\theoremstyle{plain}
\newtheorem{theorem}{Theorem}[section]
\newtheorem{fact}[theorem]{Fact}
\newtheorem{lemma}[theorem]{Lemma}
\newtheorem{corollary}[theorem]{Corollary}
\newtheorem{question}[theorem]{Question}
\newtheorem{proposition}[theorem]{Proposition}
\theoremstyle{definition}
\newtheorem{definition}[theorem]{Definition}
\theoremstyle{remark}
\newtheorem{remark}[theorem]{Remark}\newtheorem{example}[theorem]{Example}
\DeclareMathOperator{\md}{md}
\DeclareMathOperator{\Int}{Int}
\DeclareSymbolFont{AMSb}{U}{msb}{m}{n}
\DeclareMathSymbol{\N}{\mathalpha}{AMSb}{"4E}
\DeclareMathSymbol{\R}{\mathalpha}{AMSb}{"52}
\DeclareMathSymbol{\Z}{\mathalpha}{AMSb}{"5A}
\DeclareMathSymbol{\D}{\mathalpha}{AMSb}{"44}
\DeclareMathSymbol{\s}{\mathalpha}{AMSb}{"53}
\newcommand{\CAT}{CAT}
\newcommand{\eps}{\varepsilon}
\renewcommand{\phi}{\varphi}
\newcommand{\e}{e}
\DeclareMathOperator{\supp}{supp}
\DeclareMathOperator{\tr}{tr}
\DeclareMathOperator{\Div}{div}
\DeclareMathOperator{\Ric}{Ric}
\DeclareMathOperator{\Lip}{Lip}
\DeclareMathOperator{\Hess}{Hess}
\DeclareMathOperator{\vol}{vol}
\renewcommand{\tilde}{\widetilde}
\DeclareMathOperator{\diam}{diam}
\newcommand{\ke}{{K}}
\newcommand{\lk}{k_2}
\newcommand{\uk}{\kappa}
\DeclareMathOperator{\Ch}{Ch}
\newcommand{\m}{m}
\DeclareMathOperator{\Ent}{Ent}
\DeclareMathOperator{\Dom}{Dom}
\DeclareMathOperator{\Cpl}{Cpl}
\def\S2k{\mathbb{S}^2_\kappa}
\def\RR{\mathbb R}
\def\co{\colon\thinspace}
\def\SS{\mathbb S}
\DeclareMathOperator{\curv}{curv}
\author{Vitali Kapovitch}\thanks{University of Toronto, vtk@math.utoronto.ca}
\author{Christian Ketterer}\thanks{University of Toronto, ckettere@math.toronto.edu}
\title{CD meets CAT}\thanks{\textit{2010 Mathematics Subject classification}. Primary 53C20, 53C21, Keywords: Riemannian curvature-dimension condition, upper curvature bound, Alexandrov space, optimal transport}
\begin{document}
\begin{abstract}
We show that if a noncollapsed $CD(K,n)$ space $X$ with $n\ge 2$ has curvature bounded above by $\kappa$ in the sense of Alexandrov then $K\le (n-1)\kappa$ and $X$ is an Alexandrov space of curvature bounded below by $\ke-\uk (n-2)$. We also show that if a $CD(K,n)$ space $Y$ with finite $n$ has curvature bounded above then it is infinitesimally Hilbertian.
\end{abstract}
\maketitle
\tableofcontents
\section{Introduction}

It trivially follows from the definitions of sectional and Ricci curvature that if $(M^n,g)$ is a Riemannian manifold with $n\ge 2$ satisfying $\Ric_M\ge K, \sec_M\le \uk$ then $\uk (n-1)\geq \ke$ and $M$ also satisfies $\sec_M\ge \ke-\uk (n-2)$.
The main purpose of this paper is to show that the same holds true for metric measure spaces with intrinsically defined sectional and Ricci curvature bounds.

\begin{theorem}\label{main-thm}
Let $n\ge 2$ be a natural number and let $(X,d,\mathcal H_n)$ be a complete metric measure space which is $CBA(\uk)$ (has curvature bounded above by $\uk$ in the sense of Alexandrov) and satisfies $CD(K,n)$.
Then $\uk (n-1)\geq \ke$,  and $(X,d)$ is an Alexandrov space of curvature bounded below by $\ke-\uk (n-2)$. {In particular $X$ is infinitesimally Hilbertian.}
\end{theorem}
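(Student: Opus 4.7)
The plan is to combine the two curvature bounds at the level of tangent cones. Under $CBA(\kappa)$, every tangent cone is a Euclidean cone $T_xX = C(\Sigma_x)$ over a $CAT(1)$ space of directions, and the noncollapsing assumption (reference measure exactly $\mathcal H_n$) allows one to transfer the CD condition to blow-ups and then to the link via a cone characterization theorem.

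For the numerical bound $\kappa(n-1)\ge K$, the cleanest route is volume comparison on small balls. Noncollapsed $CD(K,n)$ gives the Bishop-Gromov upper bound with model of sectional curvature $K/(n-1)$, while $CAT(\kappa)$ gives G\"unther's lower volume bound with model of sectional curvature $\kappa$. Expanding both as $(1+cr^2+O(r^4))\omega_n r^n$ at small $r$ and comparing the $r^2$ coefficients forces $K/(n-1)\le\kappa$.

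For infinitesimal Hilbertianity and the Alexandrov lower bound I would argue by descent on $n$ via the link. The rescaled spaces $(X, r^{-1}d, r^{-n}\mathcal H_n)$ converge in the pointed measured Gromov-Hausdorff sense to $(T_xX, \mathcal H_n)$, which by stability of CD under pmGH limits carries a $CD(0,n)$ structure. Combined with the Euclidean cone structure from CBA, a cone theorem in the style of Ketterer forces $\Sigma_x$ to be simultaneously $CAT(1)$ and $CD(n-2, n-1)$---the same twinned hypothesis in one lower dimension. The induction base is dimension $1$, where $CAT(1)$ plus $CD(0,1)$ forces an interval or a circle of total length $\le \pi$, rigid enough to conclude that the tangent cones of $X$ are Euclidean at regular points. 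The Alexandrov lower bound $K-(n-2)\kappa$ then arises from the Riemannian identity $\Ric(v_1)=\sum_{i\ge 2}\sec(v_1, v_i)$: since $n-2$ of the sectional summands are at most $\kappa$ and the sum is at least $K$, the remaining one is at least $K-(n-2)\kappa$. This infinitesimal statement globalizes to an Alexandrov triangle comparison via the local-to-global principle available because $X$ is locally $CAT(\kappa)$.

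The main obstacle is the stability of the CD condition under blow-up combined with the propagation of noncollapsing down the link chain, so that the cone theorem can be reapplied with sharp parameters at every stage of the descent. CBA provides the essential structural control---uniqueness of geodesics in small balls and well-defined Euclidean tangent cones---but verifying that the rescaled optimal transport plans converge to genuinely optimal plans on $T_xX$, and that the limit measure is again $\mathcal H_n$ on each successive link so that the cone/link reduction remains noncollapsed, is where I expect the serious technical work to lie. It is essentially this iteration that unlocks both infinitesimal Hilbertianity and the final Alexandrov comparison.
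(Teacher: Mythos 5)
Your proposal has the right high-level intuition---that the Alexandrov bound should come from the Riemannian identity $\Ric(v_1)=\sum_{i\ge 2}\sec(v_1,v_i)$ and that the link structure should be exploited by induction on $n$---and that part of the strategy does match the paper's proof of the $RCD$+$CAT$-to-Alexandrov implication. However, there are two genuine gaps.

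First, there is a circularity in the way you propose to obtain infinitesimal Hilbertianness. The cone and link reduction you invoke (stability of $CD$ under blow-up, ``volume-cone-implies-metric-cone,'' descent of the curvature-dimension bound to $\Sigma_x$) is only available in the non-collapsed \emph{RCD} setting (De Philippis--Gigli, Ketterer); it does not apply to merely $CD$ spaces, because without infinitesimal Hilbertianness one cannot control the measure on the blow-up or identify the tangent cone as a metric cone. So you cannot run the descent to prove infinitesimal Hilbertianness without already assuming it. Relatedly, your first sentence---that in a $CBA$ space every blow-up tangent cone is a Euclidean cone over a $CAT(1)$ link---is not true in general: in $CAT(\kappa)$ spaces the \emph{geodesic} tangent cone $T^g_pX$ is always a Euclidean cone, but the metric-measure tangent cone obtained as a rescaling limit can be strictly larger and need not be a metric cone at all; the paper gives explicit $CAT(0)$ examples of this. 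The paper circumvents both problems by proving infinitesimal Hilbertianness by a completely different route, independent of the link: a splitting theorem for $CD(0,N)\cap CAT(0)$ spaces proved via the Flat Strip Theorem, combined with the ``tangents of tangents are tangents'' theorem of Gigli--Mondino--Rajala to locate a Euclidean tangent cone at a.e.\ point, and then Cheeger's differentiability theorem. This cannot be replaced by your link descent.

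Second, the sentence ``The Alexandrov lower bound $K-(n-2)\kappa$ then arises from the Riemannian identity $\Ric(v_1)=\sum_{i\ge 2}\sec(v_1,v_i)$\dots\ This infinitesimal statement globalizes'' skips the part of the argument that actually requires work. In the non-smooth setting there is no Ricci or sectional curvature tensor, and the identity must be replaced by an a.e.\ trace identity $\Delta f = \tr\Hess f$ for the weak Hessian in the tangent module (Han's theorem), combined with a \emph{lower} bound $\Delta d_p \ge (n-1)\cot_\kappa(d_p)$ coming from the $CAT$ condition and an \emph{upper} bound $\Delta d_p \le (n-1)\cot_{K/(n-1)}(d_p)$ from $RCD$. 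Establishing that $d_p$ is even in the domain of the Laplacian, and that its Laplacian is bounded below, requires first showing that the set of regular points is open, \emph{convex} (via a semiconcavity lemma for the density, which itself uses the $CAT$ condition and can fail for general noncollapsed $RCD$ spaces), and a topological $n$-manifold, so that geodesics are locally extendible and the inverse gradient flow of $d_p$ is measure-contracting. One must then translate the Hessian bound into a metric convexity statement via the second variation formula before applying Petrunin's globalization. None of these steps is automatic, and without them the ``identity'' has no meaning. Finally, your volume-comparison argument for $\kappa(n-1)\ge K$ would require a G\"unther-type lower bound on $\mathcal H_n(B_r(p))$ in $CAT(\kappa)$ spaces with a controlled $r^2$ coefficient; this is not available in general (a cone point of angle $<2\pi$ is $CAT(0)$ but violates it), and the paper instead extracts the inequality as a byproduct of the Laplacian/Hessian comparison just described.
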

Examples given by  manifolds of constant sectional curvature show that the lower curvature bound provided by this theorem is optimal. 
%This theorem provides further evidence that  the notion of $RCD(K,N)$ is the right generalization of Riemannian manifolds with Ricci curvature bounded below.

Theorem \ref{main-thm} shows that $X$ has two sided curvature bounds in Alexandrov sense. By work of Alexandrov,  Berestovsky and Nikolaev (see~\cite{nikolaev}) this immediately gives the following corollary:

\begin{corollary}
Let  $(X,d,\mathcal H_n)$ be as in Theorem \ref{main-thm} then $X$ is a topological $n$-dimensional manifold with boundary and  $\Int X$ has a canonical open $C^3$ atlas of harmonic coordinates and a Riemannian metric $g$ which induces $d$ and such that 
 $g$ is in $C^{1,\alpha}\cap W^{2,p}$ in local harmonic charts for every $1\le p<\infty, 0<\alpha<1$.

\end{corollary}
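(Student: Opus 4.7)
The plan is to derive the corollary directly from the smoothing theory for Alexandrov spaces with two-sided curvature bounds, due to Berestovskii and Nikolaev. By Theorem \ref{main-thm}, $(X,d)$ is simultaneously CBA$(\uk)$ and CBB$(\ke-\uk(n-2))$, and the noncollapsed hypothesis (reference measure $\mathcal H_n$) ensures that the Hausdorff dimension of $X$ equals $n$. Thus $X$ falls into the classical class of $n$-dimensional Alexandrov spaces of bounded curvature.

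First I would establish the topological manifold-with-boundary structure, which follows from the CBB side together with the dimension information: interior points are those whose tangent cone is $\R^n$, boundary points are those whose tangent cone is a half-space of $\R^n$, and standard structure theory for noncollapsed Alexandrov spaces (Perelman's stability theorem together with Burago-Gromov-Perelman) shows that these two strata glue to a topological $n$-manifold with boundary. The CBA bound plays no role at this step, but ensures local uniqueness of geodesics, which simplifies the construction of coordinates in the next step.

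Next, on $\Int X$, I would invoke Nikolaev's theorem (\cite{nikolaev}): on any Alexandrov space of two-sided bounded curvature one constructs a canonical atlas of distance coordinates in which the metric tensor is $C^\alpha$, and then harmonic coordinates built from these in which $g \in C^{1,\alpha}$. Transition functions between two harmonic charts solve $\Delta_g u = 0$ with $g$ of class $C^{1,\alpha}$ and hence lie in $C^{2,\alpha}\subset C^3$ by interior Schauder estimates, giving the canonical $C^3$ atlas. The $W^{2,p}$ regularity is obtained by applying $L^p$ elliptic theory to the standard harmonic-coordinate identity $\Ric_{ij} = -\tfrac12 g^{kl}\partial_k\partial_l g_{ij} + Q(g,\partial g)$, using that the two-sided Alexandrov bounds give an $L^\infty$ bound on the Ricci tensor. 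The principal substantive point --- and the only real obstacle --- is verifying that the CBB bound produced by Theorem \ref{main-thm}, together with the noncollapsed hypothesis, places $X$ squarely within the hypotheses of the Berestovskii-Nikolaev smoothing theorem; once this identification is made, the corollary is a direct application of that theorem.
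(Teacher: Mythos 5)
Your proposal is correct and follows essentially the same route as the paper: the paper simply observes that Theorem \ref{main-thm} gives the two-sided Alexandrov bounds $CBA(\uk)$ and $CBB(\ke-\uk(n-2))$, with finite dimensionality coming from the reference measure $\mathcal H_n$, and then cites the Berestovskii--Nikolaev structure theorem (Theorem \ref{th:nikolaev}) verbatim. The additional steps you sketch (Perelman stability for the manifold structure, Schauder and $L^p$ estimates in harmonic coordinates) are just an outline of the proof of that cited theorem rather than a different argument.
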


Let us comment on the assumptions in the main theorem.

$CD(K,N)$ spaces for {$N\in [1,\infty)$} were introduced by Lott and Villani for $K=0$ in \cite{lottvillani}, and independently by Sturm for general $K\in \mathbb{R}$ in \cite{stugeo2}.
%\footnote{
%{\color{blue} Strictly speaking this is Sturm. Lott Villani only did the case $CD(0,N)$. But I guess they were aware on what was going on.}\color{red}
%This needs to be rephrased. Sturm didn't write the Lott-Villani paper }
Other curvature-dimension conditions are 
%the \textit{local curvature-dimension condition} $CD_{loc}(K-,N)$, 
the \textit{reduced curvature-dimension condition} $CD^*(K,N)$ \cite{bast}
and the \textit{entropic curvature dimension condition} $CD^e(K,N)$ \cite{erbarkuwadasturm} that
are simpler from an analytical viewpoint. For a Riemannian manifold each condition 
characterizes lower Ricci curvature bounds. However it is not known if the conditions $CD^*$ or $CD^e$ are in general equivalent to the original one by Sturm, or to each other. Moreover, in general they do 
not produce sharp estimates in geometric inequalities. 
 {Conditions $CD(K,N)$, $CD^*(K,N)$ and $CD^e(K,N)$ are known to be equivalent under the extra assumption that the space is 
 \emph{essentially non-branching} \cite{erbarkuwadasturm,cavmil}. In Proposition \ref{prop:nonbra} we prove that a {$CBA(\uk)$
 }
 %\footnote{\color{red} I changed CAT to CBA here because CAT is introduced later} 
 space which satisfies any of the conditions $CD(K,N)$, $CD^*(K,N)$ or $CD^e(K,N)$ with $N<\infty$  is  
\emph{non-branching} and therefore for {$CBA(\uk)$} spaces all these curvature-dimension conditions are equivalent.}

In the original version of this paper the main theorem had an extra assumption that $X$ is infinitesimally Hilbertian which might be considered a natural assumption in this setting, 
and under which all of the previous curvature-dimension conditions are equivalent as well. %(Remark \ref{rem-equiv-rcd}).}
However, as we show in  Theorem~\ref{CD+CAT implies RCD}  a space satisfying  any of the curvature dimension conditions
$CD(K,N)$,  $CD^*(K,N)$ or $CD^e(K,N)$, and $CBA(\uk)$ for $1\le N<\infty$, $K,\uk<\infty$ is automatically infinitesimally Hilbertian and hence $RCD(K,N)$. (Note that this includes the case $N=1$).

 Next, we exclude $n=1$ in the statement of the main theorem because 
if  $(X,d,m)$ is $RCD(K,1)$ then by ~\cite{Kit-Lak}  it is a point or a  smooth Riemannian 1-dimensional manifold (possibly with boundary) and thus is an Alexandrov space with curvature bounded below and above without an extra assumption of an a priori upper  curvature bound. 

Further, some assumptions on the measure $m$ in relation to $n$ in the main theorem are obviously necessary as the following simple example indicates 

\begin{example}\label{counterex}
{Let $f\co \RR^2\to\RR$ be given by $f(x)=4|x|^2$.  Let  $X=\overline{ B}_{\frac{1}{100}}(0)$. A simple computation shows that $\Ric^3_f\ge 2$ on $X$ where $\Ric^3_f$ is the $3$-Bakry-Emery Ricci tensor
of $(X,g_{Eucl},e^{-f}\mathcal{H}_2)$. Since all balls $B_r(0)$ are convex this  easily implies that 
%$f$ is $(2,1)$-convex on $X=\overline{ B}_{\frac{1}{100}}(0)$. Since $(\RR^2,d_{Eucl},\mathcal H_2)$ is $RCD(0,2)$, ${ B}_{\frac{1}{100}}(0)$ is convex in $\RR^2$,  $f$ is $(2,1)$-convex on $X$  it follows (e.g. by Proposition %\ref{prop-conf-conv}) that
$(X,d_{Eucl},e^{-f}\mathcal H_2)$ is $RCD(2,3)$.} On the other hand, $(X,d_{Eucl})$ is obviously $CBA(0)$. Thus, $X$ is $RCD(K, n)$ and $CBA(\uk)$ with $n=3, \ke=2, \uk=0$ but $K>\uk(n-1)$.
\end{example}
Note that while the space $X$ constructed in the above example  violates the conclusion of Theorem \ref{main-thm},  it nevertheless \emph{is} an Alexandrov space of curvature bounded below (it obviously has $\curv\ge 0$), just with a different lower curvature bound that the one claimed in Theorem ~\ref{main-thm}. In section~\ref{CD+CAT to RCD+CAT} we construct an example  of a compact $CBA(0)$, $RCD(-100,3)$ space which is not Alexandrov  of $\curv\ge \hat \uk$ for any  $\hat \uk$ (Example~\ref{cd+cat-not-alex}).

\begin{comment}
It follows from \cite{mondinonaber, kellmondino, giglipasqualetto} that if $(X,d,m)$ is $RCD(\ke,n)$ and $m(\mathcal R_n)>0$ (where $\mathcal R_n$ is the set of all points $x$ with  $T_xX\cong \RR^n$) then $m|_{\mathcal R_n}$ is absolutely continuous with respect to $\mathcal H_n$.  We conjecture that in such situation $\mathcal R_n$ has full measure in $X$, i.e.  $m$ is absolutely continuous with respect to $\mathcal H_n$ on all of $X$.
\end{comment}

In ~\cite{GP-noncol}  De~Philippis and Gigli (cf. also ~\cite{Kitab-noncol})  considered the class of $RCD(K,n)$ spaces where the background measure is $\mathcal H_n$.
Following  De~Philippis and Gigli we will call such spaces  \emph{noncollapsed}.
%By definition, in a weakly noncollapsed $RCD(K,n)$ space $m=f \mathcal H_n$ for some locally integrable $f$ with respect to $\mathcal H_n$. If $f=1$ then, again using the terminology of De~Philippis and Gigli, we will call $X$  \emph{noncollapsed}. 
%Conjecturally, a weakly noncollapsed $RCD(K,n)$ space $(X,d,f\mathcal H_n)$ satisfies $f=const $ a.e., i.e. $X$ is noncollapsed up to multiplying the background measure by a constant.

It follows from work of Cheeger--Colding \cite{cheegercoldingI} that a measured Gromov-Hausdorff limit of a sequence of complete $n$-dimensional Riemannian manifolds $(M_i,p_i)$ 
with convex boundary satisfying $\Ric_{M_i}\ge K, \vol (B(p_i,1))\ge v>0$ for some $K\in\RR,v>0$ is a noncollapsed $RCD(K,n)$ space in the above sense. 
This also follows from ~\cite{GP-noncol}  where it is shown more generally, 
that for any $v>0$ the class of noncollapsed $RCD(K,n)$ spaces $(X,d,m,p)$ satisfying $m(B_1(p))\ge 1$ is \emph{compact} in the pointed measured Gromov-Hausdorff topology.
%In particular, a noncollapsing limit of noncollapsed $RCD(K,n)$ spaces  is again noncollapsed.
{This includes the nontrivial statement that for a sequence $(X_i,d,\mathcal H_n,p_i)$ in the above class converging to $(X,d,m,p)$ the limit measure $m$ is automatically $\mathcal H_n$.}

Thus, noncollapsed $RCD(K,n)$ spaces are a natural synthetic generalization of noncollapsing Ricci limits.

The above discussion shows that requiring that the background measure be $\mathcal H_n$ in the context of Theorem \ref{main-thm} is a natural assumption.

Let us outline the structure of the proof of the main theorem. It consists of two  independent parts. Part one is to show that a space satisfying $CD(K,n)$ and $CBA(\uk)$  with finite $n$ is infinitesimally Hilbertian and hence $RCD(K,n)$. Part two is to show that a noncollapsed $RCD(K,n)$  space which is also  $CBA(\uk)$ is Alexandrov with  $curv\ge\ke-\uk (n-2)$.

Since small balls in $X$ are convex, using local-to-global results for RCD and Alexandrov spaces it's enough to prove both parts for small balls in $X$ which are $CAT(\uk)$ i.e. satisfy the upper curvature triangle comparison \emph{globally}. Thus, for most of the paper we only consider spaces $X$ which have small diameter and are $CAT(\uk)$ rather than $CBA(\uk)$.

The proof that $X$ satisfying $CD(K,n)$ and $\CAT(\uk)$  with finite $n$ must be infinitesimally Hilbertian consists of several steps. 
The main step is proving the splitting theorem (Proposition ~\ref{splitting theorem}) which says that if a space $X$ which is $CD(0,n)$ and $CAT(0)$ with $n<\infty$ then it must metrically split  as 
$Y\times \R$.

Recall that the usual scheme for proving the splitting theorem under various versions of nonnegative Ricci curvature involves a variation of the following argument ~\cite{cheegergromoll,giglisplitting}.

Let $\gamma\co \R\to X$  be a line in $X$. Consider the rays  $\gamma_+(t)=\gamma(t)$ and $\gamma_-(t)=\gamma(-t)$ for $t\ge 0$. 
Let $b_\pm$ be the corresponding Busemann functions.  From the triangle inequality it holds that $b=b_++b_-\ge 0$ on $X$. Also, $b|_\gamma\equiv 0$. Then the usual argument is to first show that $b_\pm$ are both superharmonic, hence $b$ is is superharmonic and hence it must be identically zero on $X$ by the maximum principle. However, this argument completely fails in our situation because knowing that $b_\pm$ are superharmonic \emph{does not} imply that $b$ is superharmonic too as the Laplace operator is not known to be linear yet - we are trying to prove that it is.

Our proof of the splitting theorem goes along very different lines. It relies on the Flat Strip Theorem for $CAT(0)$ spaces to conclude that $b\equiv 0$ and to get the splitting.

%The geodesic tangent cone  $T_p^gX$ embeds isometrically into any metric-measure tangent cone $T_pX$ but this embedding need not be onto for points in a general  $CD^*(K,n)$ and $CAT(\uk)$ space. As mentioned, Ahlfors regularity implies that it's always onto. 
  By \cite{gmr} "tangents of tangents are tangents" a.e., i.e. there is a set $A\subset X$ of full measure such that for \emph{every} point $p\in A$ for \emph{any} tangent cone $(T_pX,d_p,m_p)$ and \emph{any} point $y\in T_pX$  any tangent cone $T_y(T_pX)$ is a tangent cone at $p$.  Using the splitting theorem this easily implies that there exists a tangent cone at $p$ isometric to $\R^k$ for some $k\le n$.
  
  Now infinitesimal Hilbertianness of $X$ easily follows by an application of Cheeger's celebrated generalization of Rademacher's theorem to doubling metric measure spaces which satisfy the Poincar\'e inequality ~\cite{cheegerlipschitz}.

The second major part in the proof of the main theorem is showing that it holds if $X$ is $RCD(K,n)$, $CAT(\uk)$ and $m=\mathcal H_n$.

The obvious proof which works for Riemannian manifolds does not easily generalize as there is no notion of curvature or Ricci tensors on $X$.
Let us describe an argument that does generalize. Let $(M^n,g)$ be a complete Riemannian manifold with $\sec\le \uk, \Ric\ge K$. Fix any $\hat\kappa<\ke-\uk (n-2)$. To verify that $\sec_M\ge \hat\kappa$ it's enough to show that for any $p\in M$ the distance function to $p$ is more concave than the distance function in the simply connected space form of constant curvature $k$. For points $q$ near $p$ this is equivalent to checking that 

\begin{align}
\Hess ( d_p|_q)(V,V)\leq \cot_{\hat{\kappa}}(d_p(q)) \ \mbox{ for any unit } V\in T_qM \mbox{ orthogonal to } \nabla d_p
\end{align}

where $\cot_k(t)$ is the generalized cotangent function (see section \ref{CAT and CBB}  for the definition).

The condition that $\sec_M\le \uk$ implies that

\begin{align}
\Hess ( d_p)(V,V)\ge \cot_{\uk}(d_p(q)) \ \mbox{ for any unit } V\in T_qM \mbox{ orthogonal to } \nabla d_p
\end{align}

On the other hand,  since $\Ric_M\ge K$, by Laplace comparison we have that
%(which follows from the Bochner inequality and Ricatti comparison)  we have that

\begin{align}
\Delta d_p(q)=\sum_i\Hess ( d_p)(V_i,V_i)\leq  (n-1)\cot_{K/(n-1)}(d_p(q)) \ 
\end{align}
where $V_1,\ldots,V_{n-1}$ is an orthonormal basis of  $\nabla d_p^\perp\subset T_qM$.
 %This is done in section ~\ref{CD+CAT to RCD+CAT}.  

Combining the above inequalities gives that $\uk (n-1)\geq \ke$ and that for any $i=1,\ldots n-1$

\[
\Hess ( d_p)(V_i,V_i)\le  (n-1)\cot_{K/(n-1)}(d_p(q))-(n-2)\cot_{\uk}(d_p(q))\le \cot_{\hat{\kappa}}(d_p(q)) 
\]
when $d(p,q)$ is sufficiently small. Hence $\sec M\ge \hat\kappa$. Since $\hat\kappa<\ke-\uk (n-2)$ was arbitrary this shows that $\sec M\ge \ke-\uk (n-2)$.

There are a number of technical challenges in generalizing this argument to the setting of Theorem \ref{main-thm}. 
The first one is to get a \emph{lower} laplacian bound on the distance functions using the upper curvature bound. 
To do this we first show that the set of regular points $X_{reg}$ is open, \emph{convex} and is a topological $n$-manifold.
A crucial point in showing convexity of $X_{reg}$ is proving that the density function is semiconcave on $X$ (Lemma~\ref{dens-semiconcave}). 
This uses the CAT property of $X$ and need not be true for general noncollapsed $RCD(K,n)$ spaces.

By a homological argument the fact that $X_{reg}$ is a manifold implies that geodesics on $X_{reg}$ are locally extendible.
Once this has been established it follows from contraction properties of the inverse gradient flow of $d_p$ that $\Delta d_p$ is bounded below on 
$X_{reg}$. $RCD(K,n)$ condition implies that it's bounded above which implies 
that distance functions locally lie in the domain of the laplacian. This allows us to apply to the distance functions analytic tools we develop in 
Section \ref{sec:convexity-and-hessian} which relate convexity properties of  functions in the domain of the laplacian on $RCD$ spaces to bounds on their Hessians.  
Using the calculus of tangent modules developed by Gigli \cite{giglinonsmooth} and a result of Han \cite{hanriccitensor} that for a sufficiently regular function $f$ on a noncollapsed $RCD(K,n)$ 
space $\Delta  f=\tr \Hess f$, we are able to carry out the Riemannian argument that was outlined earlier to obtain the same concavity properties of distance functions locally on $X_{reg}$.  By a globalization result of Petrunin this implies that $X$ is Alexandrov.

 The paper is structured as follows.
In Section \ref{sec:preliminaries} we provide preliminaries on synthetic Ricci curvature bounds, calculus for metric measure spaces %, the notion of tangent module, the Bakry-Emery characterization of lower Ricci curvature bounds, Rectifiability, the generalized  Hessian operator, 
and curvature bounds for metric spaces in the sense of Alexandrov.

In Section \ref{sec:laplace bounds} we prove a lower Laplace bound for distance functions in the context of metric spaces which are topological manifolds and satisfy RCD and CBA bounds. %that satisfy an upper curvature bound and also a lower Ricci curvature bound provided the space is a topological manifold.

In Section \ref{sec:convexity-and-hessian} we establish a result that gives a characterization of local $\kappa$-convexity of Lipschitz functions that are in the domain of the Laplace operator, in terms of almost everywhere lower bounds for the Hessian.

In Section ~\ref{sec:RCD+CAT-to-Alexandrov}  we prove the main theorem  under an extra assumption that $X$ is infinitesimally Hilbertian making use of several tools and results for the Laplace operator and the tangent module of metric measure spaces. 
 
Finally, in Section~\ref{CD+CAT to RCD+CAT} we prove that a space satisfying $CD^*(K,n)$ and $CBA(\uk)$ for finite $n$ must be infinitesimally Hilbertian (Theorem~\ref{CD+CAT implies RCD}).
Combined with the results of Section ~\ref{sec:RCD+CAT-to-Alexandrov} this finishes the proof of the main theorem.

 \subsection{Acknowledgments} The authors are grateful to Robert Haslhofer for helpful conversations and comments. { The authors also want to thank Nicola Gigli for comments and remarks that helped to improve an earlier version of this article.}
The first author was supported in part by a Discovery grant from NSERC.
This work was done while the second author was participating in Fields Thematic Program on ``Geometric Analysis'' from July til December 2017. Both authors want to thank the Fields Institute for providing an excellent and stimulating research environment.

\section{Preliminaries}\label{sec:preliminaries}
\subsection{Curvature-dimension condition for metric measure spaces}
\begin{definition}
Let $[a,b]\subset \mathbb{R}$ be an interval. 
We say a lower semi-continuous function $u:[a,b]\rightarrow (-\infty,\infty]$ is $(K,N)$-convex for $K\in\mathbb{R}$ and $N\in (0,\infty]$ if
$u$ is absolutely continuous and
\begin{align*}
u''\geq K+\frac{1}{N}(u')^2
\end{align*}
%$e^{-\frac{1}{N}u}:[a,b]\rightarrow [0,\infty)$ satisfies 
%\begin{align*}
%v''+\frac{K}{N}v\leq 0 
%\end{align*}
\noindent
holds
in the distributional sense  where $\frac{1}{\infty}=:0$.
We say that $u$ is $K$-convex if $u$ is $(K,\infty)$-convex, and we say that $u$ is $K$-concave if $-u$ is $-K$-convex.
%We follow the sign convention in \cite{erbarkuwadasturm}.
%We say $u$ is $(0,1)$-convex iff $e^{-u}$ is $1$-concave.

If $N<\infty$, we define 
$
U_N(t)=e^{-\frac{1}{N}t}$.
Then, 
$u:[a,b]\rightarrow (-\infty,\infty]$ is $(K,N)$-convex if and only if $U_N(u)=:v:[a,b]\rightarrow [0,\infty)$ satisfies $v''\leq -K/N v$ on $[a,b]$ in the distributional sense.
\end{definition}
Let $(X,d)$ be a metric space. If $A\subset X$, the induced metric on $A$ is denoted by $d_A$. We say a rectifiable constant  speed curve $\gamma:[a,b]\rightarrow X$ is a \textit{minimizing geodesic} or just \textit{geodesic} if $\mbox{L}(\gamma)=d(\gamma(a),\gamma(b))$.
We say $(X,d)$ is a \textit{geodesic metric space} if for any pair $x,y\in X$ there exists a geodesic between $x$ and $y$. For a geodesic $\gamma$ between points $x,y\in X$ we will also use the notation $[x,y]$ 
where in this case we think of the geodesic as its
image in $X$. Similarly $]x,y[=[x,y]\backslash \left\{x,y\right\}$.
\begin{definition}\label{def:metricconvex}
Let $(X,d)$ be a metric space, and let $V:X\rightarrow (-\infty,\infty]$ be a lower semi-continuous function. We set $\Dom V:= \left\{x\in X: V(x)<\infty\right\}$. Let $K\in \mathbb{R}$ and $N\in (0,\infty]$.
\begin{itemize}
 \item[(i)]
 We say that $V$ is weakly $(K,N)$-convex if for every pair $x,y\in \Dom V$ there exists
a unit speed geodesic $\gamma:[0,d(x,y)]\rightarrow X$ between $x$ and $y$ such that $V\circ \gamma:[0,d(x,y)]\rightarrow (-\infty,\infty]$ is $(K,N)$-convex.
%In this case $\gamma_t\in \Dom V$ for any $t\in [0,1]$.
\item[(ii)]
If $(X,d)$ is a geodesic metric space
we say $V$ is $(K,N)$-convex if $V\circ\gamma$ is $(K,N)$-convex for any unit speed geodesic $\gamma:[0,L]\rightarrow \Dom V$. 
\item[(iii)]
We say $V:X\rightarrow (-\infty,\infty]$ is semi-convex if for any $x\in X$ there exists a neighborhoud $U$ of $x$ and $K\in\mathbb{R}$ such that $V|_U$ is $K$-convex.
%\item[(iv)]
%We say $V:X\rightarrow[-\infty,\infty)$ is $(K,N)$-concave if $-V$ is $(K,N)$-convex.
\end{itemize}
\end{definition}
$\mathcal{P}^2(X)$ denotes the set of Borel probability measures $\mu$ on $(X,d)$ such that $\int_Xd(x_0,x)^2d\mu(x)<\infty$ for some $x_0\in X$. For any pair $\mu_0,\mu_1\in \mathcal{P}^2(X)$ 
we denote with $W_2(\mu_0,\mu_1)$ the \textit{$L^2$-Wasserstein distance} that is finite and defined by
\begin{align}\label{def:W2}
W_{2}(\mu_{1}, \mu_{2})^{2}:=\inf_{\pi \in  \Cpl(\mu_1,\mu_2)}\int_{X^2} d^{2}(x,y) d\pi(x,y),
\end{align}
where $\Cpl(\mu_1,\mu_2)$ is the set of all couplings between $\mu_1$ and $\mu_2$, i.e. of all the probability measures $\pi\in \mathcal{P}(X^2)$ 
such that $(P_i)_{\sharp}\pi=\mu_i$, $i=1,2$,   $P_1,P_2$ being the projection maps. $(\mathcal{P}^{2}(X),W_2)$ becomes a separable metric space that is
a geodesic metric space provided $X$ is a geodesic metric space.
A coupling $\pi\in \Cpl(\mu_1,\mu_2)$ is optimal if it is a minimizer for (\ref{def:W2}).
Optimal couplings always exist
We call the metric space $(\mathcal{P}^2(X),W_2)$ the \textit{$L^2$-Wasserstein space} of $(X,d)$.
The subspace of probability measures with bounded support is denoted with
$\mathcal{P}^2_b(X)$.
\begin{definition}
A \textit{metric measure space} is a triple $(X,d,\m)=:X$ where $(X,d)$ is a complete and separable metric space and $\m$ is a locally finite measure.
\end{definition}
%
%Let $(X,d)$ be a metric space that is complete and separable, and let $\m$ be a locally finite Borel measure. The triple $(X,d,\m)=:X$ is called a \textit{metric measure space}.
The space of $\m$-absolutely continuous probability measures in $\mathcal{P}^2(X)$ is denoted by $\mathcal{P}^2(X,\m)$.
The \textit{Shanon-Boltzmann entropy}  of a metric measure space $(X,d,\m)$ is defined as 
\begin{align*}\Ent_{\m}:\mathcal{P}^2_b(X)\rightarrow (-\infty,\infty],\ \
\Ent_{\m}(\mu)= \int \log \rho d\mu\mbox{ if $\mu=\rho\m \mbox{ and } (\rho\log \rho )_+\mbox{ is $\m$-integrable}$,}
\end{align*}
and $\infty$ otherwise.  Note $\Dom\Ent_{\m}\subset \mathcal{P}^2(X,\m)$, and $\Ent_{\m}:\mathcal{P}^2_b(X)\rightarrow (-\infty,\infty]$ is lower semi-continuous. By Jensen's inequality one has $\Ent_{\m}\mu\geq -\log\m(\supp\mu)$ if $\m(\supp\mu)<\infty$.

\begin{definition}[\cite{stugeo1,lottvillani,erbarkuwadasturm}]
A metric measure space $(X,d,\m)$ satisfies the \textit{curvature-dimension condition} $CD(\ke,\infty)$ for $\ke\in \mathbb{R}$ if 
$\Ent_{\m}$ is weakly $\ke$-convex.

A metric measure space $(X,d,\m)$ satisfies the \textit{entropic curvature-dimension condition} $CD^e(\ke,N)$ for $\ke\in \mathbb{R}$ and $N\in (0,\infty)$ if 
$\Ent_{\m}$ is weakly $(\ke,N)$-convex.
%for every pair $\mu_0,\mu_1\in \mathcal{P}^2(X,d,\m)$ there exist a 
%geodesic $(\mu_t)_{t\in [0,1]}$ in $\mathcal{P}^2(X)$ such that 
%\begin{align*}
%t\in [0,1]\mapsto \Ent\circ\mu_t
%\end{align*}
%is $(\ke\Theta^2,N)$-convex for $\Theta=W_2(\mu_0,\mu_1)$.
\end{definition}
The \textit{$N$-Renyi entropy} is defined as
\begin{align*}
S_N(\cdot|\m):\mathcal{P}^2_b(X)\rightarrow (-\infty,0],\ \ S_N(\mu|\m)=-\int \rho^{1-\frac{1}{N}}d\m\ \mbox{ if $\mu=\rho\m$, and }0\mbox{ otherwise}.
\end{align*}
Note that $\mu=\rho\m\in \mathcal{P}(X,\m)$ implies $\rho\in L^{1-\frac{1}{N}}(\m)$, and therefore $S_N$ is well-defined.
$S_N$ is lower semi-continuous, and $S_N(\mu)\geq - \m(\supp\mu)^{\frac{1}{N}}$ by Jensen's inequality.
%{\color{red} Should we also define $CD^*(K,N)$ since it's used in the main theorem?}

\begin{definition}
For $\kappa\in \mathbb{R}$ we define $\cos_{\kappa}:[0,\infty)\rightarrow \mathbb{R}$ as the solution of 
\begin{align*}
v''+\kappa v=0 \ \ \ v(0)=1 \ \ \& \ \ v'(0)=0.
\end{align*}
$\sin_{\kappa}$ is defined as solution of the same ODE with initial value $v(0)=0 \ \&\ v'(0)=1$. That is 
\begin{align*}
\cos_{\kappa}(x)=\begin{cases}
 \cosh (\sqrt{|\kappa|}x) & \mbox{if } \kappa<0\\
1& \mbox{if } \kappa=0\\
\cos (\sqrt{\kappa}x) & \mbox{if } \kappa>0
                \end{cases}
                \quad
   \sin_{\kappa}(x)=\begin{cases}
\frac{ \sinh (\sqrt{|\kappa|}x)}{\sqrt{|\kappa|}} & \mbox{if } \kappa<0\\
x& \mbox{if } \kappa=0\\
\frac{\sin (\sqrt{\kappa}x)}{\sqrt \kappa} & \mbox{if } \kappa>0
                \end{cases}                 
                \end{align*}
Let $\pi_\kappa$ be the diameter of a simply connected space form $\S2k$ of constant curvature $\kappa$, i.e.
\[
\pi_\kappa= \begin{cases}
 \infty \ &\textrm{ if } \kappa\le 0\\
\frac{\pi}{\sqrt \kappa}\ &  \textrm{ if } \kappa> 0

\end{cases}
\]
For $K\in \mathbb{R}$, $N\in (0,\infty)$ and $\theta\geq 0$ we define the \textit{distortion coefficient} as
\begin{align*}
t\in [0,1]\mapsto \sigma_{K,N}^{(t)}(\theta)=\begin{cases}
                                             \frac{\sin_{K/N}(t\theta)}{\sin_{K/N}(\theta)}\ &\mbox{ if } \theta\in [0,\pi_{K/N}),\\
                                             \infty\ & \ \mbox{otherwise}.
                                             \end{cases}
\end{align*}
Note that $\sigma_{K,N}^{(t)}(0)=t$.
Moreover, for $K\in \mathbb{R}$, $N\in [1,\infty)$ and $\theta\geq 0$ the \textit{modified distortion coefficient} is defined as
\begin{align*}
t\in [0,1]\mapsto \tau_{K,N}^{(t)}(\theta)=\begin{cases}
                                            \theta\cdot\infty \ & \mbox{ if }K>0\mbox{ and }N=1,\\
                                            t^{\frac{1}{N}}\left[\sigma_{K,N-1}^{(t)}(\theta)\right]^{1-\frac{1}{N}}\ & \mbox{ otherwise}.
                                           \end{cases}\end{align*}
\end{definition}
\begin{definition}[\cite{stugeo2,lottvillani,bast}]
We say $(X,d,\m)$ satisfies the \textit{curvature-dimension condition} $CD(\ke,N)$ for $\ke\in \mathbb{R}$ and $N\in [1,\infty)$ if for every $\mu_0,\mu_1\in \mathcal{P}_b^2(X,\m)$ 
there exists an $L^2$-Wasserstein geodesic $(\mu_t)_{t\in [0,1]}$ and an optimal coupling $\pi$ between $\mu_0$ and $\mu_1$ such that 
$$
S_N(\mu_t|\m)\leq -\int \left[\tau_{K,N}^{(1-t)}(d(x,y))\rho_0(x)^{-\frac{1}{N}}+\tau_{K,N}^{(t)}(d(x,y))\rho_1(y)^{-\frac{1}{N}}\right]d\pi(x,y)
$$
where $\mu_i=\rho_id\m$, $i=0,1$.

We say $(X,d,\m)$ satisfies the \textit{reduced curvature-dimension condition} $CD^*(\ke,N)$ for $\ke\in \mathbb{R}$ and $N\in (0,\infty)$ if we replace 
in the previous definition the modified distortion coefficients $\tau^{(t)}_{K,N}(\theta)$ by the usual distortion coefficients $\sigma_{K,N}^{(t)}(\theta)$.

If $K=0$, the condition $CD(K,N)$ coincides with the condition $CD^*(K,N)$ and is simply convexity of the $N$-Renyi entropy functional.
\end{definition}
\begin{remark}
We note that if a metric measure space $(X,d,\m)$ satisfies a curvature dimension condition $CD(K,N)$, $CD^*(K,N)$ or $CD^e(K,N)$ for $N<\infty$,
the support $\supp\m$ of $\m$ with the induced metric $d_{\supp\m}$ becomes a geodesic space. 
This follows since $(\supp\m,d_{\supp\m})$ is complete and a curvature-dimension condition 
yields that $\supp\m$ is a length space, and is locally compact by
Bishop-Gromov-type comparison that holds in any case \cite{stugeo2,lottvillani,bast,erbarkuwadasturm}.
In this paper we will always assume that $\supp m=X$.
\end{remark}
\begin{theorem}[\cite{stugeo1,stugeo2,lottvillani,gmsstability,erbarkuwadasturm}]\label{th:stability+BM}
All of the previous curvature-dimension conditions $CD(K,N)$, $CD^*(K,N)$ and $CD^e(K,N)$  are stable under pointed measured Gromov-Hausdorff convergence, and yield  Brunn-Minkowski-type inequalities if $N$ is finite.
In the case $K=0$ the latter is the same statement for every curvature-dimension condition: For each pair of measurable subsets $A_0,A_1\subset X$ it holds that
\begin{align}\label{ineq:BM}
\m(A_t)^{\frac{1}{N}}\geq (1-t)\m(A_0)^{\frac{1}{N}}+t\m(A_1)^{\frac{1}{N}}
\end{align}
where $A_t$ is the set of $t$-midpoints of geodesics with endpoints in $A_0$ and $A_1$ respectively. 
\end{theorem}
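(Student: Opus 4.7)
The statement contains two independent assertions that I would tackle separately: the stability of the curvature-dimension conditions under pointed measured Gromov-Hausdorff convergence, and the Brunn-Minkowski-type inequality (\ref{ineq:BM}) in the finite dimensional regime. I expect the Brunn-Minkowski inequality to reduce to a short computation once the correct test measures are chosen; the stability is where the real analytic content lies.

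For (\ref{ineq:BM}) in the case $K = 0$, fix measurable $A_0, A_1 \subset X$ with $0 < \m(A_0), \m(A_1) < \infty$ (the remaining cases follow by inner approximation and monotone convergence). Introduce the normalized uniform measures $\mu_i := \m(A_i)^{-1} \mathbf{1}_{A_i}\, \m \in \mathcal{P}^2_b(X, \m)$, which lie in $\Dom S_N$. Invoking $CD(0, N)$ (which by the remark preceding the theorem coincides with $CD^*(0, N)$ and is just convexity of $S_N$) produces a $W_2$-geodesic $(\mu_t)_{t \in [0,1]}$ along which
\begin{align*}
S_N(\mu_t | \m) \le (1-t)\, S_N(\mu_0 | \m) + t\, S_N(\mu_1 | \m).
\end{align*}
A direct computation gives $S_N(\mu_i | \m) = -\m(A_i)^{1/N}$. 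On the other hand, $\supp \mu_t \subset A_t$ by the definition of the $t$-midpoint set, and Jensen's inequality applied to the concave map $s \mapsto s^{1-1/N}$ yields $S_N(\mu_t | \m) \ge -\m(A_t)^{1/N}$. Combining these three inequalities produces (\ref{ineq:BM}). The condition $CD^e(0, N)$ is handled analogously: $(0,N)$-convexity of $\Ent_{\m}$ says that $e^{-\Ent_{\m}/N}$ is concave along a $W_2$-geodesic, the uniform measures satisfy $e^{-\Ent_{\m}(\mu_i)/N} = \m(A_i)^{1/N}$, and Jensen gives $e^{-\Ent_{\m}(\mu_t)/N} \le \m(A_t)^{1/N}$.

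For the stability assertion, take a convergent sequence $(X_n, d_n, \m_n, x_n) \to (X, d, \m, x)$ in the pointed measured Gromov-Hausdorff sense with each $X_n$ satisfying, say, $CD(K, N)$. Given $\mu_0, \mu_1 \in \mathcal{P}^2_b(X, \m)$, the plan is to construct approximating sequences $\mu_i^n \in \mathcal{P}^2_b(X_n, \m_n)$ which converge to $\mu_i$ in an appropriate $W_2$ sense and along which the Renyi entropies converge. Apply the CD condition on each $X_n$ to extract a $W_2$-geodesic $(\mu_t^n)$ and an optimal coupling $\pi^n$ satisfying the defining inequality with the appropriate distortion coefficient $\tau^{(\cdot)}_{K,N}$ (resp.\ $\sigma^{(\cdot)}_{K,N}$, resp.\ the $(K,N)$-convexity of $\Ent$), and then pass to a subsequential limit using Prokhorov-type compactness (uniform tightness being provided by the Bishop-Gromov comparison built into any CD condition) together with the uniform equicontinuity of $W_2$-geodesics. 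Lower semi-continuity of $S_N$ and $\Ent$ under the relevant joint convergence, together with continuity of the distortion coefficients in $d$, then yields the defining inequality on $(X, d, \m)$.

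The main technical obstacle is the lower semi-continuity of $S_N(\cdot | \m_n)$ and $\Ent_{\m_n}$ when the reference measures $\m_n$ themselves vary with $n$. The standard resolution is to embed $(X_n, d_n)$ and $(X, d)$ isometrically into a common ambient Polish metric space so that $\m_n \to \m$ narrowly (weakly on balls, with appropriate cutoffs in the noncompact setting), then to exploit the joint lower semi-continuity of relative entropy in both its arguments with respect to narrow convergence; the analogous statement for $S_N$ is a consequence. Once lower semi-continuity is in hand, the limiting inequality follows and yields stability of all three curvature-dimension conditions in a uniform manner.
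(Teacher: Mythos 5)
This statement is a compilation of known background results; the paper gives no proof and instead cites \cite{stugeo1,stugeo2,lottvillani,gmsstability,erbarkuwadasturm}, so there is no internal argument to compare against. Your derivation of the Brunn--Minkowski inequality is correct and is exactly the standard argument in those references: take normalized uniform densities $\rho_i = \m(A_i)^{-1}\mathbf{1}_{A_i}$, compute $S_N(\mu_i\,|\,\m)=-\m(A_i)^{1/N}$ (equivalently $\Ent_\m(\mu_i)=-\log\m(A_i)$), apply the defining convexity of $S_N$ (respectively $(0,N)$-convexity of $\Ent_\m$, i.e.\ concavity of $e^{-\Ent_\m/N}$) along the distinguished geodesic, use $\supp\mu_t\subset A_t$, and close with Jensen. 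The only technicalities you gloss over --- possible nonmeasurability of the midpoint set $A_t$ (addressed via Suslin/outer-measure considerations in \cite{stugeo2}) and the reduction from bounded to arbitrary measurable $A_i$ --- are exactly as you say handled by inner approximation.

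Your stability sketch is also a fair outline of what the cited works prove, but it is genuinely only a sketch. The actual content in \cite{stugeo1,lottvillani,gmsstability,erbarkuwadasturm} is precisely the part you flag as the ``main technical obstacle'': embedding the converging sequence in a common Polish space, establishing joint lower semicontinuity of $\Ent$ and $S_N$ under narrow (or more precisely $W_2$-) convergence of both the measure and the reference measure, constructing recovery sequences $\mu_i^n\to\mu_i$ with $S_N(\mu_i^n\,|\,\m_n)\to S_N(\mu_i\,|\,\m)$ from above (not just from below), and extracting a limit geodesic together with a limit optimal plan so that the $\tau$/$\sigma$ terms in Sturm's definition pass to the limit. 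In the pointed noncompact setting the recovery step and the tightness argument are delicate enough that \cite{gmsstability} is devoted to them. So your plan identifies the right ingredients and the right obstacle, but asserting these steps is not the same as carrying them out; as written this half of your proposal is a program rather than a proof. Since the theorem is attributed rather than proved in the paper, this is not a gap relative to the paper, but you should be aware that the stability assertion is not a two-paragraph matter.
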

The next fact, the next lemma and the next theorem collect a number of important properties of spaces that satisfy a curvature-dimension condition.
\begin{fact}[\cite{stugeo2,bast,erbarkuwadasturm}]\label{fact:cd}
$(X,d,m)$ satisfies a condition $CD(\ke,N)$ for $\ke\in\mathbb{R}$ and $N\in (0,\infty)$. Then
\begin{itemize}
 \item[(i)] $(\supp\m,d_{\supp\m})$ is locally compact, a geodesic space and satisfies a Bishop-Gromov-type comparison and a doubling property.\medskip
 \item[(ii)] $(X,\alpha d, \beta\m)$ satisfies the condition $CD(\alpha^{-2}\ke,N)$ for every $\alpha,\beta>0$.\medskip
 \item[(iii)] If\ $U\subset X$ is geodesically convex and closed, $(U,d_{U},\m|_U)$ satisfies the condition $CD(\ke,N)$.\medskip
 \item[(iv)] If $(X,d,\m)$ satisfies a condition $CD(K,N)$ for $N<\infty$, then it satisfies $CD(K,\infty)$.
\end{itemize}
Each of the previous statements holds for $CD^*(K,N)$ and $CD^e(K,N)$ as well.
\end{fact}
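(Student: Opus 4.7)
The four assertions are largely standard, so my plan is to indicate how each reduces to the defining convexity inequality together with routine manipulations; the main technical issue is only item (i). Throughout I assume $\supp\m=X$ (otherwise just restrict to the support).

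For (i), the Bishop--Gromov--type volume comparison is obtained by applying the defining $CD(K,N)$ inequality (or its $CD^*$/$CD^e$ analogue) to a one-parameter family $\mu_t=(\mathrm{ev}_t)_\sharp \Pi$ of measures obtained by pushing forward an optimal dynamical plan sending the normalized indicator of a small ball around a point $x_0$ to the normalized indicator of an annulus $A=B_{R}(x_0)\setminus B_{r}(x_0)$; letting the initial ball radius shrink to zero and using the displacement convexity of $S_N$ (respectively of $\Ent_\m$) along these interpolations yields the sharp ratio estimate for $\m(B_R(x_0))/\m(B_r(x_0))$. Doubling is an immediate consequence, and doubling plus completeness gives local compactness. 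The geodesic property of $(\supp\m,d_{\supp\m})$ follows by taking two points $x,y\in\supp\m$, smoothing $\d_x$ and $\d_y$ into absolutely continuous measures concentrated near $x,y$, running the $CD$-interpolation which by Bishop--Gromov has support in $\supp\m$, and extracting a midpoint via compactness as the smoothing scale goes to zero.

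Item (ii) is a direct change of variables. Under $d\mapsto \alpha d$ we have $W_2\mapsto\alpha W_2$, so $L^2$-Wasserstein geodesics are preserved. Under $\m\mapsto\beta\m$ the Shannon entropy changes by the additive constant $-\log\beta$ and the $N$-R\'enyi entropy by the multiplicative constant $\beta^{-1/N}$, neither of which affects displacement convexity. The distortion coefficients $\sigma^{(t)}_{K,N}(\theta)$ and $\tau^{(t)}_{K,N}(\theta)$ depend on the product $\sqrt{K/N}\,\theta$, hence the simultaneous replacements $K\mapsto \alpha^{-2}K$ and $\theta\mapsto\alpha\theta$ leave them invariant, and the defining inequality is preserved verbatim.

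For (iii), the key point is that if $U\subset X$ is closed and geodesically convex and $\mu_0,\mu_1\in\mathcal{P}^2_b(U,\m|_U)$, then every $W_2$-geodesic from $\mu_0$ to $\mu_1$ in $\mathcal{P}^2(X)$ is already supported in $U$: this is because any optimal coupling $\pi\in \mathrm{Cpl}(\mu_0,\mu_1)$ is concentrated on $U\times U$, and convexity of $U$ forces every geodesic between such endpoints to remain in $U$. Consequently the inequality provided by $CD(K,N)$ on $X$ restricts to give the same inequality on $(U,d_U,\m|_U)$, and similarly for $CD^*$ and $CD^e$.

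Finally (iv) follows from the standard passage from R\'enyi to Boltzmann entropy: writing $\Ent_\m(\mu)=\lim_{N\to\infty} N\bigl(1+S_N(\mu|\m)\bigr)$ (whenever $\m(\supp\mu)<\infty$; the general case needs a truncation) and expanding $\tau^{(t)}_{K,N}(\theta)=t+\tfrac{K}{6N}\cdot t(1-t^2)\theta^2+O(1/N^2)$, the $CD(K,N)$ inequality along $\mu_t$, multiplied by $N$ and sent to the limit $N\to\infty$, yields the weak $K$-convexity of $\Ent_\m$ that defines $CD(K,\infty)$. The same limiting argument works starting from $CD^*(K,N)$ or $CD^e(K,N)$, since both conditions share the same asymptotic expansion as $N\to\infty$. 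I expect step (i) to be the main obstacle, as the Bishop--Gromov argument via transport plans is delicate when one wants sharp constants, but the other three items are essentially bookkeeping.
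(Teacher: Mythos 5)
The paper does not actually prove this statement; it is recorded as a \textbf{Fact} with citations to Sturm, Bacher--Sturm and Erbar--Kuwada--Sturm, so there is no ``paper's own proof'' to compare against. I therefore assess your sketch on its own merits.

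Items (i)--(iii) are correct in outline. A few minor quibbles: in (ii) the $N$-R\'enyi entropy actually scales by $\beta^{1/N}$, not $\beta^{-1/N}$ (this does not affect the argument since only the fact that it is a multiplicative constant matters, and the matching factor appears in the $\rho_i^{-1/N}$ terms on the right-hand side, so the inequality is genuinely invariant). In (iii), your phrase ``convexity of $U$ forces every geodesic between such endpoints to remain in $U$'' is implicitly using the strong (totally geodesic) notion of convexity; if $U$ is only convex in the weaker sense that \emph{some} geodesic between any two of its points lies in $U$, then one must argue that the particular optimal dynamical plan witnessing $CD(K,N)$ on $X$ can be chosen to charge only geodesics lying in $U$. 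In the applications in this paper $U$ is always a small ball in a $CAT(\kappa)$ space so geodesics are unique and the distinction evaporates, but you should be aware of it.

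Item (iv) has a genuine gap. You propose to multiply the $CD(K,N)$ inequality by $N$ and send $N\to\infty$, using the pointwise identity $\Ent_\m(\mu)=\lim_{N\to\infty}N\bigl(1+S_N(\mu|\m)\bigr)$. But the hypothesis is that $CD(K,N)$ holds for \emph{one fixed} $N$; the inequality $S_{N'}(\mu_t|\m)\le -\int[\tau^{(1-t)}_{K,N'}\rho_0^{-1/N'}+\tau^{(t)}_{K,N'}\rho_1^{-1/N'}]d\pi$ for larger $N'$ is not available without further work, so there is no family of inequalities indexed by $N'\to\infty$ to pass to the limit in. Moreover, for the single fixed $N$ the elementary inequality $N\bigl(1+S_N(\mu|\m)\bigr)\le \Ent_\m(\mu)$ goes the \emph{wrong} way to produce an upper bound for $\Ent_\m(\mu_t)$ from the $CD(K,N)$ upper bound on $S_N(\mu_t|\m)$. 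The missing step is the monotonicity in $N$: one must first prove that $CD(K,N)$ implies $CD(K,N')$ for all $N'\ge N$ (Sturm, Proposition 1.6 of the cited reference; this is a H\"older/Jensen-type comparison of the modified distortion coefficients across different $N$ together with a convexity argument and is not a bookkeeping triviality), and only then does the $N'\to\infty$ limit you describe become legitimate. Alternatively one can route through $CD(K,N)\Rightarrow CD^*(K,N)$ (since $\tau^{(t)}_{K,N}\ge\sigma^{(t)}_{K,N}$) and then $CD^*(K,N)\Rightarrow CD(K,\infty)$ as in Bacher--Sturm, but either way the monotonicity step must be made explicit.
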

\begin{lemma}[\cite{erbarkuwadasturm}]\label{conv->CD}
Let $(X,d,\m)$ be a metric measure space satisfying a condition $CD(\ke,\infty)$, $CD(\ke,N)$, $CD^*(\ke,N)$ or $CD^e(\ke,N)$ for some $\ke\in\mathbb{R}$ and $N\in (0,\infty)$, 
and let $f:X\rightarrow \mathbb{R}$ be $\kappa$-convex for $\alpha\in\mathbb{R}$ and bounded from below. Then the metric measure space
$(X,d,e^{-f}\m)$ satisfies the condition $CD(\ke+\kappa,\infty)$.
\end{lemma}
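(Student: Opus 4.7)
The plan is to reduce everything to the case of $CD(K,\infty)$ and then exploit the standard additive behaviour of the Shannon--Boltzmann entropy under a change of reference measure. By Fact~\ref{fact:cd}(iv) (and its stated extension to $CD^*$ and $CD^e$), each of the four hypothesized curvature--dimension conditions implies $CD(K,\infty)$, so it suffices to prove: if $(X,d,\m)$ is $CD(K,\infty)$ and $f\colon X\to\R$ is $\kappa$-convex and bounded from below, then $(X,d,e^{-f}\m)$ is $CD(K+\kappa,\infty)$.

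The first step is the pointwise identity
\begin{equation*}
\Ent_{e^{-f}\m}(\mu)=\Ent_{\m}(\mu)+\int_X f\,\dd\mu,\qquad \mu\in\mathcal{P}^2_b(X).
\end{equation*}
Since $f$ is bounded below and $\mu$ has bounded support (so $f$ is bounded above on $\supp\mu$ by lower semi--continuity of $-f$, or more carefully by upper semi--continuity which follows from local $\kappa$-convexity), the integral $\int f\,\dd\mu$ is finite, and a direct computation with the density $\rho$ of $\mu$ with respect to $\m$ (noting that the density with respect to $e^{-f}\m$ is $e^{f}\rho$) yields the identity. In particular $\Dom \Ent_{e^{-f}\m}=\Dom\Ent_{\m}$.

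Next, given $\mu_0,\mu_1\in \Dom\Ent_{e^{-f}\m}$, weak $K$-convexity of $\Ent_{\m}$ supplies a $W_2$-geodesic $(\mu_t)_{t\in[0,1]}$ along which $t\mapsto \Ent_{\m}(\mu_t)$ is $K$-convex. Represent this geodesic as $\mu_t=(e_t)_\sharp\Pi$ for an optimal dynamical plan $\Pi$ concentrated on constant--speed minimizing geodesics $\gamma\colon[0,1]\to X$. Since $f$ is $\kappa$-convex, for $\Pi$-a.e.\ such $\gamma$ we have the pointwise inequality
\begin{equation*}
f(\gamma(t))\leq (1-t)f(\gamma(0))+tf(\gamma(1))-\tfrac{\kappa}{2}\,t(1-t)\,d(\gamma(0),\gamma(1))^2.
\end{equation*}
Integrating against $\Pi$ and using $\int d(\gamma(0),\gamma(1))^2\,\dd\Pi(\gamma)=W_2^2(\mu_0,\mu_1)$ shows that $t\mapsto \int f\,\dd\mu_t$ is $\kappa$-convex on $[0,1]$.

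Adding the two convexity statements along the \emph{same} geodesic $(\mu_t)$ proves that $t\mapsto \Ent_{e^{-f}\m}(\mu_t)$ is $(K+\kappa)$-convex, which is exactly the weak $(K+\kappa)$-convexity required by $CD(K+\kappa,\infty)$. The argument is essentially bookkeeping; the only mild technical point is to justify that $\int f\,\dd\mu_t$ is finite and that the representation of $(\mu_t)$ via a dynamical optimal plan is available, but both follow from $f$ being bounded below together with standard facts about $W_2$-geodesics in the complete separable metric space $(X,d)$.
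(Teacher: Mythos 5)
The paper does not prove this lemma itself; it cites \cite{erbarkuwadasturm} and treats the statement as a black box. Your argument is the standard proof of this fact and is correct: reduce to $CD(K,\infty)$ via Fact~\ref{fact:cd}(iv), use the additive identity $\Ent_{e^{-f}\m}(\mu)=\Ent_{\m}(\mu)+\int f\,\dd\mu$, pick the geodesic supplied by $CD(K,\infty)$ along which $\Ent_{\m}$ is $K$-convex, lift it to a dynamical optimal plan, integrate the pointwise $\kappa$-convexity of $f$ against the plan, and add. One technical remark worth tightening: your parenthetical that upper semicontinuity of $f$ ``follows from local $\kappa$-convexity'' is not automatic in a general geodesic metric space, though $\kappa$-convexity does give continuity \emph{along} geodesics (absolute continuity is built into Definition~\ref{def:metricconvex}); in the only place the paper uses this lemma (Proposition~\ref{prop-conf-conv}), $f$ is Lipschitz and bounded, so the point is moot there, but if you want to state the lemma as generally as written you should add local boundedness of $f$ on bounded sets as a hypothesis or justify it more carefully.
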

\begin{theorem}[\cite{rajala1,rajala2}]\label{th:rajala}
A metric measure space $(X,d,\m)$ that satisfies $CD(\ke,N)$, $CD^*(\ke,N)$ or $CD^e(\ke,N)$ for $\ke\in \mathbb{R}$, $N\in (0,\infty)$, admits a weak local 1-1 Poincar\'e inequality.
\end{theorem}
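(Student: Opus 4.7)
The plan is to follow Rajala's original approach: the curvature-dimension condition produces $W_2$-geodesics along which the intermediate densities remain uniformly bounded, and such ``good geodesics'' let one transport mass from a ball to a single point while only picking up a controlled integral of any upper gradient along the way.

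The first step is to reduce to proving, for every $x_0 \in \supp\m$, a local estimate of the shape
\begin{align*}
\frac{1}{\m(B_r(x_0))}\int_{B_r(x_0)} |u - u_{B_r(x_0)}|\, \mathrm{d}\m
\leq C\, r \cdot \frac{1}{\m(B_{\lambda r}(x_0))}\int_{B_{\lambda r}(x_0)} g\, \mathrm{d}\m
\end{align*}
valid for all $0 < r < r_0$, every locally integrable $u$ and every upper gradient $g$, with constants depending only on $K$, $N$ and $r_0$. A radius restriction is needed because the distortion coefficients $\sigma^{(t)}_{K,N}(\theta)$ and $\tau^{(t)}_{K,N}(\theta)$ appearing in the $CD$/$CD^*$/$CD^e$ definitions are close to $t$ when $\theta$ is small; hence on a small ball every curvature-dimension condition behaves like $CD(0,N)$, and one can argue essentially in the scale-free Brunn--Minkowski regime.

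The central ingredient is a ``good geodesic lemma''. Given absolutely continuous probability measures $\mu_i = f_i\,\m$ for $i=0,1$ supported in $B_{r_0}(x_0)$ with $f_i \leq L$, I would apply the Brunn--Minkowski inequality of Theorem~\ref{th:stability+BM} to small subsets and a measurable selection of a dynamical optimal plan to produce a $W_2$-geodesic $(\mu_t)_{t\in[0,1]}$ joining $\mu_0$ and $\mu_1$ such that $\mu_t = f_t\,\m$ with a pointwise bound $f_t \leq C(K,N,r_0)\cdot L$ for every $t$. With such a geodesic in hand, fix $y \in B_r(x_0)$ and choose $\mu_0 = \m|_{B_r(x_0)}/\m(B_r(x_0))$ together with $\mu_1$ a small mollification of $\delta_y$; disintegrate the good geodesic via a dynamical plan $\Pi$ concentrated on minimizing curves $\gamma$ from $B_r(x_0)$ to (approximately) $y$. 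Since $|u(\gamma_1) - u(\gamma_0)| \leq \mathrm{L}(\gamma)\int_0^1 g(\gamma_t)\,\mathrm{d}t$, integrating against $\Pi$ and using the density bound $f_t \leq C/\m(B_r(x_0))$ converts the right-hand side into a controlled integral of $g$ over $B_{\lambda r}(x_0)$. Averaging over $y$ then yields the desired weak local $(1,1)$-Poincar\'e inequality.

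The hard part is, unsurprisingly, the construction of the good geodesic with a genuine $L^\infty$ bound on the intermediate densities starting only from the entropy- or Rényi-convexity statement that defines $CD$, $CD^*$ and $CD^e$. This requires a careful measurable selection of optimal dynamical plans, truncation of unbounded densities by a telescoping argument, and bookkeeping of the distortion coefficients for $K \neq 0$ (where the reduction to the $K=0$ case is only approximate). This is precisely the technical heart of Rajala's papers; once it is in place, the remainder is standard Wasserstein transport calculus.
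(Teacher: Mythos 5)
Your sketch reproduces exactly the strategy of the cited references \cite{rajala1,rajala2}: a good-geodesic lemma producing $W_2$-interpolants with uniform $L^\infty$-density bounds (via Brunn--Minkowski and measurable selection of optimal dynamical plans), followed by transporting $\m|_{B_r(x_0)}$ toward a point and integrating an upper gradient along the curves of the plan to obtain the weak $(1,1)$-Poincar\'e inequality. The paper itself gives no independent proof, merely citing Rajala and remarking that the argument carries over to $CD^e$; your proposal takes the same route, and the only small addition worth making explicit would be why Rajala's construction goes through verbatim under $CD^e(\ke,N)$ (the entropy-convexity version still gives the needed Brunn--Minkowski-type inequalities and density control).
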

{
\cite{rajala1,rajala2} proves this for $CD(\ke,N)$ and $CD^*(\ke,N)$ but it's easy to see that the proof also works for $CD^e(\ke,N)$.}

\subsection{Cheeger energy and calculus for metric measure spaces} 
In the following we present the framework for calculus on metric measure spaces by Ambrosio, Gigli and Savar\'e \cite{agslipschitz,agsheat,agsriemannian,giglistructure}.
Let $(X,d,m)$ be a metric measure space, and $\Lip(X)$ and $\Lip_b(X)$ be the space of Lipschitz functions, and bounded Lipschitz functions respectively. For $f\in \Lip(X)$ its local slope is
\begin{align*}
\mbox{Lip}(f)(x)=\limsup_{y\rightarrow x}\frac{|f(x)-f(y)|}{d(x,y)}, \ \ x\in X.
\end{align*}
If $f\in L^2(m)$ a function $g\in L^2(\m)$ is called \textit{relaxed gradient} if there exists sequence of Lipschitz functions $f_n$ which $L^2$-converges to $f$, and there exists $\tilde{g}$ such that 
$\mbox{Lip}f_n$ weakly converges to $\tilde{g}$ in $L^2(m)$ and $\tilde{g}\leq g$ $\m$-a.e.\ . We say $g$ is the \textit{minimal relaxed gradient} of $f$ if it is a relaxed gradient and minimal w.r.t. to the $L^2$-norm amongst all relaxed gradients.

The \textit{Cheeger energy} $\Ch^X:L^2(m)\rightarrow [0,\infty]$ is defined as 
\begin{align*}
2\Ch^X(f)=\liminf_{f_n\in \Lip(X)\overset{L^2(\m)}{\longrightarrow}f}\int_X \mbox{Lip}(f_n)^2 dm.
\end{align*}
The space of \textit{$L^2$-Sobolev functions} is then $$W^{1,2}(X):= D(\Ch^X):= \left\{ f\in L^2(\m): \Ch^X(f)<\infty\right\}.$$
For any $f\in W^{1,2}(X)$ there exists a minimal relaxed that is denoted with $|\nabla f|$ and unique up to set of measure $0$. One also calls $|\nabla f|$ the \textit{minimal weak upper gradient} of $f$. Then, one has

\begin{align*}
\Ch^X(f)=\frac{1}{2}\int |\nabla f|^2 d\m.
\end{align*}
The space 
$W^{1,2}(X)$ equipped with the norm 
$
\left\|f\right\|_{W^{1,2}(X)}^2=\left\|f\right\|^2_{L^2}+\left\||\nabla f|\right\|_{L^2}^2
$
is a Banach space.
If $W^{1,2}(X)$ is a Hilbert space, we say $(X,d,m)$ is \textit{infinitesimally Hilbertian.}
\begin{remark}
Note that in general $|\nabla u|\neq \Lip u$ for a Lipschitz function $u$ unless $(X,d,\m)$ satisfies a Poincar\'e inequality and a doubling property \cite[Theorem 5.1]{cheegerlipschitz}.
By~\cite{rajala1,rajala2} spaces satisfying $CD(K,N)$, $CD^*(K,N)$ or $CD^e(K,N)$ with $N<\infty$ do satisfy a  1-1 Poincar\'e inequality (Theorem \ref{th:rajala}).
Hence for such spaces \cite{cheegerlipschitz} applies and $|\nabla u|= \Lip u$  a.e. for any Lipschitz $u$.
\end{remark}
If $(X,d,\m)$ is infinitesimally Hilbertian, by polarization of $|\nabla f|^2$ we can define 
\begin{align*}
(f,g)\in W^{1,2}(X)^2\mapsto \langle \nabla f,\nabla g\rangle := \frac{1}{4}|\nabla (f+g)|^2-\frac{1}{4}|\nabla (f-g)|^2\in L^1(m).
\end{align*}
%In particular, the bilinear form $\mathcal{E}(f,g)=\int \langle \nabla f,\nabla g\rangle dm $ becomes a strongly local, quasi-regular Dirichletform.
We say that $f\in W^{1,2}(X)$ is in the domain of the \textit{Laplace operator $\Delta$} if there exists a function $g=:\Delta f\in L^2(m)$ such that for every $h\in W^{1,2}(X)$
\begin{align*}
\int \langle\nabla f,\nabla h\rangle dm=-\int h\Delta f dm.
\end{align*}
In this case we say that $f\in D(\Delta)$. The vector space $D(\Delta)$ is equipped with the operator norm 
\begin{align*}
\left\|f\right\|_{D(\Delta)}^2=\left\|f\right\|_{L^2}^2+\left\|\Delta f\right\|_{L^2}^2.
\end{align*}
Convergence in $D(\Delta)$ implies convergence in $W^{1,2}(X)$.
If $\mathbb{V}$ is any subspace of $L^2(\m)$ and we have that $\Delta f\in \mathbb{V}$, we write $f\in D_{\mathbb{V}}(\Delta)$. $(P_t)_{t\in (0,\infty)}$ denotes the heat semi-group associated to $\Delta$.

More generally, -- assuming $X$ is locally compact -- if $U$ is an open subset of $X$, we say $f\in W^{1,2}(X)$ is in the domain $D({\bf \Delta},U)$ of the \textit{measure valued Laplace} ${\bf \Delta}$ on $U$ if there exists a signed Radon measure $\mu=:{\bf \Delta}f$ such that
for every Lipschitz function $g$ with bounded support in $U$ we have
\begin{align*}
\int\langle \nabla g,\nabla f\rangle dm = -\int g d{\bf \Delta}f.
\end{align*}
%In analogy to the previous notation we also write $D_{L^2(m)}({\bf \Delta})=D(\Delta)$.
\begin{proposition}[\cite{giglistructure}]\label{prop:chainrulelaplacian}
Let $(X,d,\m)$ be an infinitesimally Hilbertian metric measure space, let $f\in D({\bf \Delta},U)\cap \Lip(X)$ for an open subset $U\subset X$, let $I\subset \mathbb{R}$ be an open subset, assume $\m(f^{-1}(\mathbb{R}\backslash I))=0$, and let $\phi\in C^2(I)$. 
Then $\phi\circ f\in D({\bf \Delta},U)$ and 
\begin{align*}
{\bf\Delta}(\phi\circ f)= \phi'(f){\bf\Delta} f + \phi''(f)|\nabla f|^2\m \ \mbox{ on }U.
\end{align*}

\end{proposition}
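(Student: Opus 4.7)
The plan is to derive the identity directly from the calculus in the infinitesimally Hilbertian setting (chain rule plus Leibniz rule) combined with the defining integration-by-parts property of $f\in D(\mathbf{\Delta},U)$. Specifically, one uses that for $h\in W^{1,2}(X)$ one has the pointwise identities
\begin{align*}
\langle \nabla(\phi\circ f),\nabla h\rangle &= \phi'(f)\,\langle \nabla f,\nabla h\rangle \quad \m\text{-a.e.}, \\
\langle \nabla(uv),\nabla h\rangle &= u\,\langle \nabla v,\nabla h\rangle + v\,\langle \nabla u,\nabla h\rangle \quad \m\text{-a.e.},
\end{align*}
both standard in the Ambrosio--Gigli--Savar\'e / Gigli framework already invoked in the paper. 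The first needs $\phi\in C^1$ on an open set containing the essential range of $f$, and the second needs $u,v\in W^{1,2}(X)\cap L^\infty(\m)$; these hypotheses hold here because $f$ is Lipschitz, $\phi\in C^2(I)$ and $\m(f^{-1}(\mathbb{R}\setminus I))=0$.

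For a Lipschitz test function $g$ with bounded support $K\subset U$ the main computation reads
\begin{align*}
\int \langle \nabla g, \nabla(\phi\circ f)\rangle \, d\m
&= \int \phi'(f)\,\langle \nabla g,\nabla f\rangle \, d\m \\
&= \int \langle \nabla(\phi'(f)\,g),\nabla f\rangle \, d\m - \int g\,\phi''(f)\,|\nabla f|^2 \, d\m \\
&= -\int \phi'(f)\,g\, d\mathbf{\Delta}f - \int g\,\phi''(f)\,|\nabla f|^2 \, d\m.
\end{align*}
Line one is the chain rule; line two rewrites $\phi'(f)\langle\nabla g,\nabla f\rangle$ using the Leibniz identity $\nabla(\phi'(f)g)=\phi'(f)\nabla g+g\,\phi''(f)\nabla f$ (Leibniz combined with the chain rule applied to $\phi'\circ f$); line three applies the hypothesis $f\in D(\mathbf{\Delta},U)$ to the test function $\phi'(f)g$. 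Comparing with the definition of the measure-valued Laplacian identifies the right-hand side with $-\int g\,d\bigl(\phi'(f)\mathbf{\Delta}f+\phi''(f)|\nabla f|^2\m\bigr)$, giving the claim.

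The main obstacle is verifying admissibility of $\phi'(f)g$ as a test function for $\mathbf{\Delta}f$ on $U$: one needs it Lipschitz with bounded support in $U$. Lipschitzness of $\phi'\circ f$ requires $\phi'$ to be locally Lipschitz on $f(K)$, but $\phi'$ is only given on the open set $I$, while pointwise $f(K)$ can meet the closed set $f^{-1}(\mathbb{R}\setminus I)$. Because $\supp\m=X$ and $\m(f^{-1}(\mathbb{R}\setminus I))=0$, this closed set has empty interior, hence $f^{-1}(I)$ is open and dense. One then reduces to the case where $\phi$ is replaced by $\tilde\phi=\eta\phi$ with $\eta\in C^2(\mathbb{R})$ a cutoff compactly supported in $I$ and equal to $1$ on a compact subset $J\subset I$, so that $\tilde\phi$ extends by zero to a globally $C^2$ function on $\mathbb{R}$, $\tilde\phi\circ f$ is globally Lipschitz, and the computation above applies verbatim. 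Exhausting $I$ by such $J$'s and using $\m(f^{-1}(I\setminus J))\to 0$ together with $\phi=\tilde\phi$ on $J$ transfers the identity from $\tilde\phi$ back to $\phi$. This localization/truncation step is the technical heart of the proof; once it is handled, the content reduces to the three-line calculation above.
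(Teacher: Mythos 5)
The paper does not actually give a proof of this proposition: it is cited directly from Gigli's memoir \cite{giglistructure}, so there is no in-paper argument to compare against. That said, your three-line computation
\begin{align*}
\int \langle \nabla g, \nabla(\phi\circ f)\rangle \, d\m
&= \int \phi'(f)\,\langle \nabla g,\nabla f\rangle \, d\m
= \int \langle \nabla(\phi'(f)\,g),\nabla f\rangle \, d\m - \int g\,\phi''(f)\,|\nabla f|^2 \, d\m
\end{align*}
followed by integration by parts against $\mathbf{\Delta}f$ is precisely the standard argument and is the one Gigli uses; the chain and Leibniz rules you invoke are correct in the infinitesimally Hilbertian setting. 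You have also correctly located the one real technical point, namely that $\phi'(f)g$ must qualify as a test function for $\mathbf{\Delta}f$ on $U$, and your device of truncating $\phi$ to $\tilde\phi=\eta\phi$ with $\eta$ compactly supported in $I$ is the right move.

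Where the proposal is thinner than it should be is the sentence ``transfers the identity from $\tilde\phi$ back to $\phi$.'' Two things are being swept under the rug. First, the transfer goes through \emph{locality of the measure-valued Laplacian}: since $\tilde\phi\circ f=\phi\circ f$ on the open set $f^{-1}(\operatorname{int}J)\cap U$, one has $\mathbf{\Delta}(\tilde\phi\circ f)=\mathbf{\Delta}(\phi\circ f)$ there, and one should say so rather than appeal to $\m(f^{-1}(I\setminus J))\to 0$, which by itself does not control the measures on either side. Second, and more seriously, for the right-hand side $\phi'(f)\mathbf{\Delta}f+\phi''(f)|\nabla f|^2\m$ to be a well-defined Radon measure on all of $U$ (which is what ``$\phi\circ f\in D(\mathbf{\Delta},U)$'' requires), $\phi'(f)$ must be defined and locally $|\mathbf{\Delta}f|$-integrable; but the hypothesis only gives $\m(f^{-1}(\mathbb{R}\setminus I))=0$, and $\mathbf{\Delta}f$ may a priori have a singular part charging the closed $\m$-null set $f^{-1}(\mathbb{R}\setminus I)$. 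Likewise $\phi''(f)|\nabla f|^2\in L^1_{\mathrm{loc}}$ is not automatic if $\phi''$ is unbounded near $\partial I$. These integrability issues are exactly why Gigli's version carries more careful hypotheses; in the present paper they are harmless because every application takes $\phi\in C^2(\mathbb{R})$ (e.g.\ $\md_\kappa$ precomposed with $d_y^2/2$), but a freestanding proof should either assume them or note that the applications satisfy them. With those two clarifications your argument is complete and matches the intended one.
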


\begin{definition}[\cite{agsriemannian,giglistructure}]\label{def:rcd}
A metric measure space $(X,d,m)$ satisfies the Riemannian curvature-dimension condition $RCD(\ke,N)$ for $\ke\in \mathbb{R}$ and {$N\in [1,\infty]$} if it satisfies the curvature-dimension
condition $CD(\ke,N)$, and if it is infinitesimally Hilbertian.
%
%Moreover,
%a metric space $(X,d)$ satisfies the noncollapsed Riemannian curvature-dimension condition $ncRCD(\ke,n)$ for $\ke\in \mathbb{R}$ and $n\in\mathbb{N}$ if the metric measure space $(X,d,\mathcal{H}^n)$ satisfies the condition 
%$RCD(\ke,n)$.
\end{definition}
\begin{remark}
For $N=\infty$ the condition was first introduced and studied in \cite{agsriemannian}, for $N<\infty$ in \cite{giglistructure}.
\end{remark}
\begin{remark}\label{rem-equiv-rcd}
By the globalization theorem of Cavalletti and Milman \cite{cavmil}, and by results in \cite{erbarkuwadasturm}, \cite{agsriemannian} and \cite{rajalasturm} in the previous definition it is equivalent to require the condition 
$CD^*(\ke,N)$ or the condition $CD^e(\ke,N)$. More precisely, since each condition implies $CD(K,\infty)$, together with infinitesimally Hilbertianness $(X,d,\m)$ satisfies the condition $RCD(K,\infty)$ in 
the sense of \cite{agsriemannian}. Therefore, the Boltzmann-Shanon entropy is even strongly $K$-convex, and hence $(X,d,\m)$ is essentially non-branchning by \cite{rajalasturm}. Then, first we know that
$CD^*(K,N)$ is equivalent to $CD^e(K,N)$ by \cite{erbarkuwadasturm}. Second, from the globalization result in \cite{cavmil} we have that $CD^*(K,N)$ is equivalent to $CD(K,N)$.
\end{remark}

\begin{definition}
Further,  we define $\md_\kappa:[0,\infty)\rightarrow [0,\infty)$ as the solution of 
\begin{align*}
v''+ \kappa v=1 \ \ \ v(0)=0 \ \ \&\ \ v'(0)=0.
\end{align*}
More explicitly
\begin{align*}
\md_{\kappa}(x)=\begin{cases} \frac{1}{\kappa}\left(1-\cos_{\kappa}x\right)\ &\ \mbox{ if } \kappa\neq 0,\\
                 \frac{1}{2}x^2\ &\ \mbox{ if }\kappa =0.
                \end{cases}
                \end{align*}
\end{definition}
\begin{theorem}[\cite{giglistructure}]\label{th:upperbound}
Assume $(X,d,\m)$ satisfies the condition $RCD(\ke,N)$ for $N<\infty$, and for $x\in X$ we define $d_x:X\rightarrow [0,\infty)$ via $d_x(y)=d(x,y)$. 
Then
\begin{align*}
\frac{1}{2}d_y^2\in D({\bf\Delta})\ \ \mbox{ and }\ \ {\bf\Delta}\frac{1}{2}d_y^2\leq \left[1 + (N-1)d_y\cot_{K/(N-1)}d_y\right]\m
\end{align*}
where $\cot_k(x)=\frac{\cos_k(x)}{\sin_k(x)}$.
In particular, for $\ke=0$ the estimate is precisely ${\bf\Delta}\frac{1}{2}d_y^2\leq N\m$.
\end{theorem}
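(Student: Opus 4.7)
The plan is to reduce the problem to the Laplace comparison of $d_y$ itself on $X\setminus\{y\}$ and then pass to $\tfrac{1}{2}d_y^2$ via the chain rule of Proposition~\ref{prop:chainrulelaplacian}, before finally upgrading to a global measure-valued identity.

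First I would prove the one-variable Laplace comparison: $d_y\in D({\bf\Delta},X\setminus\{y\})$ and ${\bf\Delta}d_y\le (N-1)\cot_{K/(N-1)}(d_y)\,\m$ on $X\setminus\{y\}$. The natural route in a non-smooth $RCD(K,N)$ space is the Cavalletti--Mondino one-dimensional localization: since $d_y$ is $1$-Lipschitz, its transport set admits a disintegration of $\m$ into conditional measures on transport rays emanating from $y$, each of whose density $h_q$ is a one-dimensional $CD(K,N-1)$ density and hence satisfies the distributional ODE comparison $(h_q^{1/(N-1)})''+\tfrac{K}{N-1}h_q^{1/(N-1)}\le 0$. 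Integrating this bound against a compactly supported Lipschitz test function $g$ on $X\setminus\{y\}$ and reassembling via Fubini yields exactly $\int g\,d{\bf\Delta}d_y\le \int g\,(N-1)\cot_{K/(N-1)}(d_y)\,d\m$. An alternative route is via second-order variation of Rényi entropy along $L^2$-Wasserstein geodesics with target $\delta_y$, combined with the entropic reformulation of $CD(K,N)$.

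Step two is to apply Proposition~\ref{prop:chainrulelaplacian} with $f=d_y$, $\phi(t)=\tfrac{1}{2}t^2$, $I=(0,\infty)$ and $U=X\setminus\{y\}$. Because $(X,d,\m)$ satisfies $CD(K,\infty)$ and is locally doubling (Fact~\ref{fact:cd}), the measure has no atoms, so $\m(f^{-1}(\R\setminus I))=\m(\{y\})=0$ and the chain rule hypothesis holds. Using the eikonal identity $|\nabla d_y|=1$ $\m$-a.e.\ on $X\setminus\{y\}$ (standard on $RCD(K,N)$ spaces with $N<\infty$), we obtain
\begin{align*}
{\bf\Delta}\bigl(\tfrac{1}{2}d_y^2\bigr)=d_y\,{\bf\Delta}d_y+|\nabla d_y|^2\m\le \bigl[1+(N-1)d_y\cot_{K/(N-1)}(d_y)\bigr]\m \ \text{ on }X\setminus\{y\}.
\end{align*}

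Finally one must upgrade $\tfrac{1}{2}d_y^2\in D({\bf\Delta},X\setminus\{y\})$ to $\tfrac{1}{2}d_y^2\in D(\Delta)$ on all of $X$. The upper-bound function $\rho:=1+(N-1)d_y\cot_{K/(N-1)}(d_y)$ extends continuously across $y$ with value $\rho(y)=N$ (the factor $(N-1)$ is calibrated precisely to cancel the singularity of $\cot_{K/(N-1)}$ at $0$ against the prefactor $d_y$), so $\rho\m$ is a locally finite Radon measure on all of $X$. To propagate the identity across $y$, test against compactly supported Lipschitz $g$ using cutoffs $\chi_\eps$ that vanish on $B_\eps(y)$; since $|\nabla(\tfrac{1}{2}d_y^2)|=d_y\to 0$ as one approaches $y$ and $\m(B_\eps(y))\to 0$, the excised contribution vanishes in the limit, so no singular part can concentrate at $y$ and the global bound ${\bf\Delta}(\tfrac{1}{2}d_y^2)\le \rho\,\m$ follows. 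Local $L^2$ boundedness of $\rho$ on the bounded sets where testing takes place then gives $\tfrac{1}{2}d_y^2\in D(\Delta)$, and the $K=0$ case specializes to ${\bf\Delta}(\tfrac{1}{2}d_y^2)\le N\m$ because $t\cot_0(t)\equiv 1$.

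The hard part is clearly Step~1: proving the Laplace comparison for $d_y$ without access to smooth coordinates or a Ricci tensor genuinely requires the full needle-decomposition machinery (or, equivalently, a careful optimal-transport computation with the Rényi entropy along geodesics ending in a Dirac mass). The remaining steps are technical but routine once one notices that the critical quantity $t\cot_{K/(N-1)}(t)$ is uniformly bounded on $[0,\diam X]$, so that the anticipated upper bound never blows up and $y$ cannot absorb any singular Laplacian mass.
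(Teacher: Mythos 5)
This result is imported from Gigli's memoir \cite{giglistructure} and is not reproved in the paper, so there is no internal argument to compare against. Your proposal is substantively correct, but it inverts the logic of Gigli's original proof and substitutes different machinery. Gigli establishes the estimate \emph{directly} for $\tfrac{1}{2}d_y^2$, viewed as a Kantorovich potential for transport into $\delta_y$, by differentiating the R\'enyi entropy along Wasserstein geodesics and exploiting displacement convexity under the bare $CD(K,N)$ hypothesis; the bound for $d_y$ on $X\setminus\{y\}$ then follows as a \emph{corollary} via the chain rule, exactly the order recorded by the paper in Corollary~\ref{cor-mdk-laplace-comp}. Your primary route reverses this, proving the $d_y$ bound first by Cavalletti--Mondino one-dimensional localization and then pushing to $\tfrac{1}{2}d_y^2$. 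That approach is later technology with a more classical, needle-by-needle flavour, and it additionally requires essential non-branching --- automatic under $RCD$, as assumed in the statement, but not under Gigli's weaker $CD$ hypothesis.

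Two small corrections to your Steps~1 and~3. The transport-ray densities are one-dimensional $CD(K,N)$ densities, not $CD(K,N-1)$; the ODE $(h_q^{1/(N-1)})''+\tfrac{K}{N-1}h_q^{1/(N-1)}\le 0$ that you write down is precisely the one-dimensional $CD(K,N)$ condition. And your closing sentence should conclude $\tfrac{1}{2}d_y^2\in D({\bf\Delta})$, not $D(\Delta)$; the correct justification is structural rather than a statement about $L^2$ integrability of $\rho$: once the distribution $T(g)=-\int\langle\nabla g,\nabla(\tfrac{1}{2}d_y^2)\rangle\,d\m$ satisfies $T(g)\le\int g\rho\,d\m$ for all nonnegative compactly supported Lipschitz $g$, the functional $\rho\m-T$ is nonnegative, hence a Radon measure, so $T$ is a signed Radon measure and $\tfrac{1}{2}d_y^2\in D({\bf\Delta})$. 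Your cutoff argument across $y$, using $|\nabla(\tfrac{1}{2}d_y^2)|=d_y\to 0$, does correctly propagate the upper bound to all of $X$.
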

\begin{remark}
The Theorem is actually proven by Gigli under the condition $CD(K,N)$ and it is sharp in this context. 
%Therefore it also holds and is sharp for spaces satisfying $RCD(K,N)$ - no matter if we require $CD(K,N)$, $CD^*(K,N)$ or $CD^e(K,N)$ in Definition \ref{def:rcd} by \cite{cavmil}.
%This can be seen in two ways. On the one hand, there is a recent result by Cavalletti and Milman \cite{cavmil} that proves that the reduced (and therefore the entropic version as in our definition) of the curvature-dimension condition is equivalent to the one of Sturm in 
%\cite{stugeo2} provided the space is essentially non-branching. The latter always is true for $RCD$-spaces. 
%On the other hand,
Moreover, for the sharp comparison result of $\frac{1}{2}d_y^2$ only the weaker $MCP(\ke,N)$-condition is needed that follows from
the reduced curvature-dimension condition $CD^*(\ke,N)$ by \cite{cavallettisturm} provided the space is essentially non-branching.
\end{remark}
\begin{corollary}\label{cor-mdk-laplace-comp}
Assume $(X,d,\m)$ satisfies the condition $RCD(\ke,N)$ for $N<\infty$. %and if $K>0$ assume in addition that $\diam_X<\pi_K$.
Then for any $y\in X$ we have
\begin{align*}\label{mdk-laplace-comp}
\md_{\kappa}d_y\in D({\bf\Delta})\ \ \mbox{ for any $\kappa\in\RR$ and }\ \ 
{\bf\Delta}\md_{\ke/(N-1)}d_y+ \frac{N\ke}{N-1}\md_{\ke/(N-1)}d_y\leq N\m.
\end{align*}

%{\color{red}Furthermore, $\md_\kappa d_y\in D({\bf\Delta})$ for any $\kappa\in\RR$.}
\end{corollary}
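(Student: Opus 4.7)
The plan is to obtain the inequality by applying the chain rule for the measure-valued Laplacian (Proposition \ref{prop:chainrulelaplacian}) to the composition $\md_\kappa \circ d_y = \phi \circ f$, where $f := \frac{1}{2} d_y^2$ and $\phi(t) := \md_\kappa(\sqrt{2t})$, and then combining the result with Theorem \ref{th:upperbound}. The key preliminary observation is that $\md_\kappa(r)$ is an even entire function of $r$, since its Taylor expansion at $r=0$ reads $\md_\kappa(r) = \frac{r^2}{2} - \frac{\kappa r^4}{24} + O(r^6)$, so $\phi$ extends to a $C^\infty$ function on all of $\mathbb{R}$ despite the apparent singularity of $\sqrt{2t}$ at the origin. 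This is what makes the chain rule applicable.

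Using $\md_\kappa'(r) = \sin_\kappa(r)$, I would compute
\begin{align*}
\phi'(t) = \frac{\sin_\kappa(r)}{r}, \qquad \phi''(t) = \frac{r\cos_\kappa(r) - \sin_\kappa(r)}{r^3}, \qquad r := \sqrt{2t},
\end{align*}
and note that $|\nabla f|^2 = d_y^2$ $\m$-a.e., using that $|\nabla d_y| = 1$ $\m$-a.e.\ on $RCD(K,N)$ spaces. Since $f \in D({\bf\Delta})$ by Theorem \ref{th:upperbound}, Proposition \ref{prop:chainrulelaplacian} gives $\md_\kappa \circ d_y \in D({\bf\Delta})$ with
\begin{align*}
{\bf\Delta}(\md_\kappa \circ d_y) = \phi'(f)\, {\bf\Delta} f + \phi''(f)\, d_y^2\, \m.
\end{align*}

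Specializing to $\kappa = K/(N-1)$, I observe that $\phi'(f) = \sin_\kappa(d_y)/d_y$ is nonnegative on $\supp\m$ (using $d_y \leq \pi_\kappa$ from Bonnet--Myers when $K>0$), so the upper bound ${\bf\Delta} f \leq [1+(N-1)d_y\cot_\kappa(d_y)]\m$ from Theorem \ref{th:upperbound} may be multiplied through. An algebraic simplification in which two $\sin_\kappa(d_y)/d_y$ terms cancel then leaves ${\bf\Delta}(\md_\kappa \circ d_y) \leq N \cos_\kappa(d_y)\,\m$. The conclusion follows from the ODE identity $\cos_\kappa = 1 - \kappa\md_\kappa$ (a consequence of $\md_\kappa'' = \sin_\kappa' = \cos_\kappa$ together with $\md_\kappa'' + \kappa\md_\kappa = 1$), which rewrites the right-hand side as $N\m - \frac{NK}{N-1}\md_\kappa(d_y)\,\m$.

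The main technical obstacle I anticipate is that Proposition \ref{prop:chainrulelaplacian} as stated requires $f \in \Lip(X)$, whereas $f = \frac{1}{2}d_y^2$ is only locally Lipschitz when $X$ is unbounded; this can be handled by a standard localization, applying the chain rule on each ball $B_R(y)$ where $f$ is $R$-Lipschitz and pasting the resulting identities for the measure-valued Laplacian on open sets. For the claim that $\md_\kappa \circ d_y \in D({\bf\Delta})$ for arbitrary $\kappa \in \mathbb{R}$, no further work is needed beyond the chain rule, which applies verbatim since $\phi$ is $C^\infty$ for every $\kappa$.
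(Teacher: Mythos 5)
Your proof is correct and rests on the same three ingredients as the paper's (the chain rule for the measure-valued Laplacian, the comparison estimate of Theorem~\ref{th:upperbound} for $\tfrac12 d_y^2$, and the crucial observation that $\md_\kappa(r)$ is an even function of $r$, so $\md_\kappa \circ d_y$ is a smooth function of $\tfrac12 d_y^2$). The only organizational difference is that the paper first applies the chain rule on $X\setminus\{y\}$ to pass from $\tfrac12 d_y^2$ to $d_y$ and then to $\md_\kappa d_y$, recording ${\bf\Delta}d_y \leq (N-1)\cot_{K/(N-1)}(d_y)\,\m$ as an intermediate byproduct, and only afterwards invokes the smooth reparametrization to remove the excluded point $\{y\}$; you collapse these steps into a single application of the chain rule to $\phi\circ(\tfrac12 d_y^2)$ with $\phi(t)=\md_\kappa(\sqrt{2t})$, which handles the point $\{y\}$ from the outset. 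Your computations of $\phi'$, $\phi''$, the cancellation of the $\sin_\kappa(d_y)/d_y$ terms, and the final rewriting via $\cos_\kappa = 1 - \kappa\,\md_\kappa$ are all correct, and your remark that $\sin_{K/(N-1)}(d_y)\ge 0$ on $\supp\m$ (by Bonnet--Myers when $K>0$) is exactly what is needed to multiply through the one-sided estimate.
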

\begin{proof}
 Indeed, By the chain rule Theorem~\ref{th:upperbound} implies (see also ~\cite[Corollary 5.15]{giglistructure}) that $d_y\in D({\bf\Delta}, X\backslash \{y\}) $ and
 \[
 {\bf\Delta} d_y\le [(N-1)\cot_{K/(N-1)} d_y]m \mbox{ on } X\backslash \{y\}
 \] 
 Applying the chain rule one more time this yields
 
\begin{align}\label{eq-mdk-laplace-1}
 \md_{\ke/(N-1)}d_y\in D({\bf\Delta},X\backslash \{y\}) \mbox{ and }
{\bf\Delta}\md_{\ke/(N-1)}d_y+ \frac{N\ke}{N-1}\md_{\ke/(N-1)}d_y\leq N\m \mbox{ on } X\backslash \{y\}.
\end{align}
 
  Further note that \begin{align*}
\md_{\kappa}(x)= \frac{1}{2}x^2 - \frac{\kappa}{24}x^4 + \dots \ . 
\end{align*}

and hence $\md_{\kappa}(x)=\phi_{\kappa}(x^2/2)$ where $\phi_{\kappa}$ is a smooth function on $\RR$ given by

\[
\phi_{\kappa}(x)=x - \frac{\kappa}{6}x^2 + \dots 
\]

Therefore $\md_{\kappa}d_y=\phi_k(\frac{d_y^2}{2})\in D({\bf\Delta})$ for any $\kappa\in \R$ and \eqref{eq-mdk-laplace-1} can be improved to

\[
{\bf\Delta}\md_{\ke/(N-1)}d_y+ \frac{N\ke}{N-1}\md_{\ke/(N-1)}d_y\leq N\m \mbox{ on all of } X.
\]

\begin{comment}
This follows, since we can write $\md_{\kappa}(x)=\phi_{\kappa}(x^2/2)$ for some function $\phi_{\kappa}$ and any $\kappa\in \mathbb{R}$. Indeed, if we compute the Taylor expansion for $\md_{\kappa}$ at $0$, we obtain
\begin{align*}
\md_{\kappa}(x)= \frac{1}{2}x^2 - \frac{\kappa}{24}x^4 + \dots \ . 
\end{align*}
Hence, $\phi_{\kappa}(x)=x - \frac{\kappa}{6}x^2 + \dots $ is smooth on $\mathbb{R}$ and does the job. Then, we can apply the chain rule formula for the measure valued Laplacian.
\end{comment}
\end{proof}
%\begin{remark} If $\ke\neq 0$, then
%\begin{align*}
%{\bf\Delta}\left(\frac{N-1}{\ke}\cos_{\ke/(N-1)}d_y\right)+
%N \cos_{\ke/(N-1)}d_y\m\geq 0
%\end{align*}

%\end{remark}
\subsection{Tangent modules of metric measure spaces}
In this section we present a general construction of {\it tangent spaces} for metric measure spaces due to Gigli (inspired by an idea of Weaver \cite{weaver}).
%For later purposes, we also introduce the so-called $L^2$-cotangent module $L^2(T^*X)$ of $(X,d,m)$ that is - formally - the family of measurable vector fields on $X$. 
\begin{definition}[\cite{giglinonsmooth}]
Let $(X,d,\m)$ be a metric measure space, and let $\mathcal{M}$ be a Banach space. $\mathcal{M}$ is called an $L^2(\m)$-normed $L^{\infty}(\m)$-module provided it is endowed with a bilinear
map $L^{\infty}(\m)\times \mathcal{M}:(f,v)\mapsto fv\in\mathcal{M}$, and a 
function $|\cdot|:\mathcal{M}\rightarrow L^2(\m)^+$ which satisfy the following properties:
\begin{itemize}
\item[(i)] $f(gv)=(fg)v$ for all $f,g\in L^{\infty}(\m)$ and $v\in\mathcal{M}$,
\medskip
\item[(ii)] ${\bf 1}v=v$ for any $v\in\mathcal{M}$ where ${\bf 1}\in L^{\infty}(\m)$ is the function equal $1$, 
\medskip
\item[(iii)] $\left\||v|\right\|_{L^2}=\left\|v\right\|_{\mathcal{M}}$ for any $v\in \mathcal{M}$,
\medskip
\item[(iv)] $|fv|=|f||v|$ $\m$-a.e. for any $f\in L^{\infty}(\m)$ and $v\in\mathcal{M}$.
\end{itemize}
\end{definition}
\noindent
Consider a Borel measurable set $A\subset X$. The restriction $\mathcal{M}|_A$ of $\mathcal{M}$ to $A$ is defined as 
\begin{align*}
\mathcal{M}|_A=\left\{v\in \mathcal{M}:1_{A^c}v=0\ \m\mbox{-a.e.}\right\}.
\end{align*}
$\mathcal{M}|_A$ inherits the structure of $L^2(\m)$-normed $L^{\infty}(\m)$-module.

Given two $L^2(\m)$-normed $L^{\infty}(\m)$-modules $\mathcal{M}$ and $\mathcal{N}$ we say that a map $T:\mathcal{M}\rightarrow \mathcal{N}$ is \textit{module morphism} provided $T$ is linear, continuous and it satisfies
\begin{align*}
T(fv)=fT(v) \mbox{ for every }f\in L^{\infty}(\m) \ \mbox{ and every }v\in \mathcal{M}.
\end{align*}
\begin{definition}
Given an $L^2(\m)$-normed $L^{\infty}(\m)$-module $\mathcal{M}$ we define the dual module $\mathcal{M}^*$ as the space of all 
maps $T$ from $\mathcal{M}$ to $L^1(\m)$ that are $L^{\infty}$-linear (that is additive and $L^{\infty}$-homogenuous) and continuous.
$\mathcal{M}^*$ again has a natural structure of an $L^2(\m)$-normed $L^{\infty}(\m)$-module. For details we refer to \cite{giglinonsmooth}.
\end{definition}
\begin{definition}\label{def:hilbertmodule}
We call an $L^{\infty}(\m)$-module $\mathcal{M}$ an \textit{Hilbert module} if $\mathcal{M}$ is an Hilbert space. In this case $\mathcal{M}$ becomes an $L^2(\m)$-normed $L^{\infty}(\m)$-module, and the pointwise norm $|\cdot|$ satisfies
\begin{align*}
|v+w|^2 + |v-w|^2 = 2|v|^2 + 2|w|^2 \ \mbox{ for any }v, w\in \mathcal{M}.
\end{align*}
We define the pointwise inner product $\mathcal{M}^2 \rightarrow L^1(\m)$ via $4\langle v,w\rangle = |v+w|^2-|v-w|^2$. Following \cite{giglinonsmooth} 
we also note that $\mathcal{M}$ and $\mathcal{M}^*$ are canonical isomorphic as Hilbert modules.
\end{definition}

\begin{definition}\label{def:basis}
Let $\mathcal{M}$ be an $L^2(\m)$-normed $L^{\infty}(\m)$-module, and let $A\subset X$ be a Borel set such that $\m(A)>0$. We say that
\begin{itemize}
 \item[(i)] $v_1,\dots,v_n\in \mathcal{M}$ are \textit{independent} on $A$ if for $f_1,\dots,f_n\in L^{\infty}(\m)$ it holds that
 \begin{align*}
 1_A\sum_{i=1}^n f_i v_i=0 \ \ \Rightarrow \ \ f_1,\dots,f_n=0\ \m\mbox{-a.e. in $A$,}
 \end{align*}
 \item[(ii)] $S\subset \mathcal{M}$ generates $\mathcal{M}|_{A}$ if $\mathcal{M}|_A$ is the $L^2$-closure of elements $v$ in $\mathcal{M}|_A$ such that there is a decomposition of $\left\{A_i\right\}_{i\in\mathbb{N}}$ of $A$, vectors $v_{i,1},\dots,v_{i,m_i}\in S$, and functions
 $f_{i,1},\dots,f_{i,m_i}\in L^{\infty}(\m)$ which satisfy
 \begin{align*}
 1_{A_i}v=\sum_{k=1}^{m_i}f_{i,k}v_{i,k} \ \m\mbox{-a.e.} \ \mbox{ for each }i\in \mathbb{N},
 \end{align*}
\item[(iii)] $v_1,\dots,v_n\in \mathcal{M}$ is a \textit{(module) basis} on $A$ if they are independent on $A$ and generate $\mathcal{M}|_{A}$. If $A$ admits a basis of finite cardinality $n\in \mathbb{N}$, we say that $A$ has \textit{local dimension} $n$. 
If $A$ admits no basis of finite cardinality, we say $A$ has infinite local dimension.
\end{itemize}
\end{definition}
\begin{remark}\label{rem:generator}
It is easy to see that in (ii) one only needs to require that $\mathcal{M}|_{A}$ is the $L^2$-closure of finite $L^{\infty}$-linear combinations of elements in $S$ where an $L^{\infty}$-linear combination is defined by
\begin{align*}
\sum_{l=1}^m f_l v_l \ \mbox{ where }f_l\in L^{\infty}(m)\mbox{ and }v_l\in S.
\end{align*}
The more general statement (ii) - that also appears in \cite{giglinonsmooth} - is to deal with $L^{\infty}$-modules that are not necessarily $L^p(m)$-normed. 
\end{remark}
\begin{proposition}[Proposition 1.4.4. \cite{giglinonsmooth}]
Local dimension is well-defined: If both $v_1,\dots,v_n$ and $w_1,\dots,w_n$ are bases of an $L^2(\m)$-normed $L^{\infty}(\m)$-module $\mathcal{M}$ on $A$ for $n,m\in\mathbb{N}$, then $m=n$.
\end{proposition}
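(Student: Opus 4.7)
The plan is to argue by contradiction using a ``mutual inversion'' of coefficient matrices, which is the standard linear-algebraic route for proving well-definedness of rank. Assume $v_1,\dots,v_n$ and $w_1,\dots,w_m$ are both bases of $\mathcal{M}|_A$ and that $n\neq m$; I want to produce a pointwise contradiction from linear algebra.

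First, I would use the generating property of the $v_k$'s applied to each $w_j$ to produce a countable measurable decomposition $\{A^j_i\}_{i\in\mathbb{N}}$ of $A$ and $L^\infty(\m)$-coefficients $f^{(i)}_{j,k}$ with $1_{A^j_i}w_j=\sum_{k=1}^n f^{(i)}_{j,k}v_k$, and symmetrically use that the $w_{j'}$'s generate to express each $v_k$ as $1_{B^k_i}v_k=\sum_{j'=1}^m g^{(i)}_{k,j'}w_{j'}$ on a decomposition $\{B^k_i\}$. Passing to a common countable refinement $\{A_\ell\}$ of all these decompositions and fixing a piece $A_\ell$ of positive measure, I would then have $L^\infty(\m)$-coefficients $f_{j,k}$, $g_{k,j'}$ (dropping the superscript $\ell$) such that $1_{A_\ell}w_j=\sum_k f_{j,k}v_k$ and $1_{A_\ell}v_k=\sum_{j'}g_{k,j'}w_{j'}$.

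Substituting one into the other and interchanging the finite sums yields
\[
1_{A_\ell}\sum_{l=1}^n\Bigl(\sum_{j'=1}^m g_{k,j'}f_{j',l}-\delta_{k,l}\Bigr)v_l=0.
\]
Since $v_1,\dots,v_n$ are independent on $A$ (hence on $A_\ell$) and the coefficients inside the bracket lie in $L^\infty(\m)$, I get $\sum_{j'}g_{k,j'}f_{j',l}=\delta_{k,l}$ $\m$-a.e.\ on $A_\ell$. The symmetric computation, using independence of $w_1,\dots,w_m$, gives $\sum_k f_{j,k}g_{k,j'}=\delta_{j,j'}$ $\m$-a.e.\ on $A_\ell$. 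Therefore the $m\times n$ matrix $F(x)=(f_{j,k}(x))$ and the $n\times m$ matrix $G(x)=(g_{k,j}(x))$ satisfy $G(x)F(x)=I_n$ and $F(x)G(x)=I_m$ at $\m$-a.e.\ $x\in A_\ell$. At any such point we have genuine real matrices of sizes $m\times n$ and $n\times m$ that are mutually inverse, which by elementary linear algebra forces $m=n$ and yields the desired contradiction.

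The main obstacle is the first step: upgrading the generation hypothesis, which only asserts that the dense subspace of piecewise $L^\infty$-combinations is $L^2$-dense in $\mathcal{M}|_A$, into honest piecewise $L^\infty$-representations of the particular elements $w_j$ and $v_k$. For finite generating sets this is a separate lemma from Gigli's framework, and can also be obtained directly by truncation: stratify $A$ according to where candidate coefficients are bounded and pass to a positive-measure subset where the representation is honestly $L^\infty$; this suffices since all subsequent steps are local to $A_\ell$. The rest of the argument is formal manipulation of finite-dimensional matrices applied $\m$-pointwise.
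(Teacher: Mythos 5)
Your overall strategy — express each $w_j$ piecewise as an $L^\infty$-combination of the $v_k$'s and vice versa, refine to a common positive-measure piece, substitute, use independence to read off $G(x)F(x)=I_n$ and $F(x)G(x)=I_m$ pointwise, and invoke linear algebra — is the natural ``mutual inversion of coefficient matrices'' argument for invariance of rank, and your formal manipulations in the middle of the proof are correct. This is essentially the expected route.

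The genuine issue is the step you flag yourself, and the fix you sketch does not close it. The generation hypothesis only says that finite piecewise $L^\infty$-combinations of the $v_k$'s are $L^2$-\emph{dense} in $\mathcal{M}|_A$; it does \emph{not} say that a given element such as $w_j$ actually equals such a combination. Your truncation argument presupposes the existence of ``candidate coefficients'' $f_{j,k}$ with $w_j=\sum_k f_{j,k}v_k$ as measurable (perhaps unbounded) functions, and then merely restricts to a piece where they are bounded. But the existence of such measurable coefficients is precisely the point in contention, and it does not follow formally from $L^2$-density: for a sequence of approximants $u^{(r)}_j=\sum_k f^{(r)}_{j,k}v_k$ converging to $w_j$, there is no a priori control on the coefficient sequence $f^{(r)}_{j,k}$ — they may blow up everywhere, or oscillate without converging in any reasonable sense, so the set where they stay bounded uniformly in $r$ may have measure zero, and even where they do stay bounded there is no reason for them to converge. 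To make the step work one must first establish that the $L^\infty$-span of a finite \emph{independent} family is closed in the relevant local topology (or, equivalently, that a finite basis induces a module isomorphism with a concrete finite-rank module, from which the coordinate functionals and hence the measurable coefficients can be extracted). That closedness uses the independence of $v_1,\dots,v_n$ to derive a quantitative lower bound for $\bigl|\sum_k f_k v_k\bigr|$ in terms of $\max_k|f_k|$ on suitable pieces, and is a separate lemma in Gigli's development; as written, your proof simply asserts it. Either cite the precise intermediate result (the structure theorem for finitely generated $L^2$-normed modules) or supply the argument; the rest of the proof is then fine.
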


\begin{proposition}[Proposition 1.4.5. \cite{giglinonsmooth}]\label{prop:dim}
There is a unique partition $\left\{E_k\right\}_{k\in\mathbb{N}\cup\left\{\infty\right\}}$ of $X$ such that for any $k\in \mathbb{N}$ with $\m(E_k)>0$ $E_k$ has local dimension $k$, and any $E\subset E_{\infty}$ with $\m(E)>0$ has infinite local dimension.
\end{proposition}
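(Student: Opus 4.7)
The plan is to proceed by an exhaustion argument after establishing two structural facts about sets of fixed local dimension: stability under taking positive-measure subsets, and stability under countable disjoint unions. Once these are in place, each $E_k$ ($k\in\mathbb{N}$) can be constructed as the essential supremum of all sets of local dimension $k$, and $E_\infty$ as the remainder.

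First I would prove the \emph{inheritance lemma}: if $A$ has local dimension $k$ with basis $v_1,\dots,v_k$ and $B\subset A$ has $\m(B)>0$, then $1_B v_1,\dots,1_B v_k$ form a basis on $B$. Generation is immediate from $1_B w = \sum_i f_i (1_B v_i)$ whenever $w\in \cM|_B$ is written as $w=\sum_i f_i v_i$ on $A$; independence on $B$ follows by extending a vanishing combination by zero to $A$ and applying independence there. Thus every positive-measure subset of a set of local dimension $k$ again has local dimension exactly $k$. Next I would prove the \emph{countable union lemma}: if $\{A_n\}_{n\in\mathbb{N}}$ are pairwise disjoint with local dimension $k$ and bases $v^n_1,\dots,v^n_k$, choose scalars $c_n>0$ so that $c_n\max_i\||v^n_i|\|_{L^2}\le 2^{-n}$ and define $u_i=\sum_n c_n\,1_{A_n} v^n_i$; the series converges in $\cM$, and the $u_i$ form a basis on $\bigcup_n A_n$ by checking independence and generation coordinate-wise on each $A_n$.

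With these tools available, for each $k\in\mathbb{N}$ let $\mathcal{F}_k$ be the family of Borel sets of local dimension $k$. Using that $\m$ is $\sigma$-finite (as $X$ is separable with locally finite measure), an exhaustion argument produces a countable disjoint subfamily $\{A_n^{(k)}\}\subset\mathcal{F}_k$ whose union $E_k$ is essentially maximal, in the sense that every $A\in\mathcal{F}_k$ satisfies $\m(A\setminus E_k)=0$. By the countable union lemma, $E_k$ itself has local dimension $k$. The sets $E_k$ are pairwise disjoint up to $\m$-null sets: if $\m(E_k\cap E_j)>0$ with $k\ne j$, then by the inheritance lemma this intersection would have local dimension $k$ and $j$ simultaneously, contradicting Proposition~1.4.4. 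Setting $E_\infty=X\setminus\bigcup_{k\in\mathbb{N}} E_k$, any $E\subset E_\infty$ with $\m(E)>0$ must have infinite local dimension: otherwise $E\in \mathcal{F}_k$ for some $k$, whence $\m(E\setminus E_k)=0$, contradicting $E\cap E_k=\emptyset$.

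For uniqueness, if $\{F_k\}_{k\in\mathbb{N}\cup\{\infty\}}$ is another such partition, then for $k\ne j$ in $\mathbb{N}$ the intersection $E_k\cap F_j$ has simultaneously local dimension $k$ and $j$ whenever it has positive measure, which is impossible; for $k\in\mathbb{N}$ the intersection $E_k\cap F_\infty$ would have local dimension $k$ but be a subset of $F_\infty$, again impossible. Hence $E_k=F_k$ up to $\m$-null sets. The main technical obstacle is the countable union lemma: although the construction of $u_i$ via scaling is straightforward, verifying the generation property requires that any $w\in\cM|_{\bigcup_n A_n}$ be approximated in $L^2(\m)$ by \emph{bounded} $L^\infty$-linear combinations of $u_1,\dots,u_k$, whereas the natural coefficients obtained from the bases $v^n_i$ (divided by $c_n$) need not lie in $L^\infty(\m)$. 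This is handled by truncating both the index $n$ (passing to $w^N=1_{\bigcup_{n\le N}A_n}w$) and the coefficient size, using that the $L^2$-closure in Remark~\ref{rem:generator} allows $L^\infty$ coefficients only, then letting the truncations tend to infinity.
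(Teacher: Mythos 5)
The paper does not prove this statement itself; it is cited verbatim as Proposition~1.4.5 of Gigli's monograph \cite{giglinonsmooth}, so there is no in-paper argument to compare against. Judged on its own, your reconstruction is correct and follows essentially the route Gigli takes: establish stability of ``has local dimension $k$'' under passing to positive-measure subsets and under countable disjoint unions, then run an essential-supremum/exhaustion argument (using $\sigma$-finiteness of $\m$, which is automatic from separability and local finiteness), and finally deduce both the disjointness of the $E_k$ and uniqueness from well-definedness of local dimension (Proposition~1.4.4). The inheritance lemma is handled cleanly, the scaling $u_i=\sum_n c_n 1_{A_n}v^n_i$ is the right device, and your concern about uniformity of the $L^\infty$ coefficients in the generation step is legitimate under the Remark~\ref{rem:generator} formulation; your truncation-in-$N$ resolution works because $1_{\cup_{n\le N}A_n}w\to w$ in $L^2$ and, for each fixed $N$, the relevant coefficients are supported on finitely many $A_n$ and hence bounded. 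One could also sidestep the truncation by invoking the paper's original Definition~\ref{def:basis}(ii), which already permits a countable decomposition of $A$ with possibly different coefficient bounds on each piece, but your argument is self-contained and equally valid.
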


\begin{proposition}[{Proof of Theorem 1.4.11. \cite{giglinonsmooth}}]\label{prop:unitorth}
Let $\mathcal{M}$ be an Hilbert module. 
Then, for every $n\in \mathbb{N}$ and any Borel set $B\subset X$ that has local dimension $n$ and finite measure, there exists a \textit{unit orthogonal basis} $e_1,\dots,e_n\in \mathcal{M}$
on $B$. That is $\langle e_i,e_j\rangle=\delta_{i,j}$ $\m$-almost everywhere.
\end{proposition}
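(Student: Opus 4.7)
The plan is to carry out a pointwise Gram--Schmidt procedure in the module, taking into account that scalar multiplication on $\mathcal{M}$ is only by $L^\infty(\m)$. Starting from an arbitrary basis $v_1,\dots,v_n\in\mathcal{M}$ on $B$ (which exists by definition of local dimension $n$), I will produce the unit orthogonal basis by normalization and orthogonal projection, performing all divisions only on subsets on which the relevant pointwise norms are bounded below.

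First I would exhaust $B$, up to $\m$-null sets, by a countable disjoint union of Borel pieces $B'$ of finite measure on which all $|v_i|$ and all $|\langle v_i,v_j\rangle|$ are bounded above. Such a decomposition exists because $|v_i|\in L^2(\m)$ and $\langle v_i,v_j\rangle\in L^1(\m)$ are $\m$-a.e.\ finite. It suffices to produce the orthonormal basis on each such $B'$, because if $e_i^{B'}\in\mathcal{M}$ is supported on $B'$ with $|e_i^{B'}|=1_{B'}$, the series $e_i:=\sum_{B'}e_i^{B'}$ is Cauchy in $\mathcal{M}$ (its summands have pairwise disjoint supports and $\sum\m(B')=\m(B)<\infty$) and converges to an element of pointwise norm $1_B$.

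Fix such a piece $B'$ and proceed inductively in $j$. Independence of $\{v_1,\dots,v_n\}$ forces $|v_1|>0$ $\m$-a.e.\ on $B'$, since otherwise $1_A v_1=0$ on a subset $A\subset B'$ of positive measure and $f_1=1_A$, $f_i=0$ for $i\ge 2$ contradicts independence. Subdivide $B'$ into countably many Borel subsets $C$ on which $|v_1|^{-1}\in L^\infty(\m)$; on each $C$ the module element $|v_1|^{-1}\,1_C\,v_1$ has pointwise norm $1_C$, and gluing as in the previous paragraph yields $e_1\in\mathcal{M}$ with $|e_1|=1_{B'}$. Given $e_1,\dots,e_j$ on $B'$ satisfying $|e_i|=1_{B'}$ and $\langle e_i,e_k\rangle=\delta_{i,k}1_{B'}$, and using Cauchy--Schwarz in the module ($|\langle v_{j+1},e_i\rangle|\le|v_{j+1}|\,|e_i|$) to see that $\langle v_{j+1},e_i\rangle\in L^\infty(\m)$ on $B'$, I would set
\[
w_{j+1}:=1_{B'}\Bigl(v_{j+1}-\sum_{i=1}^{j}\langle v_{j+1},e_i\rangle\,e_i\Bigr)\in\mathcal{M}.
\]
Then $1_{B'}v_{j+1}$ is an $L^\infty$-combination of $e_1,\dots,e_j,w_{j+1}$, so, expressing each $e_i$ back in terms of $v_1,\dots,v_i$, independence of $\{v_1,\dots,v_{j+1}\}$ forces $|w_{j+1}|>0$ $\m$-a.e.\ on $B'$. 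A second round of the bounded-below subdivision normalizes $w_{j+1}$ to $e_{j+1}$, and direct bilinearity of the pointwise inner product together with $|e_i|^2=1_{B'}$ gives $\langle e_{j+1},e_i\rangle=0$ on $B'$ for $i\le j$.

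After assembling the pieces into $e_1,\dots,e_n\in\mathcal{M}$, one verifies that they are pointwise orthonormal on $B$, that they generate $\mathcal{M}|_B$ (since each $1_{B'}v_i$ lies in their $L^\infty$-span by construction), and that any $L^\infty$-relation $\sum f_i e_i=0$ on $B$ yields $f_j=0$ $\m$-a.e.\ on $B$ by pairing with $e_j$ and using orthonormality. The only real difficulty I foresee is the bookkeeping: keeping every Gram--Schmidt step inside the $L^\infty(\m)$-action on $\mathcal{M}$ forces the repeated subdivision of $B$ into subsets where the relevant reciprocals are bounded, and it is the finiteness of $\m(B)$ that makes the final gluing step back to a single module element automatic.
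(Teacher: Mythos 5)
Your argument is correct, and it follows the same route as the cited source: a pointwise Gram--Schmidt orthonormalization of an arbitrary local basis $v_1,\dots,v_n$, made rigorous by decomposing $B$ into countably many pieces on which the relevant pointwise norms and reciprocals lie in $L^\infty(\m)$, and then gluing the resulting pieces back together using $\m(B)<\infty$ and the disjointness of supports. This is precisely the construction carried out in the proof of Theorem 1.4.11 in \cite{giglinonsmooth}; the one spot you should phrase a bit more carefully is the closing generation claim, where $1_{B'}v_i$ is not a single finite $L^\infty$-combination of the $e_k$'s on all of $B'$ but only on each bounded-reciprocal piece $C$, which is exactly what the decomposition clause in Definition~\ref{def:basis}(ii) (or the $L^2$-closure in Remark~\ref{rem:generator}) is there to accommodate.
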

\begin{remark}\label{rem:pyth}
If $(e_i)_{i=1,\dots,n}$ is a unit orthogonal basis  on $B$, and $v1_{A}=\sum_{i=1}^n f_ie_i\in \mathcal{M}$ for $f_i\in L^{\infty}$ and $A$ a Borel subset in $B$, then 
\begin{align*}
|v1_A|^2= \sum_{i=1}^n |f_i|^2\ \mbox{ and }\ \left\|v1_A\right\|_{\mathcal{M}}^2=\sum_{i=1}^n\left\|f_i\right\|^2_{L^2(\m)}.
\end{align*}
\end{remark}

%\begin{assumption}
%For the rest of the paper it will be sufficient to assume that $\m(E_k)=\emptyset$ for $k\neq n$ and $n\in\mathbb{N}$, and in the following refer to $n$ as the geometric dimension of $X$. Moreover, 
%it will be sufficient to assume $\m(E_n)<\infty$. Therefore, if $(v_i)_{i=1,\dots,n}$ is a basis on $E_n$ by the Gram-Schmidt procedure, we can find a unit orthogonal basis $(e_i)_{i=1,\dots,n}$ on $E_n$.
%\end{assumption}
%\begin{definition}
%Given an $L^2(\m)$-normed $L^{\infty}$-module $\mathcal{M}$
%the local dimension of a measurable set $A\subset X$ is defined as the cardinality $n\in\mathbb{N}\cup \left\{\infty\right\}$ of a basis of the Banachspace $\mathcal{M}|_A$.
%\end{definition}
\begin{theorem}[\cite{giglinonsmooth}]
Let $(X,d,\m)$ be metric measure space. 
There exists a unique (up to module isomorphisms) couple $(\mathcal{M},d)$ where $\mathcal{M}$ is an $L^2(\m)$-normed $L^{\infty}(\m)$-module and $d$ is a linear map $W^{1,2}(\m)\rightarrow \mathcal{M}$ such that
\begin{itemize}
\medskip
 \item[(i)] $|df|=|\nabla f|$ holds $\m$-a.e. on $X$, and \medskip
 \item[(ii)] $\left\{ df\in\mathcal{M}: f\in W^{1,2}(\m)\right\}$ generates $\mathcal{M}$ on $X$. \medskip
\end{itemize}
If two couples $(\mathcal{M},d)$ and $(\mathcal{M}',d')$ satisfy the properties above then there exists a unique module isomorphism $\Phi:\mathcal{M}\rightarrow \mathcal{M}'$ such that $\Phi\circ d=d'$.

The unique module above is called the cotangent module of $(X,d,\m)$, and it is denoted with $L^2(T^*X)$. Its dual module is called the tangent module and denoted with $L^2(TX)$. Elements in $L^2(T^*X)$ are called
$1$-forms, and elements in $L^2(TX)$ are called vector fields. The map $d$ is called differential.

If $(X,d,\m)$ is infinitesimally Hilbertian, then $L^2(T^*X)$ is a Hilbert module, and we have $\Phi: L^2(T^*X)\equiv L^2(TX)$ for the Hilbert module isomorphism $\Phi(X)=\langle X,\cdot\rangle: L^2(TX)\rightarrow L^1(\m)$. $\Phi^{-1} \circ d =\nabla$ is called gradient. 
\end{theorem}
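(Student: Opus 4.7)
The plan is to reconstruct Gigli's argument from \cite{giglinonsmooth}, which proceeds by an explicit construction of $L^2(T^*X)$ as an $L^2$-completion of a ``pre-module'' of simple forms, followed by a standard universal-property argument for uniqueness.

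For existence, I would first introduce the space $\mathcal{V}$ of formal finite sums $\omega = \sum_{i=1}^{n} \chi_{A_i}\, df_i$, where $(A_i)_{i=1}^n$ is a Borel partition of $X$ and $f_i\in W^{1,2}(X)$. On $\mathcal{V}$ I would define a candidate pointwise norm by
\begin{equation*}
|\omega| := \sum_{i=1}^n \chi_{A_i}\,|\nabla f_i|\ \in L^2(\m),
\end{equation*}
and show that this expression does not depend on the chosen representation. The key locality property needed is that $|\nabla f|=|\nabla g|$ $\m$-a.e.\ on $\{f=g\}$ and, more generally, that on $A_i\cap A_j$ one has $|\nabla f_i|=|\nabla f_j|$ whenever the two representations describe the same formal object; this is a standard consequence of the locality of the minimal weak upper gradient in the Sobolev calculus of Ambrosio--Gigli--Savar\'e. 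Identifying elements whose pointwise norm difference vanishes $\m$-a.e., equipping the quotient with the $L^2$-norm $\|\omega\|^2:=\int|\omega|^2\,d\m$, and taking the metric completion yields a Banach space $\mathcal{M}$. The $L^{\infty}(\m)$-multiplication is defined first on simple functions $\chi_B$ by $\chi_B\cdot\sum\chi_{A_i}df_i := \sum \chi_{B\cap A_i}df_i$, extended by linearity and then by density to $L^\infty(\m)$; the four module axioms are checked directly from the definition of the pointwise norm. Defining $d\colon W^{1,2}(X)\to \mathcal{M}$ as the inclusion $f\mapsto 1_X\, df$, property (i) holds by construction, and (ii) follows because the simple forms are dense by design.

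For uniqueness, suppose $(\mathcal{M},d)$ and $(\mathcal{M}',d')$ both satisfy (i) and (ii). On simple elements I would set
\begin{equation*}
\Phi\!\left(\sum_{i=1}^n \chi_{A_i} df_i\right) := \sum_{i=1}^n \chi_{A_i}\, d'f_i.
\end{equation*}
Well-definedness and isometry follow from (i): the pointwise norm of either side equals $\sum \chi_{A_i}|\nabla f_i|$ $\m$-a.e., so $\Phi$ preserves both the pointwise norm and the $\mathcal{M}$-norm on the dense subspace of simple elements; hence it extends uniquely to an $L^\infty$-linear isometry $\mathcal{M}\to\mathcal{M}'$. Surjectivity is exactly assumption (ii) for $\mathcal{M}'$, and $\Phi\circ d = d'$ holds by construction on generators. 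The uniqueness of $\Phi$ follows because any two such maps would agree on the generating set $\{df:f\in W^{1,2}(X)\}$.

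The main subtlety, and where care is required, is the verification that the pointwise norm on $\mathcal{V}$ is well-defined modulo the relations forced by reshuffling partitions and by the $\m$-a.e. equivalence of simple forms representing the same formal expression; this relies crucially on the locality of $|\nabla\cdot|$ and on the fact that $W^{1,2}(X)$ together with the minimal weak upper gradient forms a consistent local calculus. Once this compatibility is settled, the remaining verifications are essentially formal Banach space and module-theoretic bookkeeping. When $(X,d,\m)$ is infinitesimally Hilbertian, the parallelogram identity passes from the $L^2$-norm to the pointwise norm $|\cdot|$, upgrading $\mathcal{M}$ to a Hilbert module and yielding the canonical isomorphism $L^2(T^*X)\cong L^2(TX)$ via the Riesz-type identification $\Phi(V)=\langle V,\cdot\rangle$, from which the gradient $\nabla := \Phi^{-1}\circ d$ is defined.
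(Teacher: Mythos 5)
The paper cites this theorem from Gigli's work \cite{giglinonsmooth} without reproving it, and your proposal correctly reconstructs Gigli's construction: existence via the $L^2$-completion of simple forms $\sum\chi_{A_i}df_i$ equipped with the locality-based pointwise norm, uniqueness via a universal-property argument defining $\Phi$ on generators and extending by density, and the Hilbert-module upgrade under infinitesimal Hilbertianness. The one step worth making a bit more explicit is the passage of the parallelogram identity from the $L^2$-norm to the \emph{pointwise} norm $|\cdot|$, which requires testing the integral identity against arbitrary $1_A$ and invoking locality of the pointwise norm (i.e.\ $|1_A v|=1_A|v|$) so that the nonnegative deficit must vanish $\m$-a.e.; you allude to this but leave it implicit.
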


\subsection{Bakry-Emery condition}
\noindent 
The following was introduced in \cite{agsbakryemery}.
Let $(X,d,\m)$ be a metric measure space that is infinitesimally Hilbertian but does not necessarily satisfy a curvature-dimension condition. For $f\in D_{W^{1,2}(X)}(\Delta)$ and $\phi\in D_{L^{\infty}}(\Delta)\cap L^{\infty}(\m)$ we define the \textit{carr\'e du champ operator} as
\begin{align*}
\Gamma_2(f;\phi)=\int \frac{1}{2}|\nabla f|^2\Delta \phi d\m - \int\langle\nabla f,\nabla \Delta f\rangle \phi d\m.
\end{align*}
%In addition, we define the modified carr\'e du champ operator for $f\in D_{L^2}(\Delta)$ and $\phi\in D_{L^{\infty}}(\Delta)\cap L^{\infty}(\m)\cap \Lip(X)$ as
%\begin{align*}
%\Gamma_2'(f;\phi)=\int \frac{1}{2}|\nabla f|^2\Delta\phi d\m + \int (\Delta f)^2 \phi d\m + \int \langle\nabla f,\nabla \phi\rangle \Delta f d\m.
%\end{align*}
%In the intersection of the domains of $\Gamma_2$ and $\Gamma_2'$ integration by parts shows that $\Gamma_2(f;\phi)$ and $\Gamma_2'(f;\phi)$ coincide.
We say that $(X,d,\m)$ satisfies the \textit{Bakry-Emery condition} $BE(\ke,N)$ for $\ke\in \mathbb{R}$ and $N\in (0,\infty]$ if it satisfies the weak Bochner inequality
\begin{align*}
\Gamma_2(f;\phi)\geq \frac{1}{N}\int (\Delta f)^2 \phi d\m + \ke\int |\nabla f|^2 \phi d\m.
\end{align*}
for any $f\in D_{W^{1,2}(X)}(\Delta)$ and $\phi\in D_{L^{\infty}}(\Delta)\cap L^{\infty}(\m), \, \phi\ge 0$.

We say a metric measure space satisfies the \textit{Sobolev-to-Lipschitz} property \cite{giglistructure} if
\begin{align*}
 \left\{f\in W^{1,2}(X):|\nabla f|\in L^{\infty}(\m)\right\}=\Lip(X)
\end{align*}
More precisely, for any Sobolev function in with bounded minimal weak upper gradient there exist a Lipschitz function $\bar{f}$ that coincides $\m$-almost everywhere with $f$ such $\Lip(\bar{f})\geq |\nabla f|$.
\begin{theorem}[\cite{erbarkuwadasturm, agsbakryemery}]\label{th:be}
Let $(X,d,\m)$ be a metric measure space. Then, the condition $RCD(\ke,N)$ for $\ke\in \mathbb{R}$ and $N>1$ holds if and only if $(X,d,\m)$ is infinitesimally Hilbertian, it satisfies the 
Sobolev-to-Lipschitz property and it satisfies the Bakry-Emery condition $BE(\ke,N)$.
\end{theorem}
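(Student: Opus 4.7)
The plan is to establish the equivalence in two directions, following the program of Ambrosio--Gigli--Savar\'e and Erbar--Kuwada--Sturm. Both directions hinge on identifying the heat flow, defined as the $L^2$-gradient flow of the Cheeger energy $\Ch^X$, with the $W_2$-gradient flow of the relative entropy $\Ent_{\m}$ on $\mathcal{P}^2(X)$, and on Kuwada's duality between pointwise gradient estimates for the heat semigroup $(P_t)$ and $W_2$-contraction estimates.

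For the forward implication, assume $(X,d,\m)$ satisfies $RCD(K,N)$. Infinitesimal Hilbertianness is built into the definition. For the Sobolev-to-Lipschitz property, one uses that under $RCD(K,\infty)$ the heat flow enjoys the pointwise gradient estimate $|\nabla P_t f|^2 \le e^{-2Kt}\, P_t(|\nabla f|^2)$, from which a Lipschitz representative of any $f \in W^{1,2}(X)$ with bounded minimal weak upper gradient is obtained by letting $t \downarrow 0$. For the Bakry--Emery condition $BE(K,N)$, the starting point is that $CD(K,N)$ combined with infinitesimal Hilbertianness implies, via the $\mathrm{EVI}_{K,N}$-characterization of the heat flow as the Wasserstein gradient flow of the entropy, a sharp dimension-dependent $W_2$-contraction for $(P_t)$. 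By Kuwada duality this is equivalent to a dimensional $L^2$-Bochner inequality, and integration by parts against test functions $\phi \in D_{L^{\infty}}(\Delta)\cap L^\infty(\m)$ with $\phi \ge 0$ yields the weak form of $BE(K,N)$.

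For the reverse implication, assume $(X,d,\m)$ is infinitesimally Hilbertian and satisfies both the Sobolev-to-Lipschitz property and $BE(K,N)$. The Bochner inequality, applied with $f$ replaced by $P_s f$ and tested against $\phi = P_{t-s} g$ for $g\ge 0$, is the abstract analogue of Bakry's $\Gamma_2$-calculus; integrating in $s\in[0,t]$ produces the dimension-refined semigroup estimate
\[
|\nabla P_t f|^2 + \frac{2}{N}\int_0^t e^{-2K(t-s)}|\Delta P_s f|^2 \sdd s \leq e^{-2Kt}\, P_t(|\nabla f|^2) \quad \m\text{-a.e.}
\]
The Sobolev-to-Lipschitz property is exactly what upgrades this $\m$-a.e.\ bound into a genuine pointwise Lipschitz estimate on $P_t f$. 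Through Kuwada duality, the resulting gradient estimate translates into a dimension-refined $W_2$-contraction of the heat flow on $\mathcal{P}^2(X)$, which by Ambrosio--Mondino--Savar\'e and Erbar--Kuwada--Sturm characterizes $(P_t)$ as the $\mathrm{EVI}_{K,N}$-gradient flow of $\Ent_{\m}$. The $\mathrm{EVI}_{K,N}$-property implies the entropic curvature-dimension condition $CD^e(K,N)$, and since the space is already infinitesimally Hilbertian, Remark \ref{rem-equiv-rcd} promotes this to $CD(K,N)$, closing the equivalence.

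The principal technical obstacle in either direction is the passage between the Eulerian viewpoint (pointwise gradient estimates for $P_t$) and the Lagrangian viewpoint (contractions in $W_2$). Kuwada's duality is the bridge, but tracking the dimension parameter $N$ through it requires care with the precise constants in the dimensional Bochner inequality; meanwhile the Sobolev-to-Lipschitz property is the essential regularity ingredient that allows strengthening $\m$-a.e.\ gradient bounds to the pointwise bounds needed to invoke the duality.
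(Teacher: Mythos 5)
The paper states this as a black-box citation (\cite{agsbakryemery} for $N=\infty$, \cite{erbarkuwadasturm} for $N<\infty$) and does not reprove it, so there is no internal argument to compare against; your proposal reconstructs the cited proof. Your outline — infinitesimal Hilbertianness by definition of $RCD$, Sobolev-to-Lipschitz via the exponential gradient estimate $|\nabla P_t f|^2\le e^{-2Kt}P_t|\nabla f|^2$, $BE(K,N)$ from the dimensional Bochner inequality on the forward side, and conversely the dimensional self-improving semigroup gradient estimate, Kuwada duality to dimension-refined $W_2$-contraction, and the $\mathrm{EVI}_{K,N}$ characterization of the heat flow as gradient flow of $\Ent_\m$, closed with Remark~\ref{rem-equiv-rcd} to pass from $CD^e$/$CD^*$ to $CD$ — is a faithful high-level summary of the Erbar--Kuwada--Sturm/Ambrosio--Gigli--Savar\'e strategy the paper is citing.
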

\begin{remark}
The case $N=\infty$ was proved in \cite{agsbakryemery}, the case $N<\infty$ in \cite{erbarkuwadasturm}.
Shortly after \cite{erbarkuwadasturm} an alternative proof for the finite dimensional case - following a completely different strategy - was established in \cite{amsnonlinear}.
\end{remark}
\subsection{Rectifiability}
Following \cite{giglipasqualetto} we say a family $\left\{A_i\right\}_{i\in\mathbb{N}}$ is an $\m$-partition of $E\subset X$ if it is a partition of some Borel set $F\subset X$ such that $\m(E\backslash F)=0$.
\begin{definition}
A metric measure space $(X,d,\m)$ is {\textit strongly $\m$-rectifiable} if there exists a $\m$-partition $\left\{A_k\right\}_{k\in \mathbb{N}}$ of $X$ into measurable sets $A_k$ such that for each $k\in \mathbb{N}$ and every $\epsilon>0$
there exists an $\m$-partition $\left\{U_i\right\}_{i\in\mathbb{N}}$ of $A_k$ and measurable maps $\phi_i:U_i\rightarrow \mathbb{R}^k$ such that for every $i\in\mathbb{N}$
\begin{align*}
\phi_i:U_i\rightarrow \phi(U_i) \ \mbox{ is } \ (1+\epsilon)\mbox{-biLipschitz }\ \& \ \ (\phi_i)_{\star}(\m|_{U_i})\ll\mathcal{L}^k.
\end{align*}
The partition $\left\{A_k\right\}_{k\in \mathbb{N}}$ that is unique up to a $\m$-negligible set is called dimensional partition of $X$.
\end{definition}
\begin{theorem}[\cite{mondinonaber, kellmondino, giglipasqualetto}]
Let $(X,d,\m)$ be a metric measure space that satisfies the curvature-dimension condition $RCD(\ke,N)$ for $N\in (0,\infty)$. Then $(X,d,\m)$ is strongly $\m$-rectifiable, and $\m(A_k)=0$ for $k>N$. 
\end{theorem}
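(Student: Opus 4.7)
The plan is to decompose $X$ according to the dimension of a Euclidean tangent cone at generic points, construct almost-isometric coordinate charts from distance functions to suitably chosen reference points, and verify absolute continuity of the chart pushforwards via Bishop-Gromov.

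First I would show that for $\m$-almost every $x \in X$ some pointed measured tangent cone at $x$ is isometric to $\R^k$ for an integer $k \le N$. Two deep structural inputs are needed: Gigli's non-smooth splitting theorem, which forces an $\R$-factor in any tangent cone (itself $RCD(0,N)$) containing a line, and the ``tangents of tangents are tangents'' property of Gigli-Mondino-Rajala, valid $\m$-a.e. Starting from an arbitrary tangent at a generic $x$, I would iteratively peel off $\R$-factors, each time passing to a generic point of the remaining factor; the Bishop-Gromov bound (Fact~\ref{fact:cd}) forces the process to halt after at most $\lfloor N \rfloor$ steps, leaving a Euclidean factor $\R^k$. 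Assigning $k(x)$ to a.e.\ $x$ and setting $A_k = \{k(x) = k\}$ defines the dimensional partition; the bound $\m(A_k) = 0$ for $k > N$ follows since $\R^k$ with Lebesgue measure is $RCD(0,N)$ only if $k \le N$.

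For each $x_0 \in A_k$ and $\eps > 0$ I would use the pointed measured Gromov-Hausdorff convergence of rescalings of $X$ at $x_0$ to $\R^k$ to choose reference points $p_1, \dots, p_k \in X$ whose rescaled images approximate an orthonormal basis. On a sufficiently small ball $U \ni x_0$ the map $\phi(y) = (d(y,p_1), \dots, d(y,p_k))$ is $(1+\eps)$-biLipschitz, because in the rescaled limit it converges to the $k$-fold linear coordinate projection. A Vitali-type cover of $A_k$ by such balls yields the required $\m$-partition together with charts $\phi_i$. Absolute continuity $(\phi_i)_\sharp(\m|_{U_i}) \ll \mathcal{L}^k$ then follows from the reverse Bishop-Gromov bound $\m(B_r(y)) \ge c\, r^k$ combined with Cheeger's differentiation theory, which applies since $RCD$ spaces are doubling and satisfy a local Poincar\'e inequality (Theorem~\ref{th:rajala}).

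The hard step is the first one: producing a Euclidean tangent at almost every point. Without the splitting theorem and the a.e.\ iterability of tangents one cannot rule out the pathological scenario in which every tangent at a generic point is an exotic non-Euclidean metric cone. Both ingredients are subtle $RCD$-specific structural results; once available, the remaining construction of charts and the measure-theoretic conclusions are relatively standard covering arguments in doubling metric measure spaces.
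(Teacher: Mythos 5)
This statement is not proved in the paper: it is quoted verbatim from the literature (Mondino--Naber, Kell--Mondino, Gigli--Pasqualetto), so there is no in-paper argument to compare against. Evaluating your sketch against the actual proofs in those references, the first stage -- producing a Euclidean tangent $\R^k$ at $\m$-a.e.\ point via the non-smooth splitting theorem, the Gigli--Mondino--Rajala ``tangents-of-tangents'' property, and a Bishop--Gromov cap on $k$ -- is essentially the Gigli--Mondino--Rajala argument and is correct.

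The gap is in the chart construction. You assert that if $p_1,\dots,p_k$ are chosen so that, after rescaling, the distance functions approximate orthonormal coordinates, then $\phi(y)=(d(y,p_1),\dots,d(y,p_k))$ is $(1+\eps)$-biLipschitz on a small ball ``because in the rescaled limit it converges to the $k$-fold linear coordinate projection.'' Pointed measured Gromov--Hausdorff convergence only yields that $\phi$ is an $\eps$-GH approximation at the relevant scale; it does not force a pointwise lower Lipschitz bound. Two nearby points can be collapsed by $\phi$ while the space remains GH-close to $\R^k$, and the distance to a fixed reference point is exactly the kind of function whose level sets can degenerate without violating GH closeness. This is precisely why Mondino--Naber do not work with the raw distance functions: they pass to harmonic $\delta$-splitting maps, establish a Hessian smallness estimate, and then run a maximal-function/segment-inequality argument \`a la Cheeger--Colding to show the maps are $(1+\eps)$-biLipschitz only on a subset of the ball of almost full measure, followed by an iteration over $\eps\to 0$ and a Vitali cover. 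That quantitative harmonic replacement and the ``large-measure, not full-measure'' subtlety are the real technical core of the theorem, and your proposal skips over them. Likewise, absolute continuity of $(\phi_i)_\sharp(\m|_{U_i})$ with respect to $\mathcal L^k$ is not a routine consequence of a lower volume bound and Cheeger differentiation; it was established separately and by different methods in Kell--Mondino, Gigli--Pasqualetto, and De Philippis--Marchese--Rindler. In short: step one is right, but the biLipschitz chart step as stated would fail, and the absolute-continuity step needs its own argument rather than being folded into ``standard covering arguments.''
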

%\begin{definition}
%Let $(X,d,\m)$ be a $\m$-rectifiable space. A chart is a couple $(U,\phi)$ where $U$ is a Borel set in $A_k$ for some $k\in\mathbb{N}$, and $\phi:U\rightarrow \phi(U)\subset \mathbb{R}^k$ is a biLipschitz map such that 
%\begin{align*}
%C^{-1}\mathcal{L}^k|_{\phi(U)}\leq \phi_*(\m|_{U})\leq C \mathcal{L}^k|_{\phi(U)}\ \ \mbox{ for some constant }C>1.
%\end{align*}
%An atlas for $(X,d,\m)$ is a family $\left\{U_i^k,\phi_i^k\right\}_{i,k\in\mathbb{N}}$ of charts such that $\left\{U_i\right\}_{i\in\mathbb{N}}$ is an $\m$-partition of $A_k$.
%A chart $(U,\phi)$ is said to be an $\epsilon$-chart provided $\phi$ is a $(1+\epsilon)$-biLipschitz map, and an atlas is said to be an $\epsilon$-atlas provided any charts is an $\epsilon$-chart.
%Moreover, we always assume that $U$ has finite $\m$-measure.
%\end{definition}
%\begin{remark}
%Any $\m$-rectifiable metric measure space admits an atlas, and any strongly $\m$-rectifiable metric measure space admits an $\epsilon$-atlas for any $\epsilon>0$.
%\end{remark}

\begin{theorem}[\cite{giglipasqualetto}]
Let $(X,d,\m)$ be a metric measure space that satisfies the condition $RCD(\ke,N)$ for $N\in (0,\infty)$, and let $A_k$ its dimensional decomposition.
Then, the local dimension of $A_k$ is $k\in\mathbb{N}\cup\left\{\infty\right\}$. Hence $A_k=E_k$ for any $k\in \mathbb{N}$ where $E_k$ is as in Proposition \ref{prop:dim}.
%\\
%If $\phi:U\rightarrow \phi(U)\subset \mathbb{R}^k$ is a chart for $A_k$ with coordinate functions $\phi^1,\dots,\phi^k:U\rightarrow \mathbb{R}$, the co-vectorfields $(d\phi^i)_{i=1,\dots,k}$ form a basis of $L^2(T^*U)$.
\end{theorem}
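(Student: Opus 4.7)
The plan is to establish, for each $k \in \mathbb{N}$ with $\m(A_k) > 0$, that every Borel subset of $A_k$ of positive measure admits a module basis of size exactly $k$ (and no other size); by uniqueness of the partition in Proposition~\ref{prop:dim}, this forces $A_k = E_k$ modulo $\m$-null sets. The approach is to work chart-by-chart, using the biLipschitz parametrizations provided by strong $\m$-rectifiability to exhibit an explicit candidate basis for the tangent/cotangent module.

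First, fix $k$ and use strong $\m$-rectifiability to decompose $A_k$ up to a null set into pieces $U_i$ equipped with $(1+\epsilon)$-biLipschitz maps $\phi_i = (\phi_i^1, \ldots, \phi_i^k) \colon U_i \to \mathbb{R}^k$, with $(\phi_i)_*(\m|_{U_i}) \ll \mathcal{L}^k$. Each coordinate $\phi_i^j$ is Lipschitz, hence lies in $W^{1,2}_{\mathrm{loc}}$, so the 1-forms $d\phi_i^1, \ldots, d\phi_i^k \in L^2(T^*X)$ are defined. Because $(X,d,\m)$ is RCD (hence doubling and Poincaré), the minimal weak upper gradient of any Lipschitz function agrees $\m$-a.e.\ with its local slope; combined with the biLipschitz control, this gives pointwise $\m$-a.e.\ bounds $(1+\epsilon)^{-1} \leq |d\phi_i^j| \leq (1+\epsilon)$ and, via the Hilbert-module inner product, off-diagonal inner products that are $O(\epsilon)$-small.

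Next, from these pointwise estimates the Gram matrix $G(x) = (\langle d\phi_i^j, d\phi_i^l\rangle(x))_{j,l}$ is $\m$-a.e.\ close to the identity; in particular it is $\m$-a.e.\ invertible, which immediately yields \emph{independence} of $d\phi_i^1, \ldots, d\phi_i^k$ on $U_i$ in the sense of Definition~\ref{def:basis}(i): a relation $\sum_j f_j d\phi_i^j = 0$ on $U_i$ implies $Gf = 0$ a.e., so all $f_j = 0$ a.e. on $U_i$. To obtain \emph{generation} of $L^2(T^*X)|_{U_i}$, I would fix an arbitrary $f \in W^{1,2}(X)$ and, composing with $\phi_i^{-1}$ on $\phi_i(U_i)$, realize $f|_{U_i}$ as (a limit of) $g \circ \phi_i$ with $g \in C^1(\mathbb{R}^k)$. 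A locality/chain-rule argument for the differential in the Gigli calculus (coupled with the fact that Lipschitz truncations are dense in $W^{1,2}$ under the doubling + Poincaré setup of Theorem~\ref{th:rajala}) then expresses $df|_{U_i}$ as an $L^\infty$-combination of the $d\phi_i^j$, modulo an $L^2$-small error. Taking $L^2$-closure, and using that differentials of Sobolev functions generate the cotangent module by definition, $\{d\phi_i^j\}_{j=1}^k$ generate $L^2(T^*X)|_{U_i}$.

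Assembling independence and generation on each $U_i$ shows $A_k$ has local dimension $k$, so $A_k \subset E_k$ up to $\m$-null sets. Since both $\{A_k\}$ and $\{E_k\}$ partition $X$ (up to null sets) and the local dimension is a $\m$-a.e.\ well-defined integer, the inclusions $A_k \subset E_k$ for all $k$ force equalities $A_k = E_k$. The main obstacle is the generation step: one needs the differentials of finitely many Lipschitz functions on a chart to actually span the cotangent module in the $L^\infty$-module sense, rather than just approximate it pointwise. This is exactly the place where one must combine (i) density of Lipschitz functions in $W^{1,2}$ under RCD, (ii) the Cheeger--Rademacher theorem identifying the weak upper gradient with the local slope, and (iii) the locality of the differential in the module sense; all three hold in the $RCD(K,N)$ context by the results cited in the preliminaries.
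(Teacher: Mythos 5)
This statement is quoted from Gigli--Pasqualetto \cite{giglipasqualetto}; the present paper does not prove it, so there is no in-paper argument for me to compare against. Your outline follows the same overall strategy as \cite{giglipasqualetto}: exhibit the differentials of the biLipschitz chart coordinates as a module basis on each $U_i$, then invoke the uniqueness of the partition from Proposition~\ref{prop:dim}. However, the step at the heart of your argument is stated too loosely to carry the weight it is asked to bear. You claim that the identification $|df|=\Lip f$ from \cite{cheegerlipschitz}, ``combined with the biLipschitz control,'' yields pointwise a.e.\ bounds $(1+\epsilon)^{-1}\le |d\phi_i^j|\le (1+\epsilon)$ and $O(\epsilon)$-small off-diagonal inner products. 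The upper bound is fine (a McShane extension of $\phi_i^j$ is globally $(1+\epsilon)$-Lipschitz). But a single coordinate of a biLipschitz map need not satisfy any pointwise lower slope bound: $|\phi_i(x)-\phi_i(y)|$ can be comparable to $d(x,y)$ while $|\phi_i^j(x)-\phi_i^j(y)|$ is tiny whenever the displacement is transverse to $e_j$, so the biLipschitz estimate alone is silent about $\Lip\phi_i^j$ from below. The missing ingredient is the hypothesis $(\phi_i)_\star(\m|_{U_i})\ll\mathcal{L}^k$ together with Lebesgue differentiation: for $\m$-a.e.\ $x\in U_i$ the point $\phi_i(x)$ is a density point of $\phi_i(U_i)$, and only then can one choose $y\in U_i$ arbitrarily close to $x$ with $\phi_i(y)-\phi_i(x)$ nearly parallel to any prescribed direction, which is what forces the infinitesimal biLipschitz estimate and hence the Gram matrix to be close to the identity. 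Without that density/differentiation step the independence argument does not get off the ground. The generation half has an analogous gap: the chain rule $d(g\circ\phi_i)=\sum_j(\partial_jg)(\phi_i)\,d\phi_i^j$ needs $g\in C^1$, and the passage from the merely Lipschitz $g=f\circ\phi_i^{-1}$ defined on the Borel (not open) set $\phi_i(U_i)$ to a $C^1$ approximation with controlled error in the restricted module $L^2(T^*X)|_{U_i}$ is precisely the technical content of \cite{giglipasqualetto}, not a routine consequence of locality and density. In short, the scaffold is right, but the two load-bearing estimates are asserted rather than established.
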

\begin{definition}
If $(X,d,\m)$ satisfies the condition $RCD(\ke,N)$, we say $x_0\in A_k$ is a regular point if the Gromov-Hausdorff tangent cone at $x_0$ is $\mathbb{R}^k$ where $\left\{A_k\right\}_{k\in\mathbb{N}}$ is the dimensional decomposition of $X$. 
We denote the set of all regular points $X_{reg}$.
\end{definition}
\subsection{Hessian operator}
\noindent 
Recall from \cite{giglinonsmooth} that $L^2((T^*X)^{\otimes 2})$ is the subset of elements $A$ in the $L^0$-module-tensor product $L^0((T^*X)^{\otimes 2})$ such that 
\begin{align*}
\left\||A|_{HS}\right\|_{L^2(\m)}^2=\int |A|_{HS}^2d\m <\infty.
\end{align*}
$|A|_{HS}$ is the \textit{pointwise Hilbert-Schmidt norm} whose construction can be found in \cite{giglinonsmooth}. Similar, one constructs $L^2((TX)^{\otimes 2})$, and if $L^2(T^*X)$ and $L^2(TX)$ are Hilbert modules,
$L^2(T^*X^{\otimes 2})$ and $L^2(TX^{\otimes 2})$ are isomorphic Hilbert modules as well. $L^2(T^{*}X^{\otimes 2})$ can be seen
as the space of all continuous bilinear
forms $A:L^2(TX)^2\rightarrow L^0(\m)$ such that $\left\||A|_{\scriptscriptstyle{HS}}\right\|_{\scriptscriptstyle{L^2(\m)}}<\infty$. $L^0$-continuity corresponds for $\m$ finite and normalized to \textit{convergence in probability}. Recall also that
for $A\in L^2(T^*X^{\otimes 2})$ 
\begin{align*}
A(X,Y)\leq |A|_{HS}|X||Y| \ \m\mbox{-a.e.}
\end{align*}
when $X,Y\in L^2(TX)$. In particular, $A(X,Y)\notin L^1(\m)$ in general.
%We define the trace of $A\in L^2((T^*X)^{\otimes 2})$ as element in $L^2(\m)$ via
%\begin{align*}
%\tr A|_B=\sum_{i=1}^k A(e_i,e_i)
%\end{align*}
%where $B$ is a subset of $E_k$ with finite measure and $\left\{e_i\right\}_{i=1,\dots,k}$ a unit orthogonal basis on $B$. Since $\int (\tr A|_B)^2d\m \leq k\int_B |A|_{HS}d\m$, $\tr A$ is indeed an element in $L^2(\m)$.
%\\
%\\
\begin{definition}
The space of test functions is 
\begin{align*}
\mathbb{D}_{\infty}= D_{W^{1,2}(X)}(\Delta)\cap L^{\infty}(\m)\cap \left\{f\in W^{1,2}(X):|\nabla f|\in L^{\infty}(\m)\right\}.
\end{align*}
\end{definition}
If $(X,d,\m)$ satisfies a Riemannian curvature-dimension condition, then
$P_tL^{\infty}(\m)$ is a subset of $\mathbb{D}_{\infty}$, it is dense in $W^{1,2}(X)$ and in $D_{L^2(m)}(\Delta)$ w.r.t. the Sobolev norm and the graph norm of the operator $\Delta$ respectively, and by the 
Sobolev-to-Lipschitz property we have $\mathbb{D}_{\infty}\subset \Lip_b(X)$. Moreover, the co-tangent module $L^2(T^*X)$ is generated by
$$T\mathbb{D}_{\infty}=\left\{\sum_{i=1}^n g_i df_i:n\in \mathbb{N}, \ g_i, f_i\in \mathbb{D}_{\infty}, i= 1,\dots,n \right\}.$$

\begin{definition}[\cite{giglinonsmooth}]
The space $W^{2,2}(X)\subset W^{1,2}(X)$ is the set of all functions $f\in W^{1,2}(X)$ for which there exists $A\in L^2(T^*X\otimes T^*X)$ such that for all $f,g_1,g_2\in \mathbb{D}_{\infty}$ we have
\begin{align}\label{hessian}
&2\int hA(dg_1,dg_2)d\m\nonumber\\
&\hspace{0.4cm}=-\int h \langle \nabla f, \nabla \langle \nabla g_1,\nabla g_2\rangle\rangle dm -\int \langle \nabla f,\nabla g_1\rangle \Div(h\nabla g_{2})d\m-\int \langle \nabla f,\nabla g_2\rangle \Div(h\nabla g_{1})d\m 
\end{align}
where $\Div(h\nabla g)=\langle \nabla h,\nabla g\rangle + h\Delta g$. In this case the operator $A$ will be called the \textit{Hessian} of $f$ and will be denoted with $\Hess f$. $W^{2,2}(X)$ is equipped with the norm
\begin{align*}
\left\|f\right\|_{W^{2,2}(X)}^2=\left\|f\right\|_{L^2}^2+\left\||\Hess f|_{HS}\right\|_{L^2}^2.
\end{align*}
We say that $\Hess f\geq \kappa$ on a measurable subset $B$ for $\kappa\in \mathbb{R}$ if for any $u\in \mathbb{D}_{\infty}$ 
\begin{align*}
\Hess f(1_{B}\nabla u,\nabla u)\geq \kappa |1_B\nabla u|^2=\kappa1_B|\nabla u|^2 \ \m\mbox{-almost everywhere.}
\end{align*}
%Since $\mathbb{D}_{\infty}$ is dense in $W^{1,2}(X)$, and since $\Hess f$ is a continuous bilinear form on $W^{1,2}(X)$, it is enough to check the previous inequality for $u\in \mathbb{D}_{\infty}$.
\end{definition}
\begin{remark}
We note that the density property of $\mathbb{D}_{\infty}$ and the fact that $\mathbb{D}_{\infty}$ is an algebra ensure that $\Hess f\in L^2(T^*X\otimes T^*X)$ is uniquely determined by (\ref{hessian}).
Then it is clear that $\Hess f$ depends linearly on $f$, and $W^{2,2}(X)$ therefore becomes a vector space.
\end{remark}
\begin{lemma}
Consider $f\in W^{2,2}(X)$, and assume $\Hess f\geq \kappa$ $m$-a.e. on $B$. Then $\Hess f(1_BV,1_BV)\geq \kappa |V|^21_B$ $\m$-a.e. for every $V\in L^2(TX)$.
\end{lemma}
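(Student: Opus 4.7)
The plan is to reduce the statement for arbitrary $V\in L^{2}(TX)$ to the hypothesis at $V=\nabla u$ with $u\in\mathbb{D}_{\infty}$, by combining the $L^{\infty}$-bilinearity of the tensor $\Hess f$ with the fact, recorded just before Definition~\ref{def:basis}, that $T\mathbb{D}_{\infty}$ generates $L^{2}(T^{*}X)\cong L^{2}(TX)$. Since $\Hess f$ is $L^{\infty}$-bilinear, the hypothesis is equivalent to $1_{B}\,\Hess f(\nabla u,\nabla u)\ge\kappa\,1_{B}|\nabla u|^{2}$ $\m$-a.e.\ for every $u\in\mathbb{D}_{\infty}$.

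First I would fix finitely many test functions $u_{1},\dots,u_{n}\in\mathbb{D}_{\infty}$ and apply the hypothesis to the linear combinations $w_{\alpha}:=\sum_{i}\alpha_{i}u_{i}\in\mathbb{D}_{\infty}$ for each rational $\alpha\in\mathbb{Q}^{n}$. Each such application produces an $\m$-null exceptional set $N_{\alpha}$; the countable union $N:=\bigcup_{\alpha\in\mathbb{Q}^{n}}N_{\alpha}$ is still $\m$-null, and on $B\setminus N$ the quadratic-form inequality
\[
\sum_{i,j}\alpha_{i}\alpha_{j}\,\Hess f(\nabla u_{i},\nabla u_{j})(x)\;\ge\;\kappa\sum_{i,j}\alpha_{i}\alpha_{j}\langle\nabla u_{i},\nabla u_{j}\rangle(x)
\]
holds simultaneously for every $\alpha\in\mathbb{Q}^{n}$, and hence by continuity in $\alpha$ for every $\alpha\in\R^{n}$. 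Given any $g_{1},\dots,g_{n}\in L^{\infty}(\m)$, I then substitute $\alpha_{i}=g_{i}(x)$: by the $L^{\infty}$-bilinearity of $\Hess f$ and of $\langle\cdot,\cdot\rangle$, the two sides rearrange to $\Hess f(V,V)(x)$ and $\kappa|V|^{2}(x)$, where $V:=\sum_{i}g_{i}\nabla u_{i}$. This proves the desired inequality whenever $V$ is a finite $L^{\infty}$-linear combination of gradients of test functions.

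To conclude, I would approximate an arbitrary $V\in L^{2}(TX)$ by such finite combinations. Since $T\mathbb{D}_{\infty}$ generates the cotangent module, Remark~\ref{rem:generator} yields a sequence $V_{k}\to V$ in $L^{2}(TX)$ with each $V_{k}$ of the above form. Extracting a subsequence one may assume $|V_{k}-V|\to 0$ $\m$-a.e., and the pointwise Cauchy--Schwarz bound $|\Hess f(X,Y)|\le|\Hess f|_{HS}|X||Y|$ (with $|\Hess f|_{HS}<\infty$ $\m$-a.e.\ because $|\Hess f|_{HS}\in L^{2}(\m)$) then gives $\Hess f(V_{k},V_{k})\to\Hess f(V,V)$ and $|V_{k}|^{2}\to|V|^{2}$ $\m$-a.e. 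Passing to the limit in the inequality already established for each $V_{k}$ delivers $\Hess f(1_{B}V,1_{B}V)\ge\kappa\,1_{B}|V|^{2}$ $\m$-a.e., as desired. The one genuinely delicate point is the rational-to-real exhaustion in the first step, which is what converts $\R$-linearity on each finite-dimensional span of gradients into a pointwise inequality compatible with $L^{\infty}$-coefficient substitution; the subsequent density step is routine once the Cauchy--Schwarz-type bound for $\Hess f$ is in hand.
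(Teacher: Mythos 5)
Your proof is correct, but it follows a genuinely different technical route from the paper's. The paper's own argument avoids any cross terms by first approximating $V$ by vector fields with disjointly-supported components: after replacing each $L^\infty$ coefficient by a step function and taking a common refinement of the partitions, it writes the approximant as $W_k=\sum_{i}1_{B_{k,i}}\nabla h_{k,i}$ with the $B_{k,i}$ pairwise disjoint and $h_{k,i}\in\mathbb{D}_{\infty}$. Then $\Hess f(W_k,W_k)=\sum_i 1_{B_{k,i}}\Hess f(\nabla h_{k,i},\nabla h_{k,i})$ with no off-diagonal terms, so the hypothesis applies term by term. You instead keep the cross terms and dispose of them by a polarization/quadratic-form argument: applying the hypothesis to $\nabla(\sum_i\alpha_iu_i)$ for all rational $\alpha$, taking a single countable union of null sets, and extending to real $\alpha$ by continuity, so that one may "freeze" $\alpha_i=g_i(x)$ pointwise and invoke $L^\infty$-bilinearity. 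Both closing steps are correct and comparable in weight: you use the pointwise Hilbert--Schmidt bound $|\Hess f(X,Y)|\le|\Hess f|_{HS}|X||Y|$ together with an a.e.\ convergent subsequence, whereas the paper cites the $L^2$-$L^0$ continuity of the bilinear form $\Hess f$ directly. The paper's disjoint-support trick is the slicker device in that it sidesteps the rational-to-real exhaustion entirely, but your argument is the more standard "freeze the coefficients at a point" route and is equally rigorous.
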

\begin{proof}
Let $V\in L^2(TX)$. It is enough to consider the case $B=X$. Since $T\mathbb{D}_{\infty}$ generates $L^2(TX)$, Remark \ref{rem:generator} we find functions 
$f_{k,i}, g_{k,i}\in \mathbb{D}_{\infty}$ with $i=1,\dots,n_{k}\in \mathbb{N}$ and $k\in \mathbb{N}$ such that 
\begin{align*}
V_k=\sum_{i=1}^{n_{k}}f_{k,i}\nabla g_{k,i} \rightarrow V\ \mbox{ in }L^2(TX).
\end{align*}
Then, since every $f_{k,i}$ can be approximated in $L^2$-sense by measurable functions that take only finitely many values in $\mathbb{R}$, and since $\nabla$ is linear, we see that $X$ is approximated 
in $L^2$-sense by vector fields $W_k$ of the form
\begin{align*}
W_k=\sum_{i=1}^{n_k}1_{B_{k,i}}\nabla h_{i,k}
\end{align*}
for measurable decompositions $\left\{B_{k,i}\right\}_{i\in\mathbb{N}}$ of $X$, and $h_{k,i}\in \mathbb{D}_{\infty}$. Hence, we have
\begin{align*}
\Hess f (W_k,W_k)&=\sum^{n_k}_{i,j=1}\Hess f(1_{B_{k,i}}\nabla h_{i,k}, 1_{B_{k,j}}\nabla h_{j,k})\\
&=\sum_{i=1}^{n_k}1_{B_{k,i}}\Hess f(\nabla h_{k,i},\nabla h_{k,i})\geq \sum_{i=1}^{n_k}\kappa |\nabla h_{k,i}|^2=\kappa|W_k|^2 \ \ \m\mbox{-a.e. on }B.
\end{align*}
The second equality is the $L^{\infty}$-homogeneity of $\Hess f(\cdot,\cdot)$.
Hence, by $L^2$-convergence of $W_k$ in $L^2(TX)$ the right hand side converges $\m$-a.e. to $|V|$ after taking a subsequence. Moreover, by continuity of the bilinear for $\Hess f: L^2(TX)^2\rightarrow L^0(\m)$ 
the left hand side converges $\m$-a.e. to $\Hess f(V,V)$ after taking
another subsequence. Then, the claim follows.
\end{proof}
\begin{theorem}[\cite{giglinonsmooth, savareself}]\label{th:hesscont}
Let $f\in D_{L^2}(\Delta)$. Then $f\in W^{2,2}(X)$, and 
\begin{align*}
\int |\Hess f|_{HS}^2 d\m\leq \int \left[\left(\Delta f\right)^2 - \ke|\nabla f|^2\right] d\m.
\end{align*}
In particular, $\mathbb{D}_{\infty}\subset W^{2,2}(X)$.
\end{theorem}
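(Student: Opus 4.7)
The strategy I would follow is to reduce the statement to a pointwise self-improved Bochner inequality and then integrate. The ambient hypothesis here is that $(X,d,\m)$ is $RCD(\ke,\infty)$ (so that $\ke$ on the right is meaningful), hence by Theorem~\ref{th:be} the Bakry-\'Emery condition $BE(\ke,\infty)$ holds: for every $f\in D_{W^{1,2}(X)}(\Delta)$ and every nonnegative $\phi\in D_{L^{\infty}}(\Delta)\cap L^{\infty}(\m)$,
\[
\Gamma_2(f;\phi)\geq \ke\int |\nabla f|^2 \phi \,d\m.
\]
The endgame I have in mind is the following: if I can produce a symmetric tensor $\Hess f\in L^2((T^*X)^{\otimes 2})$ such that the \emph{pointwise} Bochner-Hessian inequality
\[
\Gamma_2(f;\phi)\geq \int \phi\, |\Hess f|_{HS}^2\,d\m+\ke\int |\nabla f|^2 \phi\,d\m
\]
holds for all admissible test $\phi$, then taking $\phi$ to be a sequence of good cut-offs increasing to $1$ (so that $\int \tfrac12|\nabla f|^2\Delta\phi\,d\m\to 0$ while $\int \phi\,\langle\nabla f,\nabla\Delta f\rangle\,d\m\to -\int(\Delta f)^2\,d\m$ via self-adjointness of $\Delta$) produces the desired integral bound $\int |\Hess f|_{HS}^2\leq \int [(\Delta f)^2-\ke|\nabla f|^2]\,d\m$. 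So the real task is to construct $\Hess f$ and upgrade $BE(\ke,\infty)$ to the Bochner-Hessian inequality; the closing inclusion $\mathbb{D}_\infty\subset W^{2,2}(X)$ is then free since $\mathbb{D}_\infty\subset D_{L^2}(\Delta)$.

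Next I would set up the candidate Hessian on test functions. For $f,g_1,g_2\in\mathbb{D}_\infty$ define pointwise
\[
H_f(\nabla g_1,\nabla g_2):=\tfrac12\bigl[\langle\nabla g_1,\nabla\langle\nabla f,\nabla g_2\rangle\rangle+\langle\nabla g_2,\nabla\langle\nabla f,\nabla g_1\rangle\rangle-\langle\nabla f,\nabla\langle\nabla g_1,\nabla g_2\rangle\rangle\bigr];
\]
integrating against $h\in\mathbb{D}_\infty$ and applying the divergence identity $\mathrm{Div}(h\nabla g)=\langle\nabla h,\nabla g\rangle+h\Delta g$ recovers exactly the right-hand side of (\ref{hessian}), which is the motivation for the formula. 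The serious step is to promote $H_f(\nabla g_1,\nabla g_2)$ from a trilinear expression on $\mathbb{D}_\infty$ to an $L^{\infty}(\m)$-bilinear form in the gradient arguments, so that it descends to a well defined element of $L^2((T^*X)^{\otimes 2})$ via the generating property of $T\mathbb{D}_\infty$ for $L^2(T^*X)$. The route I would take is Savar\'e's self-improvement of $BE(\ke,\infty)$: one applies the Bakry-\'Emery inequality not to $f$ alone but to compositions $\Phi(f,g_1,\dots,g_n)$ with $\Phi\in C^{\infty}$, expands $\Gamma_2$ in the derivatives of $\Phi$ using the chain rule for $\Gamma$ and $\Delta$, and by freedom in choosing $\Phi$ extracts pointwise a.e. the quadratic Cauchy-Schwarz type bound
\[
H_f(\nabla g,\nabla g)^2\leq \bigl[\Gamma_2(f;\cdot)/\m-\ke|\nabla f|^2\bigr]\cdot |\nabla g|^4
\]
together with its polarized symmetric version. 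The pointwise bound gives locality (hence $L^{\infty}$-bilinearity) and simultaneously the Hilbert-Schmidt estimate needed to define $|\Hess f|_{HS}$ and obtain the improved Bochner inequality displayed above.

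The hard part is precisely this self-improvement step, i.e.\ turning the scalar inequality $\Gamma_2(f;\phi)\geq \ke\int |\nabla f|^2\phi$ into a tensorial bound controlling every directional second derivative $H_f(\nabla g,\nabla g)$; this is where one uses in an essential way that $(X,d,\m)$ is infinitesimally Hilbertian (so that the pointwise parallelogram identity for $|\cdot|$ on the tangent module is available) and that $\mathbb{D}_\infty$ is an algebra stable under $\Delta$ via heat flow regularization. Once this is done for $f\in\mathbb{D}_\infty$, the passage to general $f\in D_{L^2}(\Delta)$ is routine: approximate $f$ by $P_{1/n}f\in\mathbb{D}_\infty$, which converges to $f$ in the graph norm of $\Delta$; the $L^2$-bound on $|\Hess P_{1/n}f|_{HS}$ just established is uniform in $n$, hence a subsequence converges weakly in $L^2((T^*X)^{\otimes 2})$ to some symmetric tensor $A$, and a standard closure argument using (\ref{hessian}) identifies $A=\Hess f$; lower semicontinuity of the $L^2$-norm under weak convergence then yields the integral estimate for $f$ itself.
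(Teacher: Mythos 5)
The paper does not give a proof of this theorem: it is imported by citation from \cite{giglinonsmooth,savareself}, and your outline faithfully reproduces the strategy of those sources — define $\Hess f$ on $\mathbb{D}_\infty$ via the bilinear gradient identity, upgrade $BE(\ke,\infty)$ (available since $RCD(\ke,N)\Rightarrow RCD(\ke,\infty)$ by Fact~\ref{fact:cd}(iv)) to a pointwise Bochner--Hessian bound through Savar\'e's self-improvement trick with compositions $\Phi(f,g_1,\dots,g_n)$, integrate against cutoffs to get the $L^2$ estimate, and then close to all of $D_{L^2}(\Delta)$ via heat-semigroup regularization and weak compactness in $L^2((T^*X)^{\otimes 2})$. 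This is essentially the same route.
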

\begin{proposition}[\cite{giglinonsmooth}]\label{prop:chainrule}
Let $f\in W^{2,2}(X)\cap \Lip(X)$ and let $\phi\in C^2(\mathbb{R})$ with bounded first and second derivative. Then $\phi\circ f\in W^{2,2}(X)$ and the following formula holds
\begin{align*}
\Hess^X (\phi\circ f)(\nabla u,\nabla u) = \phi'\circ f \Hess^X f(\nabla u,\nabla u) + \phi''\circ f \langle \nabla f,\nabla u\rangle \ \ \forall u\in W^{1,2}(X).
\end{align*}

\end{proposition}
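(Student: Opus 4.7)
The plan is to directly verify that the candidate
$$A := \phi'\circ f\, \Hess f \;+\; \phi''\circ f\, df\otimes df$$
serves as the Hessian of $\phi\circ f$ by checking the defining identity~(\ref{hessian}). First, note that $A\in L^2((T^*X)^{\otimes 2})$: $\phi',\phi''$ are bounded, $|df|=|\nabla f|\in L^\infty(\m)$ by the Sobolev-to-Lipschitz property and $f\in\Lip(X)$, and $|\Hess f|_{HS}\in L^2(\m)$ by assumption. The standard chain rule for the differential/gradient on metric measure spaces gives $d(\phi\circ f)=\phi'(f)\,df$ and $\phi\circ f\in W^{1,2}(X)$ with bounded weak upper gradient, so $\phi\circ f\in \Lip_b(X)\cap W^{1,2}(X)$.

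Fix $h,g_1,g_2\in\mathbb{D}_\infty$. Plugging $\nabla(\phi\circ f)=\phi'(f)\nabla f$ into the right-hand side of~(\ref{hessian}) written for $\phi\circ f$ yields
$$-\!\int h\phi'(f)\langle\nabla f,\nabla\langle\nabla g_1,\nabla g_2\rangle\rangle d\m-\!\int\phi'(f)\langle\nabla f,\nabla g_1\rangle\Div(h\nabla g_2)d\m-\!\int\phi'(f)\langle\nabla f,\nabla g_2\rangle\Div(h\nabla g_1)d\m.$$
On the other hand, we may use~(\ref{hessian}) for $\Hess f$ itself with the modified weight $\tilde h:=h\phi'(f)$ in place of $h$; this is legitimate because $\tilde h\in L^\infty\cap W^{1,2}$ with $|\nabla \tilde h|\in L^\infty$, and by the density of $\mathbb{D}_\infty$ both sides of~(\ref{hessian}) are continuous in the weight with respect to the norm $\|\cdot\|_{L^\infty}+\|\cdot\|_{W^{1,2}}+\||\nabla\cdot|\|_{L^\infty}$, so $\mathbb{D}_\infty$-test functions may be replaced by this wider class. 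The Leibniz rule for the divergence (a direct consequence of Proposition~\ref{prop:chainrulelaplacian}) gives
$$\Div(h\phi'(f)\nabla g)=\phi'(f)\Div(h\nabla g)+h\phi''(f)\langle\nabla f,\nabla g\rangle,$$
and substituting this into the defining identity for $\Hess f$ with weight $\tilde h$ produces exactly the three displayed terms above, plus two extra contributions $-\int h\phi''(f)\langle\nabla f,\nabla g_1\rangle\langle\nabla f,\nabla g_2\rangle d\m$. Rearranging shows that
$$2\!\int h\,A(dg_1,dg_2)\,d\m \;=\; 2\!\int h\phi'(f)\Hess f(dg_1,dg_2)\,d\m \;+\; 2\!\int h\phi''(f)\langle\nabla f,\nabla g_1\rangle\langle\nabla f,\nabla g_2\rangle\,d\m$$
coincides with the right-hand side of~(\ref{hessian}) for $\phi\circ f$. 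By uniqueness of the Hessian this proves $\phi\circ f\in W^{2,2}(X)$ with $\Hess(\phi\circ f)=A$, and specializing to $g_1=g_2=u$ yields the stated pointwise identity (with the understanding that the last term should read $\phi''(f)\langle\nabla f,\nabla u\rangle^2$).

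The main obstacle is the justification of testing~(\ref{hessian}) with the enlarged weight $h\phi'(f)$, which is not a priori in $\mathbb{D}_\infty$. This is purely a matter of verifying that both sides of~(\ref{hessian}), viewed as functionals of the weight, extend continuously from $\mathbb{D}_\infty$ to the class of bounded Sobolev functions with bounded minimal weak upper gradient; once $\mathbb{D}_\infty$ is known to be dense in this class in the appropriate topology (a consequence of the mollified heat semigroup mapping $L^\infty$ into $\mathbb{D}_\infty$ under $RCD$-type assumptions, or directly via truncation/approximation using $\phi'\in C^1_b$ and $f\in\Lip\cap W^{2,2}$), the algebraic identity above concludes the proof and the chain-rule formula follows by pointwise $\m$-a.e.\ evaluation.
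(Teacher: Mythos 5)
The paper does not prove Proposition \ref{prop:chainrule}; it is cited directly from Gigli's monograph \cite{giglinonsmooth}, so there is no in-paper argument to compare against. On its own terms, your verification is the right idea: take $A=\phi'(f)\Hess f+\phi''(f)\,df\otimes df$ and check that it satisfies the defining identity \eqref{hessian} for $\phi\circ f$, using $d(\phi\circ f)=\phi'(f)\,df$ and absorbing $\phi'(f)$ into the weight. The algebra with $\Div(h\phi'(f)\nabla g)=\phi'(f)\Div(h\nabla g)+h\phi''(f)\langle\nabla f,\nabla g\rangle$ is correct, and you are right that the displayed formula in the statement has a typo — the last term should be $\phi''\circ f\,\langle\nabla f,\nabla u\rangle^2$, matching $A(du,du)$.

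The one genuine gap is exactly the point you flag: the defining identity \eqref{hessian} is only postulated for weights $h\in\mathbb{D}_\infty$, and $\tilde h=h\phi'(f)$ has no reason to lie there (it is Lipschitz and bounded but a priori not in $D(\Delta)$). Your claim that both sides of \eqref{hessian} extend continuously to weights in $\Lip_b\cap W^{1,2}$ is plausible — note the right-hand side involves only $h$ and $\nabla h$, never $\Delta h$, so the topology you name is indeed the correct one — but the density of $\mathbb{D}_\infty$ in that class with uniform $L^\infty$-bounds on both $h_n$ and $|\nabla h_n|$ needs to be established, and the sketch ("mollified heat semigroup, or truncation") is not yet a proof. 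In Gigli's own treatment this step is handled by first establishing the pointwise representation of Proposition \ref{prop:nuetzlich} for $f\in W^{2,2}\cap\Lip$ and $g_i\in H^{2,2}\cap\Lip$, and then deducing the chain rule by differentiating $\langle\nabla(\phi\circ f),\nabla g\rangle=\phi'(f)\langle\nabla f,\nabla g\rangle$ via the first-order Leibniz rule — this avoids enlarging the test class in \eqref{hessian} at all, at the cost of first proving $\phi\circ f\in W^{2,2}$ separately (typically by polynomial approximation of $\phi$ together with the Leibniz rule for the Hessian). So your route is a workable alternative, but you should either carry out the approximation of the weight in detail, or pivot to the Proposition \ref{prop:nuetzlich} computation once membership in $W^{2,2}$ has been secured by another argument.
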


\begin{definition}
$H^{2,2}(X)$ is defined as the closure of $\mathbb{D}_{\infty}$ in $W^{2,2}(X)$.
\end{definition}
\begin{remark}\label{rem:H22}
$H^{2,2}(X)$ actually coincides with the $W^{2,2}$-closure of $D(\Delta)$ (Proposition 3.3.18 in \cite{giglinonsmooth}). In particular, 
any $f\in D(\Delta)$ is in $H^{2,2}(X)$ and admits a Hessian. 
\end{remark}
\begin{proposition}[\cite{giglinonsmooth}, Proposition 3.3.22]\label{prop:nuetzlich}
Let $f\in W^{2,2}(X)\cap\Lip(X)$ and $g_1,g_2\in H^{2,2}(X)\cap \Lip(X)$. Then, $\langle \nabla f,\nabla g_i\rangle\in W^{1,2}(X), i=1,2$ and 
\begin{align*}
2\Hess f(\nabla g_1,\nabla g_2)= \langle \nabla g_1,\nabla\langle\nabla f,\nabla g_2\rangle\rangle + \langle \nabla g_2,\nabla \langle \nabla f,\nabla g_1\rangle \rangle - \langle \nabla f,\nabla \langle\nabla g_1,\nabla g_2\rangle\rangle.
\end{align*}
\end{proposition}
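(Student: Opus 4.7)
The identity is a pointwise Koszul-type formula; in smooth Riemannian geometry it is precisely Koszul's formula for the Levi--Civita connection applied to gradient vector fields. My plan is to extract it from the integral defining identity \eqref{hessian} by integrating by parts against a test weight $h$ (handling $g_1,g_2\in \mathbb{D}_\infty$), and then pass to $g_1,g_2\in H^{2,2}(X)\cap\Lip(X)$ by $W^{2,2}$-approximation. The function $f$ itself never needs to be approximated, since \eqref{hessian} already holds for every $f\in W^{2,2}(X)$ against test $g_1,g_2,h$ by definition of the Hessian.

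\textbf{Step 1 (smooth case for $g_i$).} Fix $f\in W^{2,2}(X)\cap\Lip(X)$ and $g_1,g_2\in \mathbb{D}_\infty$. Because $\mathbb{D}_\infty$ is an algebra contained in $D(\Delta)\cap L^\infty(\m)$ with $\nabla g_i\in L^\infty(\m)$, the product rule applied to $g_1g_2\in \mathbb{D}_\infty$ gives $\langle\nabla g_1,\nabla g_2\rangle\in W^{1,2}(X)$, and an analogous Leibniz argument for $\nabla f$ against test $\nabla g_i$ gives $\langle\nabla f,\nabla g_i\rangle\in W^{1,2}(X)$ together with
\begin{equation*}
\nabla \langle\nabla f,\nabla g_i\rangle \;=\; \Hess f(\nabla g_i,\cdot)^{\sharp}+\Hess g_i(\nabla f,\cdot)^{\sharp}.
\end{equation*}
With this in hand, expand $\Div(h\nabla g_j)=\langle\nabla h,\nabla g_j\rangle+h\Delta g_j$ in \eqref{hessian} and integrate by parts each $\int \langle\nabla f,\nabla g_i\rangle\,\Div(h\nabla g_j)\,\dd\m=-\int h\,\langle\nabla g_j,\nabla\langle\nabla f,\nabla g_i\rangle\rangle\,\dd\m$. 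Collecting terms rewrites \eqref{hessian} as
\begin{equation*}
\int h\,\bigl(2\Hess f(\nabla g_1,\nabla g_2)-R(f;g_1,g_2)\bigr)\,\dd\m=0
\end{equation*}
for every test $h$, where $R(f;g_1,g_2)$ denotes the right-hand side of the claimed formula. Density of $\mathbb{D}_\infty$ in $L^2(\m)$ then yields the pointwise identity $\m$-a.e.

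\textbf{Step 2 (extension to $H^{2,2}\cap\Lip$).} Choose sequences $g_i^n\in \mathbb{D}_\infty$ with $g_i^n\to g_i$ in $W^{2,2}(X)$ (possible by the definition of $H^{2,2}(X)$) and arrange uniformly bounded Lipschitz constants by truncation together with the Sobolev-to-Lipschitz property. The Leibniz formula from Step 1 shows that both $\langle\nabla f,\nabla g_i^n\rangle$ and $\langle\nabla g_1^n,\nabla g_2^n\rangle$ are Cauchy in $W^{1,2}(X)$, since their gradients are sums of terms of the form $\Hess(\cdot)(\nabla(\cdot),\cdot)^{\sharp}$ which converge in $L^2$ by the uniform $L^\infty$-bound on $\nabla g_i^n$ together with the $W^{2,2}$-convergence of $g_i^n$. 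This simultaneously gives $\langle\nabla f,\nabla g_i\rangle\in W^{1,2}(X)$ and
\begin{equation*}
\langle\nabla f,\nabla g_i^n\rangle\to \langle\nabla f,\nabla g_i\rangle,\qquad \langle\nabla g_1^n,\nabla g_2^n\rangle\to \langle\nabla g_1,\nabla g_2\rangle\quad\text{in }W^{1,2}(X).
\end{equation*}
On the other side, $\Hess f(\nabla g_1^n,\nabla g_2^n)\to \Hess f(\nabla g_1,\nabla g_2)$ in $L^1(\m)$ from $|\Hess f|_{HS}\in L^2(\m)$, the uniform $L^\infty$-bound on $\nabla g_i^n$, and $L^2$-convergence of gradients. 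Passing to the limit in the identity of Step 1 completes the proof.

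\textbf{Main obstacle.} The delicate input is the Leibniz formula for $\nabla\langle\nabla f,\nabla g\rangle$ in terms of Hessians, which is simultaneously needed to (i) integrate by parts in Step 1 and (ii) verify $W^{1,2}$-convergence in Step 2. In the smooth case this is trivial, but here it must be extracted from \eqref{hessian} without using the identity we are proving. Following Gigli, one resolves the circularity by first establishing Leibniz for $f,g\in\mathbb{D}_\infty$ using only the algebra and chain-rule properties of test functions (where both sides of Leibniz lie in $W^{1,2}$ for independent reasons), and only afterwards using it to close the density argument for general $f\in W^{2,2}\cap\Lip$ and $g\in H^{2,2}\cap\Lip$; this bootstrap is the real technical work behind the proposition.
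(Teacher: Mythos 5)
The paper does not prove this proposition; it is cited verbatim from Gigli's ``Nonsmooth differential geometry'' (Proposition 3.3.22), so there is no in-paper proof to compare against. Given that, let me assess your reconstruction on its own merits.

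Conceptually your outline points in the right direction — extract the pointwise identity from the integral defining relation \eqref{hessian} by integration by parts, then pass from $\mathbb{D}_\infty$ to $H^{2,2}\cap\Lip$ by density — and this is indeed the skeleton of Gigli's argument. But two steps you flag or gloss over contain the real content of the proof and are not closed in your write-up. First, in Step 1 you integrate $\int\langle\nabla f,\nabla g_i\rangle\,\Div(h\nabla g_j)\,d\m$ by parts; this requires knowing $\langle\nabla f,\nabla g_i\rangle\in W^{1,2}(X)$ for $f$ merely in $W^{2,2}\cap\Lip$ and $g_i\in\mathbb{D}_\infty$, and you invoke the Leibniz formula for $\nabla\langle\nabla f,\nabla g_i\rangle$ to do so. You acknowledge the circularity in your ``Main obstacle'' paragraph but do not break it: the sentence ``one resolves the circularity by first establishing Leibniz for $f,g\in\mathbb{D}_\infty$\dots and only afterwards\dots for general $f\in W^{2,2}\cap\Lip$'' is exactly the nontrivial bootstrap the proposition encodes, and you have not shown how membership of $\langle\nabla f,\nabla g_i\rangle$ in $W^{1,2}$ is obtained for $f$ that is only $W^{2,2}\cap\Lip$ and \emph{not} in $\mathbb{D}_\infty$. (Note $f\in D(\Delta)$ is not assumed here.)

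Second, in Step 2 you claim you can ``arrange uniformly bounded Lipschitz constants by truncation together with the Sobolev-to-Lipschitz property.'' Truncating function values does not control $\Lip(g^n)$, and any modification that does control $|\nabla g^n|$ in $L^\infty$ is not guaranteed to preserve $W^{2,2}$-convergence to $g$ — $W^{2,2}$-convergence of a sequence in $\mathbb{D}_\infty$ to a Lipschitz limit gives no $L^\infty$ control on the approximating gradients. This uniform bound is precisely what you use to get $\langle\nabla f,\nabla g_i^n\rangle$ Cauchy in $W^{1,2}$ and $\Hess f(\nabla g_1^n,\nabla g_2^n)$ converging in $L^1$, so the gap is load-bearing. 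In short: the proof idea is the right one, but the two technical pillars — (i) the a priori $W^{1,2}$-regularity of $\langle\nabla f,\nabla g\rangle$ that licenses the integration by parts, and (ii) the construction of a $W^{2,2}$-approximating sequence with uniformly bounded Lipschitz constants — are what Gigli's argument actually consists of, and both are left unestablished here.
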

\begin{remark}
In the case when $f\in D(\Delta)$ is Lipschitz $\Hess f(\nabla g_1,\nabla g_2)$ for $g_1,g_2\in H^{2.2}(X)$ can be computed explicitly by the previous Proposition since $D(\Delta)\subset H^{2,2}(X)$ by the previous remark.
\end{remark}
%
%\begin{theorem}[\cite{giglinonsmooth}]
%Let $f,g\in H^{2,2}(X)$. Then $\Hess f=\Hess g$ $\m$-a.e. on $\left\{f=g\right\}$.
%\end{theorem}
\begin{theorem}[\cite{giglistructure}, \cite{gigtam}]\label{th:secondvariation}
Let $(X,d,\m)$ be a metric measure space that satisfies the condition $RCD(\ke,N)$ for $N<\infty$. Let $\mu_0,\mu_1\in \mathcal{P}^1(X)$ such $\mu_i=\rho_i\m\leq C\m$ for $C>0$ and $i=0,1$, and let $(\mu_t)_{t\in [0,1]}$ be 
the unique $L^2$-Wasserstein geodesic. 
\begin{itemize}
\item[(i)] {\textit First variation formula.} Let $f\in W^{1,2}(X)$. Then, the map $t\in [0,1]\rightarrow \int f d\mu_t$ belongs to $C^1([0,1])$ and for every $t\in [0,1]$ it holds
\begin{align*}
\frac{d}{dt}\int f d\mu_t=\int \langle \nabla f,\nabla \phi\rangle d\mu_t.
\end{align*}
\item[(ii)] {\textit Second variation formula.}
Moreover, let $f\in H^{2,2}(X)$. Then, the map $t\in [0,1]\rightarrow \int fd\mu_t$ belongs to $C^2([0,1])$ and for every $t\in [0,1]$ it holds
\begin{align*}
\frac{d^2}{dt^2}\int fd\mu_t=\int \Hess f(\nabla\phi_t,\nabla \phi_t)d\mu_t
\end{align*}
where $\phi_t$ is the function such that for some $t\neq s\in [0,1]$ the function $-(s-t)\phi_t$ is a Kantorovich potential between $\mu_t$ and $\mu_s$.
\end{itemize}
\end{theorem}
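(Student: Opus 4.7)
The plan is to first reduce both statements to the case of test functions: for the first variation, to $f \in \mathrm{Lip}_b(X) \cap W^{1,2}(X)$ by density, and for the second variation to $f \in \mathbb{D}_\infty$, which is dense in $H^{2,2}(X)$ with respect to the $W^{2,2}$-norm. A crucial preparatory step is to bootstrap the density bound $\mu_i \le C\m$ to a uniform bound $\mu_t \le C'\m$ along the whole geodesic: this follows from the displacement convexity of the $N$-R\'enyi entropy together with the fact that on an essentially non-branching $RCD(K,N)$ space (Remark \ref{rem-equiv-rcd}) Wasserstein geodesics between bounded-density measures are unique and have uniformly bounded densities. Finally, lift $(\mu_t)$ to an optimal dynamical plan $\Pi$ on geodesics with $(e_t)_\sharp\Pi = \mu_t$.

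For the first variation, I would use the representation $\int f\,d\mu_t = \int f(\gamma_t)\,d\Pi(\gamma)$. For Lipschitz $f$ the difference quotient is dominated by $\mathrm{Lip}(f)\cdot L(\gamma)$ with $L(\gamma)^2$ integrable against $\Pi$, so by dominated convergence and the metric chain rule along $\Pi$-a.e.\ geodesic, the derivative exists and equals $\int \langle \nabla f, \nabla\phi_t\rangle\, d\mu_t$, where $\phi_t$ is the Kantorovich potential at time $t$ (whose gradient represents the velocity of the geodesic in $L^2(TX,\mu_t)$ via Gigli's identification of the tangent module). Continuity in $t$ follows from $W_2$-continuity of $(\mu_t)$ together with uniform Lipschitz bounds on $\phi_t$, and the extension from Lipschitz $f$ to $f \in W^{1,2}(X)$ is then via the uniform density bound $\mu_t \le C'\m$ and the inequality $\bigl|\int\langle\nabla f,\nabla\phi_t\rangle d\mu_t\bigr| \le C'\mathrm{Lip}(\phi_t)\|\nabla f\|_{L^2(\m)}$.

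For the second variation, differentiate the first variation formula once more in $t$. The family $(\phi_t)$ is the Hopf--Lax evolution of an initial Kantorovich potential and satisfies the Hamilton--Jacobi inequality $\partial_t \phi_t + \tfrac{1}{2}|\nabla\phi_t|^2 \le 0$, with equality on $\mathrm{supp}(\mu_t)$ in an appropriate weak sense. Formally one then splits
\begin{align*}
\frac{d}{dt}\int \langle\nabla f,\nabla\phi_t\rangle\,d\mu_t = \int \bigl\langle\nabla\langle\nabla f,\nabla\phi_t\rangle,\nabla\phi_t\bigr\rangle\,d\mu_t - \int \bigl\langle\nabla f,\nabla\bigl(\tfrac{1}{2}|\nabla\phi_t|^2\bigr)\bigr\rangle\,d\mu_t.
\end{align*}
Applying Proposition \ref{prop:nuetzlich} with $g_1 = g_2 = \phi_t$ gives the identity
\begin{align*}
\Hess f(\nabla\phi_t,\nabla\phi_t) = \bigl\langle\nabla\phi_t,\nabla\langle\nabla f,\nabla\phi_t\rangle\bigr\rangle - \tfrac{1}{2}\bigl\langle\nabla f,\nabla|\nabla\phi_t|^2\bigr\rangle,
\end{align*}
so the right-hand side is exactly $\int \Hess f(\nabla\phi_t,\nabla\phi_t)\,d\mu_t$. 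Continuity of this expression in $t$, and hence $C^2$-regularity of $t\mapsto \int f\,d\mu_t$, follows from continuity of $(\mu_t)$, the uniform density bound, and $L^2$-continuity of $\Hess f(\nabla\phi_t,\nabla\phi_t)$ provided by Theorem \ref{th:hesscont}.

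The main obstacle I anticipate is the rigorous justification of differentiating under the integral: one must show that $\nabla\phi_t$ makes sense as a uniformly bounded element of $L^2(TX,\mu_t)$ for \emph{every} $t$, that the Hamilton--Jacobi identity $\partial_t\phi_t + \tfrac{1}{2}|\nabla\phi_t|^2 = 0$ holds $\mu_t$-a.e., and that the various products of Sobolev functions appearing after differentiation are themselves in $W^{1,2}(X)$ so that Proposition \ref{prop:nuetzlich} applies. This is where the $RCD(K,N)$ assumption enters decisively, through the Bakry--\'Emery $BE(K,N)$ estimates of Theorem \ref{th:be} (which yield the needed $W^{2,2}$-regularity of Kantorovich potentials via the Hessian bound of Theorem \ref{th:hesscont}) and through the uniform density bounds that allow one to view $\mu_t$-integrals as dominated $\m$-integrals.
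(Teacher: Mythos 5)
This theorem is cited in the paper from \cite{giglistructure} and \cite{gigtam} without a proof, so there is no paper-internal argument to compare against; I will assess your proposal on its own merits against the cited sources.

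Part (i) is in good shape: the lift to a dynamical plan, dominated convergence via the Lipschitz bound and $\int L(\gamma)^2\,d\Pi < \infty$, the metric chain rule along $\Pi$-a.e.\ geodesic, and the extension to $W^{1,2}$ by density plus the uniform bound $\mu_t \le C'\m$ is essentially Gigli's argument.

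Part (ii) has a genuine gap, and you have understated it. Your plan is to differentiate the first variation formula, plug in the Hamilton--Jacobi equation $\partial_t\phi_t + \tfrac12|\nabla\phi_t|^2 = 0$, and apply Proposition \ref{prop:nuetzlich} with $g_1 = g_2 = \phi_t$ to recognize $\Hess f(\nabla\phi_t,\nabla\phi_t)$. But Proposition \ref{prop:nuetzlich} requires $g_1, g_2 \in H^{2,2}(X) \cap \Lip(X)$, and Kantorovich potentials along a Wasserstein geodesic are Lipschitz but are \emph{not} in general in $H^{2,2}(X)$, nor even in the domain of the Laplacian. The Bakry--\'Emery estimates of Theorem \ref{th:be} and the Hessian bound of Theorem \ref{th:hesscont} apply to functions in $D(\Delta)$, and there is no mechanism in your sketch to put $\phi_t$ there. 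Moreover, $|\nabla\phi_t|^2$ need not lie in $W^{1,2}(X)$, so the term $\langle\nabla f,\nabla(\tfrac12|\nabla\phi_t|^2)\rangle$ is not a priori defined. This regularity obstruction is precisely why \cite{gigtam} is a substantial paper: their proof does not differentiate the first variation formula directly but instead replaces the $W_2$-geodesic by an entropic interpolation (Schr\"odinger bridge) at temperature $\epsilon > 0$, along which the relevant Schr\"odinger potentials \emph{do} have the Sobolev regularity needed to run the computation you wrote down, and then passes to the limit $\epsilon \to 0$ using uniform estimates coming from the curvature-dimension condition. Without some such regularization scheme your step from the first variation formula to the Hessian identity does not go through.
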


%
%\noindent 
%As part of the proof 
%of the previous result one also gets that for every $f\in \mathbb{D}_{\infty}$ we have $|\nabla f|^2\in W^{1,2}(X)$. Then one easily checks by integration by parts that for $f\in \mathbb{D}_{\infty}$
%\begin{align*}
%2\Hess f(g_1,g_2)=-\langle \nabla f,\nabla \langle\nabla g_1,\nabla g_2\rangle\rangle + \langle \nabla g_1,\nabla \langle \nabla f,\nabla g_2\rangle\rangle +\langle \nabla g_2,\nabla \langle \nabla f,\nabla g_1\rangle\rangle.
%\end{align*}
%Moreover, Savar\'e and Gigli derive the following. 
\begin{theorem}[\cite{savareself, giglinonsmooth}]If $(X,d,\m)$ satisfies the condition $RCD(\ke,\infty)$, and $f\in \mathbb{D}_{\infty}\subset W^{2,2}(X)$, then $|\nabla f|^2\in W^{1,2}(X)\cap D({\bf \Delta})$ and
an improved Bochner formula holds in the sense of measures involving the Hilbert-Schmidt norm of the Hessian of $f$:
\begin{align*}
{\bf \Gamma}_2(f):=\frac{1}{2}{\bf \Delta}|\nabla f|^2- \langle\nabla f,\nabla \Delta f\rangle \m \geq \left[\ke |\nabla f|^2 + |\Hess f|_{HS}^2 \right]\m
\end{align*}
where ${\bf\Delta}$ is the measure valued Laplace operator, and ${\bf \Gamma}_2$ is called {\it measure valued $\Gamma_2$-operator}. In particular, the singular part of the left hand side is non-negative.
\end{theorem}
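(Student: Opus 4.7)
The plan is to establish the measure-valued improved Bochner inequality via the self-improvement technique, due originally to Bakry in the smooth setting and developed in the RCD framework by Savaré and later refined by Gigli. Since $(X,d,\m)$ is $RCD(\ke,\infty)$, Theorem~\ref{th:be} yields the weak Bochner inequality $BE(\ke,\infty)$, which reads
\[
\Gamma_2(f;\phi) \;\geq\; \ke\int |\nabla f|^2 \phi\, d\m
\]
for every $f\in\mathbb{D}_\infty$ and every non-negative $\phi\in D_{L^\infty}(\Delta)\cap L^\infty(\m)$. This is the starting point, and I would proceed in three steps.

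\medskip

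\noindent\textbf{Step 1: Produce the measure $\mathbf{\Delta}|\nabla f|^2$.} For $f\in\mathbb{D}_\infty$ one has $\Delta f\in L^\infty(\m)$, hence $\langle \nabla f,\nabla\Delta f\rangle\in L^1(\m)$, and the functional
\[
\phi \;\longmapsto\; \tfrac{1}{2}\int |\nabla f|^2\Delta\phi\,d\m \;-\; \int\langle\nabla f,\nabla\Delta f\rangle\phi\,d\m \;-\; \ke\int |\nabla f|^2\phi\,d\m
\]
is non-negative on non-negative test $\phi$ by BE. A density/duality argument, exploiting boundedness of $|\nabla f|^2$ and the richness of $\Delta$-images inside $L^\infty$, extends it to a non-negative Radon measure $\nu$. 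Setting
\[
\tfrac{1}{2}\mathbf{\Delta}|\nabla f|^2 \;:=\; \nu + \langle\nabla f,\nabla\Delta f\rangle\m + \ke|\nabla f|^2\m
\]
defines $\mathbf{\Delta}|\nabla f|^2$ as a signed Radon measure, places $|\nabla f|^2$ in $D(\mathbf{\Delta})$, and yields the weak Bochner inequality in measure form $\mathbf{\Gamma}_2(f)\geq\ke|\nabla f|^2\m$. The $W^{1,2}$-regularity of $|\nabla f|^2$ follows from the integrated $L^2$-Bochner inequality together with the Leibniz rule, using that $f$ and $|\nabla f|$ are bounded.

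\medskip

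\noindent\textbf{Step 2: Self-improve to include $|\Hess f|_{HS}^2$.} This is the heart of the matter. The idea is to apply BE not to $f$ alone but to $\Phi\circ f$ (or, dually, to pair the Bochner inequality for $f$ with gradients of products $g\,h$ with $g,h\in\mathbb{D}_\infty$), use the chain rules of Proposition~\ref{prop:chainrulelaplacian} and Proposition~\ref{prop:chainrule} to expand $\Delta(\Phi\circ f)$ and $\Hess(\Phi\circ f)$, and then collect coefficients. The resulting inequality is a quadratic form in $(\Phi'(f),\Phi''(f))$, and imposing non-negativity for \emph{all} $\Phi\in C^2$ with bounded derivatives forces a pointwise strengthening: the bound $\ke|\nabla f|^2$ must in fact be replaced by $\ke|\nabla f|^2+|\Hess f|_{HS}^2$. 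To identify the extra term with the Hilbert--Schmidt norm of $\Hess f$ as defined in Gigli's calculus, one uses the polarization identity of Proposition~\ref{prop:nuetzlich} together with the fact that $T\mathbb{D}_\infty$ generates $L^2(TX)$ (Remark~\ref{rem:generator}), evaluating the bilinear form so obtained against finite sums $V=\sum_i g_i\nabla h_i$ and passing to the supremum over orthonormal collections $V_1,\dots,V_k$ of such fields.

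\medskip

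\noindent\textbf{Step 3: Conclude.} The right-hand side $(\ke|\nabla f|^2+|\Hess f|_{HS}^2)\m$ is absolutely continuous w.r.t.\ $\m$ and belongs to $L^1(\m)$ by Theorem~\ref{th:hesscont}. Hence the measure inequality $\mathbf{\Gamma}_2(f)\geq (\ke|\nabla f|^2+|\Hess f|_{HS}^2)\m$ forces the singular part of $\mathbf{\Gamma}_2(f)$, equivalently of $\mathbf{\Delta}|\nabla f|^2$, to be non-negative, which is the final assertion.

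The main obstacle lies squarely in Step 2. In the smooth setting Bakry's trick is algebraic and pointwise; in the non-smooth setting $\Hess f$ exists only $\m$-a.e.\ and is constructed via (\ref{hessian}) from the Hilbert-module calculus, so the self-improvement must be carried out while controlling limits in $L^0(\m)$, verifying that all chain-rule identities have sufficient regularity, and ensuring that the pool of test vector fields is rich enough to recover $|\Hess f|_{HS}^2$ and not merely a weaker trace-type quantity. This careful execution is what occupies the bulk of \cite{savareself} and \cite{giglinonsmooth}, and a complete proof consists essentially of working through that machinery while keeping track of the measure-valued refinements.
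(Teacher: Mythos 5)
The paper does not actually prove this theorem: it is quoted verbatim with citations to \cite{savareself,giglinonsmooth}, so there is no in-paper proof to compare against. Your sketch correctly identifies the strategy used in those references — starting from the weak Bochner inequality $BE(\ke,\infty)$ (Theorem~\ref{th:be}), upgrading it to a measure-valued inequality via a positivity/Riesz representation argument, and then running Bakry's self-improvement to make $|\Hess f|_{HS}^2$ appear — and Steps~1 and~3 are essentially right.

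The place where your sketch is too loose to count as a proof is precisely where you say the difficulty lies, but the imprecision is more specific than you indicate. Applying $BE$ to $\Phi\circ f$ for a single $f$ and varying $\Phi$ can only ever produce the quantity $\Hess f(\nabla f,\nabla f)^2/|\nabla f|^2$, i.e.\ the contraction of the Hessian in the radial direction $\nabla f$; this is strictly weaker than $|\Hess f|_{HS}^2$ except in one dimension. To recover the full Hilbert--Schmidt norm one must run the self-improvement multilinearly, applying the Bochner inequality to polynomial expressions $P(f_1,\dots,f_m)$ in several test functions and then optimizing over the coefficients, so as to extract the off-diagonal entries $\Hess f(\nabla g,\nabla h)$. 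You gesture at this with the parenthetical ``or, dually, pair \dots with gradients of products $g\,h$'' and later with ``evaluating against finite sums $V=\sum_i g_i\nabla h_i$,'' but your text presents the one-function and the multilinear routes as interchangeable alternatives, when in fact the first is simply insufficient and the second is mandatory. Sorting this out (and verifying the $L^0$/$L^2$ limits and chain-rule regularity along the way) is the content of the references; as written, your Step~2 is an accurate roadmap but not an argument.
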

In the context of $RCD(\ke,N)$-spaces with finite $N$ the previous theorem was improved by Han \cite{hanriccitensor}, and in particular, he obtains the following. 
\begin{theorem}[\cite{hanriccitensor}]\label{th:tracelaplace}
Let $(X,d,\m)$ be a metric measure space that satisfies the condition $RCD(\ke,N)$ with $N<\infty$, and let $\left\{A_k\right\}_{k\in\mathbb{N}\cup\left\{\infty\right\}}$ be its dimensional decomposition. If 
$A_N\neq \emptyset$ and therefore $N\in\mathbb{N}$, then for any $f\in \mathbb{D}_{\infty}$ we have that $\Delta f=\tr\Hess f$ $\m$-a.e. in $A_N$. More precisely, if $B\subset A_N$  is a set of finite measure and
$(e_i)_{i=1,\dots,N}$ is a unit orthogonal basis on $B$, then $$\Delta f|_B=\sum_{i=1}^N \Hess f(e_i1_B,e_i1_B)=\sum_{i=1}^N\Hess f(e_i,e_i)1_{B}=:[\tr \Hess f]|_B.$$
\end{theorem}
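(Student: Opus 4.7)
The plan is to establish a dimensionally sharpened Bochner inequality carrying a ``trace-defect'' term, and then to observe that on the top-dimensional stratum $A_N$ the refinement degenerates and forces $\Delta f=\tr\Hess f$. The two ingredients I would combine are the improved Bochner inequality from the preceding theorem,
\begin{align*}
{\bf\Gamma}_2(f)\geq \bigl[K|\nabla f|^2+|\Hess f|_{HS}^2\bigr]\m,
\end{align*}
together with the Bakry--\'Emery condition $BE(K,N)$ from Theorem~\ref{th:be}, which in pointwise (measure) form gives
\begin{align*}
{\bf\Gamma}_2(f)\geq \bigl[K|\nabla f|^2+\tfrac{1}{N}(\Delta f)^2\bigr]\m.
\end{align*}
The target is the self-improved inequality on a finite-measure Borel set $B\subset A_k$ (for $k<N$) equipped with a unit orthogonal basis $(e_i)_{i=1,\dots,k}$:
\begin{align}\label{eq:sharper}
{\bf\Gamma}_2(f)\geq \left[K|\nabla f|^2+|\Hess f|_{HS}^2+\frac{(\Delta f-\tr\Hess f)^2}{N-k}\right]\m\quad\text{on }B.
\end{align}

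To derive \eqref{eq:sharper} I would apply the Savar\'e--Gigli self-improvement technique of \cite{savareself,giglinonsmooth}: plug $f+\lambda g$ with $g\in \mathbb{D}_\infty$ into $BE(K,N)$, and exploit non-negativity of the resulting quadratic in $\lambda$ to extract a bilinear Bochner-type inequality coupling the Hessians and Laplacians of $f$ and $g$. Specializing to variations $g$ whose gradient $\nabla g$ sweeps out the orthonormal frame $(e_i)_{i\leq k}$ on $B$, and decomposing $\Delta f=\tr\Hess f+(\Delta f-\tr\Hess f)$, a pointwise matrix computation yields the extra term $\frac{(\Delta f-\tr\Hess f)^2}{N-k}$. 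The algebraic identity driving the denominator is that for any symmetric $k\times k$ matrix $H$ and any $\tau\in\mathbb{R}$, the minimum of $|\tilde H|_{HS}^2$ over symmetric $N\times N$ extensions $\tilde H$ of $H$ with $\tr\tilde H=\tau$ equals $|H|_{HS}^2+\tfrac{(\tau-\tr H)^2}{N-k}$, attained at the block matrix $\tilde H=H\oplus\tfrac{\tau-\tr H}{N-k}I_{N-k}$.

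To finish I would push $k$ up to $N$. On $A_N$ the denominator in \eqref{eq:sharper} vanishes, so a nonzero defect $\Delta f-\tr\Hess f$ on a positive-measure subset would force ${\bf\Gamma}_2(f)$ to have infinite density there. Since $f\in \mathbb{D}_\infty$ guarantees $\Gamma_2(f)\in L^1(\m)$ by Theorem~\ref{th:hesscont} (and therefore that ${\bf\Gamma}_2(f)$ is a finite signed Radon measure), we conclude $\Delta f=\tr\Hess f$ $\m$-a.e.\ on $A_N$. The localized formula $\Delta f|_B=\sum_{i=1}^N\Hess f(e_i,e_i)1_B$ then follows from the definition of the trace in a unit orthogonal basis combined with Remark~\ref{rem:pyth}.

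The main obstacle will be the self-improvement step above: one must handle the singular part of ${\bf\Gamma}_2(f)$ carefully, justify the passage from an integrated inequality against non-negative test functions $\phi\in \mathbb{D}_\infty$ to a pointwise $\m$-a.e.\ bound, and carry out the module-theoretic linear algebra in a way compatible with the local-dimension stratification $\{A_k\}_{k\in\mathbb{N}\cup\{\infty\}}$ of $L^2(TX)$.
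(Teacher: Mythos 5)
The paper does not prove this theorem: it is stated as a citation to Han \cite{hanriccitensor}, so there is no internal proof to compare against. Your proposal, however, is a faithful reconstruction of Han's argument: the sharpened Bochner inequality
\[
{\bf\Gamma}_2(f)\geq \Bigl[K|\nabla f|^2+|\Hess f|_{HS}^2+\tfrac{(\Delta f-\tr\Hess f)^2}{N-k}\Bigr]\m\quad\text{on }A_k,\ k<N,
\]
obtained by running the Savar\'e self-improvement machinery with the dimensional term $\tfrac1N(\Delta f)^2$ retained, is precisely Han's main inequality, and the finite-dimensional linear-algebra fact you invoke (minimal Hilbert--Schmidt norm of a trace-constrained $N\times N$ extension of a $k\times k$ block) is the correct engine behind the $\tfrac{1}{N-k}$ coefficient. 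The appeal to Theorem~\ref{th:hesscont} to guarantee that ${\bf\Gamma}_2(f)$ is a finite measure is also the right way to close the argument.

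Two caveats. First, the proposal states the sharpened Bochner inequality as a target rather than deriving it; the actual self-improvement step, which requires testing $BE(K,N)$ against carefully chosen polynomial combinations $\Phi(f,g_1,\dots,g_m)$ and extracting the bilinear form in the presence of the $\tfrac1N(\Delta f)^2$ term, is the entire technical content of Han's proof and is not carried out here. Second, the phrase ``push $k$ up to $N$'' is misleading: the strata $A_k$ are disjoint, so nothing is being taken to a limit. What one actually needs is to establish the family of inequalities
\[
{\bf\Gamma}_2(f)\geq \Bigl[K|\nabla f|^2+|\Hess f|_{HS}^2+\lambda\bigl(\Delta f-\tr\Hess f\bigr)-\tfrac{\lambda^2(N-k)}{4}\Bigr]\m\quad\text{on }A_k,\ \forall\lambda\in\R,
\]
which for $k<N$ is equivalent to your \eqref{eq:sharper} after optimizing in $\lambda$, but for $k=N$ makes literal sense and, since $\lambda$ is arbitrary and ${\bf\Gamma}_2(f)$ has finite density on $A_N$, forces $\Delta f=\tr\Hess f$ $\m$-a.e.\ there. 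With that replacement your blueprint is correct and matches the cited argument.
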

\begin{corollary}\label{cor:trace}
Let $(X,d,\m)$ be a metric measure space that satisfies the condition $RCD(\ke,n)$ with $\m=\mathcal{H}^n$ and $n\in\mathbb{N}$. Then $A_k=\emptyset$ for $k\neq n$, and for any $f\in D_{L^2}(\Delta)$ we have that
$\Delta f=\tr\Hess f$ $\m$-almost everywhere in the sense of the previous theorem.
\end{corollary}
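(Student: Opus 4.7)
The plan is to reduce to the previous theorem (Han's result) by showing the dimensional decomposition collapses to a single piece, and then to bootstrap from $\mathbb{D}_\infty$ to $D_{L^2}(\Delta)$ by density.

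First, I would verify that $A_k = \emptyset$ (in the $\m$-a.e. sense) for every $k\neq n$. For $k > n$ this is already part of the rectifiability theorem quoted before the statement, which asserts $\m(A_k)=0$ when $k > N = n$. For $k < n$, by strong $\m$-rectifiability there is an $\m$-partition $\{U_i\}$ of $A_k$ together with $(1+\eps)$-biLipschitz maps $\phi_i : U_i \to \mathbb{R}^k$ satisfying $(\phi_i)_\sharp(\m|_{U_i}) \ll \mathcal{L}^k$. Since biLipschitz maps preserve Hausdorff dimension and $\phi_i(U_i) \subset \mathbb{R}^k$, each $U_i$ has Hausdorff dimension at most $k < n$, hence $\mathcal{H}^n(U_i) = 0$. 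Summing over $i$ and using $\m = \mathcal{H}^n$, we conclude $\m(A_k) = 0$. Thus $X = A_n$ up to an $\m$-negligible set, so the previous theorem applies with the whole space in place of $B$ (exhausted by sets of finite measure).

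Now Theorem~\ref{th:tracelaplace} gives, for every $f\in \mathbb{D}_\infty$, the identity $\Delta f = \tr \Hess f$ $\m$-a.e. on $X$. To upgrade this to $f \in D_{L^2}(\Delta)$, I would use Remark~\ref{rem:H22}: $D(\Delta)$ is contained in the $W^{2,2}$-closure of $\mathbb{D}_\infty$, and in fact one can approximate $f \in D_{L^2}(\Delta)$ by $f_k \in \mathbb{D}_\infty$ (for instance via the Mollified heat semigroup) so that $f_k \to f$ in the graph norm of $\Delta$, i.e. $f_k \to f$ in $L^2$ and $\Delta f_k \to \Delta f$ in $L^2$. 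Combined with the $W^{2,2}$-continuity bound $\||\Hess g|_{HS}\|_{L^2}^2 \leq \|\Delta g\|_{L^2}^2 - \ke\||\nabla g|\|_{L^2}^2$ from Theorem~\ref{th:hesscont} applied to $f_k - f_j$, the sequence $(\Hess f_k)$ is Cauchy in $L^2((T^*X)^{\otimes 2})$ and converges to $\Hess f$.

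The last step is to pass the trace identity to the limit. Picking locally a unit orthogonal basis $e_1,\dots,e_n$ on a set $B$ of finite measure (provided by Proposition~\ref{prop:unitorth}), one has $\tr \Hess g|_B = \sum_{i=1}^n \Hess g(e_i, e_i) 1_B$, and the Cauchy–Schwarz bound $|\tr\Hess g|_B \leq \sqrt{n}\, |\Hess g|_{HS} 1_B$ gives
\begin{align*}
\|\tr \Hess f_k - \tr \Hess f\|_{L^2(B)} \leq \sqrt{n}\, \||\Hess f_k - \Hess f|_{HS}\|_{L^2(B)} \longrightarrow 0.
\end{align*}
Combining this with $\Delta f_k \to \Delta f$ in $L^2$ and the identity $\Delta f_k = \tr \Hess f_k$ $\m$-a.e., we obtain $\Delta f = \tr \Hess f$ $\m$-a.e. on $B$, and exhausting $X$ by such sets $B$ yields the conclusion. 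The only real subtlety here is the independence of the trace from the chosen local orthonormal basis, which follows from the usual change-of-basis argument in the finite-dimensional Hilbert module $L^2(TX)|_{A_n}$, so there is no serious obstacle; the main technical point is simply verifying the $W^{2,2}$-approximation of elements of $D(\Delta)$ by test functions, which is exactly the content of Remark~\ref{rem:H22}.
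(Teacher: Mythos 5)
Your proposal is correct and takes essentially the same approach as the paper: the paper likewise approximates $f \in D_{L^2}(\Delta)$ by a sequence $\phi_j \in \mathbb{D}_\infty$ in the graph norm, applies Theorem~\ref{th:tracelaplace} to each $\phi_j$, and passes to the limit on a finite-measure subset $B \subset A_n$ using the Cauchy--Schwarz bound $|\tr[\Hess(\phi_j - f)]|_B| \le \sqrt{n}\,|\Hess(\phi_j - f)|_{HS}$ together with the Hessian estimate of Theorem~\ref{th:hesscont}. The only difference is that you spell out the reason $\m(A_k)=0$ for $k<n$ (via the Hausdorff-dimension argument for the biLipschitz charts), which the paper dismisses as ``clear from the assumptions''; this is a welcome clarification but not a change of strategy.
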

\begin{proof} The first claim is clear from the assumptions.
Let $f\in D_{L^2}(\Delta)$, and consider a sequence $\phi_j\in \mathbb{D}_{\infty}$ that approximates $f$ in $D_{L^2}(\Delta)$. Moreover, let $B\subset A_n$ be a Borel set of finite measure and let $(e_i)_{i=1,\dots,n}$ be a unit orthogonal basis and 
$\tr$ be the corresponding trace.
It follows
\begin{align*}
&\int 1_B (\Delta f- \tr \Hess f|_B)^2 d\m = \int 1_B (\Delta f - \Delta\phi_j + \tr\Hess\phi_j|_B - \tr\Hess f|_B)^2d\m\\
&\hspace{3cm}\leq 2\int 1_B\left(\Delta f - \Delta\phi_j\right)^2d\m + 2\int 1_B\left(\tr[\Hess (\phi_j-f)]|_B\right)^2d\m\\
&\hspace{3cm}\leq 2\left\|\Delta f-\Delta\phi_j\right\|^2_{L^2(\m)} + n\left\||\Hess (f-\phi_j)|_{HS}\right\|^2 \leq \epsilon
\end{align*}
where the last inequality holds for arbitrary $\epsilon>0$ provided $j$ is sufficiently large. We obtain that $\Delta f|_{B}=\tr \Hess f|_B$ $\m$-a.e. and therefore the claim.
\end{proof}

\subsection{Upper and lower sectional curvature bounds for metric spaces}\label{CAT and CBB}
We recall the following notions of spaces with curvature bounded below (above) for geodesic metric spaces

\begin{definition}
We say that a complete geodesic metric space $(X,d)$ is $CBB(\kappa)$ or  has curvature bounded  below by $\kappa\in\mathbb{R}$ (respectively is $\CAT(k)$)  if for any triple of points $x,y,z\in X$ with $d(x,y)+d(y,z)+d(z,x)<2\pi_\kappa$
the following condition holds.

For any geodesic $[xz]$ and $q\in ]x z[$, we have

\begin{equation} \label{point-on-a-side-comp}
d(y,q)\ge d(\bar y, \bar q)\, (\text{ respectively }d(y,q)\le d(\bar y, \bar q))
\end{equation}

where  $\triangle(\bar{x},\bar{y},\bar{z})_{\S2k}$ is a comparison triangle in $\S2k$ with $d(\bar x,\bar y)=d(x,y), d(\bar x,\bar z)=d(x,z), d(\bar z,\bar y)=d(z,y)$ and $\bar q\in  ]\bar x \bar z[$ satisfies $d(\bar q, \bar x)=d(q,x)$.

If $\uk>0$ and a $X$ is a 1-dimensional manifold with possibly nonempty boundary for $X$ to be $CBB(\uk)$ we additionally require that $\diam X\le \pi_\kappa$.

Property \eqref{point-on-a-side-comp} is equivalent to saying that for any  unit speed geodesic $\gamma : [0,l]\to X$ 
% $\log\left[-\cos_{\kappa} d_x\right]\circ\gamma$ is $(\kappa,1)$-convex in the sense of Definition \ref{def:metricconvex} 
such that 
\begin{equation}\label{unique-comp-tr}
d(y,\gamma(0))+l+d(\gamma(l),y)<2\pi_k,
\end{equation}
it holds that 

\begin{align}\label{CBB-CBA-mdk}
\left[\md_{\kappa}(d_y\circ\gamma)\right]''+\md_{\kappa}(d_y\circ\gamma)\leq 1 (\text{ respectively } \ge 1)
\end{align}

A reformulation of this inequality is - taking into account the definition of $\md_{\kappa}$ via $\cos_{\kappa}$ - 
\begin{align*}
\left[\frac{1}{\kappa}\cos_{\kappa}(d_y\circ\gamma)\right]''\geq & - \cos_{\kappa}(d_y\circ\gamma)\ \  (\text{respectively } \le \cos_{\kappa}(d_y\circ\gamma) )  \ \mbox{ if }\ \kappa\neq 0 \ \ \&\ \\
\left[\frac{1}{2}d_y^2\circ\gamma\right]'' \leq& 1\ \  (\ge 1)\ \ \mbox{ if }\ \kappa=0.
\end{align*}

\end{definition}

In particular, $(X,d)$ has $\curv\ge 0$ ( is $\CAT(0)$)   if and only if for any $y\in X$ the function $\frac{1}{2}d_y^2$ is $1$-concave ($1$-convex).

We will refer to CBB version of inequality  \eqref{point-on-a-side-comp} as  ${\eqref{point-on-a-side-comp}}_{(CBB)}$ and to the CAT version of it as   ${\eqref{point-on-a-side-comp}}_{(CAT)}$. We will employ the same convention for ~\eqref{CBB-CBA-mdk}.
\begin{comment}
\begin{remark}\label{mdk-on-X-miuns y}
Since ${\eqref{point-on-a-side-comp}}_{(CBB)}$ and ${\eqref{CBB-CBA-mdk}}_{(CBB)}$ always hold for degenerate triangles for trivial reasons, $X$ satisfies $CBB(\kappa)$ iff property   ${\eqref{CBB-CBA-mdk}}_{(CBB)}$ holds for all $y,\gamma$ satisfying \eqref{unique-comp-tr} such that $\gamma$ does not pass through $y$.
\end{remark}
 
 \end{comment}
\begin{comment}
Moreover -- by straightforward computations -- $(X,d)$ has curvature bounded from below by $\kappa$ for $\kappa> 0$ if and only if the function $v:=-\log(\frac{1}{\kappa}\cos_{\kappa} d_y):X\rightarrow \mathbb{R}$ is $(\kappa,1)$-concave in the sense of Definition \ref{def:metricconvex}. 
More precisely, for any $1$-speed geodesic $\gamma$ the function $v:=-\log(\frac{1}{\kappa}\cos_{\kappa}d_y)\circ \gamma$ satisfies 
$
v''\leq \kappa+(v')^2.
$
\end{comment}

\begin{remark}\label{cont-geod}
It's immediate from the definition that in a $CAT(\kappa)$ space $X$ geodesics of length $<\pi_\kappa$ are unique and depend continuously on their endpoints. Also, local geodesic of length $<\pi_\kappa$ are \emph{distance minimizing}, i.e. are geodesics.
\end{remark}
 \begin{definition}
 We say that a complete geodesic space $(X,d)$ is $CBA(\kappa)$ (has curvature bounded above by $\kappa$ ) if for every point $p\in X$ there is $r_p>0$ such that  ${\eqref{point-on-a-side-comp}}_{(CAT)}$  holds for any $x,y,z\in B_{r_p}(p)$.
 
 \end{definition}
 \begin{remark}\label{rem:convex}
 In the above definition of $CBA(\kappa)$ we do not require that the geodesics $[xy], [xz], [yz]$ lie in $B_{r_p}(p)$. However, it immediately follows from  ${\eqref{point-on-a-side-comp}}_{(CAT)}$  that $B_r(p)$ is convex for any $r<\min(r_p,\pi_k/2)$.  If $X$ is $\CAT(\kappa)$ this gives that $B_r(p)$ is convex and $\bar B_r(p)$ is $\CAT(\kappa)$ for any $r<\pi_\kappa/2$.

  \end{remark}
%or $\md_{\kappa}\circ\gamma:[0,|\dot{\gamma}|]\rightarrow \mathbb{R}$ satisfies 

\begin{example}
The standard sphere $\mathbb{S}^n$ and any geodesically convex subsets of $\mathbb{S}^n$ are $\CAT(1)$. On the other hand, for $n\ge 2$  any non simply connected $n$-manifold of $\sec\equiv 1$ (e.g. $\mathbb{PR}^n$) is $CBA(1)$ but not $\CAT(1)$.
\end{example}

\begin{remark}\label{rem:distconv}
It follows directly from the definition of $CAT(\kappa)$ and from the corresponding computations in $\S2k$ that if $X$ is $CAT(\kappa)$ then $d_y$,  and $\md_{\kappa}(d_y)$ are \emph{convex} in $B_{\pi_k/2}(y)$ for any $y$ in $X$. 
\end{remark}
\noindent

\begin{remark}\label{rem-globalization}
Similarly to the definition of $CBA(\kappa)$ one can define locally $CBB(\kappa)$ spaces by requiring that they satisfy   ${\eqref{point-on-a-side-comp}}_{(CBB)}$  locally. However, it turns out that this is equivalent to requiring that they satisfy ${\eqref{point-on-a-side-comp}}_{(CBB)}$  \emph{globally} by the Globalization Theorem~\cite{bgp}.
\end{remark}

If $(X,d)$ is $CAT(\uk_1)$ ($CBB(\uk_1)$)  and $\uk_1\le \uk_2$ ( $\uk_1\ge \uk_2$) then $(X,d)$ is $CAT(\uk_2)$ ($CBB(\uk_2)$).

If $(X,d)$ is $CAT(\kappa)$ ($CBB(\kappa)$) then $(X,\lambda  d)$ is $CAT(\kappa/\lambda^2)$ ($CBB(\kappa/\lambda^2)$). Therefore after appropriate rescaling any $CAT(\kappa)$ ($CBB(\kappa)$) becomes $\CAT(1)$ ($CBB(-1)$).

% By rescaling this automatically generalizes to general positive upper curvature bounds, and also to non-positive curvature. We collect several important facts about $\CAT(1)$-spaces.
\begin{theorem}[\cite{bbi} Theorem 4.7.1]
Let $(X,d)$ be a complete geodesic space. Then the euclidean cone $C(X)$ over $(X,d)$ is $\CAT(0)$  ($CBB(0)$) if and only if $(X,d)$ is $\CAT(1)$ ($CBB(1)$).
\end{theorem}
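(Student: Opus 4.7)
The plan is to reduce comparison problems in $C(X)$ to comparison problems in $X$ via the defining identity
\[
d_{C(X)}\bigl((t,x),(s,y)\bigr)^2 = t^2+s^2-2ts\cos\bigl(\min(d(x,y),\pi)\bigr),
\]
which is exactly the Euclidean law of cosines for a planar triangle with sides $t,s$ meeting at the ``angle'' $\bar d(x,y):=\min(d(x,y),\pi)$. The analogous identity with the spherical law of cosines in $\mathbb{S}^2$ (apex at the north pole) shows that the ``unit link'' $\{1\}\times X\hookrightarrow C(X)$ plays the role of unit tangent directions, and cone distances are a monotone function of $\bar d(x,y)$ for fixed radii.

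For the $\CAT$ direction $X$ is $\CAT(1)\Rightarrow C(X)$ is $\CAT(0)$: fix a geodesic triangle $p_0p_1p_2$ in $C(X)$. If one vertex is the apex $o$, the comparison reduces to a direct calculation on the radial rays $[op_j]$. Otherwise write $p_i=(t_i,x_i)$. I would first analyse geodesics: a geodesic in $C(X)$ between two non-apex points projects radially to a geodesic in $X$ of length $<\pi$ when $d(x_i,x_j)<\pi$, and otherwise passes through $o$; in the second case the $\CAT(0)$ comparison follows from the triangle inequality after splitting at $o$. So we may assume all pairwise distances $d(x_i,x_j)$ are $<\pi$ and the perimeter of $x_0x_1x_2$ is $<2\pi$. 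Then $\CAT(1)$ produces a spherical comparison triangle $\tilde x_0\tilde x_1\tilde x_2\subset\mathbb{S}^2$, and I would build the Euclidean model $\tilde p_i:=t_i\tilde x_i\in\mathbb{R}^3$: the Euclidean law of cosines matches the cone formula, so the $\tilde p_i$ form a valid Euclidean comparison triangle for $p_0p_1p_2$. For $q\in[p_1p_2]$, writing $q=(s,z)$ with $z$ on the $X$-geodesic from $x_1$ to $x_2$, the $\CAT(1)$ comparison for $x_0,x_1,x_2$ at $z$ gives $d_{\mathbb{S}^2}(\tilde x_0,\tilde z)\ge d(x_0,z)$, and the monotonicity of $r\mapsto t_0^2+s^2-2t_0s\cos r$ converts this to $|\tilde p_0-\tilde q|\ge d_C(p_0,q)$.

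For the converse $C(X)$ is $\CAT(0)\Rightarrow X$ is $\CAT(1)$, I would invert the construction: any spherical triangle in $X$ of perimeter $<2\pi$ lifts to a triangle on the unit link $\{1\}\times X$, $\CAT(0)$ comparison in $C(X)$ produces a Euclidean model triangle, and the required spherical comparison triangle in $\mathbb{S}^2$ is recovered by placing unit vectors in the directions of the three Euclidean vertices; the law-of-cosines dictionary then converts Euclidean distance comparisons into spherical ones. The $CBB$ version of the theorem is proved by the same bookkeeping with every inequality reversed, using the characterization of $CBB(0)$ via $1$-concavity (rather than $1$-convexity) of $\tfrac12 d_o^2$ from Remark \ref{rem:distconv}; the apex-splitting reduction is simpler in this case because $CBB$ comparison is inherently global.

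The main obstacle is the ``large triangle'' case, where the projected triangle in $X$ has some side of length $\ge\pi$ or total perimeter $\ge 2\pi$, so that some cone geodesic degenerates through $o$ or no spherical comparison triangle exists in $\mathbb{S}^2$. Carefully reducing such configurations to apex-passing sub-triangles, and checking that the Euclidean model built from the spherical comparison really is the Euclidean comparison triangle of $p_0p_1p_2$, is the heart of the argument.
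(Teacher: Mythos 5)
The paper does not give a proof of this statement; it is quoted verbatim from Burago--Burago--Ivanov, Theorem~4.7.1, and used as a black box. Your sketch reproduces the standard argument from that reference --- the Euclidean/spherical law-of-cosines dictionary for cone distances, spherical comparison triangles for the link, radial development of cone geodesics, and apex-splitting for the degenerate large-triangle case --- so it matches the cited source's approach rather than any argument in this paper.
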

Let $X$ be $CAT(\kappa)$ and let $y\in X$.

By remark~\ref{rem:distconv}  $d_y$,  $\md_{\kappa}(d_y)$ are convex in $B_{\pi_k/2}(y)$. Together with the first variation formula  \cite[Chapter 4]{bbi} this shows that they  admit obvious  (inverse) gradient flows on $B_{\pi_k/2}(y)$ following appropriately parameterized  unique geodesics connecting points in $B_{\pi_k/2}(y)$ to $y$.
Moreover, the $CAT(\kappa)$ condition  implies that all three  of these  flows of are $1$-Lipschitz on  $B_{\pi_k/2}(y)$ for any positive time $t$ since this holds in $\S2k$.
\begin{lemma}\label{lem-contr-flow}
Let $X$ be $CAT(\kappa)$ with $\diam X<\pi_\kappa/2$. Let $y\in X$ and let  $\Phi_t$ be the gradient flow of $\md_{\kappa}(d_y)$ and $\Psi_t$ be the gradient flow of $d_y$. Then for any $n\ge 1, t\ge 0$ we have

\begin{enumerate}
\item For any Borel $A\subset X$ it holds that $\mathcal{H}^n(\Phi_t(A))\le \mathcal{H}^n(A)$.
\item For any  Borel $A\subset \Phi_t(X)$
\begin{align*}
(\Phi_t)_{\star}\mathcal{H}^n(A)\geq \mathcal{H}^n(A).
\end{align*}
\end{enumerate}
Furthermore, the same properties hold for $\Psi_t$.
\end{lemma}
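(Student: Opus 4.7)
The plan is to reduce both assertions to the fact, recorded in the paragraph preceding the lemma, that $\Phi_t$ and $\Psi_t$ are $1$-Lipschitz self-maps of $X$. Since $\diam X < \pi_\kappa/2$, the whole space is contained in a ball of the form $B_{\pi_\kappa/2}(y)$ (in fact in every such ball), so these flows are $1$-Lipschitz on all of $X$, and they are Borel (indeed continuous) maps $X \to X$ for every $t \ge 0$; for $\Psi_t$ an orbit that has already arrived at $y$ simply stays there, which does not affect the Lipschitz bound.

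For part (1) I invoke the standard inequality $\mathcal{H}^n(f(A)) \le L^n \mathcal{H}^n(A)$ for any $L$-Lipschitz map $f$, which follows at once from the definition of Hausdorff measure by coverings: the image of a cover $\{U_i\}$ of $A$ is a cover of $f(A)$ with $\diam f(U_i) \le L \diam U_i$. Applied with $L=1$ and $f=\Phi_t$, this yields $\mathcal{H}^n(\Phi_t(A)) \le \mathcal{H}^n(A)$. The same reasoning works verbatim for $\Psi_t$.

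For part (2) I deduce the claim from (1) by a preimage trick. Fix a Borel set $A \subset \Phi_t(X)$, and set $B := \Phi_t^{-1}(A)$, which is Borel since $\Phi_t$ is continuous. Because $A$ lies in the image of $\Phi_t$, we have $\Phi_t(B) = A$, and applying part (1) to $B$ gives
\[
\mathcal{H}^n(A) \;=\; \mathcal{H}^n(\Phi_t(B)) \;\le\; \mathcal{H}^n(B) \;=\; (\Phi_t)_{\star}\mathcal{H}^n(A),
\]
which is the inequality in (2). The argument for $\Psi_t$ is identical.

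I do not anticipate any real obstacle here: once the $1$-Lipschitz property of the two flows has been recorded (as has already been done above the statement, using that in the model space $\S2k$ the analogous flows are $1$-Lipschitz and that the $CAT(\kappa)$ comparison transfers this), both conclusions are elementary consequences of how $n$-dimensional Hausdorff measure transforms under Lipschitz maps.
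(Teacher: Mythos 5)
Your proof is correct and matches the approach the paper relies on: the lemma is stated without an explicit proof precisely because the preceding paragraph records the $1$-Lipschitz property of $\Phi_t$ and $\Psi_t$, after which (1) is the standard Lipschitz bound $\mathcal{H}^n(f(A))\le L^n\mathcal{H}^n(A)$, and (2) is exactly your preimage trick (indeed, a commented-out version of this argument appears in the source with the same deduction $\mathcal{H}^n(A)=\mathcal{H}^n(\Phi_t(\Phi_t^{-1}(A)))\le\mathcal{H}^n(\Phi_t^{-1}(A))$).
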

\begin{definition}
Given a point $p$ in a $CAT(\kappa)$ space $X$ we say that two unit speed geodesics starting at $p$ define the same direction if the angle between them is zero. This is an equivalence relation by the triangle inequality for angles and the angle induces a metric on the set $S_p^g(X)$ of equivalence classes. The metric completion  $\Sigma_p^gX$ of $S_p^gX$ is called the \emph{space of geodesic directions} at $p$.
The Euclidean cone $C(\Sigma_p^gX)$ is called the \emph{geodesic tangent cone} at $p$ and is denoted by $T^g_pX$.
\end{definition}
The following theorem is due to Nikolaev~\cite[Theorem 3.19]{BH99}:
\begin{theorem}\label{geod-tangent-cone}
$T_p^gX$ is $CAT(0)$ and $\Sigma_p^gX$ is  $CAT(1)$. 
\end{theorem}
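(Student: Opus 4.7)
The plan is to reduce the statement about $T_p^g X$ to the statement about $\Sigma_p^g X$ via the cone theorem cited immediately before the statement, and then prove the CAT(1) property of the space of geodesic directions by a scaling/limiting argument from the CAT($\kappa$) hypothesis on $X$.

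First, by construction $T_p^g X = C(\Sigma_p^g X)$ is the Euclidean cone over the space of geodesic directions. The cone theorem (Theorem 4.7.1 in \cite{bbi}), stated just above, asserts that $C(Y)$ is $CAT(0)$ if and only if $Y$ is $CAT(1)$. So it suffices to prove that $\Sigma_p^g X$ is $CAT(1)$. Since $CAT(1)$ passes to metric completions (triangle comparison survives under taking Cauchy limits of points, using continuity of the angle metric), it is enough to verify the $CAT(1)$ four-point comparison on the dense subset $S_p^g X$ of honest geodesic directions.

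Next, I would verify the $CAT(1)$ triangle comparison on $S_p^g X$ as follows. Given three directions $u,v,w \in S_p^g X$, represented by unit-speed geodesics $\alpha,\beta,\gamma$ emanating from $p$, first note that the triangle inequality $\angle(u,w)\le\angle(u,v)+\angle(v,w)$ is standard from the definition of angle via comparison, and so the three angles form a bona fide triangle on $\mathbb{S}^2$ provided their sum is at most $2\pi$ (the case of sum larger than $2\pi$ is trivial for $CAT(1)$). For the actual comparison, take small parameter values $s,t,r>0$, look at the comparison angle $\tilde\angle_\kappa(p;\alpha(s),\beta(t))$ at $p$ in the model space $\mathbb{S}^2_\kappa$, and similarly for the other two pairs. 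The $CAT(\kappa)$ hypothesis gives monotonicity of these comparison angles: they are non-increasing as $s,t\downarrow 0$ and converge to the actual angles $\angle(u,v)$, etc. To get spherical (rather than $\kappa$-spherical) comparison, rescale: since rescaling the metric $d \mapsto \lambda d$ replaces $\kappa$ by $\kappa/\lambda^2$, and since for the angle at $p$ only the infinitesimal structure matters, the comparison angles computed in $\mathbb{S}^2_\kappa$ and in the rescaled model $\mathbb{S}^2_{\kappa/\lambda^2}$ have the same limit as $\lambda\to\infty$. Choosing $\lambda$ large one verifies the $CAT(1)$ four-point condition on directions; equivalently, given a geodesic $\eta$ in $\Sigma_p^g X$ from $v$ to $w$ and a third direction $u$, one checks $d(u,\eta(\tau))\le d(\bar u,\bar\eta(\tau))$ in the $\mathbb{S}^2$ comparison triangle by passing to the limit of the corresponding comparison in $X$ for rescaled geodesics.

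The main obstacle is carrying out the limit argument cleanly: one must organize the comparison between angles (intrinsic objects at $p$) and the $CAT(\kappa)$ hinge comparison for small triangles in $X$, and argue that the spherical comparison at the level of directions follows from the infinitesimal part of the $CAT(\kappa)$ condition (so that the specific value of $\kappa$ is irrelevant, yielding the universal constant $1$ on the sphere). This is precisely the content of the classical Nikolaev/Alexandrov result, so a complete write-up would invoke the standard monotonicity of comparison angles, the first variation formula, and a blow-up argument to conclude $\Sigma_p^g X$ is $CAT(1)$, and then apply the cone theorem to finish.
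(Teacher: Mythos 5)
The paper itself does not prove this theorem; it simply cites it as Nikolaev's result, appearing as Theorem II.3.19 in \cite{BH99}. So there is no internal argument to compare against, and I will assess your outline on its own.

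Your reduction from $T_p^gX$ to $\Sigma_p^gX$ via the cone theorem, and your strategy for the angle comparison (monotonicity of $\kappa$-comparison angles as the sides shrink, plus a blow-up to trade $\kappa$ for the universal constant $1$), are indeed the core of the classical proof. However, there is a genuine gap, and it is exactly the point the paper flags in the sentence immediately after the theorem statement: it is \emph{not} obvious that $\Sigma_p^gX$ (or $T_p^gX$) is a geodesic space, and your argument does not address this. You write that it suffices to verify the ``$CAT(1)$ four-point comparison'' on the dense set $S_p^gX$ and that ``$CAT(1)$ passes to metric completions.'' The four-point distance inequality alone, on a dense subset, does not yield $CAT(1)$ of the completion: a complete metric space satisfying the four-point condition need not be geodesic (a two-point space already satisfies it), and the standard criterion (e.g.\ \cite[Prop.~II.1.11]{BH99}) requires \emph{completeness plus approximate midpoints} in addition to the four-point inequality. $S_p^gX$ is not a geodesic space in any obvious way, and neither is its completion, so you must separately construct (approximate) midpoints of pairs of directions $u,v$ with $\angle(u,v)<\pi$. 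In the classical argument this is done by representing $u,v$ by geodesics $\alpha,\beta$ from $p$, taking the midpoint $m_t$ of $[\alpha(t),\beta(t)]$, forming the direction of $[p,m_t]$ at $p$, and using the $CAT(\kappa)$ convexity estimates to show these directions form a Cauchy family in $S_p^gX$ as $t\downarrow 0$; this limiting direction is the required (approximate) midpoint. Without this step your sentence ``given a geodesic $\eta$ in $\Sigma_p^gX$ from $v$ to $w$'' presupposes precisely the object whose existence is the crux of the theorem. Once approximate midpoints are produced, your comparison-angle argument closes the proof; also note that the outcome should be phrased componentwise, since, as the paper remarks, $\Sigma_p^gX$ may be disconnected with inter-component distances equal to $\pi$.
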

Note that this theorem in particular implies that $T_p^gX$ is a geodesic metric space which is not obvious from the definition.
Note further that $\Sigma_p^gX$ need not be path connected. In this case the above theorem means that each path component of $\Sigma_p^gX$ is $CAT(1)$ and the distance between points in different components is $\pi$.
\begin{comment}
We will also need the following simple lemma

\begin{lemma}\label{lem-susp}
Let $(\Sigma, d)$ be $CAT(1)$. Suppose there exist two points $y_+,y_-\in\Sigma$ such that $d(y_+,y_-)=\pi$ and for any point $x\in\Sigma$ it holds that  $d(y_+,x)+d(x,y_-)=\pi$.
Then the midpoint set $\hat \Sigma=\{x: d(x,y_+)=d(x,y_-)=\pi/2$ is convex in $\Sigma$ and $\Sigma$ is isometric to the spherical suspension over  $\hat \Sigma$ with vertices at $y_+$ and $y_-$.
\end{lemma}
\begin{proof}
%This follows by a straightforward application of the rigidity case of Toponogov comparison in $CAT(\kappa)$ spaces.
This lemma is well-known but we sketch a proof for completeness. Let $x,z$ be points in $\Sigma$ with $d(x,y_\pm)=d(z,y_\pm)=\pi/2$ and  $d(x,z)<\pi$ and let $q\in]xz[$.
Then applying ${\eqref{point-on-a-side-comp}}_{(CAT)}$  to the triangles $\Delta(y_\pm, x ,z)$ we get that $d(y_\pm,q)\le d(\bar y_\pm, \bar q)=\pi/2$. Hence  $\pi=d(y_-,q)+d(y_+,q)\le d(\bar y_-, \bar q)+d(\bar y_+, \bar q)=\pi$.

Hence, $d(y_\pm, q)=d(\bar y_\pm, \bar q)=\pi/2$ i.e. we have an equality in  ${\eqref{point-on-a-side-comp}}_{(CAT)}$  in both cases. By the rigidity case of CAT comparison ~\cite[Proposition 2.9]{BH99} this means that $y_-,x,z$ span a (unique) isometric copy of $\Delta(\bar y_-,\bar x,\bar z)$ in $\Sigma$ and the same is true for $y_+$. This easily implies the lemma.

\end{proof}
\end{comment}

\subsection{Spaces with two sided sectional Alexandrov curvature bounds}

Spaces with two sides Alexandrov bounds (i.e. spaces satisfying $CBA(\uk_1), CBB(\uk_2)$ for some $\uk_1,\uk_2\in\R$) have been studied by Alexandrov, Nikolaev and Berestovsky. The following structure theorem holds:
\begin{theorem}[\cite{nikolaev}]\label{th:nikolaev}
Let $(X,d)$ be a complete finite dimensional geodesic metric space which is $CBA(\uk_1), CBB(\uk_2)$  for some $\uk_1,\uk_2\in\R$.

Then $\kappa_2\le \kappa_1$ and $X$ is an $n$-dimensional topological manifold (possibly with boundary) for some $n\ge 1$. Moreover,  $\Int X$ possesses a canonical  $C^{3,\alpha}$-atlas for $\alpha\in (0,1)$ of harmonic coordinate charts such that in each chart $d$ is induced by a Riemannian tensor $g$ whose coefficients
$g_{i,j}$ w.r.t. this chart are in the class $W^{2,p}\cap C^{1,\alpha}$ for any $1\le p<\infty, 0<\alpha<1$.

\begin{comment}
\medskip
\begin{itemize}
\item[1.]
Then, if $X$ has no boundary in the sense of Alexandrov spaces, $X$ is a manifold and possesses a $C^{3,\alpha}$-atlas for $\alpha\in (0,1)$ of harmonic coordinate charts such that in each chart $d$ is induced by a Riemannian tensor $g$ whose coefficients
$g_{i,j}$ w.r.t. this chart are in the class $W^{2,p}$ for any $p>1$. In particular, $g_{i,j}$ is in $C^{1,\alpha}$ for any $\alpha\in (0,1)$.
\medskip
\item[2.]
If $X$ has boundary in the sense of Alexandrov spaces, then the previous applies to $\Int X:=X\backslash \partial X$ but the  boundary $\partial X$ might be non-smooth.
\medskip
\item[3.]
If $X$ has curvature bounded from above and below by the same constant $k\in\mathbb{R}$ then $X$ is a geodesically convex subset of a Riemannian space form with constant curvature $k$.
\end{itemize}
\end{comment}
\end{theorem}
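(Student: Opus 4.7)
The plan is to proceed in three stages: first establish $\kappa_2\le\kappa_1$ and identify every tangent cone as Euclidean or a Euclidean half-space, then promote this to a topological manifold structure, and finally build a harmonic atlas together with the claimed regularity of the induced Riemannian metric.

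For the first stage I would rescale, using that $(X,\lambda d)$ is $CBA(\kappa_1/\lambda^2)$ and $CBB(\kappa_2/\lambda^2)$: hence every Gromov--Hausdorff tangent cone $T_pX$ is simultaneously $CAT(0)$ and $CBB(0)$. The inequality $\kappa_2\le\kappa_1$ itself follows from the angle-sum version of comparison applied to a small geodesic triangle: the $CBB(\kappa_2)$ bound forces the sum of angles to be at least that of the $\kappa_2$-model triangle with the same sidelengths (which by Gauss--Bonnet exceeds $\pi$ by $\kappa_2 A_2$) while the $CBA(\kappa_1)$ bound forces it to be at most $\pi+\kappa_1 A_1$; as sidelengths shrink, $A_1/A_2\to 1$, forcing $\kappa_2\le\kappa_1$. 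I would then argue by induction on the topological dimension that a finite-dimensional cone $C(\Sigma)$ that is both $CAT(0)$ and $CBB(0)$ must be isometric to $\R^n$ or to the closed half-space $\R^n_+$: the inductive step is applied to $\Sigma$, which inherits the two-sided bounds $CAT(1),\,CBB(1)$ and whose iterated spaces of directions have strictly smaller dimension.

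Given $T_pX\cong\R^n$ or $\R^n_+$, I would next combine the contracting gradient flow from the $CAT$ side (Lemma~\ref{lem-contr-flow}) with the noncontracting behaviour of the exponential map on the $CBB$ side to build a bilipschitz homeomorphism from a ball in $T_pX$ onto a neighborhood of $p$. The $CAT$ bound makes geodesics vary continuously with their endpoints while the $CBB$ bound rules out collapse of the exponential map, so the resulting map is a local homeomorphism; this endows $X$ with the structure of a topological manifold, and half-space tangent cones correspond exactly to boundary points.

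For the regularity, following Berestovsky--Nikolaev I would first build a DC-atlas out of distance coordinates $x\mapsto(d(q_1,x),\dots,d(q_n,x))$ with $q_1,\dots,q_n$ in general position near $p$: the $CBA(\kappa_1)$ bound makes each $d_{q_i}$ semiconvex, the $CBB(\kappa_2)$ bound supplies a matching semiconcavity, and so every coordinate function is a difference of convex functions on the chart. Transforming to harmonic coordinates (solutions of the Dirichlet problem associated to the Dirichlet energy read off from the DC-chart) then upgrades the atlas to $C^{3,\alpha}$ via elliptic regularity. In such charts the metric coefficients $g_{ij}$ satisfy the quasilinear elliptic system $\tfrac12\Delta g_{ij}+Q_{ij}(g,\partial g)=-\mathrm{Ric}_{ij}$, and since the two-sided Alexandrov bounds provide a bounded Ricci tensor in a weak sense, a standard bootstrap yields $g_{ij}\in W^{2,p}\cap C^{1,\alpha}$ for every $p<\infty$ and $\alpha\in(0,1)$. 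The main obstacle is giving rigorous meaning to the Laplace operator and the Ricci tensor before any smoothness of $g$ is known; this is the technical heart of the Berestovsky--Nikolaev program and forces the DC-atlas to appear as a genuine intermediate object rather than a notational convenience.
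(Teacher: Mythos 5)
The paper cites this result to Berestovsky and Nikolaev (\cite{nikolaev}) and gives no proof of its own, so there is no internal argument to compare against; your sketch traces the broad outline of their program (semiconvex/semiconcave distance coordinates, a DC-atlas, passage to harmonic coordinates, elliptic bootstrap), and the angle-sum argument you give for $\kappa_2\le\kappa_1$ is sound.

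There is, however, a substantive error in your first stage. You assert that a finite-dimensional cone $C(\Sigma)$ that is both $CAT(0)$ and $CBB(0)$ must be isometric to $\R^n$ or to the closed half-space $\R^n_+$. That is false. The statement your induction actually yields is that $\Sigma$ is a convex subset of $\SS^{n-1}$ (this is Lemma~\ref{CBB-CBA-equal}), so $C(\Sigma)$ is a convex cone in $\R^n$ — but not necessarily a half-space. For example, a closed solid square $Q\subset\R^2$ is $CBB(0)$ and $CBA(0)$, yet its tangent cone at a corner is a quarter-plane, not $\R^2_+$. This is exactly the phenomenon the paper's remark after the theorem is warning about when it notes that $\partial X$ may be non-smooth, and your overclassification would, if carried into stages two and three, incorrectly suggest a $C^1$ boundary. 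What one can say is that each tangent cone is a convex cone in $\R^n$ with nonempty interior, hence \emph{homeomorphic} to $\R^n$ or $\R^n_+$, which is enough for the manifold-with-boundary conclusion. Upgrading to the isometry $T_pX\cong\R^n$ at points of $\Int X$ requires a separate argument — e.g.\ local geodesic extendibility in the manifold interior forcing $\Sigma_p$ to be a convex subset of $\SS^{n-1}$ without boundary, hence all of $\SS^{n-1}$ — and this extra step matters because the harmonic-atlas regularity is asserted only on $\Int X$.
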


\begin{remark}
If $X$ has nonempty boundary then the boundary need not be smooth. E.g. if $X$ is a closed convex body in $\R^n$ then it's $CBB(0)$ and $CBA(0)$.
\end{remark}
The following lemma is elementary and is left to the reader as an exercise.
\begin{lemma}\label{CBB-CBA-equal}
Let $(X,d)$ be an $n$-dimensional space which is $CAT(\uk)$ and $CBB(\uk)$. Then $X$ is isometric to a convex subset of $\SS^n_\kappa$.
\end{lemma}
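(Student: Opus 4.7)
\medskip

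\textbf{Proof plan.} The plan is to apply Theorem~\ref{th:nikolaev} to reduce the statement to a rigidity problem about a Riemannian manifold of constant sectional curvature, and then use the global $CAT(\kappa)$ condition to extract an isometric embedding into $\SS^n_\kappa$ whose image is convex.

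First, I would invoke Theorem~\ref{th:nikolaev}: since $X$ is both $CAT(\kappa)$ and $CBB(\kappa)$, it is a topological $n$-manifold (possibly with boundary) and $\Int X$ carries a canonical Riemannian metric $g$ of regularity $C^{1,\alpha}\cap W^{2,p}$ in harmonic coordinates. The key observation is that two-sided comparison with the \emph{same} constant $\kappa$ forces equality in both the $CAT$ and $CBB$ inequalities for every geodesic triangle of perimeter less than $2\pi_\kappa$; evaluated at points where the sectional curvatures of $g$ exist, this sandwiches them between $\kappa$ and $\kappa$, so $g$ has constant sectional curvature $\kappa$ almost everywhere. Standard elliptic bootstrapping in harmonic coordinates then upgrades $g$ to a smooth (in fact real-analytic) Riemannian metric of constant sectional curvature $\kappa$ on $\Int X$, so $\Int X$ is locally isometric to $\SS^n_\kappa$.

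Next, I would build a developing map. Fix an interior point $p \in \Int X$ together with a linear isometry $\iota: T_p X \to T_{p_0}\SS^n_\kappa$ for some reference point $p_0 \in \SS^n_\kappa$. Composing exponential maps gives a local isometry
\[
\Phi: U \subset \SS^n_\kappa \to X, \qquad \Phi(\exp_{p_0}(v)) = \exp_p(\iota^{-1}(v)),
\]
defined on the image $U$ of those tangent vectors along which the corresponding geodesic in $X$ exists. The global $CAT(\kappa)$ property of $X$ guarantees uniqueness of geodesics of length less than $\pi_\kappa$ and, for $\kappa>0$, yields the diameter bound $\diam X \le \pi_\kappa$. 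I would use these to show that $\Phi$ is injective where defined and that its image is a convex subset of $\SS^n_\kappa$: any two points of the image are joined by a unique minimizing geodesic in $X$, whose pull-back is a minimizing geodesic in $\SS^n_\kappa$ lying in $U$.

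The main technical point, and the place that needs care, is handling $\partial X$ and the closedness of $U$ in $\SS^n_\kappa$. Since $X$ is complete and $\Phi$ is distance-preserving, the inverse map extends continuously across the boundary, and completeness combined with $CAT(\kappa)$ forces the image to be closed. Assembling these pieces yields an isometric embedding $\Phi^{-1}: X \to \SS^n_\kappa$ onto a closed convex subset, which is the desired conclusion; the residual verifications — that the local isometries glue consistently and that the image satisfies the convex-subset property at boundary points — are routine consequences of constant curvature and uniqueness of geodesics.
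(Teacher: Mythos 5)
Your plan — reducing to a smooth constant-curvature Riemannian manifold via Theorem~\ref{th:nikolaev} and then producing an isometric embedding by a developing map — is workable, but the pivotal intermediate step, that the $W^{2,p}$ metric $g$ satisfies $\sec_g=\kappa$ almost everywhere, is not a consequence of Theorem~\ref{th:nikolaev} as stated in the paper. That theorem only asserts $g\in W^{2,p}\cap C^{1,\alpha}$ in harmonic charts; it says nothing about the a.e.\ sectional curvature being pinched between the two Alexandrov bounds. To justify the sandwiching one needs the full strength of Nikolaev's approximation theory (smooth metrics with almost-pinched curvature converging to $g$ in $W^{2,p}$), which is in \cite{nikolaev} but beyond the quoted statement, and without it your elliptic bootstrapping has no curvature identity to start from. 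Two smaller points: for $\kappa>0$ the bound $\diam X\le\pi_\kappa$ comes from $CBB(\kappa)$ (Bonnet--Myers for Alexandrov spaces), not from $CAT(\kappa)$ — an arbitrarily long tripod is $CAT(\kappa)$; and the case $n=1$ has no sectional curvature, so it needs a separate (trivial) argument.

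The Riemannian detour is in fact avoidable, which is presumably what the authors intend by ``elementary.'' Because the two one-sided comparisons carry the \emph{same} constant $\kappa$, they force equality: for any geodesic triangle $pqq'$ of perimeter $<2\pi_\kappa$ and any point $r$ in the interior of $[q,q']$, the $CBB(\kappa)$ condition gives $d(p,r)\ge d(\bar p,\bar r)$ while $CAT(\kappa)$ gives $d(p,r)\le d(\bar p,\bar r)$; so equality holds, and by the rigidity case of the $CAT(\kappa)$ comparison \cite[Proposition 2.9]{BH99} the convex hull of $\{p,q,q'\}$ is isometric to the comparison triangle in $\SS^2_\kappa$. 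Now choose a regular point $p$ of the $n$-dimensional Alexandrov space $X$, so that $\Sigma_p X\cong\SS^{n-1}$ (such points have full measure by the structure theory of $CBB$ spaces \cite{bgp}, with no appeal to Nikolaev), fix a linear isometry $\iota:T_pX\to T_{p_0}\SS^n_\kappa$, and set $F(q)=\exp_{p_0}\bigl(d(p,q)\,\iota(\xi_q)\bigr)$ where $\xi_q\in\Sigma_pX$ is the initial direction of $[pq]$. The rigid-triangle property together with the law of cosines in $\SS^2_\kappa$ shows that $F$ is distance-preserving, and the image is convex because a distance-preserving map of a geodesic space sends geodesics to geodesics; triangles of perimeter $\ge 2\pi_\kappa$ (possible only when $\kappa>0$) are handled by approximation. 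This route bypasses $W^{2,p}$-curvature, elliptic bootstrapping, and the developing map entirely, and is what I would recommend instead.
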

\section{Lower bounds for the measure-valued Laplace operator}\label{sec:laplace bounds}

The following lemma is well-known (see e.g. \cite{BH99}) but we include the proof for completeness.
\begin{lemma}\label{extend-geod}
Let $X$ be $CAT(\kappa)$ and let $p\in X$. Suppose $B_r(p)$ is a topological $n$-manifold for some $r<\pi_\kappa/2$. Then every geodesic $[xy]\subset B_r(p)$ can be extended to a geodesic with end points on $S_r(p)$.
\end{lemma}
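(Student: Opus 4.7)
The plan is to extend $\gamma$ maximally in both directions via Zorn's Lemma and to show that by the manifold hypothesis the extension must terminate on $S_r(p)$. Parametrize $\gamma$ by arclength as $\gamma\colon [0,L]\to B_r(p)$. By the uniqueness and continuous dependence of CAT($\kappa$) geodesics of length $<\pi_\kappa$ (Remark~\ref{cont-geod}), the set of unit-speed extensions of $\gamma$ contained in $\bar B_r(p)$ is partially ordered by inclusion, and Zorn's Lemma produces a maximal extension $\gamma^*\colon [\alpha,\beta]\to \bar B_r(p)$. By completeness the endpoints are attained. It suffices to prove $\gamma^*(\beta)\in S_r(p)$; the other endpoint is symmetric.

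Suppose for contradiction that $y^*:=\gamma^*(\beta)$ lies in the open set $B_r(p)$. By the manifold hypothesis and Remark~\ref{rem:convex}, pick $\delta>0$ such that $\bar B_\delta(y^*)\subset B_r(p)$, $B_\delta(y^*)$ is homeomorphic to $\mathbb R^n$, and $B_\delta(y^*)$ is geodesically convex. The case $n=1$ is immediate, since $B_\delta(y^*)$ is then an arc with $y^*$ interior. Hereafter assume $n\ge 2$. Define
\[
W:=\{z\in B_{\delta/2}(y^*)\setminus\{y^*\}:\text{the geodesic } [z,y^*]\text{ extends past } y^* \text{ inside } B_\delta(y^*)\}.
\]
The plan is to show that $W$ equals all of $B_{\delta/2}(y^*)\setminus\{y^*\}$. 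Applied to $z=\gamma^*(\beta-\eta)$ for small $\eta>0$, this yields a strict extension of $\gamma^*$ past $y^*$, contradicting maximality. The argument proceeds by connectedness: $W$ is open by continuous dependence of CAT($\kappa$) geodesics on their endpoints; $W$ is closed because, given $z_k\in W$ with $z_k\to z$, compactness of $S_\delta(y^*)$ and the fact that pointwise limits of CAT($\kappa$) geodesics are geodesics produce a limit geodesic through $y^*$ connecting $z$ to a point of $S_\delta(y^*)$; and $B_{\delta/2}(y^*)\setminus\{y^*\}\cong \mathbb R^n\setminus\{0\}$ is connected for $n\ge 2$. The claim therefore reduces to showing $W\neq\emptyset$.

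The non-emptiness of $W$ is the main obstacle, and it is here that the manifold hypothesis is used essentially. The plan is the following. By the uniqueness of geodesics in $B_\delta(y^*)$, the logarithm map
\[
\log_{y^*}\colon B_\delta(y^*)\setminus\{y^*\}\to T^g_{y^*}X\setminus\{o\},\qquad z\mapsto d(y^*,z)\cdot(\text{direction of }[y^*,z]),
\]
(where $o$ denotes the cone point) is a continuous injection, extending continuously to $B_\delta(y^*)\hookrightarrow T^g_{y^*}X$ by sending $y^*\mapsto o$. Combining $B_\delta(y^*)\cong \mathbb R^n$ with invariance of domain forces the image to contain an open neighborhood of $o$, so $T^g_{y^*}X$ is a topological $n$-manifold near its cone point. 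Since by Theorem~\ref{geod-tangent-cone} the tangent cone $T^g_{y^*}X$ is the Euclidean cone over the CAT($1$) space $\Sigma^g_{y^*}X$, and a Euclidean cone over a compact metric space is a topological manifold at the apex only when its link is a topological sphere, $\Sigma^g_{y^*}X$ must be homeomorphic to $S^{n-1}$. In particular every direction in $\Sigma^g_{y^*}X$ has an antipode at distance $\pi$. Applying this to the incoming direction $v$ of $\gamma^*$ at $y^*$ yields an antipodal geodesic direction $-v\in\Sigma^g_{y^*}X$; concatenating $\gamma^*$ with a short geodesic in direction $-v$ (whose length can be taken up to $\delta$ by running the associated maximality argument again inside $B_\delta(y^*)$) then exhibits an explicit element of $W$, completing the proof.
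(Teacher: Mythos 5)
Your approach is genuinely different from the paper's. The paper proves the lemma by a short homological argument: if $[xy]$ cannot be extended past $y$, then the ``straight-line'' homotopy $H(z,t)$, which moves each $z$ toward $x$ along the unique geodesic $[xz]$, never hits $y$ (hitting $y$ would produce an extension of $[xy]$), so $H$ is a nullhomotopy of the inclusion $\bar B_{2l}(y)\setminus\{y\}\hookrightarrow U\setminus\{y\}$ inside $U\setminus\{y\}$; this contradicts $H_{n-1}(U\setminus\{y\})\cong\mathbb Z$ for $U\cong\mathbb R^n$. That argument uses only the continuous dependence of geodesics on their endpoints. Your argument instead tries to establish structure for the space of directions $\Sigma^g_{y^*}X$ and then extend explicitly. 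I do not think the proposal is correct as written, for the following reasons.

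\textbf{The invariance-of-domain step is circular.} You apply invariance of domain to $\log_{y^*}\colon B_\delta(y^*)\to T^g_{y^*}X$ to conclude the image is open near $o$. But invariance of domain requires the \emph{target} to be an $n$-manifold (or $\mathbb R^n$), and $T^g_{y^*}X$ is at this point only known to be a $CAT(0)$ Euclidean cone; you are in fact trying to prove it is a manifold near $o$. There is no version of the theorem that applies to a continuous injection into an unstructured metric space.

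\textbf{Injectivity of $\log_{y^*}$ does not follow from uniqueness of geodesics.} Two distinct geodesics emanating from $y^*$ may make Alexandrov angle $0$ with one another and hence have the same image in $\Sigma^g_{y^*}X$ (this is exactly the phenomenon in Example~\ref{ex1-cat} of the paper, where the log map at $p$ collapses everything on the positive side onto one ray). Uniqueness of the geodesic \emph{between two given points} does not rule this out. Ruling it out in the presence of the manifold hypothesis is essentially a local non-branching statement at $y^*$, and nothing in the proposal establishes it; note the paper proves non-branching (Proposition~\ref{prop:nonbra}) only by invoking the $CD$ condition, which is not available in this lemma.

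\textbf{Openness of $W$ is unjustified.} Membership of $z$ in $W$ is a statement about the direction of $[z,y^*]$ at $y^*$ having a geodesic antipode. Continuity of the direction map (which does hold, by upper semicontinuity of angles applied at the diagonal) tells you that nearby $z'$ have nearby directions, but gives no control over whether the nearby direction has a geodesic antipode. The closedness argument also has a gap: it tacitly assumes the extensions $[z_k,y^*]\cup\cdots$ reach $S_\delta(y^*)$; as defined, $W$ only requires extension by some positive amount, which could degenerate in the limit.

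\textbf{Antipode realizability.} Even granting $\Sigma^g_{y^*}X\cong S^{n-1}$ and $CAT(1)$, one still needs the particular incoming direction $v$ to have an antipode at distance $\pi$ which is moreover a \emph{geodesic} direction (an element of $S^g_{y^*}X$, not just of its completion $\Sigma^g_{y^*}X$); this requires a further argument (e.g.\ a contractibility argument on the link), which is sketched only as an assertion.

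In short, the proposal replaces the paper's elementary homological obstruction with a much heavier route through the topology of the link of the geodesic tangent cone, and the key steps (invariance of domain with a non-manifold target; injectivity of the log map; openness of $W$; realizing the antipode by a geodesic) are either circular or left unproved. The paper's proof avoids all of these issues by never touching the space of directions and working instead with the topology of $U\setminus\{y\}$ directly.
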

\begin{proof}
By completeness  of $X$  it's enough to show that geodesics can not terminate at points in $B_r(p)$. Suppose to the contrary that a geodesic $[xy]\subset B_r(p)$ can not be extended past $y$. By possibly changing $x$ we can assume that $\bar B_{2l}(x)\subset U\subset B_r(p)$ where $l=d(x,y)$ and $U$ is homeomorphic to $\mathbb R^n$.  Since $H_{n-1}(U\backslash \{y\})\cong \mathbb Z\ne 0$,  the inclusion $i: \bar B_{2l}(y)\backslash \{y\}\to U\backslash \{y\}$ is not homotopic to a point. On the other hand, since $[xy]$ can not be extended past $y$, the "straight line" homotopy (which is continuous by remark~\ref{cont-geod})  along geodesics emanating from $x$ gives a homotopy of $i$ and the constant map $\bar B_{2l}(y)\backslash \{y\}\to\{x\}$. This is a contradiction and hence all geodesics in $B_r(p)$ can be extended till they hit the sphere $S_r(p)$.
\end{proof}

\begin{theorem}\label{th:lowerbound} Let $(X,d)$ be a metric space that is $\CAT(\uk)$ and $\diam_X< \pi_{\uk}/2$. 
Assume $(X,d,\mathcal{H}^n)$ is a metric measure space satisfying the condition $RCD(\ke,n)$ for $n\in\mathbb{N}$.
Let $x_0\in X$ be a point such that there is an open neighbourhood $U$ of $x_0$ that is homeomorphic to an $n$-manifold.
Then, there exists $\epsilon>0$ such that for any $y\in X$ we have 
%${\bf\Delta}\frac{1}{2}d^2_y$ satisfies
\begin{align*}
 [{\bf\Delta} \md_{\uk} (d_y)]|_{B_{\epsilon}(x_0)}\geq 0\ \ \ \& \ \ \ [{\bf \Delta} d_y]|_{B_{\epsilon}(x_0)}\geq 0
 % -\cot_{\uk}(d_y)|_{B_{\epsilon}(x_0)\backslash\left\{y\right\}}.
\end{align*}
\end{theorem}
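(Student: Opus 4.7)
The plan is to establish both lower bounds via the measure-non-decreasing property of the inverse gradient flow (Lemma \ref{lem-contr-flow}(2)) combined with the first variation formula. The key observation is that if one can arrange $B_{\epsilon}(x_0)\subset \Phi_t(X)$ for all sufficiently small $t$, where $\Phi_t$ is the gradient flow of $f := \md_\uk(d_y)$, then Lemma \ref{lem-contr-flow}(2) yields a one-sided monotonicity which, after letting $t\to 0^+$ in the first variation, is precisely the claimed positivity of the measure ${\bf\Delta}f$ on $B_\epsilon(x_0)$. The inclusion $B_\epsilon(x_0)\subset \Phi_t(X)$ amounts to the ability to extend the geodesic from $y$ past each point of $B_\epsilon(x_0)$ by a uniform amount, which is exactly where the local topological manifold assumption enters via Lemma \ref{extend-geod}. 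The same argument with $\Psi_t$ in place of $\Phi_t$ yields the bound for $d_y$.

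To make the uniform extension precise, choose $\rho<\pi_\uk/2$ so that $\bar B_\rho(x_0)\subset U$, and set $\epsilon := \rho/2$. Fix $y\in X$ and $x\in B_\epsilon(x_0)$. Since $X$ is $\CAT(\uk)$ with $\diam_X < \pi_\uk/2$, the geodesic from $y$ to $x$ is unique. Let $z$ be its last entry point into the convex closed ball $\bar B_\rho(x_0)$ (Remark \ref{rem:convex}); then $[zx]\subset \bar B_\rho(x_0)$. Applying Lemma \ref{extend-geod} inside the manifold neighborhood $B_\rho(x_0)$ extends $[zx]$ past $x$ to a geodesic in $\bar B_\rho(x_0)$ with endpoint on $S_\rho(x_0)$; its portion beyond $x$ has length at least $\rho-d(x,x_0)\geq \rho/2$, uniformly in $y$ and $x$. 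Since the speeds $|\nabla \md_\uk(d_y)|$ and $|\nabla d_y|$ are bounded uniformly in $y$ (by $\sin_\uk(\diam_X)$ and $1$ respectively), there exists $t_0>0$ independent of $y$ such that the inverse flows $\Phi_{-t},\,\Psi_{-t}$ are defined on $B_\epsilon(x_0)$ for $t\in [0,t_0]$, i.e.\ $B_\epsilon(x_0)\subset \Phi_t(X)\cap\Psi_t(X)$.

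For the conclusion, take a Lipschitz $\phi\geq 0$ with $\supp\phi\subset B_\epsilon(x_0)$. Since $\supp\phi\subset \Phi_t(X)$, Lemma \ref{lem-contr-flow}(2) and the change-of-variables formula give
\[
\int \phi\circ\Phi_t\, d\mathcal H^n \;=\; \int \phi\, d(\Phi_t)_*\mathcal H^n \;\geq\; \int \phi\, d\mathcal H^n \qquad \text{for every } t\in[0,t_0].
\]
Dominated convergence (justified by $\phi$ being Lipschitz and $\Phi_t$ being $1$-Lipschitz with bounded speed), together with the first variation formula for the gradient flow, yields
\[
\int \phi\, d{\bf\Delta}f \;=\; -\int \langle \nabla\phi,\nabla f\rangle\, d\mathcal H^n \;=\; \lim_{t\to 0^+}\frac{1}{t}\!\left(\int \phi\circ\Phi_t\, d\mathcal H^n-\int\phi\, d\mathcal H^n\right)\;\geq\; 0,
\]
where we used that $f\in D({\bf\Delta})$ by Corollary \ref{cor-mdk-laplace-comp}. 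This proves the first inequality. Running the same argument with $\Psi_t$ and $d_y$ in place of $\Phi_t$ and $f$ (using $d_y\in D({\bf\Delta},X\setminus\{y\})$ from Theorem \ref{th:upperbound} together with the chain rule) gives the second inequality, understood as a measure inequality on $B_\epsilon(x_0)\setminus\{y\}$ in case $y\in B_\epsilon(x_0)$.

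The main technical hurdle is ensuring that the geodesic extension, and hence the inverse flow, is uniformly defined on $B_\epsilon(x_0)$ for every $y\in X$; this relies on the careful interplay between convexity of balls in $\CAT(\uk)$ spaces and the local manifold structure through Lemma \ref{extend-geod}. A second, more analytic, point is the identification of the right derivative of $t\mapsto \int\phi\circ\Phi_t\, d\mathcal H^n$ with the weighted gradient integral $-\int\langle\nabla\phi,\nabla f\rangle\, d\mathcal H^n$ in the RCD sense, which uses the coincidence of the geometric CAT gradient flow with the Cheeger $L^2$-gradient flow for semiconvex Lipschitz functions in $D({\bf\Delta})$.
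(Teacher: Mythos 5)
Your argument mirrors the paper's proof essentially step for step: the nesting of balls inside the manifold neighbourhood, the uniform geodesic extendibility from Lemma~\ref{extend-geod}, the resulting inclusion $B_\epsilon(x_0)\subset\Phi_t(X)$ for small $t$ uniformly in $y$, the use of Lemma~\ref{lem-contr-flow}(2), and the first-variation computation against nonnegative compactly supported test functions. One small point, however, is worth fixing: for the $d_y$ bound you restrict the conclusion to $B_\epsilon(x_0)\setminus\{y\}$ in case $y\in B_\epsilon(x_0)$, whereas the theorem asserts the inequality (and implicitly $d_y\in D(\mathbf{\Delta},B_\epsilon(x_0))$) on all of $B_\epsilon(x_0)$. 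Your own argument already delivers the stronger statement without extra work: the first-variation calculation
\[
-\int \langle\nabla\phi,\nabla d_y\rangle\, d\mathcal{H}^n \;=\; \lim_{t\to 0^+}\tfrac{1}{t}\Bigl(\int\phi\circ\Psi_t\,d\mathcal{H}^n - \int\phi\,d\mathcal{H}^n\Bigr)\;\ge\;0
\]
is valid for every nonnegative Lipschitz $\phi$ with $\supp\phi\subset B_\epsilon(x_0)$, including those not vanishing near $y$ (the flow $\Psi_t$ fixes $y$, $d_y$ is Lipschitz, and $|\nabla d_y|\le 1$ everywhere). Thus the distributional Laplacian of $d_y$ is a nonnegative distribution on $B_\epsilon(x_0)$, hence a nonnegative Radon measure, which simultaneously shows $d_y\in D(\mathbf{\Delta},B_\epsilon(x_0))$ and gives the inequality on all of $B_\epsilon(x_0)$; this is exactly the observation the paper makes to close the argument.
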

\begin{remark}
We note that measure valued Laplacians of  $\md_{\uk} (d_y)|_{\overline{B}_{\epsilon}(x_0)}$ and of $d_y|_{\overline{B}_{\epsilon}(x_0)}$ on $\overline{B}_{\epsilon}(x_0)$ will have  negative singular parts on the boundary sphere  $S_{\epsilon}(x_0)$.
%, and in general the measure-valued Laplace of 
%$\frac{1}{2}d_y^2$ on $X$ will have a negative singular part in the complement of $\overline{B_{\epsilon}(x_0)}$, for instance a cut points of $x_0$.
\end{remark}

\begin{proof}[Proof of Theorem \ref{th:lowerbound}]
We first give a proof for $\md_{\uk} (d_y)$.

\textbf{1.} Assume w.l.o.g. that $B_{8\epsilon}(x_0)\subset U$ and $U$ is homeomorphic to $\mathbb{R}^n$. 
%By Reifenbergs theorem, the $RCD$-condition and the $CAT$-property we can find $\epsilon>0$ such that $B_{8\epsilon}(x_0)$ is homeomorphic to an open subset $U\subset \mathbb{R}^n$.

By the assumptions on the diameter of $X$ the ball  $\overline{B_{4\epsilon}(x_0)}$ is geodesically convex and geodesics in it are unique. Let $(Y,d_{Y},m)=(\overline{B_{4\epsilon}(x_0)},d,\mathcal H_n)$

In particular, $Y$ again satisfies  $RCD(\ke,n)$ and is $CAT(\uk)$.\medskip\\
Then by Lemma~\ref{extend-geod} there is $\delta=\delta(\epsilon,\uk)>0$ such that any  unit speed geodesic $\gamma:[0,L]\rightarrow \overline{B_{4\epsilon}(x_0)}$ 
 we have that
 \begin{equation}\label{extend-gamma}
 \gamma  \text{ can be extended to a geodesic 
$\hat{\gamma}:[-\delta,L+\delta]\rightarrow B_{8\epsilon}(x_0)$ with $\hat{\gamma}|_{[0,L]}=\gamma$. }
 \end{equation}

%Moreover, there exists $\delta_0>0$ such that $\delta(\gamma)\geq \delta_0$ for any geodesic $\gamma$ in $B_{4\epsilon}(x_0)$. Otherwise
%we find a sequence of geodesics $\gamma_i$ with $\delta_i=\delta(\gamma_i)\rightarrow 0$. By compactness of $\overline{B_{4\epsilon}}(x_0)$ $\gamma_i$ converges uniformly to a limit geodesic $\gamma_{\infty}$

\textbf{2.} Now, let $y\in X$ and $B_{4\epsilon}(x_0)$ as before.
%Then $B_{\epsilon}(x_0)\subset B_{2\epsilon}(x_0)\subset B_{3\epsilon}(y)\subset {B_{4\epsilon}(x_0)}\subset B_{8\epsilon}(x_0).$
By Lemma~\ref{lem-contr-flow} there exists an $\mathcal{H}^n$-contracting gradient flow $\Phi^y_t:X\rightarrow X$ for the function $\md_{\kappa}(d_y)\in D({\bf \Delta})$ and 
if $x\in B_{2\epsilon}(x_0)$, then
$t\mapsto \Phi^y_t(x)$ is precisely the geodesic $\gamma:[0,d(x,y)]\rightarrow X$ that connects $x$ with $y$, appropriately parameterized. 

%By above there is $\theta>0$ and an extension 
%$$\hat{\gamma}:[-\theta,d(x,y)]\rightarrow \overline{B_{3\epsilon}(y)}$$ 
%such that $\hat{\gamma}(d(x,y))=y$ and $\hat{\gamma}(-\theta)\in \partial B_{3\epsilon}(x_0)$.

From \eqref{extend-gamma} it easily follows that $B_{\epsilon}(x_0)\subset \Phi_t^y(B_{3\epsilon}(x_0))$ for all sufficiently small $t$.
%, and we observe for $\eta \in [-\theta,0]$ that
%$\Phi_{\eta}^y(\hat{\gamma}(-\eta))=\hat{\gamma}(0)=x$.
%Hence $B_{2\epsilon}(x_0)\subset \Phi_t^y(B_{3\epsilon}(x_0))$, and $B_{\epsilon}(x_0)\subset \Phi_t^y(B_{3\epsilon}(x_0))$ as well for every $y\in B_{\epsilon}(x_0)$.
\\

\textbf{3.} Since 
$
(\Phi_t)_{\star}\mathcal{H}^n(A)\geq \mathcal{H}^n(A)
$
for any subset $A\subset \Phi_t(X)$, we obtain for any Lipschitz function $g$ with compact support in $B_{\epsilon}(x_0)$ 
\begin{align*}
\int g (\Phi_t)_{\star}\mathcal{H}^n\geq \int g d\mathcal{H}^n.
\end{align*}
Since $(\Phi_t(x))_{t\geq 0}$ is a gradient flow curve of $\md_{\uk} (d_y)$ for any $x\in X$, we compute 
\begin{align*}
\int g {\bf\Delta} \md_{\kappa}(d_y) d\mathcal{H}^n&= -\int \langle\nabla g,\nabla \md_{\kappa}(d_y)\rangle d\mathcal{H}^n\\
%&=\int \langle\nabla g (x),\dot{\gamma}_x(0)\rangle d\mathcal{H}^n(x) \\
&= \lim_{t\rightarrow 0}\frac{1}{t}\left[\int g\circ\Phi_t d\mathcal{H}^n-\int g d\mathcal{H}^n\right]\\
&=\lim_{t\rightarrow 0}\frac{1}{t}\left[\int g  d(\Phi_t)_{\star}\mathcal{H}^n-\int g d\mathcal{H}^n\right]  \geq 0
\end{align*}
for any $g\in \Lip_c(B_{\epsilon}(x_0))$. The second equality is the first variation formula. Note, that there is a version of the first variation formula in the class of $RCD$-spaces that is sufficient for our purposes
(see Theorem \ref{th:secondvariation}), but the first variation formula is 
also well-known  for metric spaces which satisfy a $\CAT$-condition \cite{bbi}[Chapter 4].  Hence
$
{\bf\Delta}\md_{\kappa}(d_y)|_{B_{\epsilon}(x_0)}\geq 0
$ for any $y\in X$. 

\begin{comment}
{\color{blue}
\textbf{4.} $\md_{\uk}\in C^2_{loc}((0,\infty))$ has a smooth inverse on $(0,\pi_\uk/2)$ . Then, by the cain rule for the measure valued Laplace (Theorem \ref{prop:chainrulelaplacian}) it follows 
$d_y|_{B_{\epsilon}(x_0)\backslash\left\{y\right\}}\in D({\bf\Delta},B_{\epsilon}(x_0)\backslash \left\{y\right\})$. Moreover, we have the formula
\begin{align*}
0\leq {\bf\Delta} \md_{\kappa}(d_y)|_{B_{\epsilon}(x_0)\backslash \left\{y\right\}}\leq  \sin_{\kappa}d_y{\bf\Delta}d_y|_{B_{\epsilon}(x_0)\backslash \left\{y\right\}} + \cos_{\kappa} d_y |\nabla d_y|^2|_{B_{\epsilon}(x_0)\backslash \left\{y\right\}}
\end{align*}
that gives already the lower estimate for ${\bf \Delta}d_y|_{B_{\epsilon}(x_0)\backslash\left\{y\right\}}$.}
\end{comment}
The proof for $d_y$ is essentially the same as for $\md_{\uk} (d_y)$ in view of Lemma~\ref{lem-contr-flow} with the following difference. By Theorem~\ref{th:upperbound} 
 and the chain rule it follows that  $d_y|_{X\backslash\{y\}}\in  D({\bf\Delta})$ and ${\bf\Delta}d_y$ is locally bounded above on $X\backslash\{y\}$. However, on a general $RCD(k,n)$ space $d_y$ need not lie in $D({\bf\Delta})$.
Nevertheless, under the assumptions of Theorem~\ref{th:lowerbound}, the same proof as above shows that the distributional Laplacian of $d_y|_{B_{\epsilon}(x_0)}$ is nonnegative as a distribution and hence it \emph{is} a measure and  $d_y\in D(\bf\Delta)$. 

\end{proof}

\begin{corollary}\label{cor:important}
Let $(X,d,\m)$, $d_y$, $x_0\in X$ and $U\subset X$ be as in Theorem \ref{th:lowerbound}. 
Then there exists $\epsilon>0$ such that for any cutoff function $\chi\in \mathbb{D}_{\infty}^{X}$ with $\Delta\chi\in L^{\infty}(\mathcal{H}^n)$, $\supp\chi\subset B_{\epsilon}(x_0)$ and $\chi|_{B_{\epsilon/2}(x_0)}=1$ it holds that
$\chi\cdot\md_{\kappa}( d_y)\in D_{L^{\infty}}(\Delta)\cap L^{\infty}(\mathcal{H}^n)\cap \Lip(X)$.  

Further, if $\supp\chi\subset B_{\epsilon}(x_0)\backslash\left\{y\right\}$ then we also have that  
$\chi\cdot d_y\in D_{L^{\infty}}(\Delta)\cap L^{\infty}(\mathcal{H}^n)\cap \Lip(X)$.
\end{corollary}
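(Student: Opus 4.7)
The plan is to combine the lower Laplace bound from Theorem~\ref{th:lowerbound} with a matching upper Laplace bound derived from the $RCD(K,n)$ condition, deduce that $\mathbf{\Delta}\md_\kappa(d_y)$ is absolutely continuous with bounded density on $B_\epsilon(x_0)$, and then apply a standard Leibniz rule with the cutoff $\chi$.

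First I would establish a global upper bound $\mathbf{\Delta}\md_\kappa(d_y)\leq C\m$ for a constant $C=C(\diam X,K,n,\kappa)$. The input is Theorem~\ref{th:upperbound} applied to $\tfrac12 d_y^2$, whose right-hand side is uniformly bounded since $\diam X<\pi_\kappa/2$. Writing $\md_\kappa(t)=\phi_\kappa(t^2/2)$ for the smooth function $\phi_\kappa$ appearing in the proof of Corollary~\ref{cor-mdk-laplace-comp} and invoking the chain rule for the measure-valued Laplacian (Proposition~\ref{prop:chainrulelaplacian}), the upper bound transfers to $\md_\kappa(d_y)$.

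The heart of the argument is the following Radon--Nikodym step, which I expect to be the main subtle point. Decomposing $\mathbf{\Delta}\md_\kappa(d_y)=h\,\m+\nu$ with $\nu\perp\m$, the global upper bound forces $h\leq C$ $\m$-a.e.\ and $\nu\leq 0$. Intersecting with the lower bound $\mathbf{\Delta}\md_\kappa(d_y)|_{B_\epsilon(x_0)}\geq 0$ from Theorem~\ref{th:lowerbound} then forces $\nu|_{B_\epsilon(x_0)}=0$ and $0\leq h\leq C$ on $B_\epsilon(x_0)$. Equivalently, $\md_\kappa(d_y)$ lies in the domain of the Laplacian on $B_\epsilon(x_0)$ with $L^\infty$ density.

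To upgrade from this local statement to the claimed global membership $\chi\md_\kappa(d_y)\in D_{L^\infty}(\Delta)$, I would verify the Leibniz formula
\[
\Delta(\chi f)= \chi\,\Delta f + 2\langle\nabla\chi,\nabla f\rangle + f\,\Delta\chi,\qquad f:=\md_\kappa(d_y),
\]
by testing against an arbitrary $g\in W^{1,2}(X)\cap L^\infty$ and integrating by parts twice: once using $\chi g$ as test function against $f$, which is legitimate because $\chi g$ is supported in $B_\epsilon(x_0)$ where $f$ has bounded Laplacian; once using $fg$ as test function against $\chi\in D(\Delta)$, which is legitimate since $f$ is bounded Lipschitz. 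Each summand lies in $L^\infty(\m)$: $\chi\in\mathbb{D}_\infty$ with $\Delta\chi\in L^\infty$ by hypothesis, and $f$ is bounded Lipschitz with bounded Laplacian on $\supp\chi$. Lipschitzness and boundedness of $\chi f$ are immediate. The statement for $d_y$ when $\supp\chi\subset B_\epsilon(x_0)\setminus\{y\}$ follows by the same argument, replacing the chain rule for $\md_\kappa$ by its smooth inverse on the compact range of $d_y|_{\supp\chi}$, which converts the two-sided bounds for $\mathbf{\Delta}\md_\kappa(d_y)$ into two-sided bounds for $\mathbf{\Delta}d_y$ on $\supp\chi$.
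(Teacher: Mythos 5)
Your proof is correct and follows essentially the same route as the paper: it combines the lower bound on $\mathbf{\Delta}\md_\kappa(d_y)$ from Theorem~\ref{th:lowerbound} with the upper bound coming from Theorem~\ref{th:upperbound}/Corollary~\ref{cor-mdk-laplace-comp} to deduce an $L^\infty$ density on $B_\epsilon(x_0)$, and then applies the Leibniz rule to $\chi\cdot\md_\kappa(d_y)$. The only difference is that you spell out the Radon--Nikodym decomposition and rederive the Leibniz rule by integration by parts, whereas the paper treats the first as immediate from the two-sided measure bounds and cites the measure-valued Leibniz rule from Gigli; the underlying argument is the same.
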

%\begin{remark}
%Since $\md_{\kappa}$ is smooth and strictly monotone, from the chain rule of the measure valued Laplace operator we also have $\md_{\kappa}(\chi\cdot d_y), \chi\cdot d_y\in D_{L^{\infty}}(\Delta)\cap L^{\infty}(\mathcal{H}^n)\cap \Lip(X)$.

%{\color{red} This was not true for $\chi\cdot d_y$ as stated because it's not semiconcave at $y$ and has no upper laplacian bound at $y$. It's only semiconcave on $X\backslash\{y\}$.}
%\end{remark}

\begin{proof} We choose $\epsilon>0$ as in the previous theorem, and a corresponding cutoff function $\chi$. 
%Then, Lemma \ref{lem:cutoff} provides a cutoff function $\chi\in\mathbb{D}_{\infty}^X$ with $\Delta\chi\in L^{\infty}(\mathcal{H}^n)$, $\supp\chi\subset B_{\epsilon}(x_0)$ and $\chi|_{B_{\epsilon/2}(x_0)}=1$. 
Clearly it holds that $\md_{\kappa}(\chi\cdot d_y)\in L^{\infty}(\mathcal{H}^n)\cap \Lip(X)$. 
By Corollary \ref{cor-mdk-laplace-comp} we also have $\md_{\kappa}(d_y)\in D({\bf\Delta})$.
%By the Laplace comparison statement for $RCD$-spaces in Theorem \ref{th:upperbound} we also have $\md_{\kappa}(d_y)\in D({\bf\Delta})$. This can be seen as in the proof of Corollary \ref{cor-mdk-laplace-comp}.

Hence, the Leibniz rule for the measure valued Laplacian \cite[Theorem 4.12]{giglinonsmooth} yields
\begin{align*}
{\bf\Delta}(\chi \md_{\kappa}(d_y))=\chi {\bf\Delta}\md_{\kappa}(d_y)|_{B_{\epsilon}(x_0)} + \md_{\kappa}(d_y){\Delta}\chi + 2\langle \nabla \md_{\kappa}(d_y),\nabla \chi\rangle.
\end{align*}
By Theorem \ref{th:lowerbound} and again by Theorem \ref{th:upperbound} we know that ${\bf\Delta}\md_{\kappa}(d_y)\in L^{\infty}(\mathcal{H}^n)$. It follows that ${\bf\Delta}(\chi\cdot\md_{\kappa}(d_y))\in L^{\infty}(\mathcal{H}^n)$. 
Since $\chi\cdot\md_{\kappa}(d_y)$ is compactly supported in $B_{\epsilon}(x_0)$, we also get that $\chi\cdot\md_{\kappa}(d_y)\in D(\Delta)$ and therefore ${\bf \Delta}(\chi\cdot\md_{\kappa}(d_y))=\Delta(\chi\cdot\md_{\kappa}(d_y))$.

The proof for $d_y$ is the same.
\end{proof}

\section{On the relation between convexity and the Hessian}\label{sec:convexity-and-hessian}
\noindent 
In this section we explore the relation between convexity and almost everywhere lower bounds for the Hessian of a function $f$ that is in a sufficiently regular subspace of $W^{2,2}(X)$. 
This relation has already been studied in previous publications \cite{ketterer3, GKKO, hanconvexity, gigtam}.
%Especially, the second variation formula in \cite{gigtam} provides a very complete understanding of the matter in the context of $RCD(\ke,N)$-spaces with finite $N$.
A novelty of our situation is that we give a localized statement that is needed in the course of the paper. Moreover, we will show that $\kappa$-convexity implies a lower $\kappa$-bound 
for the Hessian. By the second variation formula this lower bound holds if the Hessian is evaluated {on gradients of  }Kantorovich potentials.  
However, we require the estimate for the Hessian evaluated  {on gradients of } test functions.

Throughout this section let $(X,d_X,\m_X)$ be a compact metric measure space satisfying the condition $RCD(\ke,N)$, and let $Z$ be a closed subset of $X$ 
such that $\m_X(\Int Z)>0$, $\m(\partial Z)=0$ and $(Z,d_Z,\m_{Z})$ is a metric measure space that also satisfies
the condition $RCD(\ke,N)$. We denote by $\Delta^X, \Gamma_2^X$ ect. and $\Delta^Z,\Gamma_2^Z$ ect. the Laplace operator, 
the $\Gamma_2$-operator ect.
of $X$ and $Z$, respectively. In particular, $(Z,d_Z)$ is geodesically convex and compact as well. 

Let $f\in D(\Delta^X)\cap L^{\infty}(\m_X)\cap \Lip(X)$ with $\left\|f\right\|_{L^{\infty}}$, $\left\||\nabla f|\right\|_{L^{\infty}}\leq C$ for $C\in (0,\infty)$. In particular, $f\in H^{2,2}(X)$.
\noindent
We introduce the transformed measures
\begin{equation}
%\tilde{\m}_X:=e^{-f}\m_X\ \ \mbox{and}\ \ 
\tilde{\m}_Z:=\left[e^{-f}\m_X\right]|_Z\ \ \
\end{equation}
and consider the metric measure space
%s $(X,d_X,\tilde{\m}_X)=\tilde{X}$ and 
$(Z,d_Z,\tilde{\m}_Z)=\tilde{Z}$.
%We will denote with $\tilde{\Delta}${, $\tilde{\Gamma}_2$ 
%and $\tilde{\Gamma}_2'$ the corresponding Laplacian, $\Gamma_2$-operator and modified $\Gamma_2$-operator. 
We remark that $f|_{Z}\in L^{\infty}(\m_{Z})\cap \Lip(Z)$ with $\left\|f\right\|_{L^{\infty}}, \left\||\nabla f|\right\|_{L^{\infty}}\leq C$ but $f\notin D(\Delta^Z)$. 
%Later, we also consider a family $(f_{\alpha})_{\alpha>0}$ of functions as above. In this case we will indicate the dependence on $\alpha$ by a
% superscript, 
%for instance we will write ${\Delta}^{\alpha}$ for the Laplace operator of the transformed space $(Z,d_Z,[e^{-f_{\alpha}}\m]|_{Z})$. For instance, we will write $\tilde{\m}_Z=[e^{-f_{\alpha}}\m]|_{Z}=:\tilde{\m}^{\alpha}_{Z}$.
%
%
%\footnote{\color{red} Earlier, $n$ denotes the dimension. Probably better to use a different letter here}
We observe that, for $p \in [1,\infty]$,
\[ \e^{-C/p} \|u\|_{L^p({\m_Z})} \le \|u\|_{L^p(\tilde{\m}_Z)} \le \e^{C/p} \|u\|_{L^p(\m_Z)} \]
for all $u \in L^p(\m_Z)=L^p(\tilde{\m}_Z)$, and
\[ \e^{-C/p} \big\| |\nabla u| \big\|_{L^p({\m}_Z)} \le \big\| |\nabla u| \big\|_{L^p(\tilde{\m}_Z)}
 \le \e^{C/p} \big\| |\nabla u| \big\|_{L^p(\m_Z)} \]
for all {$u \in W^{1,2}(Z)=W^{1,2}(\tilde{Z})$. 
In addition, the minimal weak upper gradient of $u \in W^{1,2} (\tilde{Z})$ 
induced by $\tilde{\m}_Z$ coincides with $|\nabla u|$} 
(see \cite[Lemma~4.11]{agsheat}). 

\begin{lemma}\label{importantlemma2}
Let $f$ and $(Z,d_Z,\tilde{\m}_Z)$ be as above.
Then we have $D(\widetilde{\Delta}^Z)=D(\Delta^Z)$ and, for any $u \in D(\widetilde{\Delta}^Z)$,
\begin{itemize}
\item[(i)] $\widetilde{\Delta}^Zu =\Delta^Z u- \langle \nabla f,\nabla u \rangle$,
\medskip
\item[(ii)] $\|\widetilde{\Delta}^Z u\|_{L^2(\tilde{\m}_Z)}^2
 \le 2\e^{C/2} \left( \| \Delta^Z u\|_{L^2({\m}_Z)}^2
 +\| {| \nabla f |} \|_{L^{\infty}}^2 \big\| |\nabla u| \big\|_{L^2({\m}_Z)}^2 \right)$,
\medskip
\item[(iii)] $\| \Delta^Z u \|_{L^2({\m}_Z)}^2
 \le 2\e^{C/2} \left( \|\widetilde{\Delta}^Z u\|_{L^2(\tilde{\m}_Z)}^2
 +\| {| \nabla f |} \|_{L^{\infty}}^2 \big\| |\nabla u| \big\|_{L^2(\tilde{\m}_Z)}^2 \right)$.
\end{itemize}
In particular, if $u\in D(\widetilde{\Delta}^Z)$,
then $P^Z_t u \in D(\widetilde{\Delta}^Z)$ and $P^Z_t u \to u$ in $D(\widetilde{\Delta}^Z)$ as $t \to 0$.
\end{lemma}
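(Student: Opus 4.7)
The identity (i) is the non-smooth version of the weighted Laplace formula $\widetilde\Delta u = \Delta u - \langle\nabla f,\nabla u\rangle$ from Bakry--\'Emery theory. The decisive fact is that $e^{\pm f}$ is bounded and Lipschitz on $Z$ (since $f \in \Lip \cap L^\infty$ with $\||\nabla f|\|_{L^\infty} \leq C$), so multiplication by $e^{\pm f}$ is a bijection of $W^{1,2}(Z)=W^{1,2}(\tilde Z)$. This lets us convert test functions for $\widetilde\Delta^Z$ into test functions for $\Delta^Z$ and vice versa, and the weight $e^{-f}$ cancels with the factor $e^{\pm f}$ coming from the Leibniz rule.

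Concretely, for $u\in D(\widetilde\Delta^Z)$ and any test function $g\in W^{1,2}(Z)$, I would set $h:=e^{f}g\in W^{1,2}(\tilde Z)$. The Leibniz rule (applicable because $e^{f}$ is bounded Lipschitz) gives
\[
\nabla h = e^{f}\nabla g + e^{f} g\,\nabla f,
\]
and plugging this into the defining identity
$\int \langle\nabla u,\nabla h\rangle\, d\tilde\m_Z = -\int h\,\widetilde\Delta^Z u\, d\tilde\m_Z$, combined with $d\tilde\m_Z = e^{-f}d\m_Z$ and the coincidence of minimal weak upper gradients stated before the lemma, yields
\[
\int_Z \langle\nabla u,\nabla g\rangle\, d\m_Z = -\int_Z g\bigl(\widetilde\Delta^Z u + \langle\nabla f,\nabla u\rangle\bigr) d\m_Z,
\]
i.e.\ $u\in D(\Delta^Z)$ and (i) holds. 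The reverse inclusion $D(\Delta^Z)\subset D(\widetilde\Delta^Z)$ is proved symmetrically by taking $h = e^{-f}g$ in the defining identity for $\Delta^Z$. Estimates (ii) and (iii) are then immediate: square the pointwise identity from (i), apply $(a+b)^2\leq 2a^2+2b^2$ and Cauchy--Schwarz to $|\langle\nabla f,\nabla u\rangle|\leq \||\nabla f|\|_{L^\infty}|\nabla u|$, and use the $L^2$-norm equivalences stated immediately before the lemma.

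For the final assertion, if $u\in D(\widetilde\Delta^Z)$ then $u\in D(\Delta^Z)$ by (i); the standard smoothing property of the heat semigroup gives $P^Z_t u\in D(\Delta^Z)=D(\widetilde\Delta^Z)$ for all $t\geq 0$, together with $P^Z_t u\to u$ in the graph norm of $\Delta^Z$. Writing
\[
\widetilde\Delta^Z P^Z_t u - \widetilde\Delta^Z u = (\Delta^Z P^Z_t u - \Delta^Z u) - \langle\nabla f, \nabla(P^Z_t u - u)\rangle,
\]
the first term tends to $0$ in $L^2(\m_Z)$ by assumption on $u$, and the second tends to $0$ in $L^2(\m_Z)$ because $W^{1,2}$-convergence $P^Z_t u \to u$ follows from graph-norm convergence in $D(\Delta^Z)$ (standard spectral calculus for the self-adjoint operator $-\Delta^Z$, or integration by parts against $u-P^Z_t u$). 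The $L^2(\m_Z)$--$L^2(\tilde\m_Z)$ norm equivalence then promotes this to convergence in the graph norm of $\widetilde\Delta^Z$, as desired.

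The main obstacle is purely bookkeeping: making sure that $e^{\pm f}g\in W^{1,2}$ whenever $g\in W^{1,2}$ and that the Leibniz rule holds in this generality. Both facts follow from $f\in \Lip(Z)\cap L^\infty(\m_Z)$ via chain/Leibniz rules for minimal weak upper gradients, which are available in any metric measure space and in particular in our $RCD(K,N)$ setting; there are no analytic subtleties beyond this.
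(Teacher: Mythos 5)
Your proof is correct, and it reconstructs exactly the weighted-Laplacian computation that the paper delegates to the cited reference: the paper's own ``proof'' consists of pointing to \cite[Lemma 3.4]{GKKO} and noting that the argument there (stated for $f\in\mathbb D_\infty^Z$) carries over to $f\in L^\infty\cap\Lip$, which is precisely the bookkeeping you spell out via the substitution $h=\e^{\pm f}g$, the Leibniz rule for the bounded Lipschitz factor $\e^{\pm f}$, and the coincidence of minimal weak upper gradients under the change of measure.

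One small remark on the constants. Squaring (i), applying $(a+b)^2\le 2(a^2+b^2)$ and Cauchy--Schwarz, and using the norm equivalence $\|\cdot\|_{L^2(\tilde\m_Z)}^2\le \e^C\|\cdot\|_{L^2(\m_Z)}^2$ gives the factor $2\e^C$ in (ii) and (iii), not $2\e^{C/2}$. In fact the stated constant $2\e^{C/2}$ cannot be correct in general: taking $f\equiv -C$ (constant, hence $\nabla f=0$, $\tilde\Delta^Z=\Delta^Z$, $\tilde\m_Z=\e^C\m_Z$) gives $\|\tilde\Delta^Z u\|_{L^2(\tilde\m_Z)}^2=\e^C\|\Delta^Z u\|_{L^2(\m_Z)}^2$, which exceeds $2\e^{C/2}\|\Delta^Z u\|_{L^2(\m_Z)}^2$ as soon as $C>2\log 2$. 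This is evidently a typo inherited from the source statement; it is harmless here, since the only thing the lemma is used for is the qualitative conclusion $P_t^Z u\to u$ in $D(\tilde\Delta^Z)$, which follows from your argument with any finite constant.
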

\begin{proof} The lemma can be found in \cite[Lemma 3.4]{GKKO} where it is assumed that $f\in\mathbb{D}_{\infty}^Z$. However, one can easily check that the proof works for $f\in L^{\infty}(\m)\cap \Lip(Z)$.
%Consider $u \in D(\widetilde{\Delta}) \subset W^{1,2}({X}, {\tilde\m})$,
%then $\widetilde{\Delta}u+\langle \nabla u,\nabla f\rangle\in L^2({X},\tilde{\m})=L^2({X},\m)$.
%Given $g\in W^{1,2}({X},\m)$, we have $\tilde{g}:=\e^f g\in W^{1,2}({X},\m) =W^{1,2}({X},\tilde{\m})$ and
%\begin{align*}
%\int_X \big( \widetilde{\Delta} u +\langle \nabla f,\nabla u \rangle \big) g \,d\m
%&= \int_X \tilde{g} \widetilde{\Delta} u \,d\tilde{\m}+\int_X \langle \nabla f,\nabla u \rangle g \,d\m\\
%&= -\int_X \langle \nabla \tilde{g},\nabla u \rangle \,d\tilde{\m}+\int_X \langle \nabla f,\nabla u \rangle g \,d\m\\
 %= -\int_X \langle \nabla \tilde{g},\nabla u \rangle \e^{-f} \,d{\m}+\int_X \langle \nabla f,\nabla u \rangle g \,d\m\\
%&= -\int_X \left\{\langle \nabla g,\nabla u\rangle +\langle \nabla f,\nabla u \rangle g\right\} \,d\m+\int_X \langle \nabla f,\nabla u \rangle g \,d\m\\
%&= -\int_X \langle \nabla g,\nabla u \rangle \,d\m.
%\end{align*}
%This shows $u \in D(\Delta)$ and the equation in (i).
%Similarly, $u\in D(\Delta)$ implies $u\in D(\widetilde{\Delta})$ (hence $D(\Delta)=D(\widetilde{\Delta})$)
%and the equation in (i).
%(ii) and (iii) follow easily from (i).
\end{proof}
\begin{proposition}\label{prop-conf-conv}
Let $(X,d,\m)$, $(Z,d_Z,\m_Z)$, $f$ and $\tilde{Z}$ be as above. Assume $f|_Z$ is $\kappa$-convex  on $(Z,d_Z,\m|_Z)$ for $\kappa\in \mathbb{R}$.
Then $\tilde{Z}$ satisfies the condition $RCD(\ke+\kappa,\infty)$, and for $u\in \mathbb{D}_{\infty}^Z$ with $\supp u\subset \Int Z$, $\phi\in \Lip(Z),\, \phi\ge 0$ and $\tilde{\phi}:=e^f\phi$
%and $\phi\in D_{L^{\infty}}(\Delta)\cap L^{\infty}(\m)\cap \Lip(X)$ 
we have $u\in \mathbb{D}^{\tilde{Z}}_{\infty}$ and 
\begin{align}\label{gammas}
\int_Z(\kappa+\ke)|\nabla u|^2\phi d\m &\leq \int\tilde{\phi} d{\bf \Gamma}_2^{\tilde{Z}}(u)\nonumber\\
&=\int \phi d{\bf \Gamma}_2^Z(u)+\int_Z \Hess^Xf(\nabla u,\nabla u)\phi d\m.
\end{align}
\end{proposition}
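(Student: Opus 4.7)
My plan splits the proof into three steps. First I promote $\tilde{Z}$ to an $RCD(\ke+\kappa,\infty)$ space: since $(Z,d_Z,\m_Z)$ satisfies $RCD(\ke,N)$, in particular $CD(\ke,\infty)$ by Fact~\ref{fact:cd}(iv), and since $f|_Z$ is $\kappa$-convex, Lemma~\ref{conv->CD} yields $\tilde{Z}\in CD(\ke+\kappa,\infty)$. Infinitesimal Hilbertianness is preserved because the consequence of \cite[Lemma~4.11]{agsheat} recorded before Lemma~\ref{importantlemma2} identifies $W^{1,2}(\tilde{Z})$ with $W^{1,2}(Z)$ as sets and endows it with an equivalent Hilbertian norm (quadratic forms are invariant under a bounded density change).

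Next I identify ${\bf\Gamma}_2^{\tilde{Z}}(u)$ in terms of ${\bf\Gamma}_2^Z(u)$. Lemma~\ref{importantlemma2}(i) at once gives $u\in\mathbb{D}_{\infty}^{\tilde{Z}}$ with $\widetilde{\Delta}^Z u=\Delta^Z u-\langle\nabla f,\nabla u\rangle$, since $\Delta^Z u\in L^{\infty}$ and $|\nabla f|,|\nabla u|\in L^{\infty}$. The same change-of-measure argument, now at the measure-valued level and applied to $h=|\nabla u|^2\in D({\bf\Delta}^Z)$ (in the domain by the improved Bochner formula on $Z$), produces
\begin{align*}
{\bf\widetilde{\Delta}}^Z h=e^{-f}{\bf\Delta}^Z h-\langle\nabla f,\nabla h\rangle\tilde{\m}_Z.
\end{align*}
Testing both identities against $\tilde{\phi}$ and using $\tilde{\phi}\,d\tilde{\m}_Z=\phi\,d\m$ yields
\begin{align*}
\int\tilde{\phi}\,d{\bf\Gamma}_2^{\tilde{Z}}(u)=\int\phi\,d{\bf\Gamma}_2^Z(u)+\int\phi\Bigl[\langle\nabla u,\nabla\langle\nabla f,\nabla u\rangle\rangle-\tfrac{1}{2}\langle\nabla f,\nabla|\nabla u|^2\rangle\Bigr]d\m.
\end{align*}
Proposition~\ref{prop:nuetzlich} applied with $g_1=g_2=u$ (viewed on $X$ via the zero extension of $u$, which lies in $H^{2,2}(X)\cap\Lip(X)$ because $\supp u\subset\Int Z$ and $Z$ is geodesically convex) rewrites the bracket as $\Hess^X f(\nabla u,\nabla u)$, giving the claimed equality.

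The remaining inequality is the integrated improved Bochner formula on $\tilde{Z}$: since $\tilde{Z}$ is $RCD(\ke+\kappa,\infty)$ and $u\in\mathbb{D}_{\infty}^{\tilde{Z}}$, we have ${\bf\Gamma}_2^{\tilde{Z}}(u)\ge(\ke+\kappa)|\nabla u|^2\tilde{\m}_Z$, and integration against $\tilde{\phi}\ge 0$ gives $(\ke+\kappa)\int|\nabla u|^2\phi\,d\m\le\int\tilde{\phi}\,d{\bf\Gamma}_2^{\tilde{Z}}(u)$. The main technical obstacle I expect is the measure-level change-of-measure identity for the weighted Laplacian when tested against the general Lipschitz $\tilde{\phi}$, which is not compactly supported in $\Int Z$; the intended route is the algebraic substitution $e^{-f}\nabla\tilde{\phi}=\nabla\phi+\phi\nabla f$, which lets us rewrite $\int\langle\nabla\tilde{\phi},\nabla h\rangle d\tilde{\m}_Z=-\int\phi\,d{\bf\Delta}^Z h+\int\phi\langle\nabla f,\nabla h\rangle d\m$, with the hypothesis $\supp u\subset\Int Z$ ensuring that no contribution from $\partial Z$ interferes when we specialize to $h=|\nabla u|^2$. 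A secondary subtlety that needs to be handled along the way is the implicit identification of $\Hess^X f$ restricted to $\Int Z$ with the natural intrinsic Hessian entering Proposition~\ref{prop:nuetzlich} on $Z$, which is again available thanks to geodesic convexity of $Z$ in $X$.
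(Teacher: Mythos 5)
Your proposal follows essentially the same route as the paper's proof: reduce to $\tilde Z$ satisfying $RCD(\ke+\kappa,\infty)$ via Fact~\ref{fact:cd}(iii) and Lemma~\ref{conv->CD}, establish $u\in\mathbb{D}_\infty^{\tilde Z}$, then integrate the improved Bochner inequality on $\tilde Z$ against $\tilde\phi$ and compute $\int\tilde\phi\, d{\bf\Gamma}_2^{\tilde Z}(u)$ via the change of variables $e^{-f}\nabla\tilde\phi=\nabla\phi+\phi\nabla f$, rewriting the resulting bracket as $\Hess^X f(\nabla u,\nabla u)$ by Proposition~\ref{prop:nuetzlich} with $g_1=g_2=u$. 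The only organizational difference is that you pass through an explicit measure-level identity for ${\bf\Delta}^{\tilde Z}h$, while the paper performs the integration by parts directly on $\int\tilde\phi\,d{\bf\Gamma}_2^{\tilde Z}(u)$; the calculations are equivalent.

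There is, however, a genuine gap in your justification that $u\in\mathbb{D}_\infty^{\tilde Z}$. You claim Lemma~\ref{importantlemma2}(i) gives this ``at once'' because ``$\Delta^Z u\in L^{\infty}$ and $|\nabla f|,|\nabla u|\in L^{\infty}$''. But membership in $\mathbb{D}_\infty$ requires the Laplacian to lie in $W^{1,2}$, not merely $L^\infty$, and in any case $u\in\mathbb{D}_\infty^Z$ does not give $\Delta^Z u\in L^\infty$. The argument one actually needs is: $\Delta^Z u\in W^{1,2}(Z)=W^{1,2}(\tilde Z)$ because $u\in\mathbb{D}_\infty^Z$, and $\langle\nabla f,\nabla u\rangle\in W^{1,2}(X)$ by Proposition~\ref{prop:nuetzlich}, using that $f\in W^{2,2}(X)$, $f$ and $u$ are Lipschitz, and $u$ extends to an element of $\mathbb{D}_\infty^X$ (this last point is where $\supp u\subset\Int Z$ is used); hence $\widetilde\Delta^Z u=\Delta^Z u-\langle\nabla f,\nabla u\rangle\in W^{1,2}(\tilde Z)$. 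This is the same ingredient (Proposition~\ref{prop:nuetzlich}) you already deploy later to rewrite the bracket, so the repair is close by, but as written your step does not establish the required $W^{1,2}$ regularity, which is essential for invoking the improved Bochner formula and for $|\nabla u|^2\in D({\bf\Delta}^{\tilde Z})$. Separately, the ``technical obstacle'' you anticipate about $\tilde\phi$ not being compactly supported in $\Int Z$ is not an obstacle at all: the measure-valued operators are taken on the compact space $(Z,d_Z,\tilde\m_Z)$ as a metric measure space in its own right, so every Lipschitz function on $Z$ has bounded support in $Z$ and is an admissible test function; the hypothesis $\supp u\subset\Int Z$ serves only to let $u$ be viewed as an element of $\mathbb{D}_\infty^X$ so that $\Hess^X f(\nabla u,\nabla u)$ is well-defined, not to tame $\tilde\phi$.
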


\begin{remark}
The conditions $RCD(K,N)$  and $\m(Z)>0$ for $(Z,d_Z,\m|_Z)$ imply that $(Z,d_Z)$ is a complete, compact, \textit{geodesic} metric space. Therefore, it makes sense to consider functions $f$ on $Z$ that are $\kappa$-convex in the sense of Definition \ref{def:metricconvex}.
\end{remark}

\begin{proof} \textbf{1.} That $\tilde{Z}$ satisfies the condition $RCD(\kappa+\ke,\infty)$, follows from Fact \ref{fact:cd} (iii), Lemma~\ref{conv->CD} and from the fact that $\tilde{Z}$ is again infinitesimally Hilbertian.
\\

\textbf{2.} We show that $u\in \mathbb{D}^{\tilde{Z}}_{\infty}$. Since $u\in \mathbb{D}^{Z}_{\infty}$, we have by definition that $u\in D_{W^{1,2}}(\Delta^Z)\cap \Lip(Z)\cap L^{\infty}(\m_Z)$.
From Lemma~\ref{importantlemma2} we know that $D(\Delta^Z)=D(\Delta^{\tilde{Z}})$ and $\Delta^{\tilde{Z}}u=\Delta^Zu-\langle \nabla u,\nabla f\rangle$. Since $u\in D_{W^{1,2}}(\Delta^Z)$ we already know that 
$\Delta^Zu\in W^{1,2}(Z)=W^{1,2}(\tilde{Z})$. Moreover, since $\supp u\in \Int Z$, it easily follows that $u\in\mathbb{D}^X_{\infty}$. Then, since $f\in W^{2,2}(X)$ and since $u$ and $f$ are Lipschitz,
it follows by Proposition \ref{prop:nuetzlich} that $\langle \nabla u,\nabla f\rangle \in W^{1,2}(X)$. Hence, $\langle\nabla u,\nabla f\rangle\in W^{1,2}(Z)$. Consequently, $\langle \nabla u,\nabla f\rangle\in W^{1,2}(\tilde{Z})$ and 
therefore $\Delta^{\tilde{Z}}u\in W^{1,2}(\tilde{Z})$.
Moreover, since $u\in \mathbb{D}_{\infty}^X$ $\Hess f(\nabla u,\nabla u)\in L^2(\m_X)$ is well-defined.
\\

\textbf{3.} Since $\tilde{Z}$ satisfies the condition $RCD(\kappa+\ke,\infty)$, the improved Bochner inequality yields for $u\in \mathbb{D}_{\infty}^{\tilde{Z}}$
\begin{align*}
{\bf\Gamma}_2^{\tilde{Z}}(u)=\frac{1}{2}{\bf\Delta}^{\tilde{Z}}|\nabla u|^2-\langle \nabla u,\nabla \Delta u\rangle\left[e^{-f}\m\right]|_Z\geq (\kappa + \ke)|\nabla u|^2\left[e^{-f}\m\right]|_Z.
\end{align*}
Recall that $|\nabla u|^2\in D({\bf\Delta}^{\tilde{Z}})$ if $u\in \mathbb{D}_{\infty}^{\tilde{Z}}$.
If we integrate $\tilde{\phi}=\phi e^f\in \Lip(Z)$ w.r.t. the previous measures, the definition of the measure valued Laplacian yields for the left hand side
\begin{align*}
\int \tilde{\phi}d{\bf\Gamma}_2^{\tilde{Z}}(u)&=-\frac{1}{2}\int_Z  \phi e^{f}d{\bf\Delta}^{\tilde{Z}} |\nabla u|^2 - \int_Z \langle \nabla u,\nabla \Delta^{\tilde{Z}}u\rangle \phi d\m\\
&=-\frac{1}{2}\int_Z \langle \nabla |\nabla u|^2,\nabla \phi e^{f}\rangle e^{-f}d\m - \int_Z \langle \nabla u,\nabla \Delta^{\tilde{Z}}u\rangle \phi d\m\\
&=-\frac{1}{2}\int_Z \langle \nabla |\nabla u|^2,\nabla \phi\rangle d\m - \frac{1}{2}\int_Z\langle \nabla |\nabla u|^2,\nabla f\rangle \phi d\m\\
&\hspace{3cm}- \int_Z \langle \nabla u,\nabla \Delta^{Z}u\rangle \phi d\m + \int_Z\langle \nabla u,\nabla \langle \nabla u,\nabla f\rangle\rangle \phi d\m\\
&= \int \phi d{\bf\Gamma}_2^{Z}(u) + \int_Z\Hess^X f(\nabla u,\nabla u)\phi d\m.
\end{align*}
For the last equality also recall that for every $g\in W^{1,2}(X)$ we have $g\in W^{1,2}(Z)=W^{1,2}(\tilde{Z})$ and $|\nabla^Z g|=|\nabla^{\tilde{Z}}g|=|\nabla^Xg||_Z$.
This completes the proof of the proposition.
\end{proof}
Let us first recall the following lemma from \cite[Lemma 6.7]{amslocal}.
\begin{lemma}\label{lem:cutoff}
Let $(X,d,\m)$ be a metric measure space satisfying a $RCD$-condition. Then for all $E\subset X$ compact and all $G\subset X$ open such that $E\subset G$ there exists a Lipschitz function $\chi:X\rightarrow [0,1]$ with
\begin{itemize}
 \item[(i)] $\chi=1$ on $E_h=\left\{x\in X:\exists y\in E: d(x,y)<h\right\}$ and $\supp\chi\subset G$,
 \medskip
 \item[(ii)] ${\bf\Delta}\chi\in L^{\infty}(\m)$ and $|\nabla\chi|^2\in W^{1,2}(X)$.
\end{itemize}
\end{lemma}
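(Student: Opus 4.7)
I would follow the heat-flow mollification strategy of Ambrosio--Mondino--Savar\'e. Since $E$ is compact and $G\supset E$ is open, first fix $h>0$ so that the closed neighbourhood $\overline{E_{3h}}$ is contained in $G$. Then pick a coarse $1/h$-Lipschitz cutoff $f_0:X\to[0,1]$ depending only on $d_E$ such that $f_0\equiv 1$ on $E_h$ and $\supp f_0\subset\overline{E_{2h}}$, for instance
\begin{equation*}
f_0(x):=\min\!\bigl\{1,\,h^{-1}(2h-d_E(x))_+\bigr\}.
\end{equation*}

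Next I would regularize by a short-time heat flow: set $g:=P_s f_0$ for some $s>0$ to be chosen. On any RCD$(K,N)$ space the Bakry--\'Emery gradient estimate from $BE(K,N)$ (Theorem~\ref{th:be}) gives $\bigl\||\nabla g|\bigr\|_{L^\infty}\le e^{-Ks}\Lip(f_0)$, while analytic semigroup theory yields $g\in\bigcap_{k\ge1}D(\Delta^k)$; together with $g\in L^\infty(\m)$ this places $g$ in $\mathbb{D}_\infty$. The crucial additional technical input I need is the $L^\infty$ bound $\Delta g\in L^\infty(\m)$, which follows from the regularizing estimate $\|s\Delta P_s f\|_{L^\infty}\le C\|f\|_{L^\infty}$ on RCD spaces. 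This can be obtained either from the Gaussian heat-kernel upper bound available on RCD$(K,N)$ spaces, or intrinsically by iterating Sav\'are's self-improved Bochner inequality. By the Feller/Gaussian estimates of $P_s$, $P_s f_0\to f_0$ uniformly on $\overline{E_{2h}}$ and $P_s f_0\to 0$ uniformly on $X\setminus E_{3h}$ as $s\downarrow 0$; hence, fixing any $\delta\in(0,1/3)$ and taking $s$ small enough, we achieve $g\ge 1-\delta$ on $E_h$ and $g\le\delta$ on $X\setminus E_{3h}$.

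Finally I would enforce the exact support and value conditions by composition: choose $\eta\in C^\infty(\mathbb{R};[0,1])$ with $\eta\equiv 0$ on $(-\infty,\delta]$, $\eta\equiv 1$ on $[1-\delta,\infty)$ and bounded derivatives, and set $\chi:=\eta\circ g$. Then $\chi\in\Lip(X;[0,1])$ with $\chi\equiv 1$ on $E_h$ and $\supp\chi\subset\overline{E_{3h}}\subset G$, which is (i). For (ii), the chain rule for the measure-valued Laplacian (Proposition~\ref{prop:chainrulelaplacian}) gives
\begin{equation*}
\Delta\chi=\eta'(g)\,\Delta g+\eta''(g)\,|\nabla g|^2\in L^\infty(\m),
\end{equation*}
since $\Delta g,|\nabla g|^2\in L^\infty(\m)$ and $\eta',\eta''$ are bounded. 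Then $\chi\in\mathbb{D}_\infty$, so the improved Bochner formula on RCD$(K,\infty)$ spaces quoted before Theorem~\ref{th:tracelaplace} yields $|\nabla\chi|^2\in W^{1,2}(X)\cap D(\mathbf{\Delta})$.

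The single significant obstacle in this plan is the $L^\infty$ bound $\Delta P_s f_0\in L^\infty(\m)$: the first-order ($\Lip$) regularization of $P_s$ is immediate from Bakry--\'Emery, but the second-order $L^\infty$-to-$L^\infty$ estimate on the generator is considerably more delicate and is where most of the work sits. Everything after that bound is a chain-rule computation together with standard RCD heat-semigroup facts.
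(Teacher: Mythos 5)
The paper itself does not prove this lemma; it cites it verbatim from \cite[Lemma~6.7]{amslocal}. Your proposal reproduces the Ambrosio--Mondino--Savar\'e strategy essentially faithfully: a coarse Lipschitz cutoff built from $d_E$, heat-flow mollification $P_s$ with the Bakry--\'Emery gradient bound controlling the Lipschitz constant, and a $C^2$-truncation $\eta$ combined with the chain rule for the measure-valued Laplacian to restore exact support and value conditions. You also correctly isolate the one genuinely nontrivial input, namely the $L^\infty$-smoothing bound $\|\Delta P_s f\|_{L^\infty}\lesssim s^{-1}\|f\|_{L^\infty}$, which on $RCD^*(K,N)$ spaces does indeed follow from the two-sided Gaussian heat-kernel bounds (available by local doubling and Poincar\'e) together with the Davies--Grigor'yan time-derivative estimate on $p_t(x,y)$; this is precisely the tool the cited proof relies on, and since $f_0$ is compactly supported the necessary ultracontractivity is local and no global geometric control at infinity is required.

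One caveat worth flagging: the alternative route you mention in passing, ``iterating Savar\'e's self-improved Bochner inequality,'' does not obviously produce the needed $L^\infty$ bound on the generator --- Savar\'e's self-improvement controls $|\nabla P_t f|^2$, not $\Delta P_t f$ --- so I would not present it as a known substitute. Otherwise the argument is sound, including the closing step: once $\chi\in\mathbb{D}_{\infty}$ is established (for which one should note that $\Delta\chi=\eta'(g)\Delta g+\eta''(g)|\nabla g|^2$ lies in $W^{1,2}(X)$ since $\Delta g\in D(\Delta)\subset W^{1,2}(X)$, $|\nabla g|^2\in W^{1,2}(X)\cap L^{\infty}$ by the improved Bochner inequality, and $\eta',\eta''$ are bounded Lipschitz), the same improved Bochner inequality yields $|\nabla\chi|^2\in W^{1,2}(X)$, giving~(ii).
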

\begin{remark}
Following the proof of this Lemma  in \cite{amslocal} we see that one can choose $\chi$ to be in $\mathbb{D}_{\infty}^X$.
\end{remark}

\begin{corollary}
Let $X$, $Z$ and $f$ be as in Proposition ~\ref{prop-conf-conv}. Then
\begin{align*}
\Hess f(1_Z\nabla u ,1_Z\nabla u)=1_Z\Hess^Xf(\nabla u,\nabla u)\geq \left[\kappa|\nabla u|^2 
%- \frac{1}{\beta}\langle \nabla f,\nabla u\rangle
\right] 1_Z\ \ \ \m\mbox{-a.e.}
\end{align*}
$\mbox{for every }u\in \mathbb{D}^X_{\infty}.$
\end{corollary}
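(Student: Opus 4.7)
The goal is to establish, for every $u \in \mathbb{D}_\infty^X$, the $\m$-a.e.\ pointwise inequality $\Hess^X f(\nabla u, \nabla u) \geq \kappa |\nabla u|^2$ on $Z$. The plan is to first reduce to a localized class of test functions via a cutoff, then derive an integrated inequality from Proposition~\ref{prop-conf-conv}, and finally upgrade to a pointwise statement using Bochner-type self-improvement together with the measure-independence of the Hessian given by Proposition~\ref{prop:nuetzlich}.

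For the reduction, given a compact $E \subset \Int Z$, Lemma~\ref{lem:cutoff} provides $\chi \in \mathbb{D}_\infty^X$ with $\chi \equiv 1$ on $E$ and $\supp \chi \subset \Int Z$. Then $v := \chi u$ lies in $\mathbb{D}_\infty^Z$ with $\supp v \subset \Int Z$, and $\nabla v = \nabla u$ on $E$. Since $\Int Z$ is exhausted by such $E$'s and $\m(\partial Z) = 0$, it suffices to prove the claim for $v \in \mathbb{D}_\infty^Z$ with $\supp v \subset \Int Z$. For such $v$ and any nonnegative Lipschitz $\phi$, Proposition~\ref{prop-conf-conv} together with the identity in its proof yields
\[
\int_Z \Hess^X f(\nabla v, \nabla v)\phi\, d\m \;=\; \int \tilde\phi\, d\Gamma_2^{\tilde Z}(v) - \int \phi\, d\Gamma_2^Z(v),
\]
and the target integrated inequality
\[
\int_Z \Hess^X f(\nabla v, \nabla v)\phi\, d\m \;\geq\; \kappa \int_Z |\nabla v|^2 \phi\, d\m \qquad (\ast)
\]
is thereby reformulated as a comparison between the two $\Gamma_2$ functionals.

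To prove $(\ast)$, I invoke the self-improvement of Bochner's inequality on both spaces: since $\tilde Z$ is $RCD(K+\kappa,\infty)$ and $Z$ is $RCD(K,N)$, one has
\[
\Gamma_2^{\tilde Z}(v) \geq (K+\kappa)|\nabla v|^2 \tilde\m + |\Hess v|^2_{HS}\, \tilde\m, \qquad \Gamma_2^{Z}(v) \geq K|\nabla v|^2 \m + |\Hess v|^2_{HS}\, \m,
\]
where crucially, by Proposition~\ref{prop:nuetzlich} the Hessian of $v$ is the same metric bilinear form in both settings (it is expressed purely in terms of inner products of gradients, without reference to the measure). Testing the first against $\tilde\phi$ and the second against $\phi$, using $\tilde\phi\, d\tilde\m = \phi\, d\m$, the $|\Hess v|^2_{HS}$ contributions appear on both sides with the \emph{same} weight $\int |\Hess v|^2_{HS} \phi\, d\m$ and are thus arranged to cancel when the two integrated Bochner bounds are combined with the Proposition's identity; the remaining terms leave exactly $\kappa \int |\nabla v|^2 \phi\, d\m$. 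Once $(\ast)$ is established for every nonnegative Lipschitz $\phi$, the pointwise $\m$-a.e.\ bound on $\Int Z$, and hence on $Z$, follows by varying $\phi$.

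The main obstacle is the cancellation step in the previous paragraph: since both improved Bochner inequalities provide lower bounds in the same direction, the cancellation is not a straightforward subtraction but requires either a more refined (near-equality) version of Bochner on $\tilde Z$ that exploits the conformal relationship $\tilde\m = e^{-f}\m$, or a parallel argument via the second variation formula (Theorem~\ref{th:secondvariation}) applied to Kantorovich potentials supported in $Z$ --- where the bound $\Hess^X f(\nabla \psi, \nabla \psi) \geq \kappa |\nabla \psi|^2$ follows immediately from the $\kappa$-convexity of $f|_Z$ and the second variation identity --- combined with a density argument to pass from gradients of Kantorovich potentials to gradients of arbitrary $u \in \mathbb{D}_\infty^X$ in the tangent module. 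As the section's preamble stresses, this passage is the key novelty, and the Proposition's integrated framework is precisely what enables it beyond the Kantorovich case.
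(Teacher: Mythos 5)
Your proposal diverges from the paper's proof in the crucial step, and the gap you flag at the end is genuine and not closed. The identity from Proposition~\ref{prop-conf-conv} gives
\[
\int_Z \Hess^X f(\nabla v,\nabla v)\phi\, d\m \;=\; \int \tilde\phi\, d\boldsymbol{\Gamma}_2^{\tilde Z}(v) - \int \phi\, d\boldsymbol{\Gamma}_2^Z(v),
\]
and the desired lower bound by $\kappa\int|\nabla v|^2\phi\,d\m$ therefore requires a \emph{lower} bound on $\int\tilde\phi\,d\boldsymbol{\Gamma}_2^{\tilde Z}(v)$ together with an \emph{upper} bound on $\int\phi\,d\boldsymbol{\Gamma}_2^Z(v)$. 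The improved Bochner inequality only ever gives lower bounds on $\boldsymbol{\Gamma}_2$, so your attempted cancellation of the $|\Hess v|_{HS}^2$ terms goes in the wrong direction: after inserting both Bochner bounds you obtain two one-sided estimates that cannot be subtracted. You acknowledge this, but neither of the repairs you sketch is supplied, and the Kantorovich-potential route faces exactly the density obstruction that makes this corollary nontrivial in the first place (gradients of Kantorovich potentials do not obviously generate the tangent module).

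The paper circumvents this entirely with a scaling trick that never touches $\boldsymbol{\Gamma}_2^Z$ quantitatively. Replace $f$ by $f_\alpha = f/\alpha$, which is $(\kappa/\alpha)$-convex and still in the admissible class for Proposition~\ref{prop-conf-conv}. The proposition then yields
\[
\int_Z\Bigl(\tfrac{\kappa}{\alpha}+K\Bigr)|\nabla(\chi u)|^2\phi\,d\m
\;\leq\; \int\phi\,d\boldsymbol{\Gamma}_2^Z(\chi u) + \int_Z\Hess^X(f/\alpha)(\nabla(\chi u),\nabla(\chi u))\phi\,d\m.
\]
Now multiply through by $\alpha>0$ and let $\alpha\to 0$: the $\kappa/\alpha$ term survives as $\kappa$, the $\alpha K$ and $\alpha\int\phi\,d\boldsymbol{\Gamma}_2^Z(\chi u)$ terms vanish, and the Hessian term is $\alpha$-homogeneous so it becomes $\Hess^X f$. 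This gives $(\ast)$ with no sign problem and no need for any self-improved Bochner cancellation, after which the covering argument with $\phi_k\uparrow 1$ and exhausting cutoffs passes to the pointwise a.e.\ statement on $Z$. Your reduction and localization steps are sound; it is only the central inequality that needs this different mechanism.
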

\begin{proof} \textbf{1.} Let $\alpha>0$ and define $f/\alpha=f_{\alpha}$. Then $f_{\alpha}$ is $\frac{\kappa}{\alpha}$-convex. 
Let $u\in\mathbb{D}_{\infty}^X$ and choose a cut-off function $\chi\in \mathbb{D}_{\infty}^X$ such that $\chi=1$ on $A\subset \Int Z$ for a closed set $A$ and $\supp u\subset \Int Z$. Then $\chi\cdot u\in \mathbb{D}_{\infty}^{Z}$ with 
$\supp \chi\cdot u\subset \Int Z$. Therefore by  Proposition ~\ref{prop-conf-conv}
\begin{align}
\int_Z(\kappa/\alpha+\ke)|\nabla (\chi\cdot u)|^2\phi d\m 
%+ \frac{1}{N+\frac{\beta}{\alpha}}\int (\Delta^Z u- \langle \nabla f/\alpha,\nabla u\rangle)^2 \phi d\m \\
\leq \int \phi d{\bf \Gamma}_2^Z(\chi\cdot u)+\int_Z \Hess^X(f/\alpha)(\nabla (\chi\cdot u),\nabla (\chi\cdot u))\phi d\m.
\end{align}
Hence, multiplying with $\alpha>0$ and letting $\alpha\rightarrow 0$ this  yields 
\begin{align*}
\kappa\int |\nabla (\chi\cdot u)|^2\phi 
%+ \frac{1}{\beta}\int \langle \nabla f,\nabla(\chi\cdot u)\rangle^2 \phi 
d\m\leq \int_Z \Hess^Xf(\nabla(\chi\cdot u),\nabla(\chi\cdot u))\phi d\m
\end{align*}
for every nonnegative $\phi\in\Lip(X)$. By standard approximation the same holds  for any nonnegative $\phi\in C_b(Z)$.
\\

\textbf{2.} We choose a sequence of nonnegative $\phi_k\in C_b(X)$, $k\in\mathbb{N}$, compactly supported in $\Int Z$ such that $\phi_k\uparrow 1$ pointwise $\m$-a.e.\ . Moreover, we choose cut-off functions $\chi_k$ as in \textbf{1.}
with $A=\supp\phi_k$. Then
\begin{align*}
\kappa\int |\nabla u|^2\phi_k d\m %+ \frac{1}{\beta}\int \langle\nabla f,\nabla u\rangle \phi d\m 
\leq \int \Hess^Xf(\nabla u,\nabla u)\phi_kd\m
\end{align*}
for every $k\in\mathbb{N}$ and $u\in \mathbb{D}_{\infty}^X$.  Since $\m(\partial Z)=0$, letting $k\rightarrow \infty$ yields the claim.
\end{proof}
\begin{theorem}\label{th:important}
Let $(X,d,\m)$ be a compact metric measure space that satisfies the condition $RCD(K,N)$ for $K\in \mathbb{R}$ and $N>0$, and $Z\subset X$ be a closed subset such that $\m(\Int Z)>0$, $\m(\partial Z)=0$ and $(Z,d_Z,\m|_Z)$ satisfies
the condition $RCD(K,N)$ as well. Let $f\in D_{L^{2}}(\Delta)\cap \Lip(X)\cap L^{\infty}$.  Let $\kappa\in \mathbb{R}$. % and $\beta\in (0,\infty]$.
%Then $f$ is $\kappa$-convex on $Z$ if and only if $\Hess^X f\geq \kappa$ $\m$-a.e. on $Z$. 

%Moreover, the 
Then the following statements are equivalent\smallskip
\begin{itemize}
 \item[(i)] $f$ is $\kappa$-convex on $(Z,d_Z)$,\medskip
 \item[(ii)] $\Hess^X f(1_Z\nabla u,\nabla u)\geq \uk|\nabla u|^21_Z %+ \frac{1}{\beta}\langle \nabla f,\nabla u\rangle^21_Z
 $ $\m$-a.e. for $u\in \mathbb{D}^X_{\infty}$.
\end{itemize}

\end{theorem}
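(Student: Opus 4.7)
The implication (i) $\Rightarrow$ (ii) was already established by the preceding corollary, which derived it from Proposition~\ref{prop-conf-conv} by applying that result to $f/\alpha$ and letting $\alpha \downarrow 0$. Thus my plan concentrates on (ii) $\Rightarrow$ (i). The strategy is to apply the second variation formula (Theorem~\ref{th:secondvariation}) along $L^2$-Wasserstein geodesics supported in $Z$, and then to descend from the resulting Wasserstein $\kappa$-convexity to pointwise $\kappa$-convexity along every geodesic in $Z$.

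First I would upgrade hypothesis (ii), which controls the Hessian only against gradients of test functions, to an inequality for arbitrary vector fields, using the lemma preceding Theorem~\ref{th:hesscont}:
\begin{align*}
\Hess^X f(1_Z V, 1_Z V) \geq \kappa |V|^2 1_Z \quad \m\text{-a.e.}
\end{align*}
for every $V \in L^2(TX)$. Next, fix $\mu_0, \mu_1 \in \mathcal{P}^2(X)$ with $\supp \mu_i \subset Z$ and $\mu_i \leq C \m|_Z$. Since $Z$ is geodesically convex in $X$ and $(Z, d_Z, \m|_Z)$ is itself $RCD(K,N)$, the unique $L^2$-Wasserstein geodesic $(\mu_t)_{t \in [0,1]}$ stays inside $Z$ with densities uniformly bounded by some $C'<\infty$. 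Since $f \in D(\Delta) \subset H^{2,2}(X)$ (Remark~\ref{rem:H22}), the second variation formula combined with the upgraded Hessian bound yields
\begin{align*}
\frac{d^2}{dt^2} \int f \, d\mu_t = \int \Hess^X f(\nabla \phi_t, \nabla \phi_t) \, d\mu_t \geq \kappa \int |\nabla \phi_t|^2 \, d\mu_t = \kappa\, W_2(\mu_0,\mu_1)^2,
\end{align*}
using that $|\nabla \phi_t|^2 \, d\mu_t$ is the kinetic energy density of the constant-speed Wasserstein geodesic.

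The main obstacle is passing from this Wasserstein $(\kappa W_2^2)$-convexity of $t \mapsto \int f \, d\mu_t$ to pointwise $\kappa$-convexity of $f$ along every unit-speed geodesic $\gamma:[0,L] \to Z$. My plan is to fix $x=\gamma(0)$, $y=\gamma(L)$ and consider the uniform measures
\begin{align*}
\mu_0^r = \tfrac{1}{\m(B_r(x)\cap Z)} \m|_{B_r(x) \cap Z}, \qquad \mu_1^r = \tfrac{1}{\m(B_r(y)\cap Z)} \m|_{B_r(y) \cap Z}.
\end{align*}
Because $(Z, d_Z, \m|_Z)$ is essentially non-branching (being $RCD$), the unique optimal dynamical plan between $\mu_0^r$ and $\mu_1^r$ disintegrates into a family of almost surely unique geodesics. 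Using the Lipschitz regularity of $f$ together with a localization of this disintegration onto a shrinking tube around $\gamma$, the integrated convexity inequality
\begin{align*}
(1-t) \int f \, d\mu_0^r + t \int f \, d\mu_1^r - \int f \, d\mu_t^r \geq \tfrac{\kappa t(1-t)}{2} W_2(\mu_0^r, \mu_1^r)^2
\end{align*}
should pass to the limit $r \downarrow 0$, delivering the desired $\kappa$-convexity of $f \circ \gamma$. The technical difficulty here is to avoid contamination from nearby competing geodesics; this is where essential non-branching of $(Z,d_Z,\m|_Z)$ enters decisively, and is the delicate step.
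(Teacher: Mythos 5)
The forward implication and the use of the second variation formula match the paper's argument. However, your plan for the final passage from Wasserstein convexity of $t\mapsto\int f\,d\mu_t$ to pointwise $\kappa$-convexity of $f$ along \emph{every} unit-speed geodesic $\gamma$ in $Z$ has a genuine gap. You propose to take uniform measures $\mu_0^r,\mu_1^r$ on $r$-balls around $\gamma(0)$ and $\gamma(L)$ and to argue that as $r\to 0$ the Wasserstein interpolants $\mu_t^r$ concentrate on $\gamma(t)$. But in a general $RCD$ space the optimal dynamical plan between $\mu_0^r$ and $\mu_1^r$ disintegrates over geodesics joining points of $B_r(\gamma(0))$ to points of $B_r(\gamma(L))$, and nothing forces these geodesics to remain near the chosen $\gamma$ — if $\gamma(0)$ and $\gamma(L)$ admit more than one geodesic between them (which essential non-branching does not preclude for a fixed pair of points), the limiting midpoint measure $\nu_t$ need not be $\delta_{\gamma(t)}$. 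There is no ``fellow-traveler'' control available here as there would be in a $\CAT$ space, so the ``shrinking tube'' localization has no mechanism to exclude contamination from competing geodesics, and the argument does not yield the inequality along the prescribed $\gamma$.

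The paper sidesteps exactly this obstacle in two steps you are missing. First, by the result of Gigli--Rajala--Sturm (\cite{grs}) quoted in the proof, for every $x_0$ and $\m$-a.e.\ $x_1$ there is a \emph{unique} geodesic from $x_0$ to $x_1$; for such pairs the Wasserstein geodesics between approximating mollified deltas do collapse to that unique geodesic, and the inequality \eqref{second} is obtained. For arbitrary $x_0,x_1$ one approximates $x_1$ by points $\tilde x_1^k$ with unique geodesics to $x_0$, extracts a convergent subsequence of the geodesics $[x_0,\tilde x_1^k]$, and passes to the limit; this yields the inequality along \emph{some} geodesic from $x_0$ to $x_1$, i.e.\ only \emph{weak} $\kappa$-convexity of $f$ on $Z$. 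The proof is then completed by invoking Sturm's equivalence theorem (Theorem~\ref{th:evi} from \cite{sturmgradient}), which upgrades weak $\kappa$-convexity to $\kappa$-convexity along every geodesic in the $RCD$ setting. Without these two ingredients — the a.e.\ uniqueness of geodesics and the weak-to-strong equivalence from the $EVI$ theory — your localization step is not enough to close the argument.
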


\begin{proof} The implication
(i)\ $\Rightarrow$\ (ii) is precisely the content of the previous corollary. \smallskip\\
For (ii)\ $\Rightarrow$\ (i) assume $\Hess^Xf(\nabla u,\nabla u)\geq \kappa|\nabla u|^2 %+ \frac{1}{\beta}\langle \nabla f,\nabla u\rangle^2
$ $\m$-a.e. on $Z$ for any $u\in W^{1,2}(X)$. 
Then the  second variation formula in Theorem \ref{th:secondvariation} implies that $t\in [0,1]\mapsto \mathcal{F}(\mu_t)=\int fd\mu_t$ is in $C^2([0,1])$ for a $L^2$-Wasserstein geodesic $(\mu_t)_{t\in [0,1]}$ with $\mu_t\leq C\m$ for some constant $C>0$, and 
\begin{align*}
\frac{d^2}{dt^2}\mathcal{F}(\mu_t)=\int \Hess^X(\nabla\phi_t,\nabla \phi_t)d\mu_t&\geq \kappa\int |\nabla \phi_t|^2 d\mu_t 
%+ \int \langle \nabla f,\nabla \phi_t\rangle d\mu_t
=\kappa W_2(\mu_0,\mu_1)^2.
%+ \frac{d}{dt}\mathcal{F}(\mu_t).
\end{align*}
Hence, $t\in [0,1]\mapsto \mathcal{F}(\mu_t)$ is $\kappa$-convex, and %In the case $\beta<\infty$ this says $\mathcal{V}(\mu_t)=e^{-\frac{1}{\beta}\mathcal{F}(\mu_t)}$ satisfies 
%\begin{align}
%\frac{d^2}{dt^2}\mathcal{V}(\mu_t)\leq - \frac{\kappa}{\beta} W_2(\mu_0,\mu_1)^2 \mathcal{V}(\mu_t).
%\end{align}
%In the case when $\beta=\infty$ w
we obtain that 
\begin{align}\label{third}
\mathcal{F}(\mu_t)\leq (1-t)\mathcal{F}(\mu_0)+t\mathcal{F}(\mu_1)- \frac{1}{2}K(1-t)tW_2(\mu_0,\mu_1)^2
\end{align}
%for every Wasserstein geodesic $(\mu_t)_{t\in [0,1]}$ such that $\mu_t\leq C\m$ for some constant $C>$.
%and in the case $\beta<\infty$ we obtain that
%\begin{align}\label{first}
%\mathcal{V}(\mu_t)\geq \sigma_{\kappa/\beta}^{(1-t)}(W_2(\mu_0,\mu_1)) \mathcal{V}(\mu_0)+\sigma_{\kappa/\beta}^{(t)}(W_2(\mu_0,\mu_1)) \mathcal{V}(\mu_1).
%\end{align}
Now, we know that for every point $x_0\in X$ and $\m$-a.e. point $x_1\in X$ there exists a unique geodesic $\gamma$ (Corollary 1.4 in \cite{grs}).
We pick two such points in $Y$, and sequences of $\m$-absolutely continuous probability measures $(\mu_0^k)_{k\in\mathbb{N}}$ and $(\mu_1^k)_{k\in\mathbb{N}}$ (for instance $\mu_i^k=\m(B_{1/k}(x_i))^{-1}\m|_{B_{1/k}(x_i)}$) 
such that $\mu_i^k\rightarrow \delta_{x_i}$ weakly for every $k\in \mathbb{N}$. 
Assume moreover that $\mu_i^k$, $i=0,1$, satisfies $\mu_i^k\leq C(k)\m$. Then the Wasserstein geodesic $(\mu_t^k)_{t\in[0,1]}$ between $\mu_0^k$ and $\mu_1^k$ satisfies $\mu_t^k\leq \tilde{C}(k,C(k),K,N)\m$ by \cite{rajala2}.
Hence, (\ref{third}) holds for $(\mu_t^k)_{t\in [0,1]}$. We can extract a subsequence such that $(\mu_t^k)_{k\in\mathbb{N}}$ converges for $t\in [0,1]\cap\mathbb{Q}$ to $\nu_t$ for a geodesic $(\nu_t)_{t\in [0,1]}$ between $\delta_{x_0}$ and $\delta_{x_1}$.
Since $x_0$ and $x_1$ are chosen such that there is only one geodesic $\gamma$ in $X$
between them, we must have $\nu_t=\delta_{\gamma(t)}$. Moreover, since $f$ is continuous, by weak convergence of $\mu_t^k$, $\mathcal{F}(\mu^k_t)\rightarrow f(\gamma(t))$ for every $t\in [0,1]\cap\mathbb{Q}$.
Hence, $\forall t\in [0,1]\cap \mathbb{Q}$, we have
\begin{align}\label{second}
f(\gamma_t)\leq& (1-t)f(\gamma_0)+tf(\gamma_1) - \frac{1}{2}K(1-t)t d^2(\gamma_0,\gamma_1)
%f(\gamma_t)^{-\frac{1}{\beta}}\geq& \sigma_{\kappa/\beta}^{(1-t)}(|\dot{\gamma}|) f(\gamma_0)^{-\frac{1}{\beta}}+\sigma_{\kappa/\beta}^{(t)}(|\dot{\gamma}|) f(\gamma_1)^{-\frac{1}{\beta}} \ \ \ \ \ \ \  \mbox{ if }\beta<\infty,
\end{align}
and by continuity this holds for every $t\in [0,1]$.
Now, one can easily see that this implies (i) for $f:Z\rightarrow \mathbb{R}$ in the weak sense. That is for any pair of points $x_0,x_1\in Z$ we can find a geodesic $\gamma$ such that the inequality holds. 
Indeed, if $x_0$ and $x_1$ are arbitrary, we can find a point $\tilde{x}^k_1$ such that $d(x_1,\tilde{x}^k_1)\rightarrow 0$ if $k\rightarrow \infty$  and such that the geodesic $\tilde\gamma^{k}=[x_0,\tilde x_k]$ is unique. Then (\ref{second}) holds for $\tilde{\gamma}^{k}$. 
Since $X$ is locally compact, by passing to a subsequence we can assume that $[x_0,\tilde x_k]\to [x_0,x_1]$. Since $f$ and $d$ are continuous by passing to the limit we obtain that (\ref{second}) holds for $[x_0,x_1]$ as well.

%we can extract a subsequence of geodesics satisfying (\ref{second}) such that this subsequence uniformly converges to a geodesic between $x_0$ and $x_1$. Since $f$ and $d$ are continuous our claim follows.

Finally, we recall the following theorem by Sturm.
\begin{theorem}[\cite{sturmgradient}]\label{th:evi}
Let $(X,d,\m)$ be a metric measure space that is locally compact and satisfies the condition $RCD(\ke,N)$ for $\ke\in \mathbb{R}$ and $N\in (0,\infty]$. Let $V:X\rightarrow (-\infty,\infty]$ be a function that is continuous and satisfies 
$V(x)\geq -C_2 - C_1d(x_0,x)^2$ for constants $C_1,C_2>0$ and $x_0\in X$. Let $\kappa\in \mathbb{R}$. Then, the following properties are equivalent:
\begin{itemize}
 \item[(i)] $V$ is weakly $\kappa$-convex,
 \item[(ii)] $V$ is $\kappa$-convex,
 \item[(iii)] For any $x_0\in X'$ there exists a curve $(x_t)_{t\geq 0}$ in $X'$ such that for all $z\in X'$ and every $t>0$ we have 
 \begin{align*}
 \frac{1}{2}\frac{d}{dt}d(x_t,z)^2+ \frac{\kappa}{2}d(x_t,z)^2\leq V(z)-V(x_t).
 \end{align*}
where $X'$ is the closure of $\Dom V$ in $X$.
We say $(x_t)_{t\geq 0}$ is an $EVI_{\kappa}$ gradient flow curve. 
\end{itemize}
\end{theorem}
This finishes the proof. \end{proof}
\begin{remark}
An $EVI_{\kappa}$ gradient flow curve $(x_t)_{t\geq 0}$ of $V$ comes with the parametrization such that 
\begin{align*}
\frac{d}{dt}V(x_t)=-|\nabla^- V|^2(x_t)=-|\dot{x}_t|^2,
\end{align*}
where $|\nabla^-V|(x)=\limsup_{y\rightarrow x}\frac{(V(y)-V(x))^-}{d(x,y)}$ is the descending slope of $V$ that is general different from the minimal weak upper gradient that was defined before. 
%However, $|\nabla ^-V|$ is an upper gradient for $V$.
%Therefore, if $\Psi:X\rightarrow C([0,1],X)$ with $\Psi(x_0)=(x_t)_{t\in [0,1]}$, the measure $(\Psi)_{\star}\m=\Pi$ represents the gradient of $V$ in the sense of Gigli \cite{giglistructure}.
%Consequently, for any Lipschitz function $g\in \Lip(X)$, 
%\begin{align*}
%\frac{d}{dt}\Big|_{t=0}\int g(x)d(\Phi_t)_{\star}\m=
%\frac{d}{dt}\Big|_{t=0}\int g(\gamma_t)d\Pi(\gamma)=\int \langle \nabla g,\nabla V\rangle d \m.
%\end{align*}
 Hence, an $EVI_{\kappa}$ gradient flow is actually an inverse gradient flow in the standard sense.
\end{remark}

\section{RCD+CAT implies Alexandrov}\label{sec:RCD+CAT-to-Alexandrov}
The goal of this section is to prove the following Theorem which  implies Theorem~\ref{main-thm} by globalization under the extra assumption that $X$ is infinitesimally Hilbertian. In section~\ref{CD+CAT to RCD+CAT} we will show that the infinitesimal hilbertianness assumption can be dropped which will finish the proof of Theorem~\ref{main-thm}  in full generality.
\begin{theorem}\label{cor:4}\label{rcd+cat-to-cbb} 
Let $2\le n\in\mathbb{N}$ and $(X,d,\m)$ be a metric measure space satisfying $RCD(\ke,n)$ for $\ke\in\mathbb{R}$  with $\m=\mathcal{H}^n$, and assume $(X,d)$ is also $CAT(\uk)$. Then
\smallskip
\begin{enumerate}
\item \label{low-curv-bound} $\uk (n-1)\geq \ke$ and $(X,d)$ is an Alexandrov space of  curvature bounded below by $\ke-\uk (n-2)$.
\smallskip
%\item[(2)] $X_{reg}$ is an open $C^3$-manifold, and $d$ is induced by a Riemannian tensor $g$  on $X_{reg}$ which is in $C^{1,\alpha}\cap W^{2,p}(X_{reg})$ for every $1\le p<\infty, 0<\alpha<1$.
%\medskip
\item \label{equal-case} If $\uk(n-1)=\ke$, then $(X,d)$ is isometric to a geodesically convex subset of the simply connected space form $\SS^n_\uk$ of constant curvature $\uk$.
\end{enumerate}
\end{theorem}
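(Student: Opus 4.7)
The plan is to transcribe the Riemannian Hessian/trace argument sketched in the introduction into Gigli's tangent-module calculus, using Han's identity $\Delta f = \tr \Hess f$ on noncollapsed spaces together with the upper Laplace comparison from $RCD(K,n)$ and a lower Hessian bound in perpendicular directions extracted from $CAT(\uk)$. First I would localize at a regular point. The structural results recalled in Section~\ref{sec:preliminaries}, combined with the semi-concavity of the density function in the $CAT$ setting (established earlier in the paper), make $X_{reg}$ open, convex, of full $\mathcal H^n$-measure, and a topological $n$-manifold. Fix $x_0\in X_{reg}$ and $\epsilon>0$ so that $\bar B_\epsilon(x_0)\subset X_{reg}$ is geodesically convex, has diameter $<\pi_\uk/2$, and is simultaneously $RCD(K,n)$ and $CAT(\uk)$. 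For any $y\in X$, Theorem~\ref{th:upperbound} and the chain rule give $\Delta d_y\le (n-1)\cot_{K/(n-1)}(d_y)$ on $X\setminus\{y\}$, while Theorem~\ref{th:lowerbound} (applicable since $B_\epsilon(x_0)$ is a topological manifold) gives $\mathbf{\Delta} d_y\ge 0$ on $B_\epsilon(x_0)\setminus\{y\}$. Thus $d_y\in D(\Delta)$ locally with $L^\infty$ Laplacian, hence $d_y\in H^{2,2}$ locally by Remark~\ref{rem:H22} and $\Hess d_y$ is defined a.e.

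Next, I read the $CAT(\uk)$ condition in the ODE form $[\md_\uk(d_y\circ\gamma)]''\ge \cos_\uk(d_y\circ\gamma)$. Applying the chain rule (Proposition~\ref{prop:chainrule}) to $\md_\uk\circ d_y$ together with the convexity/Hessian bridge of Section~\ref{sec:convexity-and-hessian} (Theorem~\ref{th:important}) yields the pointwise bound $\Hess d_y(V,V)\ge \cot_\uk(d_y)|V|^2$ for unit $V\perp\nabla d_y$. Since $|\nabla d_y|\equiv 1$ a.e.\ on $X\setminus\{y\}$ (by Cheeger's identification $|\nabla d_y|=\Lip(d_y)$ together with uniqueness of $CAT$-geodesics), differentiating $|\nabla d_y|^2$ inside the tangent module gives $\Hess d_y(\nabla d_y,\cdot)=0$. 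Invoke Corollary~\ref{cor:trace}: in any unit orthogonal basis $e_1=\nabla d_y, e_2,\ldots,e_n$ on $B_\epsilon(x_0)$ (Proposition~\ref{prop:unitorth}) and any $j\ge 2$,
\[
\Hess d_y(e_j,e_j)=\Delta d_y-\sum_{i\ne j,\,i\ge 2}\Hess d_y(e_i,e_i)\le (n-1)\cot_{K/(n-1)}(d_y)-(n-2)\cot_\uk(d_y).
\]
This bound is independent of the chosen basis of $(\nabla d_y)^\perp$, so it controls every eigenvalue of $\Hess d_y|_{(\nabla d_y)^\perp}$. Consistency of this upper bound with the lower bound $\cot_\uk(d_y)$ forces $\cot_{K/(n-1)}(d_y)\ge\cot_\uk(d_y)$, i.e.\ $\uk(n-1)\ge K$, the first claim of part (1). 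Using the expansion $\cot_\kappa(x)=\frac{1}{x}-\frac{\kappa x}{3}+O(x^3)$, for any $\hat\kappa<K-(n-2)\uk$ the right hand side is $\le\cot_{\hat\kappa}(d_y)$ on a small enough neighborhood of $x_0$.

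Finally, plugging these Hessian bounds back into the chain rule for $\md_{\hat\kappa}\circ d_y$ and using $\Hess d_y(\nabla d_y,\cdot)=0$ gives $\Hess\md_{\hat\kappa}(d_y)(V,V)\le\cos_{\hat\kappa}(d_y)|V|^2$ for every $V$. By Theorem~\ref{th:important} applied in the concavity direction, this is equivalent to $[\md_{\hat\kappa}(d_y\circ\gamma)]''+\hat\kappa\md_{\hat\kappa}(d_y\circ\gamma)\le 1$ along geodesics in $B_\epsilon(x_0)$, which is precisely the local $CBB(\hat\kappa)$ condition at $x_0$. Petrunin's globalization (Remark~\ref{rem-globalization}), applied on the open convex full-measure set $X_{reg}$ and extended to its completion $X$, promotes this to the global $CBB(\hat\kappa)$ condition on $X$; letting $\hat\kappa\nearrow K-(n-2)\uk$ yields part (1). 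For part (2), if $\uk(n-1)=K$ then the lower curvature bound equals $\uk$, so $X$ is simultaneously $CAT(\uk)$ and $CBB(\uk)$ of Hausdorff dimension $n$, and Lemma~\ref{CBB-CBA-equal} identifies it with a geodesically convex subset of $\SS^n_\uk$.

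The main technical obstacle is executing the orthogonal-basis trace argument rigorously in Gigli's nonsmooth calculus: deriving the identity $\Hess d_y(\nabla d_y,\cdot)=0$ inside the tangent module framework, extracting the \emph{sharp} pointwise bound $\Hess d_y(V,V)\ge\cot_\uk(d_y)|V|^2$ (rather than a coarser constant bound from a crude application of Theorem~\ref{th:important}), and transporting the local $CBB$ comparison from $X_{reg}$ to all of $X$ in a form that Petrunin globalization accepts. Once these calculus items are in hand, what remains is the Taylor expansion of $\cot_\kappa$ and a direct invocation of globalization.
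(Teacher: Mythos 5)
Your proposal reproduces the paper's analytic strategy in all essential respects: lower and upper Laplace bounds on $d_y$, the chain rule combined with the convexity/Hessian bridge of Theorem~\ref{th:important}, the trace identity $\Delta f=\tr\Hess f$ with a unit orthogonal module basis adapted to $\nabla d_y$, the Taylor expansion of $\cot_\kappa$, and Petrunin globalization from the open convex set $X_{reg}$. You correctly identify the delicate points in the module calculus, and the claimed Hessian bounds match what the paper derives.

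However, there is a genuine gap at the structural stage, and it is precisely the part you dispatch in one sentence. You assert that the preliminaries plus semi-concavity of the density $\theta^{1/n}$ already yield that $X_{reg}$ is open and convex. Semi-concavity alone does not give this. What is actually needed is a gap phenomenon for the density: every $x\in X$ has either $\theta(x)=1$ (regular) or $\theta(x)\le 1/2$ (singular), with no intermediate values. Only this dichotomy, combined with lower semicontinuity of $\theta$, gives openness of $X_{reg}$; and only the dichotomy combined with semi-concavity and continuity of semi-concave functions along geodesics gives convexity. In turn, the gap phenomenon hinges on knowing that the cross-section $\Sigma$ of every tangent cone $T_xX\cong C(\Sigma)$ is isometric either to $\SS^{n-1}$ or to a convex subset of $\SS^{n-1}$ lying inside a hemisphere. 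This classification of $\Sigma$ — an $(n-1)$-dimensional noncollapsed $RCD(n-2,n-1)$, $CAT(1)$ space — is exactly Theorem~\ref{cor:4} in one lower dimension (with a separate base case at $n=2$ handled via \cite{Kit-Lak}). The paper therefore runs an induction on $n$, and the induction hypothesis is what produces the gap. Your proposal omits this induction entirely. Without it, you cannot establish the openness and convexity of $X_{reg}$, which are in turn prerequisites for every downstream step: the manifold structure (via Reifenberg), local geodesic extendibility (Lemma~\ref{extend-geod}), the lower Laplace bound (Theorem~\ref{th:lowerbound}), and the eventual invocation of Petrunin globalization on a convex full-measure set. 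So the missing piece is not merely technical; it is the structural backbone that makes the rest of the argument well-posed.
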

\noindent

In the proof of Theorem \ref{cor:4} we will need the following elementary lemma
\begin{lemma}\label{cot-ineq}
Let $\uk,K\in R, n\ge 2$.
Let $\hat\kappa< \ke-(n-2)\uk$. There is $\nu=\nu(n,K,\kappa,\hat\kappa)>0$ such that
\begin{equation}\label{eq:cot-ineq}
(n-1)\cot_{K/(n-1)}(t)- (n-2) \cot_{\uk}(t)<\cot_{\hat\kappa}(t) \mbox{  for all }0<t<\nu.
\end{equation}

\end{lemma}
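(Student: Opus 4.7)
The plan is to expand both sides of \eqref{eq:cot-ineq} in a Laurent series around $t=0$ and observe that the singular $1/t$ terms cancel on the left, after which the claim follows by comparing the linear coefficients using the strict inequality $\hat\kappa < K-(n-2)\uk$.

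First I would record the expansion of $\cot_k$ near $0$. From $\cos_k(t) = 1 - \tfrac{k}{2}t^2 + O(t^4)$ and $\sin_k(t) = t - \tfrac{k}{6}t^3 + O(t^5)$, a direct division gives
\begin{align*}
\cot_k(t) = \frac{1}{t} - \frac{k}{3}t + O(t^3) \qquad \text{as } t\to 0^+,
\end{align*}
uniformly in $k$ ranging over any bounded set. This formula holds for all $k \in \R$ with the same structure (positive, zero, or negative $k$).

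Next, I would substitute this expansion into the left-hand side of \eqref{eq:cot-ineq}:
\begin{align*}
(n-1)\cot_{K/(n-1)}(t) - (n-2)\cot_\uk(t)
&= (n-1)\Bigl[\tfrac{1}{t} - \tfrac{K/(n-1)}{3}t\Bigr] - (n-2)\Bigl[\tfrac{1}{t} - \tfrac{\uk}{3}t\Bigr] + O(t^3)\\
&= \frac{1}{t} - \frac{K - (n-2)\uk}{3}\,t + O(t^3).
\end{align*}
The $\tfrac{1}{t}$ singularity cancels because the coefficients $(n-1)$ and $-(n-2)$ sum to $1$, exactly matching the $\tfrac{1}{t}$ in $\cot_{\hat\kappa}(t) = \tfrac{1}{t} - \tfrac{\hat\kappa}{3}t + O(t^3)$.

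Subtracting, I obtain
\begin{align*}
\cot_{\hat\kappa}(t) - \bigl[(n-1)\cot_{K/(n-1)}(t) - (n-2)\cot_\uk(t)\bigr] = \frac{K - (n-2)\uk - \hat\kappa}{3}\,t + O(t^3).
\end{align*}
By hypothesis $K - (n-2)\uk - \hat\kappa > 0$, so the leading term is strictly positive for small $t>0$; hence there exists $\nu=\nu(n,K,\uk,\hat\kappa)>0$ making the expression positive on $(0,\nu)$. There is no real obstacle here beyond verifying the expansion of $\cot_k$; the essential structural point is the cancellation of the pole, which is guaranteed by the numerical identity $(n-1)-(n-2)=1$.
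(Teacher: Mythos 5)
Your proof is correct and follows essentially the same route as the paper's: both expand $\cot_k(t) = \tfrac{1}{t} - \tfrac{k}{3}t + O(t^3)$ near zero, observe that the $1/t$ poles cancel on the left because $(n-1)-(n-2)=1$, and conclude by comparing the coefficients of $t$ using the strict hypothesis $\hat\kappa < K-(n-2)\uk$. You merely write out the cancellation and the resulting difference more explicitly than the paper does.
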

\begin{proof}
For any real $k$ we have the following Taylor expansions at $0$ 
\[
\sin_k(t)=t-\frac{kt^3}{6}+\ldots,\quad \cos_k(t)=1-\frac{k t^2}{2}+\ldots
\]
Hence
\[
\cot_k(t)=\frac{1-\frac{k t^2}{2}+\ldots}{t-\frac{kt^3}{6}+\ldots}=\frac{1}{t}(1-\frac{k t^2}{2}+\ldots)(1+\frac{kt^2}{6}+\ldots) =\frac{1}{t} -\frac{kt}{3}+\ldots
\]
Applying this to both sides of \eqref{eq:cot-ineq}  yields the Lemma.
\end{proof}

\textit{Proof of Theorem \ref{cor:4}.}
We will prove the theorem via induction w.r.t. $n\in \mathbb{N}$.
\\

\begin{comment}
$n=1$: In this case $(X,d,\m)$ satisfies $RCD(K,1)$. Then the following cases occur
\begin{align}\label{dim=1}
(X,d)=\begin{cases}
      \left\{pt\right\}\ \ & \ \mbox{ if }\ke>0,\\
      I, \ \mathbb{R},\RR_+ \mbox{ or } c\mathbb{S}^1 \mbox{ for }c>0 \ & \ \mbox{ if }\ke\leq 0.
      \end{cases}
\end{align}
where $I\subset \mathbb{R}$ is an interval, and $c\mathbb{S}^1$ is the rescaled circle with radius $c\pi$. Then (1), (2) and (3) follow with $\uk=0$.
%Therefore, if $\ke>0$, (1), (2) and (3) are true. If $\ke\leq 0$, then each of the admissible spaces is $CBA(\uk)$ for any $\uk\leq 0$, therefore $\uk(1-1)=0\geq \ke$ and (1) and (2) hold. Moreover, if $\uk (1-1)=0=\
\end{comment}

{\bf 1.} Let $n\ge 2$ and suppose Theorem \ref{cor:4} is true for $n-1$ if $n>2$.  Let $(X,d,\mathcal H_n)$ be $RCD(\ke,n)$ and $\CAT(\uk)$.
The base of induction $n=2$ will be handled in the same way as the general induction step with one small difference which we'll explicitly indicate.

Following Gigli and Philippis~\cite{GP-noncol} for any $x\in X$ we consider the monotone quantity $\frac{m(B_r(x))}{v_{k,n}(r)}$ which is non increasing in $r$ by the Bishop-Gromov volume comparison. 
Let $\theta_{n,r}(x)=\frac{m(B_r(x))}{\omega_nr^n}$. Consider the density function $\theta_{n}(x)=\lim_{r\to 0}\theta_{n,r}(x)=\lim_{r\to 0}\frac{m(B_r(x))}{\omega_nr^n}$.

 Since $n$ is fixed throughout the proof we will drop the  subscripts $n$  and from now on use the notations $\theta(x)$ and $\theta_{r}(x)$ for $\theta_{n}(x)$ and $\theta_{n,r}(x)$ respectively.

Note that $\theta(x)=1$ a.e. by ~\cite{GP-noncol} but it still makes sense and is well defined  pointwise. {Also, by~\cite{GP-noncol}  $\theta$ is  lower semicontinuous and hence $0<\theta(x)\le 1$ for any $x\in X$.

Let $x\in X$ be arbitrary. Let $(T_xX, d_x,m_x,o)=\lim_{r_i\to 0}(X, \frac{1}{r_i}d, \frac{1}{r_i^n}m,x)$ be a tangent cone at $x$.
Note that under this normalization along the sequence the unit balls around $x$ \emph{do not} have measure 1 but the measure $\frac{1}{r_i^n}m$ is equal to $\mathcal H_n$ with respect to the rescaled metric $\frac{1}{r_i}d$.} Obviously, $T_xX$ is $CAT(0)$ and $RCD(0,n)$. By~\cite{GP-noncol} it is also noncollapsed i.e. $m_x=\mathcal H_n$. Therefore, by the defintion of $\theta$ we have that
$\theta(x)=\frac{\mathcal H_n(B(o,1))}{\omega_n}$ where $o$ is the apex of  $T_xX$. Note that this is true even if the tangent cone $(T_xX, d_x,m_x,o)$ is not unique.

Further,   $(T_xX, d_x,m_x,o)$  is  a volume metric cone by \cite[Proposition 2.7]{GP-noncol}. Therefore, by the \textit{volume-cone-implies-metric-cone} theorem in \cite{DGi}, $T_xX=C(\Sigma)$ where $(\Sigma, d_\Sigma, m_\Sigma)$ is both $CAT(1)$ and $RCD(n-2,n-1)$ and $m_\Sigma=\mathcal H_{n-1}$. 

{\it Claim:} $\Sigma$ is isometric to $\SS^{n-1}$ or to a convex subset of  $\SS^{n-1}$ with nonempty interior and nonempty boundary.

Indeed, if $n=2$ then  $(\Sigma,d_\Sigma,m_\Sigma)$ is a noncollapsed compact $RCD(0,1)$ space which is also $CAT(1)$.
Hence  by ~\cite{Kit-Lak} it's isometric to either a circle $\SS^1_R$ of some radius $R>0$  or to a closed interval $I$. 

Suppose $\Sigma\cong\SS^1_R$. Since $\Sigma$ is $\CAT(1)$ we have that $R\ge 1$. On the other hand, since $C(\Sigma)$ is  $RCD(0,2)$ we must have $R\le 1$. Hence $R=1$ and $\Sigma\cong \SS^1$.
 
 If $\Sigma=I$ then length of $I$ is at most $\pi$ since otherwise $C(\Sigma)$ is not $RCD(0,2)$ by the splitting theorem.
 
 If $n>2$ then the Claim follows by the induction assumption.
This is the only place in the proof of the induction step  where the induction assumption is used and where the induction step differs from the proof of the base of induction $n=2$.

Since a proper convex subset of $\SS^{n-1}$ is contained in a hemisphere
the above Claim implies that we have the following gap phenomena:
\begin{equation}
\begin{gathered}\label{gap}
\text{A point $x\in X$ is either regular, in which case $\theta(x)=1$,}\\
\text{ or $x\in X$ is singular, in which case $\theta(x)\le \textstyle{\frac 1 2}$ and $\partial \Sigma\ne \emptyset$}.
\end{gathered}
\end{equation}

Next we prove the following lemma.
\begin{lemma}\label{lem: reg-convex}
The set of regular points $X_{reg}$ is  open and convex in $X$.
\end{lemma}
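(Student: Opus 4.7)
My plan is to reduce both assertions to the gap phenomenon~\eqref{gap} together with Lemma~\ref{dens-semiconcave}, which asserts that the density $\theta$ is semiconcave on $X$.

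Openness is immediate: by~\eqref{gap}, $X_{reg}=\{\theta>\tfrac{1}{2}\}$, and since $\theta$ is lower semicontinuous (as recorded just above, following~\cite{GP-noncol}), this set is open.

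For convexity, fix $x,y\in X_{reg}$ and let $\gamma\colon[0,L]\to X$ be the unique minimizing geodesic joining them, available because the paper has already reduced to the small-diameter $CAT(\kappa)$ setting. By Lemma~\ref{dens-semiconcave} $\theta$ is semiconcave on $X$, so a finite-subcover argument along the compact arc $\gamma([0,L])$ produces a single constant $K\in\mathbb{R}$ for which $t\mapsto\theta(\gamma(t))$ is $K$-semiconcave on $[0,L]$; in particular $\theta\circ\gamma$ is continuous on the open interval $(0,L)$. Now set $A:=\gamma^{-1}(X_{reg})$ and suppose for contradiction that $A\neq[0,L]$. Since $A$ is open (by the openness just proved) and contains $\{0,L\}$, the number $a:=\sup\{t\in[0,L]:[0,t)\subset A\}$ lies in $(0,L)$, with $[0,a)\subset A$ and $a\notin A$. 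Then $\theta\circ\gamma\equiv 1$ on $[0,a)$, whereas $\theta(\gamma(a))\le\tfrac{1}{2}$ by~\eqref{gap}; but continuity of $\theta\circ\gamma$ at the interior point $a$ forces $\theta(\gamma(a))=\lim_{t\uparrow a}\theta(\gamma(t))=1$, a contradiction. Hence $A=[0,L]$ and $X_{reg}$ is convex.

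The only nontrivial ingredient is Lemma~\ref{dens-semiconcave}, and I expect it to be the main obstacle: its proof must genuinely invoke the $CAT(\kappa)$ bound, since without semiconcavity one has only lower semicontinuity of $\theta$, which is too weak to preclude a downward drop of $\theta$ at the boundary point $a$. This is consistent with Example~\ref{cd+cat-not-alex}, where precisely such pathology is realized in the $CD$ (rather than $RCD+CAT$) setting. Granted Lemma~\ref{dens-semiconcave} and the gap, the present lemma is a short topological deduction.
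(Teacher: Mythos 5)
Your argument is correct and is essentially the paper's: openness follows from lower semicontinuity of $\theta$ together with the gap~\eqref{gap}, and convexity follows because $f=\theta^{1/n}$ is semiconcave (Lemma~\ref{dens-semiconcave}), hence $f\circ\gamma$ is continuous on the interior of any geodesic segment, which combined with openness of $X_{reg}$ and~\eqref{gap} forces $\theta\equiv 1$ along geodesics between regular points (the paper phrases this via connectedness rather than your boundary-point contradiction, but the two are interchangeable). Two tiny remarks: Lemma~\ref{dens-semiconcave} gives semiconcavity of $\theta^{1/n}$ rather than of $\theta$ itself, which is harmless here since the $n$-th power preserves continuity; and your aside about Example~\ref{cd+cat-not-alex} is slightly misstated, as that example \emph{is} $RCD(-100,3)$ and $CAT(0)$ but fails the noncollapsing hypothesis $m=\mathcal H_n$.
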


\noindent

Since $\theta$ is lower semicontinuous, property~\eqref{gap} immediately implies that $X_{reg}$ is open. It remains to verify that it is convex.

Let $f_r(x)=\theta_{r}(x)^{1/n}$ and $f(x)=\theta(x)^{1/n}$

\begin{lemma}\label{dens-semiconcave}
$f(x)$ is  semiconcave on $X$.
\end{lemma}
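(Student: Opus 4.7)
The plan is to combine the Brunn--Minkowski inequality of the $CD(K,n)$ condition with a CAT$(\kappa)$ contraction of the midpoint map, applied at small scale to the rescaled volume ratios $f_r$ and then taken in the limit $r\to 0$. First, I localize: fix $p\in X$ and work in a small convex neighborhood $U=B_\rho(p)$, with $\rho$ much smaller than $\pi_\kappa/2$, so that geodesics in $U$ are unique and depend continuously on their endpoints, CAT-comparison is global on $U$, and all error terms depending on $K,\kappa$ are controlled uniformly. The goal is then to prove, for all $y_1,y_2\in U$ and $t\in[0,1]$, the inequality
\begin{align*}
f(\gamma(t))\ge(1-t)f(y_1)+tf(y_2)-C\,t(1-t)\,d(y_1,y_2)^2,
\end{align*}
where $\gamma$ is the geodesic from $y_1$ to $y_2$ and $C$ depends only on $U$; this is precisely semi-concavity in the sense of Definition~\ref{def:metricconvex}.

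At scale $r>0$, write $D:=d(y_1,y_2)$ and $z:=\gamma(t)$. First ingredient, CAT contraction: by Reshetnyak's theorem, for any $z_1\in B_r(y_1)$ and $z_2\in B_r(y_2)$ the unique $t$-midpoint $\sigma_{z_1z_2}(t)$ satisfies $d(\sigma_{z_1z_2}(t),z)\le r(1+C_\kappa D^2)$ with $C_\kappa$ depending only on $\kappa$ and $\rho$; in CAT$(0)$ the function $t\mapsto d(\gamma(t),\sigma_{z_1z_2}(t))$ is convex, giving $C_\kappa=0$, and for general $\kappa$ the model-space triangle comparison in $\SS^2_\kappa$ produces the $O(\kappa D^2)$ correction. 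Consequently the set $A_t$ of all $t$-midpoints of geodesics from $B_r(y_1)$ to $B_r(y_2)$ satisfies $A_t\subseteq B_{r(1+C_\kappa D^2)}(z)$. Second ingredient, Brunn--Minkowski: the $CD(K,n)$ condition gives
\begin{align*}
\mathcal H^n(A_t)^{1/n}\ge\tau^{(1-t)}_{K,n}(D_r)\,\mathcal H^n(B_r(y_1))^{1/n}+\tau^{(t)}_{K,n}(D_r)\,\mathcal H^n(B_r(y_2))^{1/n},
\end{align*}
where $D_r:=\sup\{d(a,b):a\in B_r(y_1),b\in B_r(y_2)\}\to D$ as $r\to 0$.

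Combining, writing $r':=r(1+C_\kappa D^2)$, using $\mathcal H^n(B_s(w))=\omega_n s^n f_s(w)^n$, and dividing by $\omega_n^{1/n}r$ yields
\begin{align*}
(1+C_\kappa D^2)\,f_{r'}(z)\ge\tau^{(1-t)}_{K,n}(D_r)\,f_r(y_1)+\tau^{(t)}_{K,n}(D_r)\,f_r(y_2).
\end{align*}
Bishop--Gromov monotonicity makes $s\mapsto f_s(w)$ nonincreasing with pointwise limit $f(w)$ as $s\to 0$, so letting $r\to 0$ (hence $r'\to 0$, $D_r\to D$) and using continuity of $\tau^{(t)}_{K,n}$ produces $(1+C_\kappa D^2)f(z)\ge\tau^{(1-t)}_{K,n}(D)f(y_1)+\tau^{(t)}_{K,n}(D)f(y_2)$. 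The Taylor expansions $\tau^{(t)}_{K,n}(D)=t+O(KD^2)$ and $(1+C_\kappa D^2)^{-1}=1-C_\kappa D^2+O(D^4)$, together with the uniform bound $0<f\le 1$ on $U$ coming from the gap \eqref{gap}, convert this into the target semi-concavity inequality with an explicit $C=C(K,\kappa,\rho)$.

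The main obstacle will be the quantitative CAT contraction of the midpoint map for $\kappa>0$: one must extract from the comparison triangle in $\SS^2_\kappa$ a Lipschitz bound of the form $1+O(\kappa D^2)$ on $B_r(y_1)\times B_r(y_2)$ for small $r$. Everything else---pointwise convergence $f_r(w)\to f(w)$, enclosing $A_t$ in the right ball, and applying Brunn--Minkowski---is routine once this contraction is in hand.
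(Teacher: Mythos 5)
Your proposal follows essentially the same approach as the paper's proof: the paper also combines Brunn--Minkowski for the $t$-Minkowski sum of two small balls with a quantitative $CAT(\kappa)$ fellow-traveling estimate (proved there via a Jacobi-field computation on $\SS^2_\kappa$, precisely the ``main obstacle'' you flag), encloses the intermediate set in a slightly larger ball around $\gamma(t)$, divides out the normalization, sends $r\to 0$ using Bishop--Gromov monotonicity, and uses $0<f\le 1$ from the gap to absorb the multiplicative error into an additive one. The paper treats only $t=1/2$ after rescaling to $CAT(1)$, $RCD(-n,n)$ and notes the general $t$ adapts, while you keep general $K,\kappa,t$ and expand $\tau^{(t)}_{K,n}$; this is a cosmetic difference, and your attribution of the contraction estimate to ``Reshetnyak's theorem'' is a misnomer but the estimate itself is correct.
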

\begin{proof}
To prove semiconcavity we  need to verify  that there is a constant $C$ such that every point in $X$ has a neighborhood $U$ such that for any constant speed geodesic $\gamma\co [0,1]\to U$  and any $0\le t\le 1$ it holds that
\begin{equation}
f(\gamma(t))\ge (1-t)f(0)+tf(1)-\frac{C}{2}t(1-t)d(\gamma(0),\gamma(1))^2
\end{equation}

To simplify the exposition we will only treat the case $t=1/2$, i.e. we will verify that

\begin{equation}\label{den1:concav}
f(\gamma (1/2))\ge \frac{1}{2}f(0)+\frac{1}{2}f(1)-\frac{C}{8}d(\gamma(0),\gamma(1))^2
\end{equation}

The proof below easily adapts to the case of  general $t$.

Let $x=\gamma(0), y=\gamma(1), z=\gamma(1/2)$ and $l=d(x,y)$. 

%Since $\lim_{r\to 0} \frac{(v_{k,n}(r))^{1/n}}{\omega_n^{1/n}r}=1$ we will assume  that $f_r(x)=\frac{m(B_r(x))^{1/n}}{\omega_n^{1/n}r}$. This will make no difference in the proof.

By rescaling we can assume that $X$ is $CAT(1)$ and $RCD(-n,n)$.

We will need the following general lemma.

\begin{lemma}\label{cat(K)-conv}
Let $X$ be a $CAT(\kappa)$ space. Let $\gamma_1,\gamma_2\co [0,1]\to X$ be constant speed geodesics with length of $\gamma_1$ equal to $ l<\pi_\kappa/100$ and suppose $d(\gamma_1(0),\gamma_2(0))\le\delta, d(\gamma_1(1),\gamma_2(1))\le\delta$ with $\delta<l/100$.

Then $d(\gamma_1(1/2),\gamma_2(1/2))\le \delta(1+C(\kappa)l^2)$ for some universal $C(\kappa)\ge 0$.

\end{lemma}
\begin{proof}
It's well known that when $\kappa\le 0$ one can take $C(\kappa)=0$ since in this case $t\mapsto d(\gamma_1(t),\gamma_2(t))$ is  convex. We will therefore restrict our attention to the case $\kappa>0$. By rescaling we can assume that $\kappa=1$.
Let $X$ be $CAT(1)$.

Fix a point $\bar p$ in the  unit round sphere $\mathbb S^2$. Let $0\le t \leq 1$. Consider the "$t$-homothety" map $\phi_{t}^{\bar p}\co B_{\pi/100}( \bar p)\to B_{\pi/100}(\bar  p)$ sending any point $x$ to the point $y$ on the unique geodesic connecting $\bar p$ to $x$ with $d(\bar p,y)=td(\bar p,x)$. A direct Jacobi field computation shows that the Lipschitz constant of $\phi_t^{\bar p}$ at $x$ with $|\bar px|=l$ is $\frac{\sin (t l)}{\sin (l)}=\sigma_{1,1}^t(l)$.

Taylor expanding in $l$ we get:

\[
\sigma_{1,1}^t(l)=\frac{\sin (t l)}{\sin (l)}=\frac{tl-(tl)^3/6+\ldots}{l-(l)^3/6+\ldots}=t(1-t^2l^2/6+\ldots)(1+l^2/6+\ldots)=
\]
\[
=t(1+l^2(1-t^2)/6+\ldots)\le t(1+l^2/3) \textrm{ if } l<\pi/100
\]

which immediately gives that if $d(x,y)<d(\bar p, x)/10$ then $d(\phi_t^{\bar p}(x),\phi_t^{\bar p}(y))\le t (1+d(\bar p, x)^2)d(x,y)$.

The definition of a $CAT(1)$ space immediately gives that the same inequality holds for a similarly defined map $\phi_t^p$ for any $p\in X$.

\begin{equation}\label{lip-est1}
d(\phi_t^{ p}(x),\phi_t^{ p}(y))\le t(1+d(p, x)^2)d(x,y) \textrm{ if } d(p,x)<\pi/100,\, d(x,y)<d(p,x)/100.
\end{equation}

Let $\gamma_1,\gamma_2$ be as in the lemma.  Let $x=\gamma_1(1/2)=\phi_{1/2}^{\gamma_1(0)}(\gamma_1(1))=\phi_{1/2}^{\gamma_1(1)}(\gamma_1(0))$, $y=\gamma_2(1/2)=\phi_{1/2}^{\gamma_2(0)}(\gamma_2(1))=\phi_{1/2}^{\gamma_2(1)}(\gamma_2(0))$. Let $z$ be the midpoint between $\gamma_1(0)$ and $\gamma_2(1)$.

Then by \eqref{lip-est1} we have that $d(z,x)\le \frac 1 2 (1+l^2)d(\gamma_1(1),\gamma_2(1))\le \frac{\delta}{2}(1+2l^2)$ and $d(z,y)\le \frac 1 2 (1+(l+\delta)^2)d(\gamma_1(0),\gamma_2(0))\le \frac{\delta}{2}(1+2l^2)$.

By the triangle inequality this gives that $d(x,y)\le d(x,z)+d(z,y)\le  \frac{\delta}{2}(1+2l^2)+ \frac{\delta}{2}(1+2l^2)=\delta(1+2l^2)$ which finishes the proof of Lemma \ref{cat(K)-conv} with $C(1)=2$.
\end{proof}

We are now ready to continue with the proof of Lemma~\ref{dens-semiconcave}.

Let $A$ be the Minkowski sum $\frac{1}{2}B_r(x)+\frac{1}{2}B_r(y)$. 
By the Brunn-Minkowski inequality \cite{cavallettimondinogap} we have that 
\[
m(A)^{1/n}\ge \sigma_{-n,n}^{1/2}(l+2r)m(B_r(x))^{1/n}+\sigma_{-n,n}^{1/2}(l+2r)m(B_r(y))^{1/n}
\]
\[
=\frac{\sinh \frac{l+2r}{2}}{\sinh(l+2r)}m(B_r(x))^{1/n}+\frac{\sinh \frac{l+2r}{2}}{\sinh(l+2r)}m(B_r(y))^{1/n}
\]
\[
=\frac{1}{2\cosh \frac{l+2r}{2}}m(B_r(x))^{1/n}+\frac{1}{2\cosh \frac{l+2r}{2}}m(B_r(y))^{1/n}
\]
Thus 

\begin{equation}
(1+c_1 l^2)m(A)^{1/n}\ge \cosh \frac{l+2r}{2} m(A)^{1/n}\ge \frac{1}{2}m(B_r(x))^{1/n}+ \frac{1}{2}m(B_r(y))^{1/n}
\end{equation}
where the first inequality holds when  $l<1/100$ and $r\ll l$.

By Lemma \ref{cat(K)-conv} we have that $A\subset B_{r(1+c_2l^2)}(z)$. Therefore

\[
(1+c_1 l^2)m(B_{r(1+c_2l^2)}(z))^{1/n}\ge \frac{1}{2}m(B_r(x))^{1/n}+ \frac{1}{2}m(B_r(y))^{1/n}
\]

Dividing by $\omega_n^{1/n} r$ this gives

\[
(1+c_1 l^2)(1+c_2l^2)\frac{m(B_{r(1+c_2l^2)}(z))^{1/n}}{\omega_n^{1/n}r(1+c_2l^2)}\ge \frac{1}{2r\omega_n^{1/n}}m(B_r(x))^{1/n}+  \frac{1}{2r\omega_n^{1/n}}m(B_r(y))^{1/n}
\]
or

\[
(1+c_3l^2)f_{r(1+c_2l^2)}(z)\ge \frac{1}{2}f_r(x)+\frac{1}{2}f_r(y)
\]

Taking the limit as $r\to 0$ this gives

\[
(1+c_3l^2)f(z) \ge \frac{1}{2}f(x)+\frac{1}{2}f(y)
\]
Taking into the account that $0\le f(z)\le 1$ this gives 

\[
f(z)\ge \frac{1}{2}f(x)+\frac{1}{2}f(y)-c_3l^2
\]

which finishes the proof of \eqref{den1:concav} and hence of Lemma \ref{dens-semiconcave}.
\end{proof}
Since a bounded semiconcave function on a closed interval $I\subset \mathbb R$ is continuous on the interior of $I$, the openness of $X_{reg}$ together with the gap property \eqref{gap} immediately imply that 
$f$ must be equal to 1 along any geodesic with endpoints in  $X_{reg}$. Again using   \eqref{gap} we conclude that $X_{reg}$ is convex.

This finishes the proof of Lemma~\ref{lem: reg-convex}.\qed

\noindent
\textbf{2.} 

\begin{comment}
\textit{Claim: If $x_0\in X_{reg}$, then under the assumption of Theorem \ref{cor:4} there exists an open neighborhoud $U$ of $x_0$ that is homeomorphic to an open ball in $\mathbb{R}^n$.}
\noindent
\begin{proof}
Since $(X,d,\mathcal{H}^n)$ satisfies the condition $RCD(\ke,n)$, the map $x\mapsto \mathcal{H}^n(B_{\delta}(x))$ is continuous for every $\delta>0$. 
We fix a small neighborhoud $U$ of $x_0$. Then for every point $x\in U$ the tangent cone $T_xX$ is a metric cone over some metric space $\Sigma_x$. Hence by stability of the conditions $RCD$ and $\CAT$ under Gromov-Hausdorff 
convergence the tangent cone $T_xX$ satisfies $RCD(0,n)$ and $\CAT(0)$. Consequently $(\Sigma_x,\mathcal{H}^{n-1})$ satisfies $RCD(n-2,n-1)$ and $\CAT(1)$. Applying Corollary \ref{cor:4} for $n-1$ yields that 
$\Sigma_x$ is a convex subset of a space form of constant curvature $1$ and therefore it is either the sphere or a convex subset of the upper hemisphere. Then, by continuity of the volume of metric balls the latter is excluded, and
therefore $x\in U$ is a regular point provided the neighborhoud $U$ is small enough. 
Then, by Reifenberg's principle $U$ is homeomorphic to a simply connected open neighborhoud in $\mathbb{R}^n$.
\end{proof}
\end{comment}

\textit{Claim: $X_{reg}$ is a topological $n$-manifold.}
\noindent
\begin{proof}
By Lemma~\ref{lem: reg-convex}  $X_{reg}$ is open. Therefore it is an $n$-manifold by  Reifenberg's principle~\cite[Theorem A.1.1]{cheegercoldingI}.
\end{proof}

\noindent
\textbf{3.}

Fix an arbitrary $\hat\kappa< \ke-(n-2)\uk$ and let $\nu=\nu(n,K,\kappa,\hat\kappa)$ be provided by the Lemma~\ref{cot-ineq}.

We pick $x_0\in X_{reg}$ and a positive $\epsilon<\min \{\nu/2, \pi_{\uk}/2\}$ such that $\overline{B}_{\epsilon}(x_0)=:Y$ is contained in $X_{reg}$.

Then $Y$ is geodesically convex, uniquely geodesic, $(Y,d_Y,\m|_Y)$ satisfies $RCD(\ke,n)$ and $(Y,d_Y)$ is $\CAT(\uk)$.

Let $y\in B_{\epsilon}(x_0)$ and consider $d_y:Y\rightarrow [0,\infty)$. 
We pick any point $z\in B_{\epsilon}(x_0)\backslash \{y\}$  and $0<\delta< \min(\frac{\epsilon-d(x_0,z)}{2}, \frac{d(z,y)}{2})$. Then $\overline{B_{\delta}(z)}$ is convex, it is contained in $B_{\epsilon}(x_0)$   and   $y\notin \overline{B_{\delta}(z)}$.
% and $B_{\delta}(z)\cap \left(\left\{y\right\}\cup B_{\epsilon}(x_0)\right)=\emptyset$. 
We also can pick a 
cutoff function $\chi\in \mathbb{D}^Y_{\infty}$ (see Lemma \ref{lem:cutoff}) that {$\Delta\chi\in L^{\infty}(\mathcal{H}^n)$, }$\supp\chi\subset B_{\delta}(z)$ and $\chi|_{B_{\delta/2}(z)}=1$.

Then,  by Corollary \ref{cor:important}  $\chi\cdot d_y\in D_{L^{\infty}}(\Delta)\cap L^{\infty}(\mathcal{H}^n)\cap \Lip(X)$ and therefore $\chi\cdot d_y\in H^{2,2}(X)$ by Remark \ref{rem:H22}. 

In particular,
%$\chi\cdot d_y\in D(\Delta)\subset H^{2,2}(X)$,\footnote{\color{red} this needs to be justified } and 
$\chi\cdot d_y$ induces an element $\nabla (\chi\cdot d_y)=:u\in L^2(TY)$ such that $|u|=1\neq 0$ $\m$-a.e. on $B_{\delta/2}(z)$. 
We consider the submodule $\mathcal{N}\subset L^2(TY)$ that is generated by $u$. The orthogonal submodule is defined as
\begin{align*}
\mathcal{N}^{\perp}=\left\{v\in L^2(TY):\langle v,u\rangle = 0 \ \m\mbox{-a.e.}\right\}.
\end{align*}
It is not hard to check that $\mathcal{N}^{\perp}$ is an $L^{\infty}$-premodule in the sense of Definition 1.2.1 in 
\cite{giglinonsmooth}. $\mathcal{N}$ and $\mathcal{N}^{\perp}$ are Hilbert spaces and hence $L^2$-normed $L^{\infty}$-modules 
(compare with Proposition 1.2.21 in \cite{giglinonsmooth}). Moreover
$\mathcal{N}^{\perp}$ is the orthogonal complement of $\mathcal{N}$
in the sense of Hilbert spaces, and $\mathcal{N}\oplus\mathcal{N}^{\perp}=L^2(TY)$. 

According to Proposition \ref{prop:dim} $\mathcal{N}^{\perp}$ yields a partition $\left\{B_k\right\}_{k\in \mathbb{N}}$ of $Y$ such that 
the local dimension (in the sense of Definition \ref{def:basis}) of $\mathcal{N}^{\perp}|_{B_k}$ is $k\in\mathbb{N}$. Note that $\m(B_{\infty})=0$ since the local dimension of $L^2(TY)$ is finite on $X$, and therefore $\mathcal{N}^{\perp}$ is finitely generated $\m$-a.e.\ as well.
Hence, if $k\in \mathbb{N}$ such that $\m(B_k)>0$, any subset $B$ of $B_k$ with finite measure admits a unit 
orthogonal module basis $v_1,\dots, v_k$, and for any $v\in \mathcal{N}^{\perp}$ we have that $v1_B$ is the $L^2$-limit of finite $L^{\infty}$-linear combinations in $\mathcal{N}^{\perp}$ of the form
\begin{align*}
w=\sum_{j=1}^k f_{j}v_j \ \mbox{ for }f_{j}\in L^{\infty}(\m),\ j=1,\dots,k.
\end{align*}

At the same time we have that any $w\in L^{2}(TY)$ can be written as
a sum $\alpha u + \beta v$ with $\alpha, \beta\in\mathbb{R}$ and $v\in \mathcal{N}^{\perp}$. Hence $u,v_1,\dots,v_k$ generates $L^2(TY)|_B=(\mathcal{N}\oplus\mathcal{N}^{\perp})|_{B}$ in the sense of modules. 
Moreover, since $\mathcal{N}$ and $\mathcal{N}^{\perp}$ are orthogonal w.r.t. the pointwise inner
product $\langle\cdot,\cdot\rangle$ and since $|u|=1$ $\m$-a.e. on $B_{\delta/2}(z)$, it is easy to check that $u,v_1,\dots,v_k$ are linearly independent on $B_{\delta/2}(z)\cap B$ (again in the sense of Definition \ref{def:basis}), and hence form
a module basis of $L^2(TY)$ on $B_{\delta/2}(z)\cap B$. Since the local dimension of $L^2(TY)$ is $n$ this implies $k=n-1$  whenever $B_{\delta/2}(z)\cap B\neq \emptyset$, and $u=:E_1, v_2=:E_2,\dots, v_{n-1}=:E_n$ is a unit orthogonal basis of $L^2(TY)$ on $B_{\delta/2}(z)\cap B$.
In particular, for the decomposition $\left\{B_k\right\}_{k\in\mathbb{N}}$ we have $\m(B_k)=0$ if $k\neq n-1$, and we can choose $B$ as the ball $B_{\delta/2}(z)$ itself.
\\
\

\textbf{4.} Again from Corollary \ref{cor:important} and  we have that $\md_{\ke/(n-1)}(\chi \cdot d_y)\in D_{L^{\infty}}(\Delta)\cap L^{\infty}(\mathcal{H}^n)\cap \Lip(X)$. Then
the precise estimate in the Laplace operator comparison statement (Theorem \ref{th:upperbound}) for $\md_{\ke/(n-1)}d_y$ on $Y$ yields
\begin{align*}
\Delta(\md_{\ke/(n-1)}(\chi\cdot d_y))|_{B_{\delta/2}(z)}={\bf\Delta}\md_{\ke/(n-1)}(d_y)|_{B_{\delta/2}(z)}\leq \frac{\ke n}{n-1}\md_{\ke/(n-1)}(d_y)|_{B_{\delta/2}(z)}\ \m\mbox{-a.e.}
\end{align*}
where we used the locality property of ${\bf\Delta}$. Applying the chain rule for the Laplacian yields
\begin{align}\label{est:one}
\Delta(\chi\cdot d_y)|_{B_{\delta/2}(z)}\leq (n-1)\cot_{K/(n-1)}(d_y)|_{B_{\delta/2}(z)}.
\end{align}

On the other hand the condition $\CAT(\uk)$ on $Y$ implies the following. First, by continuity reasons for any $\vartheta>0$  there exists $\eta>0$ as above such that for any $\eta$-ball $|\md_{\uk}d_y(x)-\md_{\uk}d_y(z)|\leq \vartheta$ for $x\in B_{\eta}(z)$.
Therefore, if we choose $\delta/2\leq \eta$
\begin{align*}
(\md_{\uk}(\chi d_y)\circ\gamma)''\geq 1 - \md_{\uk}d(z,y) - \vartheta=:\lambda(\uk,\vartheta, z) \ \mbox{ for any unit speed geodesic }\gamma\mbox{ in }B_{\delta/2}(z). 
\end{align*}
Hence, $\md_{\uk}(\chi d_y)$ is $\lambda(\uk,\vartheta,z)$-convex on $B_{\delta/2}(z)$.  Now, since $\md_{\kappa}(\chi d_y)\in D_{L^{\infty}}(\Delta)\cap L^{\infty}(\mathcal{H}^n)\cap \Lip(X)$ - and again in particular $\md_{\kappa}(\chi\cdot d_z)\in H^{2,2}(X)$ - , 
we can apply Theorem \ref{th:important} where $f=\md_{\kappa}(\chi d_y)$, $X=Y$ and $Z=B_{\delta/2}(z)$. 
We obtain for $V\in L^2(TY)$
\begin{align}\label{esti}
\Hess(\md_{\uk}(\chi d_y))(V,V)\geq \lambda(\uk,\vartheta,z) |V|^2\geq (1-\md_{\uk}d_y -2\vartheta)|V|^2\ \m\mbox{-a.e. on }B_{\delta/2}(z) .
\end{align}
Now, we also can cover $B_{\delta/2}(z)$ with $\eta$-balls as above. Since any of these balls is geodesically convex by Remark \ref{rem:convex} and our choice of $\vartheta>0$,
the estimate (\ref{esti}) holds with $\delta/2$ replaced by $\eta$. Hence, the estimate holds $\m$-a.e. on $B_{\delta/2}(z)$ with arbitrary small $\vartheta>0$, so we actually have
\begin{align}\label{esti-2}
\Hess(\md_{\uk}(\chi d_y))(V,V)+\md_{\uk}d_y|V|^2\geq |V|^2\ \m\mbox{-a.e. on }B_{\delta/2}(z) .
\end{align}
Applying another time the chain rule for the Hessian  (Proposition \ref{prop:chainrule}) in particular yields
\begin{align}\label{est:two}
\Hess (\chi d_y)(\nabla \chi d_y,\nabla \chi d_y)|_{B_{\delta/2}(z)}=0\ \ \&\ \ \Hess (\chi d_y)(E_i,E_i)|_{B_{\delta/2}(z)}\geq  \cot_{\uk}d_y|_{B_{\delta/2}(z)} \mbox{ for } \ i=2,\dots,n
\end{align}
where the first identity follows -- for instance -- from the claim below, and the second one follows from (\ref{esti-2}) after applying the chain rule for the Hessian.  

Then, since $\chi\cdot d_y\in D(\Delta)$, Corollary \ref{cor:trace} and the fact that we have the unit orthogonal module basis $(E_i)_{i=1,\dots,n}$  from {\bf 3.}  together with (\ref{est:one}) and (\ref{est:two}) immediately gives us
\begin{align*}
(n-1)\cot_{\ke/(n-1)}d_y \geq {\Delta} (\chi\cdot d_y)=\sum_{i=2}^n \Hess (\chi d_y)(E_i,E_i)\geq (n-1) \cot_{\uk}d_y \ \m\mbox{-a.e. on }B_{\delta/2}(z).
\end{align*}
Since $k\mapsto \cot_k$ is monotone decreasing this implies that $\kappa(n-1)\geq \ke$ and
\begin{align}\label{eq:hess-est1}
\Hess (\chi d_y)(E_i,E_i)\leq (n-1)\cot_{K/(n-1)}d_y - (n-2) \cot_{\uk}d_y \ \m\mbox{-a.e. on }B_{\delta/2}(z).
\end{align}

Therefore, by Lemma~\ref{cot-ineq}
\begin{align}
\label{ineq:some}
\Hess(\chi d_y)(E_i,E_i)\leq \cot_{\hat{\kappa}}d_y \ \ \mbox{on } B_{\delta/2}(z)  \mbox{ for } \ i=2,\dots,n.
\end{align}
\noindent
\textbf{5.} 
\textit{Claim:} Let $h,\phi_k\in H^{2,2}(X)$ with $|\nabla h|=1$ $\m$-a.e. on $B_{\delta/2}(z)$ , and $V=\sum_{k=1}^m f_k\nabla \phi_k$, then
\begin{align*}
\Hess h(\nabla h,V)|_{B_{\delta/2}(z)}=0\ \m\mbox{-a.e.}\ .
\end{align*}
\textit{Proof of the claim.}
Since $h,\phi_k\in H^{2,2}(X)$, $k=1,\dots,m$, using Proposition \ref{prop:nuetzlich} we can compute
\begin{align*}
&2\Hess h(\nabla h,V)=2\sum_{k=1}^m f_k\Hess h(\nabla h,\nabla \phi_k)\\
&\hspace{1.3cm}= \sum_{k=1}^m f_k\left(\langle h,\nabla \langle \nabla\phi_k,\nabla h\rangle + \langle \nabla \phi_k,\nabla |\nabla h|^2\rangle-\langle h,\nabla \langle \nabla h,\nabla \phi_k\rangle \right)= \langle V,\nabla |\nabla h|^2\rangle\ \m\mbox{-a.e.}\ .
\end{align*}
By locality of $|\nabla \cdot |$ this yields $2\Hess h(\nabla h,V)|_{B_{\delta/2}(z)}=\langle X,\nabla 1\rangle|_{B_{\delta/2}(z)}=0$ $\m$-a.e. .\qed
\\
\\
The claim in particular applies for $\chi\cdot d_y=h$ and $V=\sum_{i=1}^{l}f_i\nabla \phi_k$ with $f_k\in L^{\infty}(\m)$ and $\phi_k\in \mathbb{D}_{\infty}^Y$. 
Now any $E_i$, $i=1,\dots,n$ can be approximated by vector fields  of this form. Therefore, by $L^2(TY)-L^0$-continuity of the Hessian
\begin{align}\label{ineq:some1}\Hess(\chi d_y)(\nabla(\chi d_y),E_i)=\Hess (\chi d_y)(E_1,E_i)=0\ \m\mbox{-a.e.}\ \mbox{on} \ B_{\delta/2}(z).\end{align}
\\

\textit{Claim:} Let $V\in \mathcal{N}^{\perp}$ be arbitrary. Then
\begin{align}\label{ineq:some4}
\Hess (\chi d_y)(V,V)\leq \cot_{\hat{\kappa}}d_y |V|^2\ \m\mbox{-a.e. on }B_{\delta/2}(z).
\end{align}
\textit{Proof of the claim.} Let $E_2,\dots,E_n$ be the unit orthogonal basis of $\mathcal{N}^{\perp}$ on $B_{\delta/2}(z)=:B$ as in \textbf{3.}. 
By definition of a module basis {and Remark~\ref{rem:generator},} for every $k\in\mathbb{N}$ we find functions $f^k_{i}\in L^{\infty}$, $i=2,\dots,n$, such that 
\begin{align*}
\left\|V-V^{k}\right\|_{L^2(TY)|_B}<\frac{1}{k} \ \mbox{ where }V^{k}\in L^2(TY)|_B\mbox{ such that } \ V^{k}=\sum_{i=2}^n f^k_{i}E_i.
\end{align*}
We can approximate every $f^k_{i}$ in $L^2(\mathcal{H}^n)$ by step functions that take only finitely many values. Therefore, it is sufficient to assume that $f^k_{i}=\alpha^k_{i}\in \mathbb{R}$. 
Moreover, since $B$ has finite $\mathcal{H}^n$-measure we can assume that $|\alpha_{i}^k|\in [\frac{1}{nk},\frac{k}{n}]$. Hence, it follows that $|V^k|\in [\frac{1}{k},k]$ for every $k\in \mathbb{N}$.

We can define $W^k=|V^k|^{-1}V^k\in L^2(TY)|_B$. Then $W^k$ satisfies $|W^k|=1$ $\mathcal{H}^n$-a.e. on $B$.  Now, we can choose the unit orthogonal basis $E_1,\dots,E_{n}$ of $L^2(TY)$ on $B_{\delta/2}(z)$ such that $E_2=W^k$. 
This is achieved in the same way as we were able to choose $E_1=\nabla(\chi\cdot d_y)$ in step \textbf{3.} since $|W^k|=1$ $\mathcal{H}^k$-a.e.\ .
Hence, we obtain (\ref{ineq:some4}) first for $W^k$ and by $L^{\infty}$-homogeneity of $\Hess(\chi d_y)$ also for $V^k$. Finally, since $V^k$ approximates $V$ in $L^2$-sense, and since $\Hess(\chi d_y)$ is a $L^2-L^0$-continuous bilinear form on $L^2(TY)$, 
we obtain the desired estimate for $V\in \mathcal{N}^{\perp}$. \qed
%The first inequality holds since $\epsilon\in (0,\pi_{\uk}/2)$, and therefore $\cot_{\uk}$ in the range of $d_y$.
%Therefore
%\begin{align*}
%\Hess (\chi d_y)(E_i,E_j)\leq \cot_{\hat{\kappa}}d_y\ \m\mbox{-a.e.}\mbox{ on }B_{\delta/2}(z)
%\end{align*}
%From (\ref{ineq:some}) we therefore get
%\begin{align}
%\Hess (\chi d_y)(V,V)\leq \cot_{\hat{\kappa}}d_y|V|^2 \ \m\mbox{-a.e.}\mbox{ on }B_{\delta/2}(z) \mbox{ for } V=\sum_{i=2}^n f_i E_i \mbox{ and }f_i\in L^{\infty}(\mathcal{H}^n).
%\end{align}
\\

Hence, again applying the chain rule together with (\ref{ineq:some1}) and (\ref{ineq:some4}) yields
\begin{align*}
\Hess \md_{\hat{\kappa}}(\chi d_y)(V,V)+ \hat{\kappa}\md_{\hat{\kappa}}(d_y)|V|^2\leq |V|^2\ \m\mbox{-a.e. on } B_{\delta/2}(z) \ \forall V\in L^2(TY).
\end{align*}
%
%\begin{align*}
%\Hess \frac{1}{\hat{\kappa}}\cos_{\hat{\kappa}}(\chi d_y)(W,W)\geq -\cos_{\hat{\kappa}}d_z|W|^2 \ \m\mbox{-a.e.}\mbox{ on }B_{\delta/2}(z)\ \mbox{ for } W\in L^2(TY).
%\end{align*}
%Hence $\Hess \frac{1}{\hat{\kappa}}\cos_{\hat{\kappa}}(\chi d_z)\geq -\cos_{\hat{\kappa}}d_z$ on $B_{\delta/2}(z)$ $\m$-a.e. .
\\
\\
%Now, at this point we have to emphasize that is not possible to apply Theorem \ref{th:important} again. This is simply due to fact that Theorem \ref{th:important} only can handle functions 
%that are $(\kappa,\beta)$-convex if $\beta\in (0,\infty)$. A negative $\beta$ is excluded. Therefore, w
{\textbf 6.} We consider another time the second variation formula (Theorem \ref{th:secondvariation}). It yields that the function 
$\mathcal{F}(\mu)=\int \md_{\hat{\kappa}}(\chi d_y)d\mu$ for $\mu\in \mathcal{P}^2(Y,\m)$ satisfies 
\begin{align}\label{ineq:difff}
\int_0^1\mathcal{F}(\mu_t)\phi''(t)dt+\hat{\kappa} \int_0^1\int \md_{\hat{\kappa}}(\chi d_y)|\nabla \psi_t|^2d\mu_t \phi(t)dt\leq \int_0^1 \int |\nabla \psi_t|^2d\mu_t \phi(t) dt,\ \ \phi\in C^2((0,1))
\end{align}
for Wasserstein geodesics $(\mu_t)_{t\in[0,1]}$ supported in $B_{\delta/2}(z)$, and $\psi_t\equiv \phi_t$ as in Theorem \ref{th:secondvariation}. Note that
\begin{align*}
\int |\nabla \psi_t|^2 d\mu_t = \frac{1}{(s-t)^2}\int |\nabla \psi|^2 d\mu_t=\frac{1}{(s-t)^2}W_2(\mu_t,\mu_s)^2=W_2(\mu_0,\mu_1)^2
\end{align*}
where $\psi$ is a Kantorovich potential between $\mu_t$ and $\mu_s$ for some $s\in [0,1]$, $s\neq t$. Furthermore, by the metric Brenier theorem 
$|\nabla \psi_t|^2(\gamma_t)=\frac{1}{(s-t)^2}|\nabla \psi|^2(\gamma_t)=d(\gamma_t,\gamma_s)^2$ for $\Pi$-a.e. every geodesic where $\Pi$ is the optimal 
dynamical plan associated to the geodesic $(\mu_t)_{t\in [0,1]}$. Hence
\begin{align*}
\int \md_{\hat{\kappa}}(\chi d_y)|\nabla \psi_t|^2 d\mu_t= \int \md_{\hat{\kappa}}(\chi d_y)(\gamma_t)d(\gamma_t,\gamma_s)^2 d\Pi(\gamma).
\end{align*}
Now, we choose points $x_0,x_1\in B_{\delta/2}(z)$, and $\mu^k_0,\mu^k_1\in \mathcal{P}^2(X,\mathcal{H}^n)$ with $\supp\mu^k_i\subset B_{\delta/2}(z)$ and $\mu^k_i\rightarrow \delta_{x_i}$ weakly $k\rightarrow \infty$.
Then, by compactness of $\overline{B_{\delta/2}(z)}$ the Wasserstein geodesic $(\mu_t^k)_{t\in [0,1]}$ between $\mu^k_0$ and $\mu^k_1$ converges to the Wasserstein geodesic $(\nu_t)_{t\in [0,1]}$ between $\delta_{x_0}$ and $\delta_{x_1}$. 
By uniqueness of geodesics between points in $B_{\delta/2}(z)$ -- because of the $\CAT$-condition -- , $\nu_t=\delta_{\gamma(t)}$ where $\gamma$ is the geodesic between $x_0$ and $x_1$.
Moreover, since $\md_{\hat{\kappa}}(\chi d_y)$ is continuous and bounded, we obtain from the definition of weak convergence
\begin{align*}
\mathcal{F}(\mu_t^k)\rightarrow \mathcal{F}(\nu_t)=\int \md_{\hat{\kappa}}(\chi d_y) d\nu_t = \md_{\hat{\kappa}}d_y(\gamma(t)), \ \forall t\in (0,1).
\end{align*}
Similarly, after taking another subsequence the associated dynamical plans $\Pi^k$ weakly converge as well, and again by uniqueness of geodesics they converge to the measure $\delta_{\gamma}$ that is supported on 
the single geodesic between $x_0$ and $x_1$. Since $\gamma\mapsto \md_{\hat{\kappa}}(\chi d_y)(\gamma_t)d(\gamma_t,\gamma_s)^2$ is a continuous and bounded function on the space of geodesics, we get by the $CAT$-condition that
\begin{align*}
\frac{1}{(s-t)^2}\int \md_{\hat{\kappa}}(\chi d_y)(\gamma_t)d(\gamma_t,\gamma_s)^2 d\Pi^k(\gamma)\rightarrow \md_{\hat{\kappa}}(d_y(\gamma_t))d(\gamma_0,\gamma_1)^2, \ \forall t\in (0,1).
\end{align*}

Hence, by the dominated convergence theorem we obtain the differential inequality (\ref{ineq:difff}) for $\md_{\hat{\kappa}}d_y\circ\gamma$ along geodesics $\gamma$ in ${B_{\delta/2}(z)}$.

Since $z\in B_{\epsilon}(x_0)\backslash \{y\}$ was arbitrary we have that $ \md_{\hat{\kappa}}(d_y)$ satisfies ${\eqref{CBB-CBA-mdk}}_{(CBB)}$ for any $\gamma\subset B_{\epsilon}(x_0)\backslash \{y\}$.

%By Remark \ref{mdk-on-X-miuns y}  
If $\gamma$ passes through $y$ then the same property holds for trivial reasons. As this holds for any $y\in  B_{\epsilon}(x_0)$  and  $B_{\epsilon}(x_0)$ is convex we get that 
 $CBB(\hat\uk)$ property  ${\eqref{point-on-a-side-comp}}_{(CBB)}$ holds for all triangles with vertices in $B_{\epsilon}(x_0)$.

 Since $x_0\in X_{reg}$ was arbitrary and $X_{reg}$ is convex in $X$, by Petrunin's globalization theorem \cite{petr-globalization}  (cf. \cite{Li-globalization}) it follows that $X$ satisfies $CBB(\hat\uk)$.

Since this holds for arbitrary $\hat \kappa<  \ke-(n-2)\uk$ we conclude that $X$ satisfies $CBB(\ke-(n-2)\uk)$.

This proves  part \eqref{low-curv-bound} of Theorem~\ref{cor:4}.

Now part \eqref{equal-case} follows by Lemma \ref{CBB-CBA-equal}.

This concludes the proof of the induction step and hence of Theorem~\ref{cor:4}.
\qed
%\begin{corollary}
%Let $(X,d,\m)$ be a metric measure space satisfying $RCD(k(n-1),n)$ with $\m=\mathcal{H}^n$, and such that $(X,d)$ is $\CAT_{loc}(k)$ for $k\geq 0$. 
%Then $X$ is a convex subset of a Riemannian space form of constant curvature $k$.
%\end{corollary}
%\begin{proof}
%If $k=0$, we obtain that $(X,d,\m)$ has curvature bounded from below by $0$, and therefore Theorem \ref{th:nikolaev} yields the statement. If $k>0$, we can reduce to the previous case by considering the euclidean cone over $(X,d)$.
%\end{proof}
%\include{cd-to-rcd-new}

\section{CD+CAT to RCD+CAT}\label{CD+CAT to RCD+CAT}
%The goal of this section is to provide various sufficient conditions that guarantee that a space satisfying  curvature dimension and $\CAT$ conditions is infinitesimally Hilbertian.

In this section we study  metric measure spaces $(X,d,m)$ satisfying
\begin{equation}
\begin{gathered}\label{eq:cd+cat}
\mbox{$(X,d,m)$ is $\CAT(\uk)$ and satisfies any of the conditions $CD(K,N)$, $CD^*(K,N)$ }\\
\mbox{or  $CD^e(K,N)$ for $1\le N<\infty$, $K,\uk<\infty$.}
\end{gathered}
\end{equation}

{
\begin{remark}\label{rem:equiv}
Proposition \ref{prop:nonbra} at the end of this section shows that a space $X$ satisfying  (\ref{eq:cd+cat}) is non-branching, and 
hence it is essentially non-branching. For essentially non-branching spaces conditions $CD(K,N)$, $CD^*(K,N)$ and $CD^e(K,N)$ are known to be equivalent \cite[Theorem 3.12]{erbarkuwadasturm} and \cite[Theorem 1.1]{cavmil}. Therefore, for $\CAT(\uk)$ spaces conditions  $CD(K,N)$, $CD^*(K,N)$ and $CD^e(K,N)$ with $1\le N<\infty$ are equivalent.
  However, we will not use this fact in the proof of Theorem ~\ref{CD+CAT implies RCD} below.

\end{remark}}

The main goal of this section is to prove the following theorem.
\begin{theorem}\label{CD+CAT implies RCD}
Let $(X,d,\m)$ satisfy \eqref{eq:cd+cat}. Then $X$ is infinitesimally Hilbertian.

In particular, $(X,d,\m)$ satisfies $RCD(K,N)$.
\end{theorem}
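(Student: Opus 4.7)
The plan. By the local nature of infinitesimal Hilbertianness and of the $CD$ condition on geodesically convex subsets (Fact~\ref{fact:cd}(iii), Remark~\ref{rem:convex}), it is enough to treat the case where $X$ has small diameter and is globally $\CAT(\uk)$. The proof then has two essentially independent ingredients: a metric splitting theorem for the nonnegatively curved (in both senses) setting, and a differentiability-based conclusion that produces infinitesimal Hilbertianness from the existence of Euclidean tangents.

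The central step, and the one I expect to be the main obstacle, is a \emph{Splitting Theorem}: if $X$ satisfies $CD(0,N)$ with $N<\infty$, is $\CAT(0)$, and contains a line $\gamma$, then $X$ splits isometrically as $Y\times\R$. Let $\gamma_\pm$ be the two rays of $\gamma$ with Busemann functions $b_\pm$. In a $\CAT(0)$ space, $b_\pm$ are convex and $1$-Lipschitz, so $b:=b_++b_-$ is convex, nonnegative (triangle inequality), and identically zero on $\gamma$. The aim is to prove $b\equiv 0$. One cannot argue via superharmonicity of Busemanns as in Cheeger--Gromoll or Gigli, since the Laplace operator is not yet known to be linear; this is precisely the obstruction that forces a new approach. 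The plan is to use the $CD(0,N)$ rigidity (Brunn--Minkowski, Theorem~\ref{th:stability+BM}) together with the $\CAT(0)$ convexity of $b_\pm$ to show that at every $p\in X$ the two gradient flow rays of $-b_\pm$ issuing from $p$ piece together into a line $\gamma_p$ with $d(\gamma_p(t),\gamma(t))$ bounded, hence parallel to $\gamma$. The Flat Strip Theorem for $\CAT(0)$ spaces then supplies a flat strip between $\gamma_p$ and $\gamma$, forcing $b(p)=0$. Assembling the parallel lines $\{\gamma_p\}_{p\in X}$ and using the Flat Strip Theorem once more yields the isometric product decomposition $X=Y\times\R$, with $\m$ a product measure.

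Given the Splitting Theorem, I analyze tangent cones as in the introduction. At any $p\in X$, a tangent cone $(T_pX,o)$ is $\CAT(0)$ (the upper curvature bound flattens under blow-ups) and satisfies $CD(0,N)$ by stability of curvature-dimension conditions under pmGH convergence (Theorem~\ref{th:stability+BM}). Since $T_pX$ is a Euclidean cone, any ray from $o$ through a nonapex point $y$ extends across $y$ to a line; the Splitting Theorem then gives $T_pX=Y_1\times\R$. By the Gigli--Mondino--Rajala result \cite{gmr} that ``tangents of tangents are tangents'' on a set $A\subset X$ of full $\m$-measure, and by iterating the splitting at nonapex points of successive tangent cones, one produces, for every $p\in A$, an iterated tangent cone isometric to $\R^k$ for some $1\le k\le N$, and this iterated tangent is itself a tangent cone at $p$.

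To conclude, I invoke Cheeger's generalized Rademacher theorem \cite{cheegerlipschitz}: the space $X$ is locally doubling and satisfies a weak local $(1,1)$-Poincar\'e inequality (Fact~\ref{fact:cd}(i) and Theorem~\ref{th:rajala}), so Cheeger's theorem applies. At $\m$-a.e. $p\in X$ the Gromov--Hausdorff tangent is Euclidean, and Cheeger's theory then forces the pointwise norm on the cotangent module to come from an inner product; equivalently, the minimal weak upper gradient satisfies the parallelogram identity $\m$-a.e. Integrating, the Cheeger energy $\Ch^X$ is quadratic on $L^2(\m)$, so $W^{1,2}(X)$ is Hilbert and $X$ is infinitesimally Hilbertian. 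Combined with the assumed $CD$ hypothesis (and Remark~\ref{rem-equiv-rcd}), this gives $RCD(K,N)$ and completes the proof.
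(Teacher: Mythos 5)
Your proposal takes essentially the same approach as the paper: prove a splitting theorem for $CD(0,N)\cap\CAT(0)$ spaces via the Flat Strip Theorem (thereby bypassing the non-linearity of the Laplacian), iterate it on tangents-of-tangents to get Euclidean metric tangent cones $\m$-a.e., and conclude infinitesimal Hilbertianness from Cheeger's generalized Rademacher theorem together with the Poincar\'e inequality and doubling. The one small divergence is in the splitting step: you produce the parallel line through $p$ via gradient curves of the Busemann functions, while the paper instead shows the density $m(B_r(\gamma(t)))^{1/N}$ is concave (hence constant) along $\gamma$ and extracts the parallel line from the equality case of Brunn--Minkowski by a midpoint/limiting argument; both end by invoking the Flat Strip Theorem, and the Brunn--Minkowski equality-case route is the one that actually pins down the required structure.
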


 Together with Theorem~\ref{rcd+cat-to-cbb} this  immediately gives Theorem~\ref{main-thm}.
 Let us mention that the proof of Theorem~\ref{CD+CAT implies RCD} can be simplified under various extra regularity assumptions such as requiring  $X$  to have extendible geodesics or to have metric measure tangent cones equal to geodesic tangent cones. Note that it's easy to see that for a space {$(X,d,m)$ satisfying \eqref{eq:cd+cat} for any $p$ and \emph{any} tangent cone $(T_pX,d_p,m_p)$ at $p$ there is a canonical distance preserving embedding $T_p^gX\subset T_pX$. However, it is not a priori clear if this embedding is always onto. 
 
Let us give two instructive examples to keep in mind.

\begin{example}\label{ex1-cat}
Let $X$ be the union of $\R$ with closed intervals of length $2^k$ attached at the point $2^k$ for all integer $k$. It's easy to see that $X$ is $CAT(0)$. Let $p=0$ be the base point. Then any two geodesics starting at $p$  to the right have a common beginning and hence the geodesic tangent space $T^g_pX$ is isometric to $\R$. On the other hand, $(X,p)$ is self similar with respect to multiplication by $2$ and hence the tangent cone $T_pX=\lim_{k\to\infty}(2^kX,p)$ is isometric to $X$. Note that this tangent cone is not a metric cone. 

%Further, let $f\co X\to \R$ be given by the distance to $\R\subset X$. Then $f$ is 1-Lipschitz and convex. It's easy to see that $\Lip f(p)=1$ while $\gnabla f(p)=0$.
\end{example}
%The following example is also instructive
\begin{example}\label{ex2-cat}
Let $\Gamma$ be a binary tree, Let $\eps_n$ be a sequence of positive numbers  such that $R=\sum_n\eps_n<\infty$.
Define the metric on $\Gamma$
by prescribing the  length of any edge from level $n$ to the level $n+1$ to be  $\eps_n$.  Let $X$ be the metric completion of $\Gamma$. Then $X$ is $CAT(0)$ of topological dimension 1. Let $p$ be the root of the tree $\Gamma$. 
Then the cut locus of $p$ coincides with the metric sphere of radius $R$ at $p$. It is a Cantor set and for an appropriately chosen sequence $\eps_n$ it can have arbitrary large Hausdorff dimension.
Furthermore, for any $q\in S_R(p)$ the geodesic tangent space $T_q^gX$ is still a ray and is different from any tangent cone obtained as a blow up limit.
\end{example}

The key step in the proof of  Theorem~\ref{CD+CAT implies RCD} is the following splitting theorem

\begin{proposition}[Splitting theorem]\label{splitting theorem}
Let $(X,d)$ be $CAT(0)$ and $CD(0,N)$ or $CD^e(0,N)$ for some finite $n$.
%$CD(0,n)$ and  and assume $(X,d,m)$ admits a Brunn-Minkowski inequality (\ref{ineq:BM}) as in Theorem \ref{th:stability+BM} for some finite $n$.  
 Suppose $X$ contains a line. 
 Then $(X,d)\cong (Y,d_Y)\times (\R,d_{Eucl})$  for some $CAT(0)$ metric space $(Y,d_Y)$.

%Then $(X,d,m)\cong (Y,d_Y,m_Y)\times (\R,d_{Eucl}, \mathcal H_1)$ where $Y$ is  $CAT(0)$.
%{\color{red} Does this imply that $Y$ is  $CD(0,n-1)$? I think it does even though it's not needed in the sequel.}

\end{proposition}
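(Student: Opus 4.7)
The plan is to use the Flat Strip Theorem for $CAT(0)$ spaces, combined with the Brunn-Minkowski inequality coming from $CD(0,N)$, to show every point of $X$ lies on a geodesic line parallel to $\gamma$ and thereby extract the splitting. As emphasized in the introduction, this sidesteps maximum-principle arguments which are not available in the $CD$ setting.

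Let $\gamma\co\mathbb{R}\to X$ be the given line and introduce the Busemann functions $b_\pm(x):=\lim_{t\to\infty}(d(x,\gamma(\pm t))-t)$. Since $X$ is $CAT(0)$, both $b_\pm$ are $1$-Lipschitz and convex along geodesics, so $b:=b_++b_-$ is a nonnegative convex function vanishing on $\gamma$. By uniqueness and continuity of geodesics in $CAT(0)$ (Remark~\ref{cont-geod}), for each $x\in X$ there are unique rays $\sigma_{x,\pm}\co[0,\infty)\to X$ starting at $x$ and asymptotic to $\gamma|_{[0,\infty)}$ and $\gamma|_{(-\infty,0]}$, obtained as limits of the unique geodesics from $x$ to $\gamma(\pm t)$. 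A direct computation shows that $b_\pm$ decrease at unit speed along $\sigma_{x,\pm}$, and combining this with $b_++b_-\geq 0$ and the $1$-Lipschitz property of $b_+$ yields the lower bound $d(\sigma_{x,-}(T),\sigma_{x,+}(T))\geq 2T-b(x)$. In particular $b(x)=0$ is equivalent to $\sigma_{x,-}\cup\sigma_{x,+}$ being a geodesic line through $x$, which is then automatically parallel to $\gamma$ (in the sense of bounded distance).

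The core step, and the principal obstacle, is to prove $b\equiv 0$. The plan is to argue by contradiction: assume $b(x_0)>0$ for some $x_0$ and derive a volumetric incompatibility. Apply Brunn-Minkowski (Theorem~\ref{th:stability+BM}) to the balls $B_r(\sigma_{x_0,-}(T))$ and $B_r(\sigma_{x_0,+}(T))$ for large $T$ to obtain a definite lower bound on the $1/2$-midpoint set, while the $CAT(0)$ hinge comparison at $x_0$ and at the endpoints confines the midpoints of geodesics joining these two balls to a region whose separation from $x_0$ is controlled quantitatively by the defect $b(x_0)$. Balancing the Brunn-Minkowski lower bound against the Bishop-Gromov volume comparison (Fact~\ref{fact:cd}) applied in this confinement region, and tracking how both sides scale as $T\to\infty$, should force the required contradiction. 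This is the subtle analytic heart of the argument and replaces the classical superharmonicity/maximum principle step of Cheeger-Gromoll and Gigli, which fails here because linearity of the Laplacian is precisely what we are ultimately trying to prove.

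Once $b\equiv 0$ is established, the splitting follows formally. Set $Y:=b_+^{-1}(0)$, a closed convex subset of $X$, and define $\Phi\co Y\times\mathbb{R}\to X$ by sending $(y,t)$ to the value at $t$ of the unique geodesic line through $y$ parallel to $\gamma$. Surjectivity follows from $b\equiv 0$ and injectivity from uniqueness of asymptotic rays. For any $y,y'\in Y$, the Flat Strip Theorem for $CAT(0)$ spaces applied to the parallel lines $\Phi(y,\cdot)$ and $\Phi(y',\cdot)$ yields a convex flat strip isometric to $\mathbb{R}\times[0,d(y,y')]$ bounded by them, from which the Pythagorean identity
\[
d(\Phi(y,s),\Phi(y',t))^2=d(y,y')^2+(s-t)^2
\]
is immediate. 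Hence $\Phi$ is a bijective isometry, and $(Y,d_Y)$ inherits the $CAT(0)$ property as a closed convex subspace of a $CAT(0)$ space.
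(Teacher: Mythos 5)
Your setup (Busemann functions, asymptotic rays, Flat Strip Theorem giving the Pythagorean identity once $b\equiv 0$ is known) is correct, and you rightly identify proving $b\equiv 0$ as the heart of the matter. But precisely at that heart, you have a gap: you describe a proof \emph{strategy} — contradiction via ``volumetric incompatibility'' between Brunn--Minkowski and Bishop--Gromov as $T\to\infty$ — and explicitly concede it is left unverified (``should force the required contradiction''). As written this does not work, and it is not at all clear it can be made to. The claimed confinement is quantitatively wrong in the direction you need: if $p=\sigma_{x_0,-}(T)$, $q=\sigma_{x_0,+}(T)$ and $m_T$ is the geodesic midpoint of $[p,q]$, the $CAT(0)$ quadrilateral comparison only gives $d(x_0,m_T)^2\le T^2-\tfrac14 d(p,q)^2\le T\,b(x_0)$, so the midpoint region drifts away from $x_0$ at rate $\sqrt{T\,b(x_0)}$ rather than staying at bounded distance. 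Moreover Brunn--Minkowski gives a lower bound for the midpoint set in terms of $m(B_r(p))$ and $m(B_r(q))$, which are not bounded below as $T\to\infty$, and comparing the midpoint-set measure with the measure of a ball of growing radius around $x_0$ via Bishop--Gromov produces no contradiction. So the scaling argument does not close.

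The paper's route to $b\equiv 0$ is genuinely different and avoids all of this. It first proves that $x\mapsto m(B_r(x))^{1/N}$ is \emph{concave} on $X$, by combining the $CD(0,N)$ Brunn--Minkowski inequality with the $CAT(0)$ ``fellow travel'' property (the Minkowski midpoint set of two $r$-balls is contained in the $r$-ball around the midpoint). A positive concave function on $\R$ is constant, so $m(B_r(\gamma(t)))$ is independent of $t$, which forces equality in Brunn--Minkowski. Equality then implies the $1/2$-Minkowski set \emph{equals} $\bar B_r(\gamma(\tfrac{t_1+t_2}2))$, so any $q$ at distance $r$ from $\gamma(0)$ is the midpoint of a geodesic $[q_{-t},q_t]$ with $q_{\pm t}\in B_r(\gamma(\pm t))$, and this geodesic lies in the $r$-neighbourhood of $\gamma$ by fellow travel. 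Letting $t\to\infty$ yields a line through $q$ at bounded Hausdorff distance from $\gamma$; the Flat Strip Theorem then gives a flat strip containing $q$, on which $b\equiv 0$, and $q$ was arbitrary. If you want to salvage your write-up along your own lines rather than the paper's, you would need to actually carry out the scaling contradiction, and I do not believe the estimates go through; replacing it by the concavity-forces-constancy argument is the clean fix.
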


\begin{proof}
Let $\gamma\co (-\infty,\infty)\to X$ be a line in $X$.   Let $\gamma_\pm$ be the rays  $\gamma_+(t)=\gamma(t)$ and $\gamma_-(t)=\gamma(-t)$ for $t\ge 0$. 
Let $b_\pm(x)=\lim_{t\to\infty}d(x,\gamma_\pm(t))-t$ be the corresponding Busemann functions.  Note that $b_\pm$ are both convex and 1-Lipschitz since they are limits of 1-Lipschitz convex functions.

For any $r>0$ let $f_r(x)=m^{1/N}(B_r(x))$.

%As in the proof of Theorem \ref{sec:RCD+CAT-to-Alexandrov} consider the density functions  $\theta_{n,r}(x)=\frac{m(B_r(x))}{\omega_nr^n}$ where $x\in X$ and $r>0$. Let $f_r(x)=\theta_{n,r}^{1/n}(x)$.
By the same proof as in Lemma~\ref{dens-semiconcave}, for any fixed $r$ the function $f_r(x)$ is \emph{concave} on $X$. We recall the argument which is particularly simple in our case because the lower Ricci and the upper curvature bounds are both zero.

It's well-known that geodesics in $CAT(0)$ spaces satisfy the following "fellow travel" property:

If two constant speed geodesics $\sigma_1,\sigma_2\co [0,1]\to X$ satisfy $d(\sigma_1(0),\sigma_2(0))\le r, d(\sigma_1(1),\sigma_2(1))\le r$ then $d(\sigma_1(t),\sigma_2(t))\le r$ for all $0\le t\le 1$.

This immediately follows from the fact that the function $t\mapsto d(\sigma_1(t),\sigma_2(t))$ is convex in $t$ which is an easy consequence of the $CAT(0)$ condition.

Let $x,y$ be any two points in $X$ and let $\sigma\co [0, 1]\to X$ be a constant speed geodesic from $x$ to $y$.
Let $A=\bar B_r(x), B=\bar B_r(y)$. Let $0\le t\le 1$ and let $C_t=(1-t)A+tB$ be their $t$-Minkowski sum. By the  "fellow travel" property we have that $C_t \subseteq \bar B_r(\sigma(t))$. Also, $C_t$ is clearly closed. By the Brunn-Minkowski inequality ( Theorem \ref{th:stability+BM}) we have that  $m^{1/N}(C_t)\ge (1-t) m^{1/N}(A)+ t m^{1/N}(B)$.  Using that $m(C_t)\le m(\bar B_r(\sigma(t)))$ this gives
$f_r(\sigma(t))\ge (1-t)f_r(x)+tf_r(y)$ i.e. $f_r$ is concave.

Thus we have that for any $r>0$ the map $t\to f_r(\gamma(t))$ is concave and positive on $\R$. This implies that it's constant. Therefore, $m(\bar B_r(\gamma(t)))$ is 
constant in $t$. This means that in the proof of concavity of $f$ along $\gamma(t)$ all inequalities  must be equalities and hence for any $t_1,t_2$ it holds that the $1/2$-Minkowski sum 
of $\bar B_r(\gamma(t_1))$ and $\bar B_r(\gamma(t_2))$ is equal to $\bar B_r(\gamma(\frac{t_1+t_2}{2}))$.  Since the $1/2$-Minkowski sum is
closed, the open complement in $\bar B_r(\gamma(\frac{t_1+t_2}{2}))$ must be empty.

Let $q\in X$ be any point an let $r=d(q,\gamma(0))$. By above, for any $t\ge 0$ there exist $q_t\in B_r(\gamma(t))$ and $q_{-t}\in B_r(\gamma(-t))$ such that $q$ is the midpoint of $[q_t,q_{-t}]$. Moreover, again using the "fellow travel" property we get that the whole geodesic $[q_t,q_{-t}]$ lies in the $r$-neighbourhood of $\gamma$. By letting $t\to\infty$ and passing to the limit along a subsequence we obtain a line $\gamma_q\co (-\infty,\infty)\to X$ such that $\gamma_q(0)=q$ and the whole $\gamma_q$ lies in the $r$-neighbourhood of $\gamma$. By the triangle inequality this implies that $d(\gamma(t), \gamma_q(t))\le 3r$ for any $t\in\R$. By the Flat Strip Theorem  ~\cite[Theorem 2.13]{BH99} this implies that the convex hull of $\gamma\cup \gamma_q$ is isometric to the flat strip $[0,D]\times \R$ for some $D\le r$ with $\gamma$ and $\gamma_q$ corresponding to $\{0\}\times\R$ and $\{D\}\times \R$ respectively.  We will call two lines in $X$ parallel if they bound such flat strip.

The above trivially implies that $b=b_++b_-\equiv 0$ on $\gamma_q$ and since $q$ was arbitrary, $b\equiv 0$ on all of $X$. Since  $b_\pm$ are both convex this implies that they are both affine and hence $\{b_+=c\}$ is convex in $X$ for any real $c$. Further,  because of the flat strip property above it holds that $b_+(\gamma_q(t))=b_+(q)-t$ for any $t\ge 0$. Thus, $\gamma_q(t)$ is an (inverse) gradient curve of $b_+$ and we have a similar property for $b_-$. This easily implies that $\gamma_q$ is unique. That is we claim that for every $q$ there is a unique line through $q$ parallel to $\gamma$. Indeed, by possibly changing $\gamma(t)$ to $\gamma(t+c)$ we can assume that $b_+(q)=b_-(q)=0$. By the above $b_+(\gamma_q(t))=-t$ for any $t\ge 0$. Since $b_+$ is 1-Lipschitz this means that $\gamma_q(t)$ is the closest point in $\{b_+\le -t\}$ to $q$. Since $b_+$ is a convex function, the set $\{b_+\le -t\}$ is convex. In $CAT (0)$ spaces nearest point projections to convex subsets are unique (this is immediate from the definition of $CAT(0)$). Hence $\gamma_q(t)$ is uniquely determined by $q$ and $t\ge 0$. The same works for $t\le 0$ using $b_-$.

 %Let $x=\gamma(t_1), y=\gamma(t_2), z=\gamma(\frac{t_1+t_2}{2})$. The above means that for any $q$ with $d(q,z)=r$ there exist $q_1,q_2$ such that $d(x,q_1)\le r$, $d(y,q_2)\le r$ and $q$ is the midpoint of $[q_1,q_2]$. But convexity of distance between two geodesics implies that this is only possible if  $d(x,q_1)=d(y,q_2)= r$. By the flat quadrilateral theorem ~\cite[Theorem 2.11]{BH99} this implies that there is a flat isometrically embedded parallelogram $(x,q_1,q_2,y)$ in $X$. By fixing  $z=\gamma(0)$ and setting $t_1=t, t_2=-t$ and letting $t\to\infty$ this means that \emph{any} point $q\in X$ lies in an infinite isometrically embedded flat strip $I\times\R\subset X$ containing $\gamma$.
%The "upper" edge of this strip is a line $\gamma_q$ passing through $q$ and parallel (i.e. equidistant) to $\gamma$. 

Everything we've shown for the line $\gamma$ applies to the line $\gamma_q$ as well. In particular any point $q'$ in $X$ is contained in a flat strip containing $\gamma_q$ with edges $\gamma_q$ and $\gamma_{q'}$. This easily implies the metric splitting $X\cong Y\times \R$ with $Y=\{b_{+}=0\}$. $Y$ will obviously be $CAT(0)$.

\begin{comment}
Furthermore, recall that  we've shown that $m(B_r(\gamma_q(t)))$ is constant in $t$ for any $q\in X, r>0$.  Since $m$ is doubling this easily implies that $m$ is invariant with respect to  translations along the $\R$ factor in the splitting $X\cong Y\times \R$.  This in turn easily implies that $m$ splits as  $m_Y\times \mathcal {H}_1|_\R $.

Indeed, without loss of generality we can assume that $Y$ is compact (if not change $Y$ to a closed ball of any size $R$ (which is convex)  by the $CAT (0)$ condition). Let $\pi\co X\cong Y\times \R\to \R$ be the coordinate projection onto the $\R$ factor. Let $\nu=\pi_*(m)$.  By above $\nu$  is translation invariant. Hence it must be a constant multiple of the standard Lebesgue measure on $\R$. Now applying disintegration and using the fact that $m$ is translation invariant we immediately get the splitting of $m$.
\end{comment}
\end{proof}
\begin{remark}
Since $m(B_r(\gamma_q(t)))$ is constant in $t$ for any $q\in X, r>0$ it is easy to see that one gets the splitting of the measure in the above proposition as well, that is $(X,d,m)\cong (Y,d_Y,m_Y)\times (\R,d_{Eucl}, \mathcal H_1)$  for some measure  $m_Y$ on $Y$. We omit the details since we don't need it for the proof of Theorem~\ref{CD+CAT implies RCD} and the measure splitting will follow by Gigli's splitting theorem ~\cite{giglisplitting} anyway once Theorem~\ref{CD+CAT implies RCD} is proved. 
\end{remark}
\begin{comment}
{\color{red}  I think because of the metric-measure splitting $Y$ also has to be  $CD(0,n-1)$  although we don't need it. Given any two probability measures $\mu_1,\mu_2$ on $Y$ absolutely continuous with respect to $m_Y$ take $\mu_{1,r}\mu_1\times \delta_r$ and $\mu_{2,r}=\mu_2\times \delta_r$
where $\delta_r$ is the uniform measure on $[-r,r]$. Let $\mu_{t,r}$ be a Wasserstein geodesic between them along which the $n-$entropy is convex. Then as $r\to 0$ $\mu_{t,r}\to\mu_t$  - a geodesic between $\mu_1$ and $\mu_2$. Lower semicontinuity of entropy plus Jensen's inequality should imply that $(n-1)$-entropy is convex along $\mu_t$. some further argument is needed here. This statement is not needed for the proof of the main result so I'll skip it for now...}

As an immediate corollary of the splitting theorem we obtain
\begin{corollary}\label{cor:R^n-RCD}
Let $(\R^n,d_{Eucl},m)$ be $CD^*(0,N)$ for some $n\le N<\infty$.
Then $m=c\cdot \mathcal H_n$ for some $c>0$.
\end{corollary}

{\color{red} Is this Corollary already known? I assumed that it was as it follows from ~\cite{aststability} for example but I don't know a reference. If it's known we can remove the corollary and just give  a reference.}
\end{comment}
\begin{proposition}\label{prop:inf-hilb}
Let $(X,d,m)$ satisfy any of the conditions $CD(K,N)$, $CD^*(K,N)$ or $CD^e(K,N)$ with $N<\infty$.
Then $X$ is infinitesimally Hilbertian if and only if for almost all points $p\in X$ some tangent cone $T_pX$ is isometric to $\R^k$ for some $k\le N$.
\end{proposition}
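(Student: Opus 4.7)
My plan is to prove the two directions separately, with the forward direction being a straightforward consequence of the structure theory of $RCD$ spaces and the backward direction using Cheeger's differentiation theorem.

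\textbf{Forward direction.} If $X$ is infinitesimally Hilbertian, then combined with the $CD$ hypothesis and Remark~\ref{rem-equiv-rcd} it is $RCD(K,N)$. By the Mondino--Naber--Kell--Mondino--Gigli--Pasqualetto rectifiability theorem cited above, $X$ admits a dimensional decomposition $\{A_k\}_{k\in\mathbb{N}\cup\{\infty\}}$ with $m(A_k)=0$ for $k>N$, and for each $k\le N$ the local dimension of the tangent module on $A_k$ is $k$. Standard blow-up arguments (together with the \emph{tangents of tangents are tangents} result of Gigli--Mondino--Rajala used elsewhere in the paper) then show that for $m$-a.e.\ $p\in A_k$ some measured Gromov--Hausdorff tangent cone is $\mathbb{R}^k$.

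\textbf{Backward direction.} Assume that at $m$-a.e.\ $p\in X$ some tangent cone is isometric to $\mathbb{R}^k$ for some $k\le N$. By Fact~\ref{fact:cd} and Theorem~\ref{th:rajala}, $(X,d,m)$ is a locally doubling metric measure space satisfying a local $(1,1)$-Poincar\'e inequality, so Cheeger's theorem applies. This yields a countable collection of bi-Lipschitz charts $\phi_i\co U_i\to\mathbb{R}^{n_i}$ and a measurable family of seminorms $\|\cdot\|_x^*$ on $(\mathbb{R}^{n_i})^*$ such that for every Lipschitz $f$ and $m$-a.e.\ $x\in U_i$ there exists a unique covector $d_x f\in(\mathbb{R}^{n_i})^*$ with $\mbox{Lip}(f-d_xf\circ\phi_i)(x)=0$ and $\mbox{Lip}(f)(x)=\|d_xf\|_x^*$. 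Since $X$ is doubling and PI, for any Lipschitz $u$ the minimal weak upper gradient coincides with $\mbox{Lip} u$ $m$-a.e., and hence the Cheeger energy satisfies $2\Ch^X(u)=\int\|d_xu\|_x^{*2}\,dm(x)$. Consequently, $X$ is infinitesimally Hilbertian if and only if $\|\cdot\|_x^*$ satisfies the parallelogram identity for $m$-a.e.\ $x$.

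\textbf{Identification step.} The heart of the argument is to show that at $m$-a.e.\ point $p$ for which some tangent cone is $\mathbb{R}^k$, the Cheeger fiber norm $\|\cdot\|_p^*$ is Euclidean. I would choose a sequence $r_j\to 0$ realising the tangent cone, rescale the chart $\phi_i$ accordingly, and pass to the limit: the rescalings of $\phi_i$ converge (after subsequence and recentring) to a Lipschitz map $\Phi\co\mathbb{R}^k\to\mathbb{R}^{n_i}$ whose image has positive $\mathcal{H}^k$-measure, so by Rademacher's theorem $\Phi$ is differentiable at $m_*$-a.e.\ point of $\mathbb{R}^k$, and its differential at the cone apex is injective. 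On the other hand, for any Lipschitz test function $u$ defined near $p$, the rescaled functions converge to a Lipschitz function $\tilde u$ on $\mathbb{R}^k$ which by Cheeger differentiability is linear in the coordinates supplied by $\Phi$, and whose Euclidean slope at the apex equals $\mbox{Lip}(u)(p)=\|d_pu\|_p^*$. This realises $\|\cdot\|_p^*$ as the pullback under a linear injection of the Euclidean norm on $(\mathbb{R}^k)^*$, which is Hilbertian.

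\textbf{Main obstacle.} The technical heart of the proof, and where I expect the main obstacle to lie, is in the identification step above: passing from a \emph{metric} Gromov--Hausdorff statement about the tangent cone to an \emph{analytic} statement about the Cheeger infinitesimal norm, including measurability of the set of points where the identification goes through and control on how Cheeger's charts blow up along the sequence. This compatibility between Cheeger's differentiable structure and metric tangent cones is delicate, but has been worked out in the PI-doubling setting (Cheeger--Kleiner, Schioppa, Bate--Li), and I would use the sharpest available version to conclude the proof. Once the parallelogram identity is established pointwise $m$-a.e., integrating and passing to the closure to define $W^{1,2}(X)$ gives that the Cheeger energy is a quadratic form and hence $W^{1,2}(X)$ is a Hilbert space.
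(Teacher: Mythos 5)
Your proposal takes essentially the same route as the paper: forward direction via $RCD$ structure theory, backward direction via Cheeger's differentiation theorem, culminating in the pointwise parallelogram identity and integration. The one substantive difference is in how you handle what you call the ``main obstacle'': the compatibility between the Cheeger differentiable structure and the metric tangent cones at a given point. You propose to establish this by hand via a blow-up of the Cheeger chart and appeal to Cheeger--Kleiner, Schioppa, or Bate--Li. The paper sidesteps this by invoking Theorems 10.2 and 8.1 of Cheeger's original paper directly: Theorem 10.2 gives, for any Lipschitz $f$, a full-measure set $B_f$ such that at every $p \in B_f$ and for \emph{every} tangent cone $T_pX$ the blow-up $df_p\co T_pX \to \R$ exists and is generalized linear, and Theorem 8.1 then says that if $T_pX \cong \R^k$ as a metric space (irrespective of the limit measure) this generalized linear map is actually linear with $\Lip df_p = \Lip f(p)$. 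This removes the need for your explicit blow-up of $\phi_i$ and the subsequent Rademacher argument; one simply intersects $B_f$, $B_g$, and the full-measure set where $\Lip = |\nabla \cdot|$ for $f$, $g$, and $f\pm g$, and applies the parallelogram law for linear functionals on $\R^k$. Your outline is correct in substance, but the step you were most worried about is already contained in the reference you were already citing; no later literature is needed.
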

\begin{proof}
The "only if" direction is well-known and follows from ~\cite{gmr} and Remark \ref{rem-equiv-rcd}. 

We observe that the "if" direction easily follows from Cheeger's generalization of Rademacher's theorem to doubling metric-measure spaces satisfying the Poincar\'e inequality~\cite{cheegerlipschitz}.

Indeed, suppose $(X,d,m)$ satisfies the assumption of the theorem and for almost every $p\in X$ some tangent cone $T_pX$ is Euclidean as a metric space.

Let $f$ be a Lipschitz function on $X$. Recall that by Theorem~\ref{th:rajala}, if $X$ satisfies any of the conditions $CD(K,N), CD(K,N)^e$  or $CD^*(K,N)$, it admits
a weak type 1-1 Poincar\'e inequality and hence  by \cite[Theorem 5.1]{cheegerlipschitz} it holds that $\Lip f =|\nabla f|$ a.e. on $X$.

Further, by ~\cite[Theorem 10.2]{cheegerlipschitz} there is a set of full measure $B_f$ such that for every $p\in B_f$ and every tangent cone $T_pX$ the differential $df_p\co T_pX\to \R$ (which always exists after possibly passing to a rescaling subsequence) is generalized linear (see ~\cite{cheegerlipschitz} for the definition). By above we can also assume that $\Lip f(p) =|\nabla f(p)|$ for any $p\in B_f$.

%By Corollary ~\ref{cor:R^n-RCD} if $T_pX\cong \R^k$ then the background measure on  $T_pX$ is the standard Lebesgue measure. 
By~\cite[Theorem 8.1]{cheegerlipschitz}  if  for $p\in B_f$ it holds that $T_pX\cong \R^k$ as a metric space then (irrespective of the limit measure on $T_pX$)  $df_p\co\R^k\to\R$ is linear in the ordinary sense and $\Lip df_p=\Lip f(p)$.  Given two  Lipschitz functions $f,g$ on $X$, by passing to a subsequence we see that the same works simultaneously for both $df_p, dg_p$ for any $p\in B_f\cap B_g$ which is still a set of full measure in $X$.  Using \cite[Theorem 5.1]{cheegerlipschitz} again we can further assume that $\Lip (f\pm g) =|\nabla (f\pm g)|$ everywhere on $B_f\cap B_g$.

Since $\Lip$ satisfies the parallelogram rule on the set of linear functions on $\R^k$ and $d(f\pm g)_p=df_p\pm dg_p$ this gives

\[
|\nabla(f+g)(p)|^2+|\nabla (f-g)(p)|^2=2|\nabla f(p)|^2+2|\nabla g(p)|^2
\]
for any $p\in B_f\cap B_g$. Therefore the parallelogram rule holds for the Cheeger energies of $f$ and $g$:

\begin{equation}\label{parallelogram-grad}
\int_X|\nabla(f+g)|^2+|\nabla (f-g)|^2dm=\int_X2|\nabla f|^2+2|\nabla g|^2dm
\end{equation}
Since   Lipschitz functions are is dense in  $W^{1,2}(X)$ this implies that \eqref{parallelogram-grad} holds for all $f,g\in W^{1,2}(X)$. This means that $X$ is infinitesimally Hilbertian and hence $RCD(K,N)$ by Remark~\ref{rem-equiv-rcd}. This finishes the proof of Proposition~\ref{prop:inf-hilb}.

\end{proof}
The above Proposition shows that for $CD(K,N)$-spaces ($CD^*(K,N)$-space, $CD^e(K,N)$-spaces, respectively) with finite $N$ "analytic" infinitesimal Hilbertianness in the sense of the original definition is equivalent to the "geometric" infinitesimal Hilbertianness ( i.e. requiring that tangent spaces almost everywhere be Euclidean).

We are now ready to finish the proof of Theorem~\ref{CD+CAT implies RCD}.

\begin{proof}[Proof of Theorem~\ref{CD+CAT implies RCD}]
{First note that by stability of each condition $CD$, $CD^*$, $CD^e$ and $CAT$ under measured Gromov-Hausdorff and Gromov-Hausdorff convergence respectively, it follows that tangent cones satisfy $CD(0,N)$, $CD^*(0,N)$, $CD^e(0,N)$ and $CAT(0)$ respectively. }
%and in each case we obtain the Brunn-Minkowski inequality (\ref{ineq:BM}) in Theorem \ref{th:stability+BM}.

Since $m$ is locally doubling, By \cite[Theorem 3.2]{gmr} there is a set $A\subset X$ of full measure such that for \emph{every} point $p\in A$ for \emph{any} tangent cone $(T_pX,d_p,m_p)$ and \emph{any} point $y\in T_pX$  any tangent cone $(T_y(T_pX), d_y,m_y)$ is a tangent cone at $p$. Let $p\in A$. Let $k$ be the largest integer such that \emph{some} tangent cone  $T_pX$ splits isometrically as $\R^k\times Y$. Clearly $k\le N$. We claim that $Y=\{pt\}$. If not then take a point $y\in Y$ which is a midpoint on some non-constant geodesic segment. Then $T_{(0,y)}(\R^k\times Y)\cong \R^k\times T_yY$ contains a line $l$ contained in $\{0\}\times T_yY$.
Moreover, since any line parallel to $l$ is equidistant from $l$ it easily follows that a line parallel to $l$ and passing through a point in  $\{0\}\times T_yY$ is entirely contained in  $\{0\}\times T_yY$. The splitting theorem then implies that $\{0\}\times T_yY$ is isometric to $\R\times Z$ for some metric space $Z$ and hence $T_{(0,y)}(\R^k\times Y)\cong \R^k\times T_yY\cong \R^{k+1}\times Z$ 
 But it's a tangent cone at $p$ which contradicts the maximality of $k$. Hence there is a tangent cone at $p$ isometric to some $\R^k$ with $k\le N$. 
 Now the result follows by Proposition ~\ref{prop:inf-hilb}.

\end{proof}

\begin{comment}
\begin{remark}\footnote{{\color{blue} again I would remove this Remark.}}
Since $CD_{loc}(K_-,n)$ is equivalent to $CD^*(K,n)$\cite{bast}, Theorem~\ref{CD+CAT implies RCD} remains true if the condition that $X$ is  $CD(K,n)$ is replaced by the weaker condition that it is $CD^*(K,n)$. 
\end{remark}
\end{comment}

Next we give an example of a space satisfying  \eqref{eq:cd+cat} which is not Alexandrov of $curv\ge \hat \uk$ for any $\hat \uk$. 
%This provides a positive answer to Question~\ref{cd+cat implies cbb?}.
\begin{example}\label{cd+cat-not-alex}
Let $(Y,d,m)$ be the closed unit ball $\bar B_1(0)$ in $\R^2$ with the standard Euclidean metric and $m=\mathcal H_2$. We are going  to show that there exist two $C^1$ functions $\phi, v\co Y\to \R$ such that $X=(\bar B_1(0),e^{\phi}d, e^vm)$ is $RCD(-100,3)$ and $CAT(0)$. The functions $\phi,v$ will be $C^4$ on $B_1(0)$ with the infimum of sectional curvature of $e^{2\phi}g_{Eucl}$  on $B_1(0)$ equal to $-\infty$. This will obviously imply that $X$ does not satisfy  $curv\ge \hat \uk$ for any $\hat \uk$.

 Recall that given a Riemannian manifold  $(M^n,g)$ if we change the Riemannian metric conformally $\tilde g=e^{2\phi} g$ then for any smooth function $f$ on $M$ its hessian changes by the formula

\begin{equation}\label{hess-conf}
\tilde \Hess_f(V,V)=\Hess_f(V,V)-2\langle \nabla \phi, V\rangle \langle \nabla f, V\rangle+|V|^2\cdot \langle \nabla \phi, \nabla f\rangle
\end{equation}
here and in what follows $\langle\cdot,\cdot\rangle$ and $\langle\cdot,\cdot\rangle_\sim$ are the inner products with respect to $g$ and $\tilde g$ respectively.

Also, recall that when $n=2$ Ricci tensors of $g$ and $\tilde g$ are related as follows

\begin{equation}\label{eq-ric-conf}
\tilde \Ric (V,V)=\Ric(V,V)-\Delta \phi |V|^2
\end{equation}
Then for any $N>2$ the weighted $N$-Ricci tensor of $(M^2, \tilde g, e^{-f}d\vol_{\tilde g})$ is equal to

\begin{equation}\label{N-ric-conf}
\begin{gathered}
\tilde \Ric_f^N(V,V)=\tilde \Ric(V,V)+\tilde\Hess_f(V,V)-\frac{\langle \tilde\nabla f, V\rangle_\sim^2}{N-2}=\\
=\Ric(V,V)-\Delta \phi |V|^2+\Hess_f(V,V)-2\langle \nabla \phi, V\rangle \langle \nabla f, V\rangle+|V|^2\cdot \langle \nabla \phi, \nabla f\rangle-\frac{\langle \nabla f, V\rangle^2}{N-2}
\end{gathered}
\end{equation}

By smoothing out  functions of the form $\delta( |(x,y)|-10)$ on $B_{10}(0)\subset \R^2$ ( with $0<\delta\ll 1$) it's easy to show that for every $\eps>0, L>0$ there exist smooth functions $f_{\eps,L}, \phi_{\eps,L}\co B_5(0)\to \R$ with the following properties

\begin{enumerate}
\item $|f_{\eps,L}|\le \eps, |\phi_{\eps,L}|\le\eps$;
\item  $f_{\eps,L}$ and $ \phi_{\eps,L}$ are convex and $\eps$-Lipschitz;
\item $\sup \Delta \phi_{\eps,L}=L$;
\item $\Hess f_{\eps,L}\ge \Delta\phi_{\eps,L}$ everywhere on $B_5(0)$;
\item $| f_{\eps,L}|_{C^4}\le \eps,| \phi_{\eps,L}|_{C^4}\le \eps$ outside $B_\eps(0)$.
\end{enumerate}

Now let $p_n=(0,1-\frac{1}{n})$ and let $f_n(x)=f_{10^{-n},n}(x-p_n), \phi_n(x)=\phi_{10^{-n},n}(x-p_n)$.

Lastly, let $f=\sum_{n=2}^\infty f_n, \phi=\sum _{n=2}^\infty\phi_n\co \bar B_2(0)\to\R$. Then it's easy to see that $f$ and $\phi$ satisfy the following properties
\begin{enumerate}
\item $|f|\le1, |\phi|\le 1$;
\item  $f$ and $ \phi$ are convex and $1/10$-Lipschitz;
\item $f$ and $\phi$ are $C^4$ on $B_1(0)$;
\item \label{sup-lap} $\sup \Delta \phi|_{B_1(0)}=+\infty$;
\item $\Hess f\ge \Delta\phi$ everywhere on $B_1(0)$.
\end{enumerate}

We claim that the space $(X,e^{\phi}d_{Eucl}, e^{-f}d\vol_{\tilde g})$ is $RCD(-100,3)$ and $CAT(0)$ where $\tilde g=e^{2\phi} g_{Eucl}$ on $B_1(0)$. Indeed, by \eqref{eq-ric-conf}  $\tilde g$ has $\sec\le 0$ since $\phi$ is convex and $g=g_{Eucl}$ is flat. In fact, more is true.

Let $u(x,y)=x^2+y^2$. Since $f$ and $\phi$ are $1/10$-Lipschitz, \eqref{hess-conf} easily implies that $\tilde \Hess_u\ge 0$ on $\bar B_1(0)$, i.e. $u$ is convex with respect to $\tilde g$ on $\bar B_1(0)$. Therefore, for any $0\le R< 1$ the space $X_R=(\bar B_R(0), e^{\phi}d)$ is $CAT(0)$ since it's locally $CAT(0)$, complete and simply connected. The same holds for $R=1$ because Gromov-Hausdorff limits of $CAT(0)$ spaces are again $CAT(0)$.

Convexity of $u$ with respect to $\tilde g$ also implies 
%In fact, by a theorem of Lytchak and Stadler ~\cite{Lyt-Stad} if $(Y,d_Y)$ is $CAT(0)$ and $u\co Y\to\R$ is bounded and convex then $(Y,e^ud_Y)$ is again $CAT(0)$. This immediately implies that $X_R$ is $CAT(0)$.
%Therefore, for any $0\le R\le 1$ the space $X_R=(\bar B_R(0), e^{\phi}d)$ is $CAT(0)$. This in particular implies
 that   $X_{R_1}$ is a convex subset of $X_{R_2}$ for any  $0<R_1<R_2\le 1$.
From formula \eqref{N-ric-conf} using the properties of $f$ and $\phi$ it easily follows that $\tilde \Ric_f^3\ge -100$ on $B_1(0)$.  Using that $X_{R_1}$ is a convex subset of $X_{R_2}$ for $R_1\le R_2$ this implies that $(X_R, e^{-f}d\vol_{\tilde g})$ is $RCD(-100,3)$ for any $0<R<1$. By passing to the limit as $R\to 1$ we get that $X=(\bar B_1(0), e^\phi d, e^{-f}d\vol_{\tilde g})$ is $RCD(-100,3)$ as well.
On the other hand, since $\sup \Delta \phi|_{B_1(0)}=+\infty$, by \eqref{eq-ric-conf}  the infimum of sectional curvature of $e^{2\phi}g_{Eucl}$  on $B_1(0)$ is equal to $-\infty$. Therefore $X$ does not satisfy $curv\ge \hat \uk$ for any  real $\hat \uk$.
\end{example}
%\begin{remark}
%It's easy to modify the above Example to construct a space which is $RCD(-100,3)$, $CAT(0)$ and not locally Alexandrov around any point.
%\end{remark}
%When dealing with $CAT(\uk)$ spaces to rule out certain pathological examples it is common to assume that all geodesics are locally extendible. 

%\include{non-branching}
Lastly, we show that spaces satisfying  \eqref{eq:cd+cat} are non-branching. This might be somewhat surprising given that branching $CAT(\uk)$ spaces are quite common (see e.g.  Examples \ref{ex1-cat} and \ref{ex2-cat}).
\begin{proposition}\label{prop:nonbra}
Let $X$ satisfy \eqref{eq:cd+cat}. Then $X$ is non-branching.
\end{proposition}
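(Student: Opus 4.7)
My plan is to argue by contradiction via the Splitting Theorem (Proposition~\ref{splitting theorem}) applied at a branching point. Suppose there exist two distinct geodesics $\gamma_1,\gamma_2\colon[0,\ell]\to X$ with $\gamma_1(0)=\gamma_2(0)=p$, coinciding on $[0,t_0]$ for some $t_0\in(0,\ell)$, and with $\gamma_1(\ell)\neq\gamma_2(\ell)$. Let $q:=\gamma_1(t_0)=\gamma_2(t_0)$ be the branching point.

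First I would form a pointed measured Gromov--Hausdorff tangent cone $(T_qX,d_q,m_q,o)$ as a subsequential limit of the rescalings $(X,r_k^{-1}d,\,m(\bar B_{r_k}(q))^{-1}m,\,q)$ with $r_k\downarrow 0$. Precompactness comes from Bishop--Gromov doubling (Fact~\ref{fact:cd}(i)), and by the scaling rule Fact~\ref{fact:cd}(ii) the rescaled spaces satisfy $CD(Kr_k^{2},N)$ and $CAT(\uk r_k^{2})$. By measured Gromov--Hausdorff stability of $CD$ (Theorem~\ref{th:stability+BM}) and of $CAT$ under pointed Gromov--Hausdorff convergence, the limit $T_qX$ is simultaneously $CAT(0)$ and $CD(0,N)$.

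Next, I would lift the branching structure to $T_qX$. Because each $\gamma_i$ is a geodesic through $q$ of positive length on both sides, the rescaled segments $\gamma_i|_{[t_0-r_kT,t_0+r_kT]}$ become unit-speed geodesics of length $2T$ in the rescaled metric; taking $T\to\infty$ along a diagonal subsequence yields in the limit a bi-infinite geodesic (line) $L_i\subset T_qX$ passing through the apex $o$. The coincidence $\gamma_1=\gamma_2$ on $[0,t_0]$ implies that the two incoming halves of $L_1$ and $L_2$ agree; on the other hand, the positive branching angle $\alpha:=\angle(\dot\gamma_1(t_0^+),\dot\gamma_2(t_0^+))>0$ combined with the first variation formula in $CAT(\uk)$ gives $r_k^{-1}d(\gamma_1(t_0+r_k),\gamma_2(t_0+r_k))\to 2\sin(\alpha/2)>0$, so the outgoing halves differ and therefore $L_1\neq L_2$.

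Finally, I apply the Splitting Theorem to $L_1$: since $T_qX$ is $CAT(0)$ and $CD(0,N)$ and contains the line $L_1$, it splits isometrically as $T_qX\cong Y\times\mathbb{R}$ with $L_1=\{y_0\}\times\mathbb{R}$ and $o=(y_0,0)$. In any product metric space, a unit-speed geodesic $L_2(\tau)=(y(\tau),r(\tau))$ has each factor-speed $|\dot y|$ and $|\dot r|$ constant in $\tau$. From the coincidence $L_2(\tau)=(y_0,\tau)$ for $\tau\leq 0$ we read $|\dot y|=0$ on $(-\infty,0]$; hence $|\dot y|\equiv 0$, so $y(\tau)\equiv y_0$ and $L_2\equiv L_1$ on all of $\mathbb{R}$, contradicting $L_1\neq L_2$ from the previous step.

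The main technical obstacle is making rigorous the convergence of the rescaled geodesics $\gamma_i$ to genuine bi-infinite lines in $T_qX$, and ensuring that the measured Gromov--Hausdorff tangent cone inherits the full $CD(0,N)$ condition (and not a strictly weaker one) when $m$ is not the Hausdorff measure. Both of these should follow from the Bishop--Gromov comparison, the doubling property, and the stability theorem cited above, but they deserve some care in the write-up — in particular the verification that the normalization $m(\bar B_{r_k}(q))^{-1}m$ is tight enough that a nontrivial limit measure exists and that the rescaled $CD$ inequalities pass to the limit.
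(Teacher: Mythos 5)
Your argument is essentially correct, but it takes a genuinely different route from the paper's. The paper works directly at the branching point in the original space: after rescaling to $CD(-1,N)$ and $CAT(1)$, it uses the $1$-Lipschitz contraction of the homothety maps $\Phi_s^x$ (a $CAT$ fact) together with the Brunn--Minkowski inequality (a $CD$ fact) to show that the measures of small balls along the two branches near the branching point are all nearly equal to that at the branching point itself, and then reaches a contradiction because two disjoint balls around the diverging branches are contracted by $\Phi^x$ into a single ball of comparable measure. Your approach instead blows up at the branching point, argues that the two branches survive as distinct lines $L_1\neq L_2$ in a tangent cone (which is $CAT(0)$ and $CD(0,N)$ by stability and rescaling), and invokes the Splitting Theorem (Proposition~\ref{splitting theorem}) together with the rigidity of geodesics in a metric product to obtain a contradiction. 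There is no circularity: the paper's proof of Proposition~\ref{splitting theorem} does not rely on non-branching. The one step in your write-up that needs to be justified (or at least cited) is the assertion that the branching angle $\alpha>0$. This is not automatic, since distinct geodesics issuing from a common point of a $CAT(\uk)$ space can make Alexandrov angle zero when they share an initial segment; what one needs is the standard $CAT(\uk)$ fact (a corollary of the first variation formula, see Bridson--Haefliger) that two geodesics issuing from the same point at angle zero must coincide on a nontrivial initial interval. Since after the branching time $t_0$ your two geodesics differ at every small positive parameter (by local uniqueness of geodesics in $CAT(\uk)$), this fact gives $\alpha>0$, and then the comparison-angle estimate keeps $r_k^{-1}d(\gamma_1(t_0+r_k),\gamma_2(t_0+r_k))$ bounded away from $0$, so $L_1\neq L_2$. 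Comparing the two routes: the paper's argument is more elementary and self-contained (no tangent cones, no splitting theorem, no angle arguments), and, as noted in the remark following the proposition, it works already under the weaker measure-contraction property $MCP(K,N)$. Your argument requires heavier machinery but has the conceptual virtue of isolating the mechanism: branching would produce an extra line in a tangent cone that is incompatible with the splitting theorem.
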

\begin{proof}
By rescaling we can assume that $X$ is $CD(-1,N)$ and $CAT(1)$. 
Suppose $\gamma_1,\gamma_2\co[0,1]\to X$ are two branching constant speed geodesics of length $<\pi$. Let $t_0\in(0,1)$ be the branching point. By uniqueness of geodesics in $CAT(1)$ spaces this means that $\gamma_1|_{[0,t_0]}=\gamma_2|_{[0,t_0]}$ and
$\gamma_1(t)\ne \gamma_2(t)$ for any $t>t_0$. Let $x=\gamma_1(0)=\gamma_2(0), y=\gamma_1(1), z=\gamma_2(1), p=\gamma_1(t_0)$. By shortening the geodesics we can assume that $|xp|=|yp|=|zp|=l<\pi/10$.
Then $t_0=1/2$. Recall that since $X$ is $CAT(1)$ the homothety map $\Phi_s^x$ centered at $x$ is $1$-Lipschitz on $B_{\pi/2}(x)$ for any $0\le t\le 1$. Therefore, for any $0<r<1/10, 0<s<1$
we have that $\Phi^x_s(B_r(\gamma_i(t))\subset B_{r}(\gamma_i(st))$. Note that $\Phi_s^x(B_r(\gamma_i(t))$ is the $s$-Minkowski sum $(1-s)\{x\}+sB_r(\gamma_i(t))$. 
Therefore $\Phi^x_{\frac{t_0}{t_0+\eps}}(B_r(\gamma_i(t_0+\eps)))\subset B_r(\gamma_i(t_0))$ for $\eps,r<1/10$.

On the other hand, by the Brunn-Minkowski inequality for $0<r<\eps$ we have that that\\ $m(\Phi^x_{\frac{t_0}{t_0+\eps}}(B_r(\gamma_i(t+\eps)))\ge (1-\delta(\eps)) m(B_r(\gamma_i(t_0+\eps)))$ where $\delta(\eps)=\delta(\eps,l,N)\to 0$ as $\eps\to 0$. 
{(Note that conditions $CD(K,N), CD^*(K,N)$ and $CD^e(K,N)$ give slightly different Brunn-Minkowski inequalities when $K\ne 0$ but all of them trivially imply the existence of $\delta(\eps)$ as above).}
Therefore, 
\[
m(B_r(\gamma_i(t_0)))\ge m(\Phi^x_{\frac{t_0}{t_0+\eps}}(B_r(\gamma_i(t_0+\eps)))\ge (1-\delta(\eps)) m(B_r(\gamma_i(t_0+\eps)))
\]
Applying the same argument to $\Phi_s^y, \Phi_s^z$ gives that $m(B_r(\gamma_i(t_0+\eps)))\ge (1-\delta(\eps))m(B_r(\gamma_i(t_0)))$. Combining the above we get

\begin{equation}\label{eq:alm-equal}
{\frac{1}{1-\delta(\eps)}}\ge \frac{m(B_r(p))}{m(B_r(\gamma_i(t_0+\eps)))}\ge 1-\delta(\eps)\,\, \textrm{ for } 0<r<\eps
\end{equation}

Fix an $0<\eps<1/10$ small enough so that $\delta(\eps)<1/100$. Since $\gamma_1(t_0+\eps)\ne \gamma_2(t_0+\eps)$, for all small $r<\eps$ we have that $B_r(\gamma_1(t_0+\eps))\cap B_r(\gamma_1(t_0+\eps))=\emptyset$.
Then  using \eqref{eq:alm-equal} and  the Brunn-Minkowski inequality for $A=\{x\}$ and $B=B_r(\gamma_1(t_0+\eps))\cup B_r(\gamma_2(t_0+\eps))$ we get that 

\[
 m(\Phi^x_{\frac{t_0}{t_0+\eps}}(B))\ge (1-\delta(\eps))m(B)\ge (1-\delta(\eps))2(1-\delta(\eps))m(B_r(p))\ge \frac{3}{2}m(B_r(p))
\]

On the other hand $\Phi^x_{\frac{t_0}{t_0+\eps}}(B)\subset B_r(p)$ and hence

\[
 m(\Phi^x_{\frac{t_0}{t_0+\eps}}(B))\le m(B_r(p))
\]
This is a contradiction and hence the proposition is proved and $X$ is non-branching.
\end{proof}
\begin{remark}
The above proof only uses the Brunn-Minkowski inequality when one of the sets is a point. Therefore, the proposition remains true if the condition that $X$ be $CD(K,N)$ is replaced by the weaker condition that it satisfies the measure-contracting property $MCP(K,N)$.
\end{remark}

\small{
%\bibliography{new}

\providecommand{\bysame}{\leavevmode\hbox to3em{\hrulefill}\thinspace}
\providecommand{\MR}{\relax\ifhmode\unskip\space\fi MR }
% \MRhref is called by the amsart/book/proc definition of \MR.
\providecommand{\MRhref}[2]{%
  \href{http://www.ams.org/mathscinet-getitem?mr=#1}{#2}
}
\providecommand{\href}[2]{#2}

\bibliographystyle{amsalpha}

% \nocite{*}
%\bibliography{Dissertation}
%\addcontentsline{toc}{chapter}{References}
}
\end{document}